\numberwithin{equation}{section}
\numberwithin{figure}{section}
\numberwithin{equation}{section}
\numberwithin{figure}{section}
\theoremstyle{plain}
\newtheorem{theorem}{Theorem}
\theoremstyle{plain}
\newtheorem{corollary}{Corollary}
\newtheorem{example}{Example}
\newtheorem{lemma}{Lemma}
\newtheorem{proposition}{Proposition}
\newtheorem{remark}{Remark}
\numberwithin{equation}{section}
\providecommand{\theoremname}{Theorem}
\begin{document}
\title[stochastic integro-Differential equations]{On the Cauchy problem for
stochastic parabolic integro-differential \ equations with radially
O-regularly varying Levy measure }

\begin{abstract}
Parabolic integro-differential nondegenerate Cauchy problem is considered in
the scale of $L_{p}$ spaces of functions whose regularity is defined by a
Levy measure with O-regulary varying radial profile. Existence and
uniqueness of a solution is proved by deriving apriori estimates. Some
probability density function estimates of the associated Levy process are
used as well.
\end{abstract}

\author{R. Mikulevicius and C. Phonsom}
\subjclass[2000]{45K05, 60J75, 35B65. }
\keywords{non-local parabolic integro-differential equations, Levy
processes. }
\date{June 29, 2019}
\maketitle
\tableofcontents

{}

{}

{}

\section{Introduction}

Let $\sigma\in\left(0,2\right)$ and $\mathfrak{A}^{\sigma}$ be the class of
all nonnegative measures $\nu$\ on $\mathbf{R}_{0}^{d}=\mathbf{R}%
^{d}\backslash\left\{ 0\right\} $ such that $\int\left\vert y\right\vert
^{2}\wedge1d\nu<\infty$ and 
\begin{equation*}
\sigma=\inf\left\{ \alpha<2:\int_{\left\vert y\right\vert \leq1}\left\vert
y\right\vert ^{\alpha}d{\nu}<\infty\right\} . 
\end{equation*}
In addition, we assume that for $\nu\in\mathfrak{A}^{\sigma},$ 
\begin{eqnarray*}
\int_{\left\vert y\right\vert >1}\left\vert y\right\vert d\nu & < & \infty%
\text{ if }\sigma\in\left(1,2\right), \\
\int_{R<\left\vert y\right\vert \leq R^{\prime}}yd\nu & = & 0\text{ if }%
\sigma=1\text{ for all }0<R<R^{\prime}<\infty.\text{ }
\end{eqnarray*}

Let $\left(\Omega,\mathcal{F},\mathbf{P}\right)$ be a complete probability
space with a filtration of $\sigma-$algebras on $\mathbb{F}=\left(\mathcal{F}%
_{t},t\geq0\right)$ satisfying the usual conditions. Let $\mathcal{R}\left(%
\mathbb{F}\right)$ be the progressive $\sigma-$algebra on $\left[%
0,\infty\right)\times\Omega.$ Let $\left(U,\mathcal{U},\Pi\right)$ be a
measurable space with $\sigma-$finite measure $\Pi,\mathbf{R}_{0}^{d}=%
\mathbf{R}^{d}\backslash\left\{ 0\right\} .$ Let $p\left(dt,dz\right)$ be $%
\mathbb{F}-$adapted point measures on $\left(\left[0,\infty\right)\times U,%
\mathcal{B}\left(\left[0,\infty\right)\right)\otimes\mathcal{U}\right)$ with
compensator $\Pi\left(dz\right)dt.$ We denote the martingale measure $%
q\left(dt,dz\right)=p\left(dt,dz\right)-\Pi\left(dz\right)dt.$

In this paper we consider the stochastic parabolic Cauchy problem 
\begin{eqnarray}
du\left(t,x\right) & = & \left[Lu(t,x)-\lambda u\left(t,x\right)+f(t,x)%
\right]dt  \label{eq:mainEq} \\
& & +\int_{U}\Phi\left(t,x,z\right)q\left(dt,dz\right),  \notag \\
u\left(0,x\right) & = & g\left(x\right),\left(t,x\right)\in E=\left[0,T%
\right]\times\mathbf{R}^{d},  \notag
\end{eqnarray}
with $\lambda\geq0$ and integro-differential operator 
\begin{equation*}
L\varphi\left(x\right)=L^{\nu}\varphi\left(x\right)=\int\left[%
\varphi(x+y)-\varphi\left(x\right)-\chi_{\sigma}\left(y\right)y\cdot\nabla%
\varphi\left(x\right)\right]\nu\left(dy\right),\varphi\in
C_{0}^{\infty}\left(\mathbf{R}^{d}\right), 
\end{equation*}
where $\nu\in\mathfrak{A}^{\sigma},$ $\chi_{\sigma}\left(y\right)=0$ if $%
\sigma\in\left[0,1\right),\chi_{\sigma}\left(y\right)=1_{\left\{ \left\vert
y\right\vert \leq1\right\} }\left(y\right)$ if $\sigma=1$ and $%
\chi_{\sigma}\left(y\right)=1$ if $\sigma\in(1,2).$ The symbol of $L$ is 
\begin{equation*}
{\psi}\left(\xi\right)=\psi^{\nu}\left(\xi\right)=\int\left[e^{i2\pi\xi\cdot
y}-1-i2\pi\chi_{\sigma}\left(y\right)\xi\cdot y\right]\nu\left(dy\right),\xi%
\in\mathbf{R}^{d}. 
\end{equation*}
Note that $\nu\left(dy\right)=dy/\left\vert y\right\vert ^{d+\sigma}\in${$%
\mathfrak{A}$}$^{\sigma}$ and, in this case, $L=L^{\nu}=c\left(\sigma,d%
\right)\left(-\Delta\right)^{\sigma/2}$, where $\left(-\Delta\right)^{%
\sigma/2}$\ is a fractional Laplacian. The equation (\ref{eq:mainEq}) is
forward Kolmogorov equation for the Levy process associated to $\psi^{\nu}$.
We assume that $g,f$ and $\Phi$ are resp. $\mathcal{F}_{0}\otimes\mathcal{B}%
\left(\mathbf{R}^{d}\right)$- ,$\mathcal{R}\left(\mathbb{F}\right)\otimes%
\mathcal{B}\left(\mathbf{R}^{d}\right)$- , $\Phi$ is $\mathcal{R}\left(%
\mathbb{F}\right)\otimes\mathcal{B}\left(\mathbf{R}^{d}\right)\otimes%
\mathcal{U}$-measurable.

We define for $\nu\in\mathfrak{A}^{\sigma}$ its radial distribution function 
\begin{equation*}
\delta\left(r\right)=\delta_{\nu}\left(r\right)=\nu\left(x\in\mathbf{R}%
^{d}:\left\vert x\right\vert >r\right),r>0, 
\end{equation*}
and

\begin{equation*}
w\left(r\right)=w_{\nu}\left(r\right)=\delta_{\nu}\left(r\right)^{-1},r>0. 
\end{equation*}
Let $\delta$ be continuous, and $\lim_{r\rightarrow0}\delta\left(r\right)=%
\infty.$ One of our main assumptions is that $w_{\nu}\left(r\right)$ is an
O-RV function at both infinity and at zero. That is 
\begin{equation*}
r_{1}\left(x\right)=\limsup_{\epsilon\rightarrow0}\frac{w_{\nu}\left(%
\epsilon x\right)}{w_{\nu}\left(\epsilon\right)}<\infty,\hspace{1em}%
r_{2}\left(x\right)=\limsup_{\epsilon\rightarrow\infty}\frac{%
w_{\nu}\left(\epsilon x\right)}{w_{\nu}\left(\epsilon\right)}<\infty,x>0. 
\end{equation*}
The regular variation functions were introduced in \cite{k} and used in
tauberian theorems which were extended to O-RV functions as well (see \cite%
{ALAR}, \cite{bgt}, and references therein). They are very convenient for
the derivation of our main estimates.

Given $\nu\in${$\mathfrak{A}$}$^{\sigma},p\in\left[1,\infty\right),s\in%
\mathbf{R}$, we denote $H_{p}^{s}\left(E\right)=H_{p}^{\nu;s}\left(E\right)$
the closure in $L_{p}\left(E\right)$ of $C_{0}^{\infty}\left(E\right)$ with
respect to the norm 
\begin{equation*}
\left\vert f\right\vert _{H_{p}^{\nu;s}\left(E\right)}=\left\vert \mathcal{F}%
^{-1}\left(1-\func{Re}\psi^{\nu}\right)^{s}\mathcal{F}f\right\vert
_{L_{p}\left(E\right)},\, 
\end{equation*}
where $\mathcal{F}$ is the Fourier transform in space variable. We denote $%
\mathbb{H}_{p}^{s}\text{$\left(E\right)$}=\mathbb{H}_{p}^{\nu;s}\left(E%
\right)$ the corresponding space of random functions $u\left(t,x\right),%
\left(t,x\right)\in E$. In this paper, under O-RV of $w$\ and nondegeneracy
assumptions (see assumptions \textbf{A}$,$\textbf{B} below), we prove the
existence and uniqueness of solutions to (\ref{eq:mainEq}) in the scale of
spaces $\mathbb{H}_{p}^{\nu;s}$). Moreover, \ 
\begin{equation}
\left\vert u\right\vert _{\mathbb{H}_{p}^{s}\left(E\right)}\leq C\left[%
\left\vert f\right\vert _{\mathbb{H}_{p}^{s-1}\left(E\right)}+\left\vert
g\right\vert _{\mathbb{H}_{p}^{s-\frac{1}{p}}\left(\mathbf{R}%
^{d}\right)}+\left\vert \Phi\right\vert _{\mathbb{H}_{2,p}^{s-\frac{1}{2}%
}\left(E\right)}+\left\vert \Phi\right\vert _{\mathbb{B}_{p,pp}^{s-\frac{1}{p%
}}\left(E\right)}\right]  \label{4}
\end{equation}
if $p\geq2,$where $\mathbb{B}_{p,pp}^{s}$ is the Besov "counterpart" of $%
\mathbb{H}_{p}^{s}$.

This paper is a continuation of \cite{MikPh1} and \cite{MikPh2}, where (\ref%
{eq:mainEq}) with $\Phi=0$ was considered. The conditions imposed in this
paper are more natural for needed computations. Since the symbol $%
\psi^{\nu}\left(\xi\right)$ is not smooth in $\xi$, the standard Fourier
multiplier results do not apply in this case. In order to prove the estimate
involving $\Phi$ in (\ref{4}), we follow the idea of \cite{KK2}, by applying
a version of Calderon-Zygmund theorem by associating to $L^{\nu}$ a family
of balls and verifying for it the stochastic Hörmander condition (see
Theorem 2.5 in \cite{KK2}). As an example, we consider $\nu\in${$\mathfrak{A}
$}$^{\sigma}$ defined in radial and angular coordinates $r=\left\vert
y\right\vert ,z=y/r,$ as 
\begin{equation}
\nu\left(\Gamma\right)=\int_{0}^{\infty}\int_{\left\vert z\right\vert
=1}\chi_{\Gamma}\left(rz\right)a\left(r,z\right)j\left(r\right)r^{d-1}S%
\left(dz\right)dr,\Gamma\in\mathcal{B}\left(\mathbf{R}_{0}^{d}\right),
\label{5}
\end{equation}
where $S\left(dz\right)$ is a finite measure on the unit sphere on $\mathbf{R%
}^{d}$. In \cite{zh}, (\ref{eq:mainEq}) was considered in the standard
fractional Sobolev spaces using $L^{\infty}$-$BMO$ type estimate, with $g=0$%
, $\Phi=0$ and a nondegenerate $\nu$ in the form (\ref{5}) with $%
a=1,j\left(r\right)=r^{-d-\sigma}.$ In \cite{KK1}, an elliptic problem in
the whole space with $L^{\nu}$ was studied for $\nu$ in the form (\ref{5})
with $S\left(dz\right)=dz$ being a Lebesgue measure on the unit sphere in $%
\mathbf{R}^{d}$, with $0<c_{1}\leq a\leq c_{2}$, and a set of technical
assumptions on $j\left(r\right)$. A sharp function estimate based on the
solution Hölder norm estimate (following the idea in \cite{KimDong}) was
used in \cite{KK1}. The conditions imposed in this paper are more general
than those in \cite{KK1,MikPh1,MikPh2,zh} and thus our main results cover
more examples of $\nu$ (see in particular example \ref{ex: ex1}.)

The paper is organized as follows. In Section 2, the main theorem is stated,
and some examples are considered. In Section 3, some auxiliary results on
O-RV functions and probability density estimates are presented. In section
4, results on characterization of function spaces and approximation of input
functions are presented. In section 5, the main result is proved. In
Appendix, stochastic integrals driven by jump measures are constructed and a
non-degeneracy estimate presented.

\section{Notations, function spaces, main results and examples}

\subsection{Notation}

The following notation will be used in the paper.

Let $\mathbf{N}=\{1,2,\ldots\},\mathbf{N}_{0}=\left\{ 0,1,\ldots\right\} ,%
\mathbf{R}_{0}^{d}=\mathbf{R}^{d}\backslash\{0\}.$ If $x,y\in\mathbf{R}^{d},$%
\ we write 
\begin{equation*}
x\cdot y=\sum_{i=1}^{d}x_{i}y_{i},\,|x|=\sqrt{x\cdot x}. 
\end{equation*}

We denote by $C_{0}^{\infty}(\mathbf{R}^{d})$ the set of all infinitely
differentiable functions on $\mathbf{R}^{d}$ with compact support.

We denote the partial derivatives in $x$ of a function $u(t,x)$ on $\mathbf{R%
}^{d+1}$ by $\partial_{i}u=\partial u/\partial x_{i}$, $\partial_{ij}^{2}u=%
\partial^{2}u/\partial x_{i}\partial x_{j}$, etc.; $Du=\nabla
u=(\partial_{1}u,\ldots,\partial_{d}u)$ denotes the gradient of $u$ with
respect to $x$; for a multiindex $\gamma\in\mathbf{N}_{0}^{d}$ we denote 
\begin{equation*}
D_{x}^{{\scriptsize \gamma}}u(t,x)=\frac{\partial^{|{\scriptsize \gamma|}%
}u(t,x)}{\partial x_{1}^{{\scriptsize \gamma_{1}}}\ldots\partial x_{d}^{%
{\scriptsize \gamma_{d}}}}. 
\end{equation*}
For $\alpha\in(0,2]$ and a function $u(t,x)$ on $\mathbf{R}^{d+1}$, we write 
\begin{equation*}
\partial^{{\scriptsize \alpha}}u(t,x)=-\mathcal{F}^{-1}[|\xi|^{{\scriptsize %
\alpha}}\mathcal{F}u(t,\xi)](x), 
\end{equation*}
where 
\begin{equation*}
\mathcal{F}h(t,\xi)=\hat{h}\left(t,\xi\right)=\int_{\mathbf{R}^{d}}\,\mathrm{%
e}^{-i2\pi\xi\cdot x}h(t,x)dx,\mathcal{F}^{-1}h(t,\xi)=\int_{\mathbf{R}%
^{d}}\,\mathrm{e}^{i2\pi\xi\cdot x}h(t,\xi)d\xi. 
\end{equation*}

For $\nu\in${$\mathfrak{A}$}$^{\sigma}$, we denote $Z_{t}^{\nu},t\geq0,$ the
Levy process associated to $L^{\nu}$, i.e., $Z_{t}^{\nu}$ is cadlag with
independent increments and its characteristic function 
\begin{equation*}
\mathbf{E}e^{i2\pi\xi\cdot Z_{t}^{\nu}}=\exp\left\{
\psi^{\nu}\left(\xi\right)t\right\} ,\xi\in\mathbf{R}^{d},t\geq0. 
\end{equation*}
The letters $C=C(\cdot,\ldots,\cdot)$ and $c=c(\cdot,\ldots,\cdot)$ denote
constants depending only on quantities appearing in parentheses. In a given
context the same letter will (generally) be used to denote different
constants depending on the same set of arguments.

\subsection{Function spaces}

Let $V$ be a Banach space with norm $\left\vert \cdot\right\vert _{V}$. Let $%
S\left(\mathbf{R}^{d};V\right)$ be the Schwartz space of $V$-valued rapidly
decreasing functions. We use standard notation $S\left(\mathbf{R}^{d}\right)$
when $V=\mathbf{R}$.

For a $V-$valued measurable function $h$ on $\mathbf{R}^{d}$and $p\geq1$ we
denote 
\begin{equation*}
\left|h\right|_{V,p}^{p}=\int_{\mathbf{R}^{d}}\left|h\left(x\right)%
\right|_{V}^{p}dx. 
\end{equation*}

We fix $\nu\in${$\mathfrak{A}$}$^{\sigma}$. Obviously, $\text{Re}%
\psi^{\nu}=\psi^{\nu_{sym}}$, where 
\begin{equation*}
\nu_{sym}\left(dy\right)=\frac{1}{2}\left[\nu\left(dy\right)+\nu\left(-dy%
\right)\right]. 
\end{equation*}
Let 
\begin{equation*}
Jf=J_{\nu}f=(I-L^{\nu_{sym}})f=f-L^{\nu_{sym}}f,f\in\mathcal{S}\left(\mathbf{%
R}^{d},V\right). 
\end{equation*}
For $s\in\mathbf{R}$ set 
\begin{equation*}
J^{s}f=\left(I-L^{\nu_{sym}}\right)^{s}f=\mathcal{F}^{-1}[(1-\psi^{%
\nu_{sym}})^{s}\hat{f}],f\in\mathcal{S}\left(\mathbf{R}^{d},V\right). 
\end{equation*}

\begin{equation*}
L^{\nu;s}f=\mathcal{F}^{-1}\left(-\left(-\psi^{\nu_{sym}}\right)^{s}\hat{f}%
\right),f\in\mathcal{S}\left(\mathbf{R}^{d},V\right). 
\end{equation*}

Note that $L^{\nu;1}f=L^{\nu_{sym}}f,f\in\mathcal{S}\left(\mathbf{R}%
^{d}\right).$

For $p\in\left[1,\infty\right),s\in\mathbf{R,}$ we define, following \cite%
{fjs}, the Bessel potential space $H_{p}^{s}\left(\mathbf{R}%
^{d},V\right)=H_{p}^{\nu;s}\left(\mathbf{R}^{d},V\right)$ as the closure of $%
\mathcal{S}\left(\mathbf{R}^{d},V\right)$ in the norm 
\begin{eqnarray*}
\left\vert f\right\vert _{H_{p}^{s}\left(\mathbf{R}^{d},V\right)} & = &
\left\vert J^{s}f\right\vert _{L_{p}\left(\mathbf{R}^{d},V\right)}=\left%
\vert \mathcal{F}^{-1}[(1-\psi^{\nu_{sym}})^{s}\hat{f}]\right\vert
_{L_{p}\left(\mathbf{R}^{d},V\right)} \\
& = & \left\vert \left(I-L^{\nu_{sym}}\right)^{s}f\right\vert _{L_{p}\left(%
\mathbf{R}^{d},V\right)},f\in\mathcal{S}\left(\mathbf{R}^{d},V\right).
\end{eqnarray*}
According to Theorem 2.3.1 in \cite{fjs}, $H_{p}^{t}\left(\mathbf{R}%
^{d}\right)\subseteq H_{p}^{s}\left(\mathbf{R}^{d}\right)\,\ $ is
continuously embedded if $p\in\left(1,\infty\right),s<t$, $H_{p}^{0}\left(%
\mathbf{R}^{d}\right)=L_{p}\left(\mathbf{R}^{d}\right)$. For $s\geq0,p\in%
\left[1,\infty\right),$ the norm $\left\vert f\right\vert _{H_{p}^{s}\left(%
\mathbf{R}^{d}\right)}$ is equivalent to (see Theorem 2.2.7 in \cite{fjs}) 
\begin{equation*}
\left\vert f\right\vert _{H_{p}^{s}\left(\mathbf{R}^{d}\right)}=\left\vert
f\right\vert _{L_{p}\left(\mathbf{R}^{d}\right)}+\left\vert \mathcal{F}^{-1}%
\left[(-\psi^{\nu_{sym}})^{s}\mathcal{F}f\right]\right\vert _{L_{p}\left(%
\mathbf{R}^{d}\right)}. 
\end{equation*}

Further, for a characterization of our function spaces we will use the
following construction (see \cite{lofs}).

\begin{remark}
\label{rem:system}For an integer $N>1$ there exists a function $%
\phi=\phi^{N}\in C_{0}^{\infty}(\mathbf{R}^{d})$ (see Lemma 6.1.7 in \cite%
{lofs}), such that $\mathrm{supp}~\phi=\left\{ \xi:\frac{1}{N}\leq\left\vert
\xi\right\vert \leq N\right\} $ , $\phi(\xi)>0$ if $N^{-1}<|\xi|<N$ and 
\begin{equation*}
\sum_{j=-\infty}^{\infty}\phi(N^{-j}\xi)=1\quad\text{if }\xi\neq0. 
\end{equation*}
Let 
\begin{equation}
\tilde{\phi}\left(\xi\right)=\phi\left(N\xi\right)+\phi\left(\xi\right)+\phi%
\left(N^{-1}\xi\right),\xi\in\mathbf{R}^{d}.  \label{pp1}
\end{equation}
Note that supp $~\tilde{\phi}\subseteq\left\{ N^{-2}\leq\left\vert
\xi\right\vert \leq N^{2}\right\} $ and $\tilde{\phi}\phi=\phi$. Let $%
\varphi_{k}=\varphi_{k}^{N}=\mathcal{F}^{-1}\phi\left(N^{-k}\cdot\right),k%
\geq1,$ and $\varphi_{0}=\varphi_{0}^{N}\in\mathcal{S}\left(\mathbf{R}%
^{d}\right)$ is defined as 
\begin{equation*}
\varphi_{0}=\mathcal{F}^{-1}\left[1-\sum_{k=1}^{\infty}\phi\left(N^{-k}\cdot%
\right)\right]. 
\end{equation*}
Let $\phi_{0}\left(\xi\right)=\mathcal{F}\varphi_{0}\left(\xi\right),\tilde{%
\phi}_{0}\left(\xi\right)=\mathcal{F}\varphi_{0}\left(\xi\right)+\mathcal{%
F\varphi}_{1}\left(\xi\right),\xi\in\mathbf{R}^{d}\mathbf{,}\tilde{\varphi}=%
\mathcal{F}^{-1}\tilde{\phi},\varphi=\mathcal{F}^{-1}\phi,$ and 
\begin{equation*}
\tilde{\varphi}_{k}=\sum_{l=-1}^{1}\varphi_{k+l},k\geq1,\tilde{\varphi}%
_{0}=\varphi_{0}+\varphi_{1} 
\end{equation*}
that is 
\begin{eqnarray*}
\mathcal{F\tilde{\varphi}}_{k} & = &
\phi\left(N^{-k+1}\xi\right)+\phi\left(N^{-k}\xi\right)+\phi\left(N^{-k-1}%
\xi\right) \\
& = & \tilde{\phi}\left(N^{-k}\xi\right),\xi\in\mathbf{R}^{d},k\geq1.
\end{eqnarray*}
Note that $\varphi_{k}=\tilde{\varphi}_{k}\ast\varphi_{k},k\geq0$.
Obviously, $f=\sum_{k=0}^{\infty}f\ast\varphi_{k}$ in $\mathcal{S}%
^{\prime}\left(\mathbf{R}^{d}\right)$ for $f\in\mathcal{S}\left(\mathbf{R}%
^{d}\right).$
\end{remark}

Let $s\in\mathbf{R}$ and $p,q\geq1$. For $\nu\in${$\mathfrak{A}$}$^{\sigma}$%
, we introduce the Besov space $B_{pq}^{s}=B_{pq}^{\nu;s}(\mathbf{R}^{d},V)$
as the closure of $\mathcal{S}\left(\mathbf{R}^{d},V\right)$ in the norm 
\begin{equation*}
|f|_{B_{pq}^{s}(\mathbf{R}^{d},V)}=|f|_{B_{pq}^{\nu;s}(\mathbf{R}%
^{d},V)}=\left(\sum_{j=0}^{\infty}|J^{s}\varphi_{j}\ast f|_{L_{p}\left(%
\mathbf{R}^{d},V\right)}^{q}\right)^{1/q}, 
\end{equation*}
where $J=J_{\nu}=I-L^{\nu_{sym}}.$

We introduce the corresponding spaces of functions on $E=[0,T]\times\mathbf{R%
}^{d}$ $.$ The spaces $B_{pq}^{\nu;s}(E,V)$ (resp. $H_{p}^{\nu;s}(E,V)$)
consist of all measurable $B_{pq}^{\nu;s}(\mathbf{R}^{d},V)$ (resp. $%
H_{p}^{\nu;s}(\mathbf{R}^{d},V)$) -valued functions $f$ on $[0,T]$ with
finite corresponding norms: 
\begin{eqnarray}
|f|_{B_{pq}^{s}(E,V)} & = &
|f|_{B_{pq}^{\nu;s}(E,V)}=\left(\int_{0}^{T}|f(t,\cdot)|_{B_{pq}^{\nu;s}(%
\mathbf{R}^{d},V)}^{q}dt\right)^{1/q},  \notag \\
|f|_{H_{p}^{s}(E,V)} & = &
|f|_{H_{p}^{\nu;s}(E,V)}=\left(\int_{0}^{T}|f(t,\cdot)|_{H_{p}^{\nu,s}(%
\mathbf{R}^{d},V)}^{p}dt\right)^{1/p}.  \label{norm11}
\end{eqnarray}

Similarly we introduce the corresponding spaces of random functions. Let $%
\left(\Omega,\mathcal{F},\mathbf{P}\right)$ be a complete probability spaces
with a filtration of $\sigma-$algebras $\mathbb{F}=\left(\mathcal{F}%
_{t}\right)$ satisfying the usual conditions. Let $\mathcal{R}\left(\mathbb{F%
}\right)$ be the progressive $\sigma-$algebra on $\left[0,\infty\right)%
\times\Omega$.

The spaces $\mathbb{B}_{pp}^{s}\left(\mathbf{R}^{d},V\right)$ and $\mathbb{H}%
_{p}^{s}\left(\mathbf{R}^{d},V\right)$ consists of all $\mathcal{F}-$%
measurable random functions $f$ with values in $B_{pp}^{s}\left(\mathbf{R}%
^{d},V\right)$ and $H_{p}^{s}\left(\mathbf{R}^{d},V\right)$ with finite
norms 
\begin{equation*}
\left|f\right|_{\mathbb{B}_{pp}^{s}\left(\mathbf{R}^{d},V\right)}=\left\{ 
\mathbf{E}\left|f\right|_{B_{pp}^{s}\left(\mathbf{R}^{d},V\right)}^{p}\right%
\} ^{1/p} 
\end{equation*}

and 
\begin{equation*}
\left|f\right|_{\mathbb{H}_{p}^{s}\left(\mathbf{R}^{d},V\right)}=\left\{ 
\mathbf{E}\left|f\right|_{H_{p}^{s}\left(\mathbf{R}^{d},V\right)}^{p}\right%
\} ^{1/p}. 
\end{equation*}

The spaces $\mathbb{B}_{pp}^{s}\left(E,V\right)$ and $\mathbb{H}%
_{p}^{s}\left(E,V\right)$ consist of all $\mathcal{R}\left(\mathbb{F}\right)-
$measurable random functions with values in $B_{pp}^{s}\left(E,V\right)$ and 
$H_{p}^{s}\left(E,V\right)$ with finite norms 
\begin{equation*}
\left|f\right|_{\mathbb{B}_{pp}^{s}\left(E,V\right)}=\left\{ \mathbf{E}%
\left|f\right|_{B_{pp}^{s}\left(E,V\right)}^{p}\right\} ^{1/p} 
\end{equation*}

and 
\begin{equation*}
\left\vert f\right\vert _{\mathbb{H}_{p}^{s}\left(E,V\right)}=\left\{ 
\mathbf{E}\left\vert f\right\vert _{H_{p}^{s}\left(E,V\right)}^{p}\right\}
^{1/p}. 
\end{equation*}

\begin{remark}
\label{rem:embedding}(see \cite{MikPh2}) For every $\varepsilon>0$, $p>1$, $%
B_{pp}^{\nu;s+\varepsilon}\left(\mathbf{R}^{d}\right)$ is continuously
embedded into $H_{p}^{\nu;s}\left(\mathbf{R}^{d}\right)$; for $p\geq2,$ $%
H_{p}^{\nu;s}\left(\mathbf{R}^{d}\right)$ is continuously embedded into $%
B_{pp}^{\nu;s}\left(\mathbf{R}^{d}\right).$
\end{remark}

If $V_{r}=L_{r}\left(U,\mathcal{U},\Pi\right),r\geq1$, the space of $r-$%
integrable measurable functions on $U$, and $V_{0}=\mathbf{R}$, we write 
\begin{eqnarray*}
B_{r,pp}^{s}\left(A\right) & = & B_{pp}^{s}\left(A,V\right),\hspace{1em}%
\mathbb{B}_{r,pp}^{s}\left(A\right)=\mathbb{\mathbb{B}}_{pp}^{s}\left(A,V%
\right), \\
H_{r,p}^{s}\left(A\right) & = & H_{p}^{s}\left(A,V\right),\hspace{1em}%
\mathbb{H}_{r,p}^{s}\left(A\right)=\mathbb{H}_{p}^{s}\left(A,V\right),
\end{eqnarray*}
and 
\begin{equation*}
L_{r,p}\left(A\right)=H_{r,p}^{0}\left(A\right),\hspace{1em}\mathbb{L}%
_{r,p}\left(A\right)=\mathbb{H}_{r,p}^{0}\left(A\right), 
\end{equation*}
where $A=\mathbf{R}^{d}$ or $E$. For scalar functions we drop $V$ in the
notation of function spaces.

Let $U_{n}\in\mathcal{U},U_{n}\subseteq U_{n+1},n\geq1,\cup_{n}U_{n}=U$ and $%
\Pi\left(U_{n}\right)<\infty,n\geq1.$ We denote by $\mathbb{\tilde{C}}%
_{r.p}^{\infty}\left(E\right),1\leq p<\infty,$ the space of all $\mathcal{R}%
\left(\mathbb{F}\right)\otimes\mathcal{B}\left(\mathbf{R}^{d}\right)$
-measurable $V_{r}$ -valued random functions $\Phi$ on $E$ such that for
every multiindex $\gamma\in\mathbf{N}_{0}^{d}$, 
\begin{equation*}
\mathbf{E}\int_{0}^{T}\sup_{x\in\mathbf{R}^{d}}\left\vert
D^{\gamma}\Phi\left(t,x\right)\right\vert _{V_{r}}^{p}dt+\mathbf{E}\left[%
\left\vert D^{\gamma}\Phi\right\vert _{L_{p}\left(E;V_{r}\right)}^{p}\right]%
<\infty, 
\end{equation*}
and $\Phi=\Phi\raisebox{2pt}{\ensuremath{\chi}}_{U_{n}}$ for some $n$ if $%
r=2,p.$ Similarly we define the space $\mathbb{\tilde{C}}_{r.p}^{\infty}%
\left(\mathbf{R}^{d}\right)$ by replacing $\mathcal{R}\left(\mathbb{F}\right)
$ and $E$ by $\mathcal{F}$ and $\mathbf{R}^{d}$ respectively in the
definition of $\mathbb{\tilde{C}}_{r,p}^{\infty}\left(E\right)$.

\subsection{Main results}

We set for $\nu\in\mathfrak{A}^{\sigma}$ 
\begin{equation*}
\delta\left(r\right)=\delta_{\nu}\left(r\right)=\nu\left(x\in\mathbf{R}%
^{d}:\left|x\right|>r\right),r>0 
\end{equation*}

\begin{equation*}
w\left(r\right)=w_{\nu}\left(r\right)=\delta_{\nu}\left(r\right)^{-1},r>0. 
\end{equation*}

Our main assumption is that $w=w_{\nu}\left(r\right)$ is an O-RV function at
both infinity and at zero. That is $w$ is a positive, finite, measurable
function and 
\begin{equation*}
r_{1}\left(x\right)=\limsup_{\epsilon\rightarrow0}\frac{w\left(\epsilon
x\right)}{w\left(\epsilon\right)}<\infty,\hspace{1em}r_{2}\left(x\right)=%
\limsup_{\epsilon\rightarrow\infty}\frac{w\left(\epsilon x\right)}{%
w\left(\epsilon\right)}<\infty,x>0. 
\end{equation*}

By Theorem 2 in \cite{ALAR}, the following limit exist: 
\begin{equation}
p_{1}=p_{1}^{w}=\lim_{\epsilon\rightarrow0}\frac{\log
r_{1}\left(\epsilon\right)}{\log\epsilon}\leq
q_{1}=q_{1}^{w}=\lim_{\epsilon\rightarrow\infty}\frac{\log
r_{1}\left(\epsilon\right)}{\log\left(\epsilon\right)}  \label{1}
\end{equation}
and 
\begin{equation}
p_{2}=p_{2}^{w}=\lim_{\epsilon\rightarrow0}\frac{\log
r_{2}\left(\epsilon\right)}{\log\epsilon}\leq
q_{2}=q_{2}^{w}=\lim_{\epsilon\rightarrow\infty}\frac{\log
r_{2}\left(\epsilon\right)}{\log\left(\epsilon\right)}.  \label{2}
\end{equation}
Note that $p_{1}\leq\sigma\leq q_{1}$ (see \cite{MF}). We will assume
throughout this paper that $p_{1},p_{2},q_{1},q_{2}>0$. The numbers $%
p_{1},p_{2}$ are called lower indices and $q_{1},q_{2}$ are called upper
indices of O-RV\ function.

When the context is clear, for a function $f$ which is both O-RV at zero and
infinity we always denote its lower index at zero by $p_{1}$, upper index at
zero by $q_{1}$ , lower index at infinity by $p_{2}$ and upper index at
infinity by $q_{2}$. If we wish to be precise, we will write $%
p_{1}^{f},p_{2}^{f},q_{1}^{f},q_{2}^{f}$. For brevity, we sometimes say that 
$f$ is an O-RV function if it is both O-RV at zero and infinity.

The main result for (\ref{eq:mainEq}) is the following statement.

\begin{theorem}
\label{thm:main1}Let $p\in\left(1,\infty\right),s\in\mathbf{R.}$ Let $\nu\in%
\mathfrak{A}^{\sigma},$ and $w=w_{\nu}$ be continuous O-RV function at zero
and infinity with $p_{i},q_{i},i=1,2,$ defined in (\ref{1}), (\ref{2}).
Assume

\textbf{A. }for $i=1,2$ 
\begin{eqnarray*}
0 & < & p_{i}\leq q_{i}<1\text{ if }\sigma\in\left(0,1\right),0<p_{i}\leq1%
\leq q_{i}<2\text{ if }\sigma=1, \\
1 & < & p_{i}\leq q_{i}<2\text{ if $\sigma$}\in\left(1,2\right).
\end{eqnarray*}

\textbf{B. } 
\begin{equation*}
\inf_{R\in\left(0,\infty\right),\left\vert \hat{\xi}\right\vert
=1}\int_{\left\vert y\right\vert \leq1}\left\vert \hat{\xi}\cdot
y\right\vert ^{2}\tilde{\nu}_{R}\left(dy\right)>0, 
\end{equation*}
where $\tilde{\nu}_{R}\left(dy\right)=w\left(R\right)\nu\left(Rdy\right)$.
Then for each $f\in\mathbb{H}_{p}^{\nu;s}(E),g\in\mathbb{B}%
_{pp}^{\nu;s+1-1/p}\left(\mathbf{R}^{d}\right)$, $\Phi\in\mathbb{B}%
_{p,pp}^{\nu;s+1-1/p}\left(E\right)\cap\mathbb{H}_{2,p}^{\nu;s+1/2}\left(E%
\right)\hspace{1em}$ if $p\in\left[2,\infty\right)$ and $\Phi\in\mathbb{B}%
_{p,pp}^{\nu;s+1-1/p}\left(E\right)$ if\hspace{1em}$p\in\left(1,2\right)$,
there is a unique $u\in\mathbb{H}_{p}^{\nu;s+1}\left(E\right)$ solving (\ref%
{eq:mainEq}). Moreover, there is $C=C\left(d,p,\nu\right)$ such that for $%
p\in\left[2,\infty\right)$, 
\begin{eqnarray*}
& & \left\vert L^{\nu}u\right\vert _{\mathbb{H}_{p}^{\nu;s}\left(E\right)} \\
& \leq & C\left[\left\vert f\right\vert _{\mathbb{H}_{p}^{\nu;s}\left(E%
\right)}+\left\vert g\right\vert _{\mathbb{B}_{pp}^{\nu;s+1-1/p}\left(%
\mathbf{R}^{d}\right)}+\left\vert \Phi\right\vert _{\mathbb{B}%
_{p,pp}^{\nu;s+1-1/p}\left(E\right)}+\left\vert \Phi\right\vert _{\mathbb{H}%
_{2,p}^{\nu;s+1/2}\left(E\right)}\right], \\
& & \left\vert u\right\vert _{\mathbb{H}_{p}^{\nu;s}\left(E\right)} \\
& \leq & C[\rho_{\lambda}\left\vert f\right\vert _{\mathbb{H}%
_{p}^{\nu;s}\left(E\right)}+\rho_{\lambda}^{1/p}\left\vert g\right\vert _{%
\mathbb{H}_{p}^{\nu;s}\left(\mathbf{R}^{d}\right)}+\rho_{\lambda}^{1/p}\left%
\vert \Phi\right\vert _{\mathbb{H}_{p,p}^{\nu;s}\left(E\right)}+\rho_{%
\lambda}^{1/2}\left\vert \Phi\right\vert _{\mathbb{H}_{2,p}^{\nu;s}\left(E%
\right)}],
\end{eqnarray*}
and for $p\in\left(1,2\right)$, 
\begin{eqnarray*}
\left\vert L^{\nu}u\right\vert _{\mathbb{H}_{p}^{\nu;s}\left(E\right)} &
\leq & C\left[\left\vert f\right\vert _{\mathbb{H}_{p}^{\nu;s}\left(E%
\right)}+\left\vert g\right\vert _{\mathbb{B}_{pp}^{\nu;s+1-1/p}\left(%
\mathbf{R}^{d}\right)}+\left\vert \Phi\right\vert _{\mathbb{B}%
_{p,pp}^{\nu;s+1-1/p}\left(E\right)}\right], \\
\left\vert u\right\vert _{\mathbb{H}_{p}^{\nu;s}\left(E\right)} & \leq & C%
\left[\rho_{\lambda}\left\vert f\right\vert _{\mathbb{H}_{p}^{\nu;s}\left(E%
\right)}+\rho_{\lambda}^{1/p}\left\vert g\right\vert _{\mathbb{H}%
_{p}^{\nu;s}\left(\mathbf{R}^{d}\right)}+\rho_{\lambda}^{1/p}\left\vert
\Phi\right\vert _{\mathbb{H}_{p,p}^{\nu;s}\left(E\right)}\right],
\end{eqnarray*}
where $\rho_{\lambda}=\frac{1}{\lambda}\wedge T.$
\end{theorem}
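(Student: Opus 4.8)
The proof proceeds by the standard route for stochastic parabolic equations: first establish the a priori estimate for smooth data, then use it together with the method of continuity to obtain existence and uniqueness. The key analytic input is a Fourier‐multiplier / Calderón–Zygmund estimate for the symbol $\psi^{\nu}$, which is not smooth but whose associated family of balls (defined via the radial distribution $\delta_{\nu}$ and hence via the O-RV function $w_{\nu}$) satisfies the requisite doubling and Hörmander-type conditions thanks to Assumption \textbf{A} (which pins down the indices $p_i,q_i$ strictly inside the admissible range) and the nondegeneracy Assumption \textbf{B}.

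\textbf{Step 1: reduction and the deterministic estimate.} I would first treat the equation with $\Phi=0$, recovering the estimate of \cite{MikPh1,MikPh2}: for $u$ solving $du=(L^{\nu}u-\lambda u+f)dt$, $u(0)=g$, one has $|L^{\nu}u|_{\mathbb H_p^{s}(E)}\le C(|f|_{\mathbb H_p^{s}(E)}+|g|_{\mathbb B_{pp}^{s+1-1/p}(\mathbf R^d)})$ and the corresponding bound on $|u|_{\mathbb H_p^{s}(E)}$ with the gain factor $\rho_\lambda=\tfrac1\lambda\wedge T$. This rests on representing $u$ through the semigroup generated by $L^{\nu}$, i.e. via convolution with the transition density of $Z^{\nu}_t$, and on the probability density estimates for $Z^{\nu}$ quoted in Section 3; the O-RV property of $w_\nu$ is what makes those density bounds (and the attendant $L_p$-multiplier estimates) uniform in scale.

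\textbf{Step 2: the stochastic term.} For the part driven by $\int_U\Phi\,q(dt,dz)$, I would follow \cite{KK2}: write the solution as a stochastic convolution $u(t,x)=\int_0^t\!\int_U T_{t-r}[\Phi(r,\cdot,z)](x)\,q(dr,dz)$, where $T_t$ is the $L^{\nu}$-semigroup. Applying the $L_p$-theory of stochastic integrals against jump measures (constructed in the Appendix) together with the Burkholder–Davis–Gundy inequality reduces the estimate to a Calderón–Zygmund bound for the kernel $(t,r,x)\mapsto$ the space-time kernel of $T_{t-r}L^{\nu}$ acting on the ball family associated to $L^{\nu}$. Here one verifies the \emph{stochastic Hörmander condition} of Theorem 2.5 in \cite{KK2} for this family of balls: this is where the $\mathbb H_{2,p}^{s+1/2}$ (the $L_2(U,\Pi)$-valued square-function part, needed when $p\ge2$ via the $\ell_2$-valued CZ theory) and $\mathbb B_{p,pp}^{s+1-1/p}$ (the Besov part, from a Littlewood–Paley / real interpolation argument as in Remark \ref{rem:embedding}) norms enter. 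For $p\in(1,2)$ only the Besov part survives, because there the $\ell_2$-valued estimate is replaced by an $\ell_p$-embedding.

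\textbf{Step 3: existence, uniqueness, closing the argument.} With the a priori estimate in hand on a dense class (smooth, compactly supported $f,g,\Phi$, using the approximation results of Section 4 and the cutoffs $U_n\uparrow U$), uniqueness is immediate from linearity and the estimate. For existence I would use the method of continuity, interpolating between $L^{\nu}$ and (say) a fractional Laplacian of the same order $\sigma$ for which solvability is classical, the a priori bound being uniform along the homotopy; alternatively, solve explicitly by the two convolution formulas above and check via the density/approximation lemmas that the resulting $u$ lies in $\mathbb H_p^{s+1}(E)$ and solves \eqref{eq:mainEq}. Finally, passing from the estimate on $|L^{\nu}u|_{\mathbb H_p^{s}}$ to the stated norm $|u|_{\mathbb H_p^{s}}$ with the factor $\rho_\lambda$ is done by the resolvent identity $u=(\lambda-L^{\nu})^{-1}(\cdots)$ and the scaling $|(\lambda-L^{\nu})^{-1}|\lesssim\rho_\lambda$ on $L_p$.

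\textbf{Main obstacle.} The crux is Step 2: verifying the stochastic Hörmander condition for the ball family attached to the non-smooth symbol $\psi^{\nu}$. One must show that the balls $B(x,\delta_\nu(\cdot))$ are doubling and that the space-time kernel of the stochastic convolution operator has the cancellation needed for the Calderón–Zygmund estimate — and for this the only leverage is the two-sided control of $w_\nu$ by its indices $p_i,q_i$ from Assumption \textbf{A} together with the uniform ellipticity of the rescaled measures $\tilde\nu_R$ from Assumption \textbf{B}. Translating these two hypotheses into the quantitative kernel bounds, uniformly across all dyadic scales at zero and at infinity simultaneously, is the technically heaviest part of the whole argument.
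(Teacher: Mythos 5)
Your plan follows essentially the same route as the paper: explicit representation $u=T^{\lambda}_{t}g+R_{\lambda}f+\tilde R_{\lambda}\Phi$ for smooth data, a priori estimates via the probability-density bounds and the deterministic and stochastic H\"ormander conditions of \cite{MikPh1} and \cite{KK2}, then passage to general inputs by density; the explicit-convolution alternative you mention in Step 3 is what the paper actually uses (no method of continuity is needed), and uniqueness reduces to the deterministic case. The only point worth sharpening is the origin of the $\mathbb H_{2,p}^{\nu;s+1/2}$ norm: in the paper it comes not from generic $\ell_2$-valued Calder\'on--Zygmund theory but from factoring $L^{\nu}=L^{\nu;1/2}\circ L^{\nu;1/2}$ and putting one square root on the kernel (which then satisfies the square-integrated stochastic H\"ormander condition) and the other on $\Phi$, while the Besov norm $\mathbb B_{p,pp}^{\nu;s+1-1/p}$ arises from the $p$-th moment term of Kunita's inequality, estimated exactly as the initial-data term $T_{\lambda}g$.
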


Note that the assumption $\mathbf{A}$ depends on $\nu$ only through $%
w=w_{\nu}$.

The following examples are taken from \cite{MF}.

\begin{example}
\label{ex: ex1}According to \cite{DM} (pp. 70-74), any Levy measure $\nu\in%
\mathfrak{A}^{\sigma}$ can be disintegrated as 
\begin{equation*}
\nu\left(\Gamma\right)=-\int_{0}^{\infty}\int_{S_{d-1}}\chi_{\Gamma}\left(rz%
\right)\Pi\left(r,dz\right)d\delta_{\nu}\left(r\right),\Gamma\in\mathcal{B}%
\left(\mathbf{R}_{0}^{d}\right), 
\end{equation*}
where $\delta=\delta_{\nu}$, and $\Pi\left(r,dz\right),r>0$ is a measurable
family of measures on the unit sphere $S_{d-1}$ with $\Pi\left(r,S_{d-1}%
\right)=1,r>0.$ If $w_{\nu}=\delta^{-1}$ is continuous, O-RV at zero and
infinity, and satisfies assumption $\mathbf{A}$. Assume that $\left\vert
\left\{ s\in\left[0,1\right]:r_{i}\left(s\right)<1\right\} \right\vert
>0,i=1,2,$ and 
\begin{equation*}
\inf_{\left\vert \hat{\xi}\right\vert =1}\int_{S_{d-1}}\left\vert \hat{\xi}%
\cdot z\right\vert ^{2}\Pi\left(r,dz\right)\geq c_{0}>0,\hspace{1em}r>0, 
\end{equation*}
hold, then all assumptions of Theorem \ref{thm:main1} holds. (cf. Lemma \ref%
{cor:Example1} in the Appendix)
\end{example}

\begin{example}
Consider Levy measure in radial and angular coordinate in the form 
\begin{equation*}
\nu\left(B\right)=\int_{0}^{\infty}\int_{\left\vert z\right\vert
=1}1_{B}\left(rz\right)a\left(r,z\right)j\left(r\right)r^{d-1}S\left(dz%
\right)dr,B\in\mathcal{B}\left(\mathbf{R}_{0}^{d}\right), 
\end{equation*}
where $S\left(dz\right)$ is a finite measure on the unit sphere.

Assume

(i) There is $C>1,c>0,0<\delta_{1}\leq\delta_{2}<1$ such that 
\begin{equation*}
C^{-1}\phi\left(r^{-2}\right)\leq j\left(r\right)r^{d}\leq
C\phi\left(r^{-2}\right) 
\end{equation*}
and for all $0<r\leq R$, 
\begin{equation*}
c^{-1}\left(\frac{R}{r}\right)^{\delta_{1}}\leq\frac{\phi\left(R\right)}{%
\phi\left(r\right)}\leq c\left(\frac{R}{r}\right)^{\delta_{2}}. 
\end{equation*}

(ii) There is a function $\rho_{0}\left(z\right)$ defined on the unit sphere
such that $\rho_{0}\left(z\right)\leq a\left(r,z\right)\leq1,r>0,z\in S_{d-1}
$, and for all $\left\vert \hat{\xi}\right\vert =1$, 
\begin{equation*}
\int_{S^{d-1}}\left\vert \hat{\xi}\cdot z\right\vert
^{2}\rho_{0}\left(z\right)S\left(dz\right)\geq c>0. 
\end{equation*}
Under these assumptions it can be shown that $\mathbf{B}$ holds, and $w_{\nu}
$ is an O-RV function with $2\delta_{1}\leq p_{i}^{\nu}\leq
q_{i}^{\nu}\leq2\delta_{2},i=1,2.$
\end{example}

\clearpage

\section{Auxiliary results}

We start with

\subsection{Some estimates of O-RV functions}

We start with a simple but useful corollaries about functions that are O-RV
at both zero and infinity. For $\nu\in\mathfrak{A}^{\sigma},R>0$, we denote $%
\tilde{\nu}_{R}\left(dy\right)=w_{\nu}\left(R\right)\nu\left(Rdy\right)$.

First we note scaling property of functions $w:\left(0,\infty\right)%
\rightarrow\left(0,\infty\right)$ that are non-decreasing, O-RV at both zero
and infinity with strictly positive lower indices.

\begin{lemma}
\label{lem:powerEstratio}Let $\nu\in\mathfrak{A}^{\sigma},$ and $w=w_{\nu}$
be an O-RV function at zero and infinity with $p_{i},q_{i},i=1,2,$ defined
in (\ref{1}), (\ref{2}), and assumption \textbf{A }holds. Then for any $%
\alpha_{1}>q_{1}\vee q_{2}$ , $0<\alpha_{2}<p_{1}\wedge p_{2}$, there exist $%
c_{1}=c_{1}\left(\alpha_{1}\right),c_{2}=c_{2}\left(\alpha_{2}\right)>0$
such that 
\begin{equation*}
c_{1}\left(\frac{y}{x}\right)^{\alpha_{2}}\leq\frac{w\left(y\right)}{%
w\left(x\right)}\leq c_{2}\left(\frac{y}{x}\right)^{\alpha_{1}},0<x\leq
y<\infty. 
\end{equation*}
\end{lemma}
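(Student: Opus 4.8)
The plan is to exploit the characterization of the upper and lower indices of an O-RV function in terms of the asymptotic behavior of $r_1,r_2$, as recorded in (\ref{1}) and (\ref{2}). Recall that $w$ is O-RV at zero with upper index $q_1$, at infinity with upper index $q_2$, and likewise $p_1,p_2$ for the lower indices. Fix $\alpha_1>q_1\vee q_2$. By the definition of $q_1$ as the limit of $\log r_1(\epsilon)/\log\epsilon$ as $\epsilon\to\infty$, and of $q_2$ similarly, there is a threshold $\epsilon_0>1$ so that for $\epsilon\geq\epsilon_0$ one has $r_1(\epsilon)\leq\epsilon^{\alpha_1}$ and $r_2(\epsilon)\leq\epsilon^{\alpha_1}$. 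From the definition $r_i(x)=\limsup w(\epsilon x)/w(\epsilon)$ this is the seed of a submultiplicativity/Potter-type bound: the ratio $w(y)/w(x)$ is controlled, for $y/x$ large, by $(y/x)^{\alpha_1}$ up to a multiplicative constant, uniformly over the region where both $x$ and $y$ are bounded away from $0$ and $\infty$ in the appropriate regime. The standard mechanism (Potter's theorem in the O-RV setting, cf. \cite{bgt}, \cite{ALAR}) gives exactly this.

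The main step is to obtain the global bound $c_1(y/x)^{\alpha_2}\leq w(y)/w(x)\leq c_2(y/x)^{\alpha_1}$ for all $0<x\leq y<\infty$, not merely locally at zero or at infinity. First I would record the two local Potter estimates separately: one valid for $x,y$ both in a neighborhood of $0$ (using the indices $p_1,q_1$), and one valid for $x,y$ both near $\infty$ (using $p_2,q_2$). The assumption that $w$ is O-RV \emph{simultaneously} at zero and at infinity, together with $w$ being positive and finite (hence bounded above and below on any compact subinterval of $(0,\infty)$ by measurability — here I would invoke that an O-RV function is automatically locally bounded away from $0$ and $\infty$ on compacts, a consequence of the uniform convergence theorem for O-RV functions), lets me patch the two regimes: split an arbitrary pair $0<x\leq y$ according to whether $y\leq 1$, $x\geq 1$, or $x<1<y$, and in the last case insert the intermediate point $1$, writing $w(y)/w(x)=[w(y)/w(1)]\cdot[w(1)/w(x)]$ and applying the infinity-estimate to the first factor and the zero-estimate to the second. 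Since $(y/x)^{\alpha_1}=(y/1)^{\alpha_1}(1/x)^{\alpha_1}$ the exponents multiply correctly, and the constants combine.

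The role of assumption \textbf{A} is to guarantee $0<p_i\leq q_i$ with the $p_i$ strictly positive (so that the lower bound with exponent $\alpha_2\in(0,p_1\wedge p_2)$ is non-vacuous) and the $q_i$ finite below $2$ (so $\alpha_1$ can be chosen finite); without $p_i>0$ the left inequality could fail. Note also that one does not actually need $w$ non-decreasing for the statement as phrased — the lemma only asserts the two-sided power bound — although monotonicity of $w$ (which holds since $\delta_\nu$ is non-increasing, hence $w_\nu=\delta_\nu^{-1}$ is non-decreasing) makes the patching cleaner and is consistent with the remark preceding the lemma.

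The step I expect to be the main obstacle is making the two local Potter estimates genuinely \emph{uniform} down to the constants $c_1,c_2$ depending only on $\alpha_1,\alpha_2$ (and not on $x,y$): this requires the uniform convergence theorem for O-RV functions, i.e. that the convergence $\log r_i(\epsilon)/\log\epsilon\to q_i$ (resp. $\to p_i$) can be upgraded from a pointwise limit of the index functionals to a bound on $w(\epsilon x)/w(\epsilon)$ that holds for all $\epsilon$ past a threshold \emph{and} all $x\geq 1$ simultaneously. This is precisely where \cite{ALAR} Theorem~2 (cited just above in the text for the existence of the indices) and the O-RV Potter bounds in \cite{bgt} do the work, so I would cite those rather than reprove them; the remaining bookkeeping — choosing thresholds, handling the compact middle region by local boundedness, and combining constants across the three cases — is routine.
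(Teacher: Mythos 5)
Your proposal is correct and follows essentially the same route as the paper: local Potter-type bounds at zero and at infinity from the Karamata characterization of O-RV functions in \cite{ALAR}, then patching the middle regime by factoring $w(y)/w(x)$ through fixed intermediate points and absorbing the resulting finite ratio into the constant. The paper factors through two thresholds $\eta_1<\eta_2$ rather than through the point $1$, but this is only a cosmetic difference.
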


\begin{proof}
Due to similarity, we only show the right hand side of the inequality. By
Karamata characterization (see (1.7) of \cite{ALAR}) of O-RV functions,
there are $0<\eta_{1}<\eta_{2}$ such that the RHS inequality holds if either 
$x\vee y\leq\eta_{1}$ or $x\wedge y\geq\eta_{2}$. If $y\geq\eta_{2}$ and $%
x\leq\eta_{1},$ 
\begin{equation*}
\frac{w\left(y\right)}{w\left(x\right)}=\frac{w\left(y\right)}{%
w\left(\eta_{2}\right)}\frac{w\left(\eta_{2}\right)}{w\left(\eta_{1}\right)}%
\frac{w\left(\eta_{1}\right)}{w\left(x\right)}\leq c\left(\frac{y}{x}%
\right)^{\alpha_{1}}\frac{w\left(\eta_{2}\right)}{w\left(\eta_{1}\right)}, 
\end{equation*}
and similarly we consider other cases.
\end{proof}

\begin{remark}
It trivially follows from Lemma \ref{lem:powerEstratio} that $%
\lim_{r\rightarrow0}\omega\left(r\right)=0$ and $\lim_{r\rightarrow\infty}%
\omega\left(r\right)=\infty.$
\end{remark}

The following statement holds

\begin{lemma}
\label{al1}(Lemma 8 in \cite{MF}) Assume $w\left(r\right),r>0,$ is an O-RV
function at zero with lower and upper indices $p_{1},q_{1},$ that is, 
\begin{equation*}
r_{1}\left(x\right)=\overline{\lim_{\varepsilon\rightarrow0}}\frac{%
w\left(\epsilon x\right)}{w\left(\epsilon\right)}<\infty,x>0, 
\end{equation*}
and 
\begin{equation*}
p_{1}=\lim_{\epsilon\rightarrow0}\frac{\log r_{1}\left(\epsilon\right)}{%
\log\epsilon}\leq q_{1}=\lim_{\epsilon\rightarrow\infty}\frac{\log
r_{1}\left(\epsilon\right)}{\log\left(\epsilon\right)}. 
\end{equation*}
a) Let $\beta>0$ and $\tau>-\beta p_{1}.$ There is $C>0$ so that 
\begin{equation*}
\int_{0}^{x}t^{\tau}w\left(t\right)^{\beta}\frac{dt}{t}\leq
Cx^{\tau}w\left(x\right)^{\beta},x\in(0,1], 
\end{equation*}
and $\lim_{x\rightarrow0}x^{\tau}w\left(x\right)^{\beta}=0.$

\noindent b) Let $\beta>0$ and $\tau<-\beta q_{1}$. There is $C>0$ so that 
\begin{equation*}
\int_{x}^{1}t^{\tau}w\left(t\right)^{\beta}\frac{dt}{t}\leq
Cx^{\tau}w\left(x\right)^{\beta},x\in(0,1],\text{ } 
\end{equation*}
and $\lim_{x\rightarrow0}x^{\tau}w\left(x\right)^{\beta}=\infty.$

\noindent c) Let $\beta<0$ and $\tau>-\beta q_{1}$. There is $C>0$ so that 
\begin{equation*}
\int_{0}^{x}t^{\tau}w\left(t\right)^{\beta}\frac{dt}{t}\leq
Cx^{\tau}w\left(x\right)^{\beta},x\in(0,1], 
\end{equation*}
and $\lim_{x\rightarrow0}x^{\tau}w\left(x\right)^{\beta}=0.$

\noindent d) Let $\beta<0$ and $\tau<-\beta p_{1}$. There is $C>0$ so that 
\begin{equation*}
\int_{x}^{1}t^{\tau}w\left(t\right)^{\beta}\frac{dt}{t}%
=\int_{1}^{x^{-1}}t^{-\tau}w\left(\frac{1}{t}\right)^{\beta}\frac{dt}{t}\leq
Cx^{\tau}w\left(x\right)^{\beta},x\in(0,1], 
\end{equation*}
and $\lim_{x\rightarrow0}x^{\tau}w\left(x\right)^{\beta}=\infty.$
\end{lemma}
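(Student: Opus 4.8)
The plan is to reduce all four statements to the single key estimate of Lemma~\ref{lem:powerEstratio}, namely the two-sided power bound $c_{1}(y/x)^{\alpha_{2}}\le w(y)/w(x)\le c_{2}(y/x)^{\alpha_{1}}$ for $0<x\le y<\infty$, valid for any $\alpha_{1}>q_{1}\vee q_{2}$ and $0<\alpha_{2}<p_{1}\wedge p_{2}$. Since parts (a)--(d) are only local statements near $0$ (the integrals run over subintervals of $(0,1]$), I only need the version of that bound for $0<x\le y\le 1$, where it suffices to have $\alpha_{1}>q_{1}$ and $\alpha_{2}<p_{1}$ respectively; this matches exactly the hypotheses $\tau>-\beta p_{1}$, $\tau<-\beta q_{1}$, etc., once one chooses the auxiliary exponent on the correct side of $p_{1}$ or $q_{1}$.

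For part (a), fix $\beta>0$ and $\tau>-\beta p_{1}$; choose $\alpha_{2}$ with $\alpha_{2}<p_{1}$ and still $\tau>-\beta\alpha_{2}$ (possible since $\tau+\beta p_{1}>0$ and we can take $\alpha_{2}$ close to $p_{1}$). For $0<t\le x\le 1$ apply the lower bound $w(t)/w(x)\ge c_{1}(t/x)^{\alpha_{2}}$, equivalently $w(t)\le c_{1}^{-1}(t/x)^{-\alpha_{2}}w(x)$... wait—one wants an upper bound on $w(t)$ for $t\le x$, and since $w$ is increasing, $w(t)\le w(x)$; but to make the integral converge near $0$ one instead writes $w(t)^{\beta}\le c_{1}^{-\beta}(x/t)^{\beta\alpha_{2}}w(x)^{\beta}$, insert into $\int_{0}^{x}t^{\tau}w(t)^{\beta}\,dt/t$ and get $c_{1}^{-\beta}x^{\beta\alpha_{2}}w(x)^{\beta}\int_{0}^{x}t^{\tau-\beta\alpha_{2}-1}\,dt=C\,x^{\tau}w(x)^{\beta}$ because the exponent $\tau-\beta\alpha_{2}>0$ makes the integral finite and equal to a constant times $x^{\tau-\beta\alpha_{2}}$. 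The limit statement $\lim_{x\to0}x^{\tau}w(x)^{\beta}=0$ follows similarly: compare $x^{\tau}w(x)^{\beta}$ with $x^{\tau}\cdot c_{2}^{\beta}x^{\beta\alpha_{1}}w(1)^{\beta}$ using the upper bound applied to the pair $x\le 1$... hmm, actually the cleaner route is to pick any $\alpha_{2}'<p_{1}$ with $\tau+\beta\alpha_{2}'>0$ and use $w(x)\le C x^{\alpha_{2}'}$ near $0$ (from the lower bound $w(1)/w(x)\ge c_{1}(1/x)^{\alpha_{2}'}$), giving $x^{\tau}w(x)^{\beta}\le C x^{\tau+\beta\alpha_{2}'}\to0$. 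Parts (b), (c), (d) are handled by the symmetric choices: (b) uses $\alpha_{1}>q_{1}$ with $\tau+\beta\alpha_{1}<0$ so that $\int_{x}^{1}t^{\tau-\beta\alpha_{1}-1}\,dt\le C x^{\tau-\beta\alpha_{1}}$ diverges at the right rate; (c) is (a) with $\beta<0$, where the roles of the two power bounds swap because raising to a negative power reverses inequalities, so one needs $\tau>-\beta q_{1}$; (d) is (b) with $\beta<0$ and the substitution $t\mapsto 1/t$ already written in the statement, so it reduces to an integral $\int_{1}^{x^{-1}}t^{-\tau-1}w(1/t)^{\beta}\,dt$ and the behaviour of $w$ at infinity is irrelevant since $1/t\le 1$ throughout—one again only invokes the near-zero bound, now with $-\beta p_{1}$ on the correct side.

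In every case the skeleton is identical: (1) translate the index hypothesis on $\tau$ into a strict inequality $\tau+\beta\alpha\lessgtr0$ for an auxiliary exponent $\alpha$ squeezed strictly between the relevant index and $\tau/\beta$ in absolute value; (2) replace $w(t)^{\beta}$ inside the integral by $C(t/x)^{\pm\beta\alpha}w(x)^{\beta}$ via Lemma~\ref{lem:powerEstratio}; (3) pull out $x^{\tau}w(x)^{\beta}$ and evaluate the remaining pure-power integral, which converges precisely because of the sign arranged in step (1); (4) read off the limit claim from the same power comparison with the fixed endpoint $1$.

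The only mildly delicate point—and the one I would be most careful about—is the direction-reversal when $\beta<0$ in parts (c) and (d): raising the inequality $c_{1}(t/x)^{\alpha_{2}}\le w(t)/w(x)\le c_{2}(t/x)^{\alpha_{1}}$ to the power $\beta<0$ turns the lower index bound into an upper bound on $w(t)^{\beta}$ governed by $q_{1}$ rather than $p_{1}$, which is exactly why the hypotheses in (c), (d) feature $q_{1}$ and $p_{1}$ in the opposite pattern from (a), (b). Everything else is the routine estimate just described; no further input beyond Lemma~\ref{lem:powerEstratio} and monotonicity of the pure power $t\mapsto t^{\gamma}$ is needed.
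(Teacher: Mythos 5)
Your overall strategy is sound and is essentially the standard one: the paper itself gives no proof of Lemma \ref{al1} (it is quoted from \cite{MF}), and its proof of the companion Lemma \ref{al2} simply cites the integral theorems (Theorems 3 and 4) of \cite{ALAR}; you instead derive the integral estimates by hand from Potter-type power bounds, which is a legitimate, more self-contained route. Two remarks on the reduction. First, Lemma \ref{lem:powerEstratio} as stated assumes O-RV at \emph{both} zero and infinity plus assumption \textbf{A}, and its exponents are constrained by both pairs of indices, whereas Lemma \ref{al1} assumes only O-RV at zero; you correctly note that you only need the local bound on $0<x\le y\le1$ with $\alpha_{1}>q_{1}$, $\alpha_{2}<p_{1}$, but that is really Theorem 3 / the Karamata characterization (1.7) of \cite{ALAR} (used in the paper's proof of Lemma \ref{al3}), not Lemma \ref{lem:powerEstratio} itself. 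Your handling of the $\beta<0$ cases (the swap of $p_{1}$ and $q_{1}$ under a negative power) and of the limit claims via comparison with the endpoint $1$ is correct.

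There is, however, a concrete error in the displayed computation for part (a), echoed in (b). For $0<t\le x$ the lower-index bound gives $w(x)/w(t)\ge c_{1}(x/t)^{\alpha_{2}}$, hence
\begin{equation*}
w(t)^{\beta}\le c_{1}^{-\beta}\left(\tfrac{t}{x}\right)^{\beta\alpha_{2}}w(x)^{\beta},
\end{equation*}
so the integral becomes $c_{1}^{-\beta}x^{-\beta\alpha_{2}}w(x)^{\beta}\int_{0}^{x}t^{\tau+\beta\alpha_{2}-1}\,dt$, which converges precisely because your chosen $\alpha_{2}$ satisfies $\tau+\beta\alpha_{2}>0$ and yields $Cx^{\tau}w(x)^{\beta}$. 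You instead wrote $w(t)^{\beta}\le c_{1}^{-\beta}(x/t)^{\beta\alpha_{2}}w(x)^{\beta}$ and required $\tau-\beta\alpha_{2}>0$: the former inequality is true but useless (it follows from monotonicity alone), and the convergence condition $\tau-\beta\alpha_{2}>0$ is \emph{not} implied by the hypothesis $\tau>-\beta p_{1}$ (take $\tau=0$), so the step fails as written. The same $(t/x)$ versus $(x/t)$ flip appears in the exponent $\tau-\beta\alpha_{1}-1$ in part (b), although there you state the correct condition $\tau+\beta\alpha_{1}<0$. The fix is the one-character change indicated above, and your own concluding ``skeleton'' paragraph describes the corrected version; but the proof should be written with the exponents matching the hypotheses.
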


Similar statement holds for O-RV functions at infinity.

\begin{lemma}
\label{al2} Assume $w\left(r\right),r>0,$ is an O-RV function at infinity
with lower and upper indices $p_{2},q_{2},$ that is, 
\begin{equation*}
r_{2}\left(x\right)=\overline{\lim_{\varepsilon\rightarrow\infty}}\frac{%
w\left(\epsilon x\right)}{w\left(\epsilon\right)}<\infty,x>0, 
\end{equation*}
and 
\begin{equation*}
p_{2}=\lim_{\epsilon\rightarrow0}\frac{\log r_{2}\left(\epsilon\right)}{%
\log\epsilon}\leq q_{2}=\lim_{\epsilon\rightarrow\infty}\frac{\log
r_{2}\left(\epsilon\right)}{\log\left(\epsilon\right)}. 
\end{equation*}
a) Let $\beta>0$ and $-\tau>\beta q_{2}.$ There is $C>0$ so that 
\begin{equation*}
\int_{x}^{\infty}t^{\tau}w\left(t\right)^{\beta}\frac{dt}{t}\leq
Cx^{\tau}w\left(x\right)^{\beta},x\in\lbrack1,\infty), 
\end{equation*}
and $\lim_{x\rightarrow\infty}x^{\tau}w\left(x\right)^{\beta}=0.$

\noindent b) Let $\beta>0$ and $-\tau<\beta p_{2}$. There is $C>0$ so that 
\begin{equation*}
\int_{1}^{x}t^{\tau}w\left(t\right)^{\beta}\frac{dt}{t}\leq
Cx^{\tau}w\left(x\right)^{\beta},x\in\lbrack1,\infty),\text{ } 
\end{equation*}
and $\lim_{x\rightarrow\infty}x^{\tau}w\left(x\right)^{\beta}=\infty.$

\noindent c) Let $\beta<0$ and $\tau<-\beta p_{2}$. There is $C>0$ so that 
\begin{equation*}
\int_{x}^{\infty}t^{\tau}w\left(t\right)^{\beta}\frac{dt}{t}\leq
Cx^{\tau}w\left(x\right)^{\beta},x\in\lbrack1,\infty), 
\end{equation*}
and $\lim_{x\rightarrow\infty}x^{\tau}w\left(x\right)^{\beta}=0.$

\noindent d) Let $\beta<0$ and $\tau>-\beta q_{2}$. There is $C>0$ so that 
\begin{equation*}
\int_{1}^{x}t^{\tau}w\left(t\right)^{\beta}\frac{dt}{t}\leq
Cx^{\tau}w\left(x\right)^{\beta},x\in\lbrack1,\infty), 
\end{equation*}
and $\lim_{x\rightarrow\infty}x^{\tau}w\left(x\right)^{\beta}=\infty.$
\end{lemma}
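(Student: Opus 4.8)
The plan is to reduce everything to the zero–case already established in Lemma \ref{al1} by the substitution $t\mapsto 1/t$. Define $\tilde w(s)=w(1/s)$ for $s>0$. First I would check that $\tilde w$ is O-RV at zero with indices governed by the infinity–indices of $w$: indeed
\[
\limsup_{\varepsilon\to 0}\frac{\tilde w(\varepsilon x)}{\tilde w(\varepsilon)}
=\limsup_{\varepsilon\to 0}\frac{w(1/(\varepsilon x))}{w(1/\varepsilon)}
=\limsup_{\eta\to\infty}\frac{w(\eta/x)}{w(\eta)}=r_2(1/x),
\]
so $\tilde r_1(x)=r_2(1/x)$, and taking logarithms one gets $p_1^{\tilde w}=-q_2^{w}$ and $q_1^{\tilde w}=-p_2^{w}$ (the order reversal comes from the minus sign, and one must track that $r_2(1/x)$ as $\varepsilon\to 0$ corresponds to behaviour of $r_2$ near $\infty$, etc.). This index bookkeeping is the one place where a sign slip is easy, so I would do it carefully and explicitly.

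Next I would translate each integral. For part (a), with $\beta>0$ and $-\tau>\beta q_2$, substitute $t=1/u$ in $\int_x^\infty t^{\tau}w(t)^{\beta}\frac{dt}{t}$, which $t\mapsto 1/t$ sends to $\int_0^{1/x} u^{-\tau}\tilde w(u)^{\beta}\frac{du}{u}$. Writing $\tau'=-\tau$ and $y=1/x\in(0,1]$, the hypothesis $-\tau>\beta q_2$ reads $\tau'>\beta q_2 = -\beta p_1^{\tilde w}$, which is exactly the hypothesis of Lemma \ref{al1}(a) applied to $\tilde w$. That lemma then yields $\int_0^{y} u^{\tau'}\tilde w(u)^{\beta}\frac{du}{u}\le C y^{\tau'}\tilde w(y)^{\beta}$, and undoing the substitution gives $\int_x^\infty t^\tau w(t)^\beta\frac{dt}{t}\le C x^\tau w(x)^\beta$ for $x\in[1,\infty)$, plus the limit $\lim_{x\to\infty}x^\tau w(x)^\beta=\lim_{y\to 0}y^{\tau'}\tilde w(y)^\beta=0$. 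Parts (b), (c), (d) are handled identically: (b) $\leftrightarrow$ Lemma \ref{al1}(b), (c) $\leftrightarrow$ Lemma \ref{al1}(c), (d) $\leftrightarrow$ Lemma \ref{al1}(d), in each case with the hypothesis on $(\beta,\tau)$ for $w$ at infinity matching the corresponding hypothesis for $\tilde w$ at zero under $p_1^{\tilde w}=-q_2^w$, $q_1^{\tilde w}=-p_2^w$; one also uses $\tilde w$ inherits continuity and positivity from $w$, and that Lemma \ref{lem:powerEstratio} (which presupposes assumption \textbf{A}) is what guarantees $\tilde w$ is genuinely O-RV at zero with strictly positive lower index so that Lemma \ref{al1} applies.

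The only mildly delicate point — and the step I expect to be the real (if modest) obstacle — is the translation of hypotheses through the index relabelling: one must be sure that, e.g., "$-\tau < \beta p_2^{w}$'' becomes precisely "$\tau' < -\beta q_1^{\tilde w}$'' (the condition in Lemma \ref{al1}(b)) and not an off-by-a-case version, and that the two monotone substitutions $t=1/u$ in the two displayed forms of the part-(d) integral are consistent. Everything else is a routine change of variables, so I would present part (a) in full and remark that (b)–(d) follow by the same argument with the indicated correspondence.
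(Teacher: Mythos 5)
Your argument is correct, but it is not the route the paper takes. The paper proves Lemma \ref{al2} directly at infinity: it first checks that $w(t)^{\beta}$, $t\ge 1$, is itself an O-RV function at infinity with indices $\beta q_{2}\le \beta p_{2}$ (this is done for $\beta<0$ in part (d); the other parts are analogous), and then invokes Theorems 3 and 4 of \cite{ALAR} — the Karamata-type integral estimates for O-RV functions \emph{at infinity} — treating only part (d) in detail. You instead reduce to the zero case, Lemma \ref{al1}, via the inversion $\tilde w(s)=w(1/s)$. Your index bookkeeping $p_{1}^{\tilde w}=-q_{2}^{w}$, $q_{1}^{\tilde w}=-p_{2}^{w}$ is right, and all four hypothesis translations check out (for instance, in (a) the condition $-\tau>\beta q_{2}$ becomes $\tau'>\beta q_{2}=-\beta p_{1}^{\tilde w}$ with $\tau'=-\tau$, which is exactly the hypothesis of Lemma \ref{al1}(a); similarly $-\tau<\beta p_{2}$ becomes $\tau'<-\beta q_{1}^{\tilde w}$ for (b), and likewise for (c), (d)). This inversion is very much in the spirit of the identity already displayed inside Lemma \ref{al1}(d), and it buys you independence from the infinity versions of the \cite{ALAR} theorems at the cost of the relabelling. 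One remark of yours should be deleted: $\tilde w$ does \emph{not} have strictly positive lower index at zero — its indices there are $-q_{2}<0$ and $-p_{2}<0$ — and neither assumption \textbf{A} nor Lemma \ref{lem:powerEstratio} plays any role here; fortunately Lemma \ref{al1} as stated imposes no sign condition on $p_{1},q_{1}$, so this misstatement does not affect the validity of your reduction.
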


\begin{proof}
The claims follow easily by Theorems 3, 4 in \cite{ALAR}. Because of the
similarities, we will prove d) only. Let $\beta<0$ and $\tau>-\beta q_{2}$.
Then 
\begin{equation*}
\overline{\lim_{t\rightarrow\infty}}\frac{w\left(\varepsilon t\right)^{\beta}%
}{w\left(t\right)^{\beta}}=\overline{\lim_{t\rightarrow\infty}}\frac{%
w\left(\varepsilon^{-1}\varepsilon t\right)^{-\beta}}{w\left(\varepsilon
t\right)^{-\beta}}=r_{2}\left(\varepsilon^{-1}\right)^{-\beta}<\infty,%
\varepsilon>0. 
\end{equation*}
Hence $w\left(t\right)^{\beta},t\geq1$, is an O-RV function at infinity with 
\begin{equation*}
p=\lim_{\varepsilon\rightarrow0}\frac{\log
r_{2}\left(\varepsilon^{-1}\right)^{-\beta}}{\log\varepsilon}=\beta
q_{2}\leq\beta p_{2}=-\lim_{\varepsilon\rightarrow0}\frac{\log
r_{2}\left(\varepsilon\right)^{-\beta}}{\log\varepsilon}=q. 
\end{equation*}
Then by Theorems 3 and 4 in \cite{ALAR}, for $\tau>-\beta q_{2},$ 
\begin{equation*}
\int_{1}^{x}t^{\tau}w\left(t\right)^{\beta}\frac{dt}{t}\leq
Cx^{\tau}w\left(x\right)^{\beta},x\geq1, 
\end{equation*}
and $\lim_{x\rightarrow\infty}x^{\tau}w\left(x\right)^{\beta}=\infty$.
\end{proof}

\begin{lemma}
\label{lem:alpha12}Let $\nu\in\mathfrak{A}^{\sigma}$,$w=w_{\nu}$ be an O-RV
function at zero and infinity with indices $p_{1},q_{1},p_{2},q_{2}$ defined
in (\ref{1}), (\ref{2}). Then for any $\alpha_{1}>q_{1}\vee q_{2}$ and $%
0<\alpha_{2}<p_{1}\wedge p_{2}$, there is $C=C\left(w,\alpha_{1},\alpha_{2}%
\right)>0$ such that 
\begin{equation*}
\int_{\left\vert y\right\vert \leq1}\left\vert y\right\vert ^{\alpha_{1}}%
\tilde{\nu}_{R}\left(dy\right)+\int_{\left\vert y\right\vert >1}\left\vert
y\right\vert ^{\alpha_{2}}\tilde{\nu}_{R}\left(dy\right)\leq C,R>0. 
\end{equation*}
\end{lemma}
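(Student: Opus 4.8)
The plan is to reduce the two integrals against the rescaled measure $\tilde{\nu}_R(dy) = w(R)\nu(R\,dy)$ to integrals against the radial distribution function of $\nu$, and then to apply Lemma \ref{al1} and Lemma \ref{al2} to control these. First I would record the change of variables identity: for a nonnegative radial weight $\rho$,
\begin{equation*}
\int_{\mathbf{R}_0^d}\rho(|y|)\,\tilde{\nu}_R(dy) = w(R)\int_{\mathbf{R}_0^d}\rho(|y|/R)\,\nu(dy) = -w(R)\int_0^\infty \rho(r/R)\,d\delta_\nu(r),
\end{equation*}
using that $\delta_\nu(r) = \nu(|x|>r)$ is nonincreasing. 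Splitting the outer region $|y|\le 1$ (i.e. $r\le R$) and $|y|>1$ (i.e. $r>R$) and substituting $t = r/R$, the first integral becomes $w(R)\int_0^1 t^{\alpha_1}\,d(-\delta_\nu(Rt))$ and the second $w(R)\int_1^\infty t^{\alpha_2}\,d(-\delta_\nu(Rt))$. Integrating by parts (the boundary terms vanish or are harmless because $\alpha_1 > q_1\vee q_2 > 0$ forces $t^{\alpha_1}\delta_\nu(Rt)\to 0$ as $t\to 0$ by Lemma \ref{al1}, and $\alpha_2 < p_1\wedge p_2$ with $\delta_\nu = w_\nu^{-1}$ forces $t^{\alpha_2}\delta_\nu(Rt)\to 0$ as $t\to\infty$), one gets
\begin{equation*}
\int_{|y|\le 1}|y|^{\alpha_1}\tilde{\nu}_R(dy) \le \alpha_1\, w(R)\int_0^1 t^{\alpha_1-1}\delta_\nu(Rt)\,dt = \alpha_1\int_0^1 t^{\alpha_1-1}\frac{w(R)}{w(Rt)}\,dt,
\end{equation*}
and analogously $\int_{|y|>1}|y|^{\alpha_2}\tilde{\nu}_R(dy) \le \alpha_2\int_1^\infty t^{\alpha_2-1}\frac{w(R)}{w(Rt)}\,dt$.

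Next I would bound $w(R)/w(Rt)$ uniformly in $R$. By Lemma \ref{lem:powerEstratio}, since $\alpha_1 > q_1\vee q_2$ and $0<\alpha_2 < p_1\wedge p_2$, we may pick auxiliary exponents $q_1\vee q_2 < \beta_1 < \alpha_1$ and $\alpha_2 < \beta_2 < p_1\wedge p_2$ and obtain constants $c,c'>0$ with
\begin{equation*}
\frac{w(R)}{w(Rt)} \le c\, t^{-\beta_1}\ \ (0<t\le 1),\qquad \frac{w(R)}{w(Rt)} \le c'\, t^{-\beta_2}\ \ (t\ge 1),
\end{equation*}
both uniformly in $R>0$ (the lemma gives $w(y)/w(x)\le c_2(y/x)^{\beta_1}$ for $x\le y$, applied once with $x=Rt,y=R$ and once with $x=R,y=Rt$). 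Substituting these bounds, the first integral is dominated by $c\alpha_1\int_0^1 t^{\alpha_1-\beta_1-1}\,dt < \infty$ since $\alpha_1 - \beta_1 > 0$, and the second by $c'\alpha_2\int_1^\infty t^{\alpha_2-\beta_2-1}\,dt < \infty$ since $\alpha_2 - \beta_2 < 0$. Both bounds are independent of $R$, which is exactly the claim with $C = c\alpha_1\int_0^1 t^{\alpha_1-\beta_1-1}dt + c'\alpha_2\int_1^\infty t^{\alpha_2-\beta_2-1}dt$.

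The main obstacle is the bookkeeping around the integration by parts and the vanishing of boundary terms: one must be careful that $\delta_\nu$ need not be absolutely continuous, so the integration by parts should be stated in Stieltjes form, and one must justify $\lim_{t\to 0} t^{\alpha_1}\delta_\nu(Rt) = 0$ and $\lim_{t\to\infty} t^{\alpha_2}\delta_\nu(Rt) = 0$ uniformly enough that no boundary contribution survives — these follow from the limit statements in Lemma \ref{al1}(a) and (the $\beta<0$ cases translated to $\delta_\nu = w_\nu^{-1}$ via) Lemma \ref{al2}, or directly from the two-sided power bounds of Lemma \ref{lem:powerEstratio}. An alternative that sidesteps integration by parts entirely is to bound $|y|^{\alpha_1}$ on $|y|\le 1$ by a telescoping sum over dyadic-type annuli $N^{-k-1}<|y|\le N^{-k}$ and use $\tilde{\nu}_R$ of an annulus $\le w(R)/w(RN^{-k-1})$ together with the power bound; I would use whichever is cleaner, but the Stieltjes integration-by-parts route is the most transparent.
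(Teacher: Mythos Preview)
Your proposal is correct and follows essentially the same route as the paper: both reduce the two integrals, via the layer-cake/Stieltjes integration-by-parts identity with $\delta_\nu$, to expressions of the form $\alpha_i\, w(R)\int t^{\alpha_i}w(Rt)^{-1}\,\frac{dt}{t}$ (the paper writes these as $I_1,I_2$ after substituting $s=Rt$), and then bound them using the O-RV scaling of $w$. The only stylistic difference is that the paper packages the final step through Lemmas~\ref{al1} and~\ref{al2} (splitting the range at $1$ and applying the $\beta=-1$ cases), whereas you apply Lemma~\ref{lem:powerEstratio} directly to get the pointwise power bound $w(R)/w(Rt)\le C\,t^{-\beta_i}$ and integrate; since Lemmas~\ref{al1}--\ref{al2} are themselves consequences of the same Karamata-type power bounds, the two arguments are equivalent, yours being marginally more self-contained.
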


\begin{proof}
First,

\begin{align*}
\int_{\left\vert y\right\vert \leq1}\left\vert y\right\vert ^{\alpha_{1}}%
\tilde{\nu}_{r}\left(dy\right) &
=\delta\left(r\right)^{-1}r^{-\alpha_{1}}\int_{\left\vert y\right\vert \leq
r}\left\vert y\right\vert ^{\alpha_{1}}\nu\left(dy\right) \\
&
=\delta\left(r\right)^{-1}r^{-\alpha_{1}}\alpha_{1}\int_{0}^{r}s^{\alpha_{1}}
\left[\delta\left(s\right)-\delta\left(r\right)\right]\frac{ds}{s} \\
&
=\delta\left(r\right)^{-1}r^{-\alpha_{1}}\alpha_{1}\int_{0}^{r}\delta\left(s%
\right)s^{\alpha_{1}-1}ds-1,
\end{align*}
and similarly,

\begin{eqnarray*}
\int_{\left\vert y\right\vert >1}\left\vert y\right\vert ^{\alpha_{2}}\tilde{%
\nu}_{r}\left(dy\right) & = &
w\left(r\right)r^{-\alpha_{2}}\alpha_{2}\int_{0}^{\infty}\delta\left(s\vee
r\right)s^{\alpha_{2}}\frac{ds}{s} \\
& = &
1+w\left(r\right)r^{-\alpha_{2}}\alpha_{2}\int_{r}^{\infty}\delta\left(s%
\right)s^{\alpha_{2}}\frac{ds}{s}.
\end{eqnarray*}
Thus 
\begin{eqnarray*}
& & \int_{\left\vert y\right\vert \leq1}\left\vert y\right\vert ^{\alpha_{1}}%
\tilde{\pi}_{r}\left(dy\right)+\int_{\left\vert y\right\vert >1}\left\vert
y\right\vert ^{\alpha_{2}}\tilde{\pi}_{r}\left(dy\right) \\
& = &
w\left(r\right)r^{-\alpha_{1}}\alpha_{1}\int_{0}^{r}w\left(s\right)^{-1}s^{%
\alpha_{1}}\frac{ds}{s}+w\left(r\right)r^{-\alpha_{2}}\alpha_{2}\int_{r}^{%
\infty}w\left(s\right)^{-1}s^{\alpha_{2}}\frac{ds}{s} \\
& = & I_{1}+I_{2}
\end{eqnarray*}
By Lemma \ref{al2}, there is $C$ so that 
\begin{equation*}
I_{2}=w\left(r\right)r^{-\alpha_{2}}\alpha_{2}\int_{r}^{\infty}w\left(s%
\right)^{-1}s^{\alpha_{2}}\frac{ds}{s}\leq C,r\geq1. 
\end{equation*}
By Lemma \ref{al1}, there is $C$ so that 
\begin{equation*}
w\left(r\right)r^{-\alpha_{2}}\alpha_{2}\int_{r}^{1}w\left(s\right)^{-1}s^{%
\alpha_{2}}\frac{ds}{s}\leq C,r\in\left(0,1\right). 
\end{equation*}
Hence there is $C$ so that $I_{2}\leq C$ for all $r>0.$ Similarly, using
Lemmas \ref{al1}-\ref{al2}, we estimate $I_{1}.$
\end{proof}

\begin{remark}
\label{rem:alpha_choice}In particular, if $w=w_{\nu}$ satisfies assumption 
\textbf{A}, then we may choose in Lemma \ref{lem:alpha12} $%
\alpha_{1},\alpha_{2}$ so that

(i) $\alpha_{1,}\alpha_{2}\in\left(0,1\right)$ if $\sigma\in\left(0,1\right)$%
; (ii) $\alpha_{1},\alpha_{2}\in\left(1,2\right)$ if $\sigma\in\left(1,2%
\right)$; (iii) $\alpha_{1}\in(1,2]$ and $\alpha_{2}\in\lbrack0,1)$ if $%
\sigma=1.$
\end{remark}

\begin{lemma}
\label{al3}Let $\nu\in\mathfrak{A}^{\sigma}$,$w=w_{\nu}\left(r\right),r>0,$
be a continuous O-RV function at zero and infinity with indices $%
p_{1},q_{1},p_{2},q_{2}$ defined in (\ref{1}), (\ref{2}), and $p_{1},p_{2}>0.
$ Let 
\begin{equation*}
a\left(r\right)=\inf\left\{ t>0:w\left(t\right)\geq r\right\} ,r>0. 
\end{equation*}
Then

(i) $w\left(a\left(t\right)\right)=t,t>0,$ and 
\begin{eqnarray*}
a\left(w\left(t\right)-\right) & \leq & t\leq
a\left(w\left(t\right)+\right),t>0.
\end{eqnarray*}

(ii) $a$ is O-RV at zero and infinity with lower indices $p,\bar{p}$ and
upper indices $q,\bar{q}$ respectively so that 
\begin{eqnarray*}
\frac{1}{q_{1}} & \leq & p\leq q\leq\frac{1}{p_{1}}, \\
\frac{1}{q_{2}} & \leq & \bar{p}\leq\bar{q}\leq\frac{1}{p_{2}}.
\end{eqnarray*}
\end{lemma}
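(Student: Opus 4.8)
The plan is to treat the two parts separately, starting with the elementary properties of the generalized inverse $a$ in (i), then deducing the O-RV character and the index bounds in (ii) from Lemma \ref{lem:powerEstratio}.

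For (i), I would first record the standard facts about the left-continuous generalized inverse of the non-decreasing function $w$. Since $w$ is continuous with $\lim_{r\to 0}w(r)=0$ and $\lim_{r\to\infty}w(r)=\infty$ (the latter two facts from the Remark following Lemma \ref{lem:powerEstratio}), $w$ is surjective onto $(0,\infty)$, so for every $t>0$ the set $\{s>0:w(s)\ge t\}$ is a nonempty interval of the form $[a(t),\infty)$ or $(a(t),\infty)$; continuity of $w$ forces $w(a(t))=t$ by the intermediate value theorem applied at the infimum. The two-sided estimate $a(w(t)-)\le t\le a(w(t)+)$ is then the usual duality between $w$ and $a$: from $w(s)\ge w(t)$ for $s>t$ (monotonicity) one gets $a(w(t))\le t$... more carefully, one uses that $a$ is non-decreasing and left-continuous, and that $w(t)\le r \iff t\le a(r+)$ while $w(t)<r \iff t< a(r)$ fails in general — so I would phrase it as: $a(r)\le t \iff r\le w(t)$ is not quite right when $w$ has flat pieces, hence the one-sided versions. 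This is routine once the definitions are unwound; I would keep it brief.

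For (ii), the key input is Lemma \ref{lem:powerEstratio}, which under assumption \textbf{A} gives constants so that $c_1 (y/x)^{\alpha_2}\le w(y)/w(x)\le c_2(y/x)^{\alpha_1}$ for $0<x\le y$, valid for any $\alpha_1>q_1\vee q_2$ and $0<\alpha_2<p_1\wedge p_2$. I would invert these inequalities. Fix $0<t\le t'$ and set $x=a(t)$, $y=a(t')$, so $x\le y$ (as $a$ is non-decreasing) and $w(x)=t$, $w(y)=t'$ by part (i). Then $c_1(y/x)^{\alpha_2}\le t'/t\le c_2(y/x)^{\alpha_1}$, which rearranges to
\begin{equation*}
(c_2^{-1})^{1/\alpha_1}\Big(\frac{t'}{t}\Big)^{1/\alpha_1}\le \frac{a(t')}{a(t)}\le (c_1^{-1})^{1/\alpha_2}\Big(\frac{t'}{t}\Big)^{1/\alpha_2}.
\end{equation*}
From this two-sided power bound on $a(t')/a(t)$ one reads off that $a$ is O-RV at zero and at infinity: $r_1^a(\lambda)=\limsup_{\epsilon\to 0}a(\epsilon\lambda)/a(\epsilon)$ is finite (bounded by $C\lambda^{1/\alpha_2}$ for $\lambda\ge 1$ and by $C\lambda^{1/\alpha_1}$ for $\lambda\le1$, with the analogous statement at infinity). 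Then, using the definition of the indices as in (\ref{1})–(\ref{2}) — $p^a=\lim_{\epsilon\to0}\log r_1^a(\epsilon)/\log\epsilon$, etc. — the bound $C^{-1}\lambda^{1/\alpha_1}\le a(\epsilon\lambda)/a(\epsilon)$ combined with $a(\epsilon\lambda)/a(\epsilon)\le C\lambda^{1/\alpha_2}$ gives $1/\alpha_1 \le p^a \le q^a \le 1/\alpha_2$ for the indices at zero. Letting $\alpha_1\downarrow q_1\vee q_2 \ge q_1$ and $\alpha_2\uparrow p_1\wedge p_2\le p_1$ yields $1/q_1\le p\le q\le 1/p_1$; the infinity case is identical with $q_2,p_2$.

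I do not expect a serious obstacle here — the statement is essentially a bookkeeping exercise translating the power-type two-sided bounds on $w$ into power-type two-sided bounds on its inverse and then into index inequalities. The one place that needs a little care is the one-sided inverse identity in (i): because $w$ may have flat stretches, the clean equivalence "$a(r)\le t \iff r\le w(t)$" fails, which is exactly why the lemma states $a(w(t)-)\le t\le a(w(t)+)$ rather than $a(w(t))=t$; I would make sure the monotonicity/left-continuity argument for that double inequality is spelled out correctly, and also verify that assumption \textbf{A} (which forces $\alpha_1,\alpha_2$ to be chosen in the ranges recorded in Remark \ref{rem:alpha_choice}) is what legitimizes applying Lemma \ref{lem:powerEstratio} in the first place. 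Everything else is a limit argument in the definition of the indices.
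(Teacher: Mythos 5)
Your part (i) is at the same level of detail as the paper's (which simply says it follows from the definitions), and your overall strategy for (ii) — invert the power-type bounds on $w$ via $w(a(t))=t$ and read off the indices of $a$ — is the same as the paper's. However, there is a genuine gap in the final step of (ii). You invoke the \emph{global} two-sided estimate of Lemma \ref{lem:powerEstratio}, valid for all $0<x\le y<\infty$, which necessarily uses exponents $\alpha_1>q_1\vee q_2$ and $\alpha_2<p_1\wedge p_2$ because it must control $w$ simultaneously near $0$ and near $\infty$. Passing to the limit $\alpha_1\downarrow q_1\vee q_2$, $\alpha_2\uparrow p_1\wedge p_2$ then gives only
\begin{equation*}
\frac{1}{q_1\vee q_2}\;\le\;p\;\le\;q\;\le\;\frac{1}{p_1\wedge p_2},
\end{equation*}
and since $1/(q_1\vee q_2)\le 1/q_1$ and $1/(p_1\wedge p_2)\ge 1/p_1$, these are strictly \emph{weaker} than the claimed $1/q_1\le p\le q\le 1/p_1$ (and similarly at infinity). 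Your sentence ``Letting $\alpha_1\downarrow q_1\vee q_2\ge q_1$ \dots yields $1/q_1\le p\le q\le 1/p_1$'' has the inequalities pointing the wrong way.

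The paper avoids this by working \emph{locally}: by Theorem 3 of \cite{ALAR} (equivalently the Karamata characterization), for any $\alpha<p_1$ one has $w(x)/x^{\alpha}\le Cw(y)/y^{\alpha}$ only for $0<x\le y\le 1$, and this one-sided bound near zero involves the index $p_1$ alone; inverting it gives $a(y)/y^{1/\alpha}\le Ca(x)/x^{1/\alpha}$ near zero, hence $q\le 1/\alpha$ for every $\alpha<p_1$, i.e.\ $q\le 1/p_1$, with the three remaining bounds obtained analogously from the local estimates at zero and at infinity separately. Your argument is repaired by replacing the global Lemma \ref{lem:powerEstratio} with these local versions (noting that $a(t)\to0$ as $t\to0$ and $a(t)\to\infty$ as $t\to\infty$, so that the restriction of $t,t'$ to a neighbourhood of $0$ or $\infty$ keeps $a(t),a(t')$ in the range where the local bound applies). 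A secondary point: Lemma \ref{lem:powerEstratio} is stated under assumption \textbf{A}, which Lemma \ref{al3} does not assume (it assumes only $p_1,p_2>0$), so even formally you cannot cite it here without comment.
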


\begin{proof}
(i) follow easily from the definitions,

(ii) By Theorem 3 in \cite{ALAR}, for any $\alpha\in\left(0,p_{1}\right)$
there is $C>0$ so that 
\begin{equation*}
\frac{w\left(x\right)}{x^{\alpha}}\leq C\frac{w\left(y\right)}{y^{\alpha}}%
,0<x\leq y\leq1. 
\end{equation*}
Hence 
\begin{equation*}
\frac{a\left(y\right)}{y^{\frac{1}{\alpha}}}\leq C\frac{a\left(x\right)}{x^{%
\frac{1}{\alpha}}},0<x\leq y\leq1, 
\end{equation*}
and, by Karamata characterization (1.7) in \cite{ALAR} and Theorem 3 in \cite%
{ALAR}, $a$ is O-RV at zero with upper index, 
\begin{equation*}
q\leq\frac{1}{p_{1}}. 
\end{equation*}
Similarly we find that the lower index at zero $p\geq\frac{1}{q_{1}}$, and
determine that $a$ is O-RV at infinity with indices $\bar{p}\leq\bar{q}$ so
that 
\begin{equation*}
\frac{1}{q_{2}}\leq\bar{p}\leq\bar{q}\leq\frac{1}{p_{2}}. 
\end{equation*}
\end{proof}

The following claim is an obvious consequence of Lemmas \ref{al1}-\ref{al3}.

\begin{corollary}
\label{cor:aymp_integral}Let $\nu\in\mathfrak{A}^{\sigma}$,$%
w=w_{\nu}\left(r\right),r>0,$ be a continuous O-RV function at zero and
infinity with indices $p_{1},q_{1},p_{2},q_{2}$ defined in (\ref{1}), (\ref%
{2}), and $p_{1},p_{2}>0.$ Let 
\begin{equation*}
a\left(r\right)=\inf\left\{ t>0:w\left(t\right)\geq r\right\} ,r>0. 
\end{equation*}
(i) For any $\beta>0$ and $\tau<\frac{\beta}{q_{1}}\wedge\frac{\beta}{q_{2}}$
there is $C>0$ such that 
\begin{eqnarray*}
\int_{0}^{r}t^{-\tau}a\left(t\right)^{\beta}\frac{dt}{t} & \leq &
Cr^{-\tau}a\left(r\right)^{\beta},r>0, \\
\lim_{r\rightarrow0}r^{-\tau}a\left(r\right)^{\beta} & = &
0,\lim_{r\rightarrow\infty}r^{-\tau}a\left(r\right)^{\beta}=\infty,
\end{eqnarray*}
and for any $\beta<0,\tau>\left(-\frac{\beta}{p_{1}}\right)\vee\left(-\frac{%
\beta}{p_{2}}\right)$ there is $C>0$ such that 
\begin{eqnarray*}
\int_{0}^{r}t^{\tau}a\left(t\right)^{\beta}\frac{dt}{t} & \leq &
Cr^{\tau}a\left(r\right)^{\beta},r>0, \\
\lim_{r\rightarrow0}r^{\tau}a\left(r\right)^{\beta} & = &
0,\lim_{r\rightarrow\infty}r^{\tau}a\left(r\right)^{\beta}=\infty.
\end{eqnarray*}
(ii) For any $\gamma>0$ and $\tau>\frac{\gamma}{p_{1}}\vee\frac{\gamma}{p_{2}%
}$ there is $C>0$ such that 
\begin{eqnarray*}
\int_{r}^{\infty}t^{-\tau}a\left(t\right)^{\gamma}\frac{dt}{t} & \leq &
Cr^{-\tau}a\left(r\right)^{\gamma},r>0, \\
\lim_{r\rightarrow0}r^{-\tau}a\left(r\right)^{\gamma} & = &
\infty,\lim_{r\rightarrow\infty}r^{-\tau}a\left(r\right)^{\gamma}=0,
\end{eqnarray*}
and for any $\gamma<0$ and $\tau<\left(-\frac{\gamma}{q_{1}}%
\right)\wedge\left(-\frac{\gamma}{q_{2}}\right)$ there is $C>0$ such that 
\begin{eqnarray*}
\int_{r}^{\infty}t^{\tau}a\left(t\right)^{\gamma}\frac{dt}{t} & \leq &
Cr^{\tau}a\left(r\right)^{\gamma},r>0, \\
\lim_{r\rightarrow0}r^{\tau}a\left(r\right)^{\gamma} & = &
\infty,\lim_{r\rightarrow\infty}r^{\tau}a\left(r\right)^{\gamma}=0.
\end{eqnarray*}
\end{corollary}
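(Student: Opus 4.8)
The plan is to reduce everything to Lemmas~\ref{al1}, \ref{al2} together with the index bounds for $a$ established in Lemma~\ref{al3}(ii), by a change of variables $t\mapsto w(t)$ (equivalently $r\mapsto a(r)$). Since $a$ is, by Lemma~\ref{al3}, a continuous O-RV function at zero and infinity whose lower and upper indices $p,q$ (at zero) and $\bar p,\bar q$ (at infinity) satisfy $\tfrac1{q_1}\le p\le q\le\tfrac1{p_1}$ and $\tfrac1{q_2}\le\bar p\le\bar q\le\tfrac1{p_2}$, each of the four asserted inequalities is literally one of the four cases in the "O-RV at zero'' lemma \emph{spliced together with} the matching case of the "O-RV at infinity'' lemma applied to the function $a$ in place of $w$. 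So the proof is essentially a bookkeeping argument: translate the hypotheses on $\tau,\beta,\gamma$ into the hypotheses required by Lemmas~\ref{al1}--\ref{al2} for $a$, apply those lemmas on $(0,1]$ and on $[1,\infty)$ respectively, and patch the two regimes together on a compact interval exactly as in the proof of Lemma~\ref{lem:powerEstratio}.

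Concretely, I would argue case by case. For part (i), first inequality: with $\beta>0$ and $\tau<\tfrac{\beta}{q_1}\wedge\tfrac{\beta}{q_2}$, note $-\tau>-\tfrac{\beta}{q_1}\ge-\beta q$ (using $q\le 1/p_1$... more precisely $-\tau > -\beta q$ since $q\le 1/p_1$ is the wrong bound — I should use $-\tau>-\beta q$ where $q$ is the \emph{upper} index of $a$ at zero, and indeed $q\le 1/p_1$ only gives an upper bound; what I need is $-\tau > \beta\cdot(\text{something})$, so let me instead invoke the exponent condition in the form required by Lemma~\ref{al1}d for the function $a$). The cleanest route: apply Lemma~\ref{al1}d) to $a$ on $(0,1]$, which requires $\beta>0$ replaced by the appropriate sign and $-\tau<\beta p$ where $p\ge 1/q_1$ is the lower index of $a$ at zero, so $-\tau<\tfrac{\beta}{q_1}$ suffices; simultaneously apply Lemma~\ref{al2}c) to $a$ on $[1,\infty)$, which needs $-\tau<\bar p\,\beta$ with $\bar p\ge 1/q_2$, so $-\tau<\tfrac{\beta}{q_2}$ suffices. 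Taking $\tau<\tfrac{\beta}{q_1}\wedge\tfrac{\beta}{q_2}$ makes both applicable, giving $\int_0^r t^{-\tau}a(t)^\beta\tfrac{dt}{t}\le C r^{-\tau}a(r)^\beta$ first for $r\le1$ and for $r\ge1$, and then for all $r>0$ by the splicing trick. The limits $\lim_{r\to0}=0,\lim_{r\to\infty}=\infty$ come directly from the corresponding limit assertions in those two lemmas. The remaining three cases are handled identically, matching: (i)-second inequality to Lemma~\ref{al1}a)+Lemma~\ref{al2}b) for $a$; (ii)-first inequality to Lemma~\ref{al1}b)+Lemma~\ref{al2}a) for $a$; (ii)-second inequality to Lemma~\ref{al1}c)+Lemma~\ref{al2}d) for $a$.

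The main obstacle—really the only nontrivial point—is \emph{correctly matching the sign conventions and the index inequalities}: Lemmas~\ref{al1}--\ref{al2} are stated with $w$, but here the integrand carries $a$, so one must (a) recall from Lemma~\ref{al3}(ii) that $a$ is itself O-RV with the four indices bounded in terms of $p_i,q_i$, and (b) check that the stated restriction on $\tau$ (stated in terms of $p_i,q_i$ of $w$) is \emph{at least as strong} as the restriction those lemmas demand in terms of the indices of $a$. Because $a$'s lower index at zero is $\ge 1/q_1$ and its upper index at zero is $\le 1/p_1$ (and similarly at infinity with $q_2,p_2$), every inequality of the form "$\tau<\tfrac{\cdot}{q_i}$'' or "$\tau>\tfrac{\cdot}{p_i}$'' in the statement is exactly what is needed, with the direction of each inequality flipping appropriately when $\beta$ or $\gamma$ is negative (this is the content of the sign-flip computation already carried out in the proof of Lemma~\ref{al2}d)). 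Once this matching is verified in one representative case, the rest is routine and the proof can simply say "the remaining cases are analogous''; the patching on the compact interval $[1,\eta_2]$ or $[\eta_1,1]$ is the same device as in Lemma~\ref{lem:powerEstratio} and needs no repetition.
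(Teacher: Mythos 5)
Your strategy is exactly the intended one: the paper offers no proof (it calls the corollary an "obvious consequence of Lemmas \ref{al1}--\ref{al3}"), and the correct filling-in is precisely what you describe --- use Lemma \ref{al3}(ii) to get that $a$ is O-RV with lower/upper indices sandwiched by $1/q_i$ and $1/p_i$, apply Lemmas \ref{al1} and \ref{al2} to $a$ on $(0,1]$ and $[1,\infty)$ respectively, and splice at $1$. However, in the one case you work out the bookkeeping you identify as "the only nontrivial point" is garbled. For $\int_0^r t^{-\tau}a(t)^\beta\,\tfrac{dt}{t}$ with $\beta>0$, the relevant parts are Lemma \ref{al1}a) (not d), which concerns $\beta<0$ and integrals $\int_x^1$) on $(0,1]$ and Lemma \ref{al2}b) (not c)) on $[1,\infty)$. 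Reading Lemma \ref{al1}a) with $w$ replaced by $a$ and the integrand exponent $\tau$ replaced by $-\tau$, the required condition is $-\tau>-\beta p$, i.e.\ $\tau<\beta p$ with $p\ge 1/q_1$ the lower index of $a$ at zero --- not "$-\tau<\beta p$" as you wrote, which is the reverse inequality $\tau>-\beta p$ and would make your derived sufficient condition "$-\tau<\beta/q_1$" inconsistent with the hypothesis $\tau<\beta/q_1$ you then invoke. The same correction applies on $[1,\infty)$: Lemma \ref{al2}b) for $a$ demands $\tau<\beta\bar p$ with $\bar p\ge 1/q_2$. With these citations and signs fixed the hypotheses of the corollary are indeed exactly what is needed, the limits follow from the limit assertions of the same lemma parts, and the remaining three cases go through by the identical matching; so the proof is correct once the representative case is repaired.
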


\subsection{Estimates for probability density}

In this section we derive some estimates of probability density of $Z_{t}^{%
\tilde{\nu}_{R}}$ (see below for detailed description), these preliminary
estimates will be used in verifying Hörmander condition (see \cite{MikPh1})
and stochastic Hörmander condition (see \cite{KK2}) to derive apriori
estimates.

Let $\nu\in\mathfrak{A}^{\sigma}$ and $p\left(dt,dy\right)$ be a Poisson
point measure on $[0,\infty)\times\mathbf{R}_{0}^{d}$ such that $\mathbf{E}%
p\left(dt,dy\right)=\nu\left(dy\right)dt.$ Let $q(dt,dy)=p\left(dt,dy%
\right)-\nu\left(dy\right)dt$. We associate to $L^{\nu}$ the stochastic
process with independent increments 
\begin{equation}
Z_{t}=Z_{t}^{\nu}=\int_{0}^{t}\int\chi_{\sigma}(y)yq(ds,dy)+\int_{0}^{t}%
\int(1-\chi_{\sigma}(y))yp(ds,dy),t\geq0.  \label{f10}
\end{equation}
By Ito formula, 
\begin{equation}
\mathbf{E}e^{i2\pi\xi\cdot Z_{t}^{\nu}}=\exp\left\{
\psi^{\nu}\left(\xi\right)t\right\} ,t\geq0,\xi\in\mathbf{R}^{d},
\label{f12}
\end{equation}
where 
\begin{equation*}
\psi^{\nu}(\xi):=\int\left[e^{i2\pi\xi\cdot
y}-1-i2\pi\chi_{\sigma}\left(y\right)y\cdot\xi\right]\nu(dy). 
\end{equation*}

For $R>0$, let $Z_{t}^{R}=Z_{t}^{\nu,R},t>0$ be the stochastic process with
independent increments associated with $\tilde{\nu}_{R}=w_{\nu}\left(R%
\right)\nu_{R}$, i.e., 
\begin{equation*}
\mathbf{E}e^{i2\pi\xi\cdot Z_{t}^{R}}=\exp\left\{ \psi^{\tilde{\nu}%
_{R}}\left(\xi\right)t\right\} 
\end{equation*}
with 
\begin{equation*}
\psi^{\tilde{\nu}_{R}}\left(\xi\right)=\int\left[e^{i2\pi\xi\cdot
y}-1-i2\pi\chi_{\sigma}\left(y\right)y\cdot\xi\right]d\tilde{\nu}_{R},\xi\in%
\mathbf{R}^{d}. 
\end{equation*}
Note $Z_{t}^{R}=Z_{t}^{\nu,R}$ and $R^{-1}Z_{\omega_{\nu}\left(R\right)t}^{%
\nu},t>0,$ have the same distribution. For $R>0$, consider Levy measures $%
\nu^{R,0}\left(dy\right)=\chi_{\left\{ \left\vert y\right\vert \leq1\right\}
}\tilde{\nu}_{R}\left(dy\right)$, i.e., 
\begin{equation*}
\int\chi_{\Gamma}\left(y\right)\nu^{R,0}\left(dy\right)=\int_{\left\vert
y\right\vert \leq1}\chi_{\Gamma}\left(y\right)\tilde{\nu}_{R}\left(dy%
\right)=w_{\nu}\left(R\right)\int_{\left\vert y\right\vert \leq
R}\chi_{\Gamma}\left(y/R\right)\nu\left(dy\right),\Gamma\in\mathcal{B}%
_{0}\left(\mathbf{R}^{d}\right). 
\end{equation*}
Let $\eta=\eta_{0}^{R}$ be a random variable with characteristic function $%
\exp\left\{ \psi^{R,0}\left(\xi\right)\right\} $ given below, denote $\hat{%
\xi}=\xi/\left\vert \xi\right\vert ,\xi\in\mathbf{R}_{0}^{d}$, 
\begin{eqnarray}
\psi^{R,0}\left(\xi\right) & = & \int_{\left\vert y\right\vert \leq1}\left[%
e^{i2\pi\xi\cdot y}-1-i\chi_{\alpha}\left(y\right)\xi\cdot y\right]\tilde{\nu%
}_{R}\left(dy\right)  \notag \\
& = & \int_{\left\vert y\right\vert \leq1}\left[e^{i2\pi\left\vert
\xi\right\vert \hat{\xi}\cdot y}-1-i\chi_{\alpha}\left(y\right)\hat{\xi}%
\cdot y\left\vert \xi\right\vert \right]\tilde{\nu}_{R}\left(dy\right) 
\notag \\
& = & \frac{w\left(R\right)}{w\left(R\left\vert \xi\right\vert ^{-1}\right)}%
\int_{\left\vert y\right\vert \leq\left\vert \xi\right\vert }\left[e^{i2\pi%
\hat{\xi}\cdot y}-1-i\chi_{\alpha}\left(y\right)\hat{\xi}\cdot y\right]%
\tilde{\nu}_{R\left\vert \xi\right\vert ^{-1}}\left(dy\right).
\label{eq:cutSymb}
\end{eqnarray}

\begin{lemma}
\label{lem:symbol}Let $\nu\in\mathfrak{A}^{\sigma},w=w_{\nu}$ be an O-RV
function and \textbf{A, B} hold. Then

(i) 
\begin{equation}
\func{Re}\psi^{R,0}\left(\xi\right)\leq-c\left\vert \xi\right\vert
^{\kappa},\left\vert \xi\right\vert \geq1,  \label{eq:eq:mainExp}
\end{equation}
with some $c,\kappa>0$ independent of $R$.

(ii) $\eta=\eta_{0}^{R}$ has a pdf $p_{R,0}\left(x\right),x\in\mathbf{R}^{d},
$ such that for any multiindex $\beta\in\mathbf{N}_{0}^{d}$, and a positive
integer $n\geq0$, then there exists $C=C\left(\beta,\nu\right)>0$ such that 
\begin{equation*}
\sup_{x}\left\vert \partial^{\beta}p_{R,0}\left(x\right)\right\vert
+\int(1+\left\vert x\right\vert ^{2})^{n}\left\vert
\partial^{\beta}p_{R,0}\left(x\right)\right\vert dx\leq C,R>0. 
\end{equation*}
\end{lemma}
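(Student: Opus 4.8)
\textbf{Proof proposal for Lemma \ref{lem:symbol}.}

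The plan is to reduce everything to the scaled Levy measures $\tilde\nu_{R|\xi|^{-1}}$ and exploit the uniform-in-$R$ moment bounds of Lemma \ref{lem:alpha12} together with the uniform nondegeneracy of Assumption \textbf{B}. For part (i), start from the last expression in \eqref{eq:cutSymb}: write $\func{Re}\psi^{R,0}(\xi) = -\dfrac{w(R)}{w(R|\xi|^{-1})}\int_{|y|\le|\xi|}\bigl[1-\cos(2\pi\hat\xi\cdot y)\bigr]\tilde\nu_{R|\xi|^{-1}}(dy)$, and restrict the integration to $|y|\le 1$. On that region $1-\cos(2\pi\hat\xi\cdot y)\ge c\,|\hat\xi\cdot y|^2$, so the integral is bounded below by $c\int_{|y|\le1}|\hat\xi\cdot y|^2\tilde\nu_{R|\xi|^{-1}}(dy)$, which is $\ge c_0>0$ uniformly by Assumption \textbf{B}. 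It remains to bound the prefactor $w(R)/w(R|\xi|^{-1})$ from below for $|\xi|\ge1$: since $|\xi|\ge1$ means $R|\xi|^{-1}\le R$, Lemma \ref{lem:powerEstratio} gives $w(R|\xi|^{-1})/w(R)\le c_2\,|\xi|^{-\alpha_2}$, hence $w(R)/w(R|\xi|^{-1})\ge c_2^{-1}|\xi|^{\alpha_2}$. Combining, $\func{Re}\psi^{R,0}(\xi)\le -c\,|\xi|^{\alpha_2}$ for $|\xi|\ge1$, so \eqref{eq:eq:mainExp} holds with $\kappa=\alpha_2$ and a constant independent of $R$; any fixed $\alpha_2\in(0,p_1\wedge p_2)$ works.

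For part (ii), the existence and smoothness of the density follows from the standard Fourier-inversion argument: since $\eta_0^R$ has characteristic function $e^{\psi^{R,0}(\xi)}$ and $\func{Re}\psi^{R,0}(\xi)\le -c|\xi|^\kappa$ for $|\xi|\ge1$ while $\psi^{R,0}$ is bounded on $|\xi|\le1$, the function $\xi\mapsto e^{\psi^{R,0}(\xi)}$ is integrable and $p_{R,0}(x)=\int e^{i2\pi\xi\cdot x}e^{\psi^{R,0}(\xi)}\,d\xi$. To control $\partial^\beta p_{R,0}$ and its weighted integrals we must differentiate under the integral in $\xi$ (producing polynomial factors $(2\pi i\xi)^\beta$) and integrate by parts in $\xi$ against $e^{i2\pi\xi\cdot x}$ to generate the $(1+|x|^2)^n$ weight; the latter costs up to $2n$ derivatives of $e^{\psi^{R,0}(\xi)}$ in $\xi$. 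So the crux is a uniform-in-$R$ estimate: for every multiindex $\gamma$,
\begin{equation*}
\bigl|\partial_\xi^\gamma e^{\psi^{R,0}(\xi)}\bigr| \le C_\gamma\,\bigl(1+|\xi|\bigr)^{M_\gamma}\,e^{\func{Re}\psi^{R,0}(\xi)}\le C_\gamma\,(1+|\xi|)^{M_\gamma}\,e^{-c|\xi|^\kappa\mathbf{1}_{|\xi|\ge1}},
\end{equation*}
after which $\sup_x|\partial^\beta p_{R,0}(x)| + \int (1+|x|^2)^n|\partial^\beta p_{R,0}(x)|\,dx$ is dominated by a finite sum of integrals $\int (1+|\xi|)^{M}e^{-c|\xi|^\kappa\mathbf{1}_{|\xi|\ge1}}\,d\xi<\infty$, all bounds independent of $R$.

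The main obstacle is therefore the derivative estimate on $\psi^{R,0}$ itself, uniformly in $R$. By Faà di Bruno, $\partial_\xi^\gamma e^{\psi^{R,0}}$ is $e^{\psi^{R,0}}$ times a polynomial in the derivatives $\partial_\xi^\alpha\psi^{R,0}$, $1\le|\alpha|\le|\gamma|$, so it suffices to show each $|\partial_\xi^\alpha\psi^{R,0}(\xi)|$ grows at most polynomially in $|\xi|$, uniformly in $R$. Differentiating $\psi^{R,0}(\xi)=\int_{|y|\le1}[e^{i2\pi\xi\cdot y}-1-i\chi_\alpha(y)\xi\cdot y]\tilde\nu_R(dy)$ under the integral sign brings down factors $y^\alpha$: for $|\alpha|\ge1$ we get $\partial_\xi^\alpha\psi^{R,0}(\xi)=\int_{|y|\le1}(2\pi i y)^\alpha e^{i2\pi\xi\cdot y}\tilde\nu_R(dy)$ (up to the linear correction term when $|\alpha|=1$, $\sigma\ge1$), and $|\partial_\xi^\alpha\psi^{R,0}(\xi)|\le (2\pi)^{|\alpha|}\int_{|y|\le1}|y|^{|\alpha|}\tilde\nu_R(dy)$. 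When $|\alpha|\ge\lceil\alpha_1\rceil$ with $\alpha_1>q_1\vee q_2$ chosen as in Remark \ref{rem:alpha_choice} (note $\alpha_1\le 2$, and for $\sigma<1$ one can take $\alpha_1<1$), this is $\le\int_{|y|\le1}|y|^{\alpha_1}\tilde\nu_R(dy)\le C$ by Lemma \ref{lem:alpha12}, uniformly in $R$; for the finitely many lower-order $\alpha$ one either uses $|y|^{|\alpha|}\le|y|^{\alpha_1}$ on $|y|\le1$ directly (when $\alpha_1\le 1$) or the singular-integral/cancellation structure of $\psi^{R,0}$ together with the bound $\int_{|y|\le1}|y|^2\tilde\nu_R(dy)\le C$ (taking $\alpha_1\le 2$), which again is uniform in $R$. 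Feeding these polynomial bounds on $\partial_\xi^\alpha\psi^{R,0}$ and the exponential decay $e^{\func{Re}\psi^{R,0}}\le e^{-c|\xi|^\kappa}$ for $|\xi|\ge1$ from part (i) into the Faà di Bruno expansion yields the displayed derivative estimate with $R$-independent constants, and completes the proof.
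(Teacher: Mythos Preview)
Your proof is correct and follows essentially the same route as the paper: part (i) via Lemma \ref{lem:powerEstratio} for the prefactor plus Assumption \textbf{B} for the integral, and part (ii) via Fourier inversion, integration by parts, and uniform bounds on $\partial_\xi^\gamma\psi^{R,0}$ from Lemma \ref{lem:alpha12}. One small caveat in part (i): the inequality $1-\cos(2\pi\hat\xi\cdot y)\ge c|\hat\xi\cdot y|^2$ fails when $|\hat\xi\cdot y|$ is close to an integer (the left side vanishes at $|\hat\xi\cdot y|=1$), so you should first restrict the integral to, say, $|y|\le 1/4$ where the cosine bound is valid, and then rescale back to $\int_{|y|\le1}|\hat\xi\cdot y|^2\tilde\nu_{R|\xi|^{-1}/4}(dy)$ using Lemma \ref{lem:powerEstratio} to recover the constant from Assumption \textbf{B}; the paper does exactly this (with $1/8$).
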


\begin{proof}
(i) Let $\alpha_{2}\in\left(0,p_{1}^{\omega_{\nu}}\wedge
p_{2}^{\omega_{\nu}}\right)$. By Lemma \ref{lem:powerEstratio}, there is $C>0
$ so that 
\begin{equation*}
\frac{w\left(R\right)}{w\left(R\left\vert \xi\right\vert ^{-1}\right)}\geq
C\left\vert \xi\right\vert ^{\alpha_{2}},R>0,\left\vert \xi\right\vert
\geq1. 
\end{equation*}
Hence, according to (\ref{eq:cutSymb}), for $\left\vert \xi\right\vert \geq1,%
\hat{\xi}=\xi/\left\vert \xi\right\vert $, 
\begin{align*}
\func{Re}\psi^{R,0}\left(\xi\right) & \leq c\left\vert \xi\right\vert
^{\alpha_{2}}\int_{\left\vert y\right\vert \leq1/8}\left[\cos\left(2\pi\hat{%
\xi}\cdot y\right)-1\right]\tilde{\nu}_{R\left\vert \xi\right\vert
^{-1}}\left(dy\right) \\
& \leq-c\left\vert \xi\right\vert
^{\alpha_{2}}\inf_{R\in\left(0,\infty\right),\left\vert \hat{\xi}\right\vert
=1}\int_{\left\vert y\right\vert \leq1}\left\vert \hat{\xi}\cdot
y\right\vert ^{2}\tilde{\nu}_{R}\left(dy\right)=-c_{0}\left\vert
\xi\right\vert ^{\alpha_{2}},
\end{align*}
and 
\begin{equation}
\int\exp\left\{ \func{Re}\psi^{R,0}\left(\xi\right)\right\} d\xi<\infty.
\label{3}
\end{equation}

(ii) Since (\ref{3}) holds, by Proposition I.2.5 in \cite{sato}, $%
\eta=\eta_{0}^{R}$ has a continuous bounded density 
\begin{equation}
p_{R,0}\left(x\right)=\int e^{-i2\pi x\cdot\xi}\exp\left\{
\psi^{R,0}\left(\xi\right)\right\} d\xi,x\in\mathbf{R}^{d}.  \label{d2-1}
\end{equation}
Moreover, by (i), for any multiindex $\beta\in\mathbf{N}_{0}^{d},$ 
\begin{equation*}
\partial^{\beta}p_{R,0}\left(x\right)=\int e^{-i2\pi
x\cdot\xi}\left(-i2\pi\xi\right)^{\beta}\exp\left\{
\psi^{R,0}\left(\xi\right)\right\} d\xi,x\in\mathbf{R}^{d}, 
\end{equation*}
is a bounded continuous function. The function $\left(1+\left\vert
x\right\vert ^{2}\right)^{n}$ $\partial^{\beta}p_{R,0}$ is integrable if 
\begin{eqnarray}
& & (-i2\pi x_{j})^{l}\left(-i2\pi
x_{k}\right)^{2n}\partial^{\beta}p_{R,0}\left(x\right)  \label{d3-1} \\
& = & \int\partial_{\xi_{j}}^{l}\partial_{\xi_{k}}^{2n}[e^{-i2\pi
x\cdot\xi}]\left(-i2\pi\xi\right)^{\beta}\exp\left\{
\psi^{R,0}\left(\xi\right)\right\} d\xi  \notag \\
& = & \left(-1\right)^{l+2n}\int e^{-i2\pi
x\cdot\xi}\partial_{\xi_{j}}^{l}\partial_{\xi_{k}}^{2n}[\left(-i2\pi\xi%
\right)^{\beta}\exp\left\{ \psi^{R,0}\left(\xi\right)\right\} ]d\xi  \notag
\end{eqnarray}
is bounded for all $j,k,$ and for $l\leq d+1$. Since $\partial^{\mu}%
\psi^{R,0}\left(\xi\right)$ is bounded for $\left\vert \mu\right\vert \geq2$
and 
\begin{equation*}
|\nabla\psi^{R,0}\left(\xi\right)|\leq
C\left(1+\zeta\left(\xi\right)\right),\xi\in\mathbf{R}^{d}, 
\end{equation*}
with $\zeta\left(\xi\right)=\int_{\left\vert y\right\vert
\leq1}\chi_{\sigma}\left(y\right)\left\vert y\right\vert \left[%
\left(\left\vert \xi\right\vert \left\vert y\right\vert \right)\wedge1\right]%
\tilde{\nu}_{R}\left(dy\right)$. The boundedness follows from (i) and Lemma %
\ref{lem:alpha12} (see Remark \ref{rem:alpha_choice}).
\end{proof}

\begin{lemma}
\label{Lem: density_one}Let $\nu\in\mathfrak{A}^{\sigma},w=w_{\nu}$ is an
O-RV function and \textbf{A, B} hold.

Then for $R>0$, $Z_{1}^{R}$ has a bounded continuous probability density of
the form 
\begin{equation*}
p^{R}\left(1,x\right)=\int p_{R,0}\left(x-y\right)P_{R}\left(dy\right),x\in%
\mathbf{R}^{d}, 
\end{equation*}
where $P_{R}$ is a probability distribution. Moreover, for any $%
0<\alpha_{2}<p_{1}^{w_{\nu}}\wedge p_{2}^{w_{\nu}}$ and multiindex $\beta\in%
\mathbf{N}_{0}^{d}$, there is $C=C\left(\beta,\alpha_{2},\nu\right)>0$ such
that 
\begin{equation}
\sup_{x}\left|\partial^{\beta}p^{R}\left(1,x\right)\right|+\int\left(1+\left%
\vert x\right\vert ^{\alpha_{2}}\right)\left\vert
D^{\beta}p^{R}\left(1,x\right)\right\vert dx\leq C.  \label{eq:densityEst}
\end{equation}
\end{lemma}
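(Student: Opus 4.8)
The starting point is the standard Lévy--Itô decomposition of the process $Z_t^R$ associated to the truncated-and-rescaled measure $\tilde\nu_R$. I would split the jumps of $Z^R$ into small jumps (those of size $\le 1$) and large jumps (those of size $>1$): writing $Z_1^R = \eta_0^R + Y_R$, where $\eta_0^R$ is the random variable with characteristic exponent $\psi^{R,0}$ introduced before Lemma \ref{lem:symbol} (built from $\nu^{R,0}(dy)=\chi_{\{|y|\le 1\}}\tilde\nu_R(dy)$) and $Y_R$ is the compound-Poisson-type part coming from $\chi_{\{|y|>1\}}\tilde\nu_R$ together with the compensating drift if $\sigma\ge 1$; by Lemma \ref{lem:alpha12} the large-jump intensity $\int_{|y|>1}\tilde\nu_R(dy)$ is finite uniformly in $R$, so $Y_R$ is a genuine (finite-activity) random variable and independent of $\eta_0^R$. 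Let $P_R$ be the law of $Y_R$. Then $Z_1^R$ has density $p^R(1,x)=\int p_{R,0}(x-y)\,P_R(dy)$, where $p_{R,0}$ is the density of $\eta_0^R$ furnished by Lemma \ref{lem:symbol}(ii); differentiating under the integral sign in $x$ moves all derivatives onto $p_{R,0}$, giving $D^\beta p^R(1,x)=\int (D^\beta p_{R,0})(x-y)\,P_R(dy)$.

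For the supremum bound, Lemma \ref{lem:symbol}(ii) gives $\sup_x|D^\beta p_{R,0}(x)|\le C$ uniformly in $R$, and integrating against the probability measure $P_R$ preserves this bound. For the weighted $L^1$ bound I would estimate
\[
\int (1+|x|^{\alpha_2})\,|D^\beta p^R(1,x)|\,dx
\le \int\!\!\int (1+|x|^{\alpha_2})\,|D^\beta p_{R,0}(x-y)|\,dx\,P_R(dy),
\]
and use the elementary inequality $|x|^{\alpha_2}\le C(|x-y|^{\alpha_2}+|y|^{\alpha_2})$ (valid since $\alpha_2<p_1^{w}\wedge p_2^w < 2$) to separate the two contributions. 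The first yields $\int (1+|z|^{\alpha_2})|D^\beta p_{R,0}(z)|\,dz \le C$, which is exactly the $n=1$ case of Lemma \ref{lem:symbol}(ii) (since $\alpha_2\le 2$, $|z|^{\alpha_2}\le 1+|z|^2$); the second yields $\big(\int|D^\beta p_{R,0}(z)|\,dz\big)\cdot\int|y|^{\alpha_2}\,P_R(dy)$, again with the first factor bounded by Lemma \ref{lem:symbol}(ii).

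The one point requiring genuine care — and the main obstacle — is the uniform-in-$R$ bound on the $\alpha_2$-moment $\int|y|^{\alpha_2}\,P_R(dy)=\mathbf{E}|Y_R|^{\alpha_2}$. Here I would use that $Y_R$ is built from the large jumps of $\tilde\nu_R$: by Lemma \ref{lem:alpha12}, $\int_{|y|>1}|y|^{\alpha_2}\,\tilde\nu_R(dy)\le C$ uniformly in $R$, which controls the $\alpha_2$-moment of a single large jump and (via the sub-additivity $|a+b|^{\alpha_2}\le |a|^{\alpha_2}+|b|^{\alpha_2}$ for $\alpha_2\le 1$, or Minkowski/Rosenthal-type bounds for $\alpha_2\in(1,2)$, together with finiteness of the total large-jump rate) the $\alpha_2$-moment of the compound sum $Y_R$ up to time $1$. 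When $\sigma\ge 1$ one must additionally absorb the compensating drift $\int_{|y|\le 1}\chi_\sigma(y)\,y\,\tilde\nu_R(dy)$ that was moved into $\eta_0^R$ versus $Y_R$; here choosing $\alpha_2\in(1,2)$ when $\sigma\in(1,2)$ (as in Remark \ref{rem:alpha_choice}) makes $\int_{|y|\le1}|y|\,\tilde\nu_R(dy)\le\int_{|y|\le1}|y|^{\alpha_2}\,\tilde\nu_R(dy)\cdot$(const) finite uniformly in $R$, and for $\sigma=1$ the symmetry/cancellation assumption on $\nu$ (inherited by $\tilde\nu_R$) eliminates the problematic drift. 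Putting the two pieces together gives the stated constant $C=C(\beta,\alpha_2,\nu)$, independent of $R$.
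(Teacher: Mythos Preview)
Your proposal is correct and follows essentially the same route as the paper: split $\tilde\nu_R$ into small and large jumps, write $p^R(1,\cdot)=p_{R,0}\ast P_R$ with $P_R$ the law of the large-jump part, push derivatives onto $p_{R,0}$ via Lemma~\ref{lem:symbol}(ii), and control the weighted $L^1$ norm by the triangle-type splitting $|x|^{\alpha_2}\lesssim|x-y|^{\alpha_2}+|y|^{\alpha_2}$ together with a uniform-in-$R$ bound on $\mathbf E|Y_R|^{\alpha_2}$ coming from Lemma~\ref{lem:alpha12}. The paper is terser --- it simply cites ``Lemma~10 of \cite{MikPh2}'' for the moment step --- but the content is the same.

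One small correction to your drift discussion: the term $\int_{|y|\le 1}\chi_\sigma(y)\,y\,\tilde\nu_R(dy)$ is already built into $\psi^{R,0}$ and hence into $\eta_0^R$; it never appears in $Y_R$ and does not need to be ``absorbed'' (indeed for $\sigma>1$ that integral is typically infinite, which is precisely why it stays inside the compensated small-jump symbol). The only drift that enters $Y_R$ is $-\int_{|y|>1}y\,\tilde\nu_R(dy)$ when $\sigma\in(1,2)$, and this is bounded uniformly in $R$ since $|y|\le|y|^{\alpha_2'}$ on $\{|y|>1\}$ for any $\alpha_2'\in(1,p_1\wedge p_2)$ and Lemma~\ref{lem:alpha12} applies. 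Your claimed inequality $\int_{|y|\le1}|y|\,\tilde\nu_R(dy)\le C\int_{|y|\le1}|y|^{\alpha_2}\,\tilde\nu_R(dy)$ for $\alpha_2>1$ goes the wrong way and is not needed.
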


\begin{proof}
We have 
\begin{equation}
\tilde{\nu}_{R}=\tilde{\nu}_{R,0}+\tilde{\nu}_{R,2},  \label{d50}
\end{equation}
where $\tilde{\nu}_{R,0}\left(dy\right)=\chi_{\left\vert y\right\vert \leq1}%
\tilde{\nu}_{R}\left(dy\right)$ and hence, 
\begin{equation*}
\psi^{\tilde{\nu}_{R}}\left(\xi\right)=\psi^{\tilde{\nu}_{R,0}}\left(\xi%
\right)+\psi^{\tilde{\nu}_{R,2}}\left(\xi\right),\xi\in\mathbf{R}^{d}. 
\end{equation*}

Denote $\eta_{0}^{R}$ and $\eta_{2}^{R}$ to be independent random variables
with characteristic exponent $\exp\left\{ \psi^{\tilde{\nu}%
_{R,0}}\left(\xi\right)\right\} $ and $\exp\left\{ \psi^{\tilde{\nu}%
_{R,2}}\left(\xi\right)\right\} $ respectively. Obviously the distribution
of $Z_{1}^{R}$ coincides with the distribution of the sum $%
\eta_{0}^{R}+\eta_{2}^{R}$. Therefore, 
\begin{equation}
p^{R}\left(1,x\right)=\int p_{R,0}\left(x-y\right)P_{R}\left(dy\right)
\label{eq:compositionDensity}
\end{equation}
where $P_{R}\left(dy\right)$ is the probability distribution of $\eta_{2}^{R}
$. It is clear from (\ref{eq:compositionDensity}) that for any multiindex $%
\beta\in\mathbf{N}_{0}^{d}$, 
\begin{equation*}
\partial^{\beta}p^{R}\left(1,x\right)=\int\partial^{\beta}p_{R,0}\left(x-y%
\right)P_{R}\left(dy\right) 
\end{equation*}
and from Lemma \ref{lem:symbol}, 
\begin{equation*}
\int\left\vert \partial^{\beta}p^{R}\left(1,x\right)\right\vert dx\leq C, 
\end{equation*}
and 
\begin{equation*}
\sup_{x}\left\vert \partial^{\beta}p^{R}\left(1,x\right)\right\vert \leq C 
\end{equation*}

Moreover, using Lemma \ref{lem:alpha12}, and following the argument of Lemma
10 of \cite{MikPh2}, one derives easily the estimate (\ref{eq:densityEst}).
\end{proof}

We write $\pi\in\mathfrak{A}_{sign}^{\sigma}=\mathfrak{A}^{\sigma}-\mathfrak{%
A}^{\sigma}$ if $\pi=\mu-\eta$ with $\mu,\eta\in\mathfrak{A}^{\sigma}$, and $%
L^{\pi}=L^{\mu}-L^{\eta}$. Given $\pi\in\mathfrak{A}_{sign}^{\sigma}$, we
denote $\left\vert \pi\right\vert $ its variation measure. Obviously, $%
\left\vert \pi\right\vert \in\mathfrak{A}^{\sigma}.$ Based on Lemma \ref%
{Lem: density_one}, we derive the following estimates(Lemma \ref%
{lem:density_with_L} and Lemma \ref{lem:main_density}) similar to the ones
in Lemma 12, and Lemma 13 of \cite{MikPh2}. Since the proof carry over to
our setting in obvious ways we omit the proof.

\begin{lemma}
(see Corollary 4 of \cite{MikPh2})\label{lem:density_with_L}Let $\nu\in%
\mathfrak{A}^{\sigma},w=w_{\nu}$ be an O-RV function and \textbf{A, B} hold.
Let 
\begin{equation*}
\alpha_{1}>q_{1}^{w}\vee q_{2}^{w},0<\alpha_{2}<p_{1}^{w}\wedge p_{2}^{w}, 
\end{equation*}
and 
\begin{eqnarray*}
\alpha_{2} & > & 1\text{ if }\sigma\in(1,2), \\
\alpha_{1} & \leq & 1\text{ if }\sigma\in(0,1),\alpha_{1}\leq2\text{ if }%
\sigma\in\lbrack1,2).
\end{eqnarray*}
Let $\pi\in\mathfrak{A}_{sign}^{\sigma}$ and assume that 
\begin{equation*}
\int_{\left\vert y\right\vert \leq1}\left\vert y\right\vert ^{\alpha_{1}}d%
\widetilde{\left\vert \pi\right\vert }_{R}+\int_{\left\vert y\right\vert
>1}\left\vert y\right\vert ^{\alpha_{2}}d\widetilde{\left\vert
\pi\right\vert }_{R}\leq M,R>0. 
\end{equation*}
Then for any multiindex $k\in\mathbf{N}_{0}^{d}$ there is $%
C=C\left(k,\nu\right)>0$ such that

\begin{eqnarray*}
\int\left(1+\left\vert x\right\vert ^{\alpha_{2}}\right)\left\vert D^{k}L^{%
\tilde{\pi}_{R}}p^{R}\left(1,x\right)\right\vert dx & \leq & CM,
\end{eqnarray*}
and there is $C=C\left(\nu\right)>0$ such that 
\begin{equation*}
\int(1+\left\vert x\right\vert ^{\alpha_{2}})\left\vert L^{\tilde{\pi}%
_{R}}L^{\tilde{\nu}_{R}}p^{R}\left(1,x\right)\right\vert dx\leq CM. 
\end{equation*}
\end{lemma}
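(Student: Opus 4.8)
Since the statement says the proof carries over from Lemma 12 and Lemma 13 of \cite{MikPh2}, and the authors themselves choose to omit it, I would present a short proof sketch that isolates the only two ingredients that differ from \cite{MikPh2}: the density decomposition from Lemma \ref{Lem: density_one}, and the uniform-in-$R$ moment bounds on the scaled measures from Lemma \ref{lem:alpha12}. The key observation is that for a Levy measure $\pi\in\mathfrak{A}_{sign}^{\sigma}$, the operator $L^{\tilde\pi_R}$ applied to a smooth rapidly-decreasing function $h$ has the Fourier-side representation $\widehat{L^{\tilde\pi_R}h}(\xi)=\psi^{\tilde\pi_R}(\xi)\hat h(\xi)$, and one can split $L^{\tilde\pi_R}=L^{\tilde\pi_{R,0}}+L^{\tilde\pi_{R,2}}$ into its small-jump part (supported on $\left|y\right|\leq1$) and large-jump part, exactly as in (\ref{d50}). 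For the small-jump part one uses a second-order Taylor expansion of $h(x+y)-h(x)-\chi_\sigma(y)y\cdot\nabla h(x)$ against $\left|y\right|^{\alpha_1}$ and the hypothesis $\int_{\left|y\right|\leq1}\left|y\right|^{\alpha_1}d\widetilde{\left|\pi\right|}_R\leq M$; for the large-jump part one uses $\int_{\left|y\right|>1}\left|y\right|^{\alpha_2}d\widetilde{\left|\pi\right|}_R\leq M$ together with the $\left(1+\left|x\right|^{\alpha_2}\right)$-weighted integrability of the derivatives of $p^R(1,\cdot)$ from (\ref{eq:densityEst}).

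For the first estimate, I would write, for a multiindex $k$,
\begin{equation*}
D^k L^{\tilde\pi_R}p^R(1,x)=\int\left[D^k p^R(1,x+y)-D^k p^R(1,x)-\chi_\sigma(y)y\cdot\nabla D^k p^R(1,x)\right]\tilde\pi_R(dy),
\end{equation*}
split the inner integral at $\left|y\right|=1$, and integrate $\left(1+\left|x\right|^{\alpha_2}\right)$ against $dx$. On $\left|y\right|\leq1$, by Taylor's theorem the bracket is bounded by $C\left|y\right|^{\alpha_1}\sup_{\left|z\right|\leq\left|y\right|}\left|D^{k+\gamma}p^R(1,x+z)\right|$ for appropriate $\left|\gamma\right|\leq2$ (with $\left|\gamma\right|\leq1$ when $\sigma\leq1$), and one uses Fubini plus the fact (Lemma \ref{Lem: density_one}) that all such weighted-$L^1$ norms of derivatives of $p^R(1,\cdot)$ are bounded uniformly in $R$; the $\left|y\right|^{\alpha_1}$ factor is absorbed by $\int_{\left|y\right|\leq1}\left|y\right|^{\alpha_1}d\widetilde{\left|\pi\right|}_R\leq M$. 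On $\left|y\right|>1$ the bracket is bounded by a sum of three weighted-$L^1$ terms whose $x$-translates are handled by the elementary inequality $\left(1+\left|x\right|^{\alpha_2}\right)\leq C\left(1+\left|x+y\right|^{\alpha_2}\right)\left(1+\left|y\right|^{\alpha_2}\right)$, after which the $\left|y\right|$-integral is controlled by $\int_{\left|y\right|>1}\left|y\right|^{\alpha_2}d\widetilde{\left|\pi\right|}_R\leq M$. This gives the bound $CM$. The second estimate, involving $L^{\tilde\pi_R}L^{\tilde\nu_R}p^R(1,\cdot)$, follows by the same argument once one knows that $L^{\tilde\nu_R}p^R(1,\cdot)$ already enjoys the weighted integrability (\ref{eq:densityEst}) with a constant uniform in $R$ — this is the $k=0$ case of Lemma \ref{lem:density_with_L} applied with $\pi=\nu$ (for which the hypothesis is precisely Lemma \ref{lem:alpha12}), i.e. one composes the estimate with itself.

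The main obstacle, and the reason the argument is genuinely a transcription rather than trivial, is the bookkeeping around the compensator term $\chi_\sigma(y)y\cdot\nabla$ in the three regimes $\sigma\in(0,1)$, $\sigma=1$, $\sigma\in(1,2)$: one must verify that the choice of $\alpha_1,\alpha_2$ permitted by the hypotheses (and by Remark \ref{rem:alpha_choice}) is exactly what is needed to make the small-jump Taylor remainder integrable in each case — e.g. for $\sigma\in(1,2)$ one needs $\alpha_1\in(1,2]$ so that the second-order remainder after subtracting the full first-order term is $O(\left|y\right|^{\alpha_1})$, while for $\sigma\in(0,1)$ one needs $\alpha_1\leq1$ and no compensator, and for $\sigma=1$ the symmetry/cancellation built into $\mathfrak{A}^\sigma$ handles the borderline term. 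Once these case distinctions are lined up with Remark \ref{rem:alpha_choice}, everything reduces to Fubini and the uniform bounds already established, so the full details are indeed routine and may be omitted.
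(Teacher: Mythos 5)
Your proposal is correct and follows essentially the same route as the proof the paper refers to (Lemma 12/Corollary 4 of \cite{MikPh2}): split $L^{\tilde\pi_R}$ into small-jump and large-jump parts, control the small jumps by a Taylor remainder of order $\alpha_1$ together with the uniform weighted $L^1$ bounds on derivatives of $p^R(1,\cdot)$ from Lemma \ref{Lem: density_one}, control the large jumps via $(1+|x|^{\alpha_2})\leq C(1+|x+y|^{\alpha_2})(1+|y|^{\alpha_2})$ and the moment hypothesis, and obtain the second estimate by composing with the $\pi=\nu$ case supplied by Lemma \ref{lem:alpha12}. The only minor imprecision is your remark about $\sigma=1$: on $|y|\leq1$ the compensator is present there, so the second-order Taylor remainder with $\alpha_1\leq2$ already suffices and no symmetry cancellation is needed for this absolute-value estimate.
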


In the next lemma, we denote $p^{\nu}\left(t,\cdot\right)$ the probability
density of the process $Z_{t}^{\nu}$, $t>0$.

\begin{lemma}
\label{lem:main_density}Let $\nu\in\mathfrak{A}^{\sigma},w=w_{\nu}$ be a
continuous O-RV function and \textbf{A, B} hold. Let 
\begin{equation*}
\alpha_{1}>q_{1}^{w}\vee q_{2}^{w},0<\alpha_{2}<p_{1}^{w}\wedge p_{2}^{w}, 
\end{equation*}
and 
\begin{eqnarray*}
\alpha_{2} & > & 1\text{ if }\sigma\in(1,2), \\
\alpha_{1} & \leq & 1\text{ if }\sigma\in(0,1),\alpha_{1}\leq2\text{ if }%
\sigma\in\lbrack1,2).
\end{eqnarray*}
Let $\pi\in\mathfrak{A}_{sign}^{\sigma}$ and assume that 
\begin{equation*}
\int_{\left\vert y\right\vert \leq1}\left\vert y\right\vert ^{\alpha_{1}}d%
\widetilde{\left\vert \pi\right\vert }_{R}+\int_{\left\vert y\right\vert
>1}\left\vert y\right\vert ^{\alpha_{2}}d\widetilde{\left\vert
\pi\right\vert }_{R}\leq M,R>0. 
\end{equation*}
Denote $a\left(t\right)=\inf\left\{ r:w_{\nu}\left(r\right)\geq t\right\}
,t>0.$

\noindent (i) For any multiindex $k\in\mathbf{N}_{0}^{d}$ and $\beta\in\left[%
0,\alpha_{2}\right]$, there is $C=C\left(k,\nu,\beta\right)>0$ such that 
\begin{eqnarray*}
\int_{\left\vert z\right\vert >c}\left\vert
L^{\pi}D^{k}p^{\nu}\left(t,z\right)\right\vert dz & \leq &
CMt^{-1}a\left(t\right)^{\beta-\left\vert k\right\vert }c^{-\beta}, \\
\int\left\vert L^{\pi}D^{k}p^{\nu}\left(t,z\right)\right\vert dz & \leq &
CMt^{-1}a\left(t\right)^{-\left\vert k\right\vert },
\end{eqnarray*}
(ii) There is $C=C\left(\nu\right)>0$ such that 
\begin{equation*}
\int_{\mathbf{R}^{d}}\left\vert
L^{\pi}p^{\nu}\left(t,x-y\right)-L^{\pi}p^{\nu}\left(t,x\right)\right\vert
dx\leq CM\frac{\left\vert y\right\vert }{ta\left(t\right)},t>0,y\in\mathbf{R}%
^{d}, 
\end{equation*}
(iii) There is $C=C\left(\nu\right)>0$ such that 
\begin{align*}
& \int_{2b}^{\infty}\int\left\vert
L^{\pi}p^{\nu}\left(t-s,x\right)-L^{\pi}p^{\nu}\left(t,x\right)\right\vert
dxdt \\
& \leq CM,\left\vert s\right\vert \leq b<\infty.
\end{align*}
\end{lemma}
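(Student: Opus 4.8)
The strategy is to reduce everything to the rescaled density estimates already available in Lemma \ref{lem:density_with_L}, using the scaling relation between $p^{\nu}(t,\cdot)$ and $p^{R}(1,\cdot)$, and then to estimate the remaining integrals by the O-RV machinery of Lemmas \ref{al1}--\ref{al3} and Corollary \ref{cor:aymp_integral}. Recall that $Z_t^{\nu}$ and $R\, Z_{w_\nu(R)^{-1}t}^{\nu,R}$ agree in law; equivalently, choosing $R=a(t)$ so that $w_\nu(a(t))=t$, the process $Z_t^{\nu}$ has the same law as $a(t)\, Z_1^{a(t)}$. Hence
\begin{equation*}
p^{\nu}(t,x)=a(t)^{-d}\, p^{a(t)}\!\left(1,\tfrac{x}{a(t)}\right),
\end{equation*}
and, after differentiating and using that $L^{\pi}$ scales as $L^{\pi}\varphi(x)=w_{|\pi|}(R)^{-1}\!\bigl(L^{\tilde\pi_R}(\varphi(R\cdot))\bigr)(x/R)$ up to the normalization in $\widetilde{\left\vert \pi\right\vert }_R$, one gets for a multiindex $k$
\begin{equation*}
L^{\pi}D^{k}p^{\nu}(t,x)=a(t)^{-d-|k|}\,t^{-1}\,\bigl(L^{\tilde\pi_{a(t)}}D^{k}p^{a(t)}(1,\cdot)\bigr)\!\left(\tfrac{x}{a(t)}\right),
\end{equation*}
the extra $t^{-1}$ coming from the compensator scaling $w_{\nu}(a(t))^{-1}=t^{-1}$ inside $L^{\tilde\pi_{a(t)}}$ relative to $L^{\pi}$. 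This is the one computation that must be done carefully; everything after it is a change of variables.

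**Step (i).** Substituting $z=a(t)\zeta$ in $\int_{|z|>c}|L^{\pi}D^{k}p^{\nu}(t,z)|\,dz$ turns it into $t^{-1}a(t)^{-|k|}\int_{|\zeta|>c/a(t)}|L^{\tilde\pi_{a(t)}}D^{k}p^{a(t)}(1,\zeta)|\,d\zeta$. On the region $|\zeta|>c/a(t)$ we insert the factor $1\le \bigl(a(t)/c\bigr)^{\beta}|\zeta|^{\beta}$ for $\beta\in[0,\alpha_2]$ and bound the resulting weighted integral by $CM$ using Lemma \ref{lem:density_with_L} (whose hypothesis on $\widetilde{\left\vert \pi\right\vert }_R$ is exactly the one assumed here, uniformly in $R$). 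This yields $CM\,t^{-1}a(t)^{\beta-|k|}c^{-\beta}$; taking $\beta=0$ (and $c$ arbitrary, so integrating over all of $\mathbf R^d$) gives the second bound.

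**Step (ii).** Write $L^{\pi}p^{\nu}(t,x-y)-L^{\pi}p^{\nu}(t,x)=\int_0^1 y\cdot\nabla L^{\pi}p^{\nu}(t,x-\theta y)\,d\theta$, take $L^1_x$ norms, use Fubini, and apply the $|k|=1$ case of the bound from Step (i) with $\beta=1\le\alpha_2$ — note that the hypotheses on the indices force $\alpha_2\ge 1$ precisely when $\sigma\in(1,2)$, and when $\sigma\le 1$ one instead keeps the gradient inside and uses the integrability of $|x|\,|\nabla L^{\pi}p^{\nu}|$ supplied by Lemma \ref{lem:density_with_L}; either way one lands on $CM|y|/(t\,a(t))$.

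**Step (iii).** For the time increment, split at $s$: $L^{\pi}p^{\nu}(t-s,x)-L^{\pi}p^{\nu}(t,x)=-\int_{t-s}^{t}\partial_r L^{\pi}p^{\nu}(r,x)\,dr$, and use the forward Kolmogorov equation $\partial_r p^{\nu}=L^{\nu}p^{\nu}$ to replace $\partial_r$ by $L^{\nu}$. Thus the inner $L^1_x$-norm is $\le\int_{t-s}^{t}\|L^{\pi}L^{\nu}p^{\nu}(r,\cdot)\|_{L^1}\,dr$, and the second estimate of Lemma \ref{lem:density_with_L} (after the same rescaling as in Step (i), applied to $L^{\tilde\pi_{a(r)}}L^{\tilde\nu_{a(r)}}$) gives $\|L^{\pi}L^{\nu}p^{\nu}(r,\cdot)\|_{L^1}\le CM\,r^{-2}$. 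Hence
\begin{equation*}
\int_{2b}^{\infty}\!\!\int |L^{\pi}p^{\nu}(t-s,x)-L^{\pi}p^{\nu}(t,x)|\,dx\,dt\le CM\int_{2b}^{\infty}\!\!\int_{t-s}^{t} r^{-2}\,dr\,dt\le CM\int_{2b}^{\infty}\frac{|s|}{(t-|s|)^2}\,dt,
\end{equation*}
which is finite and bounded by $CM$ uniformly in $|s|\le b$ since $t-|s|\ge t/2$ on the range of integration.

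**Main obstacle.** The only genuinely delicate point is the bookkeeping in the scaling identity for $L^{\pi}D^{k}p^{\nu}$: one must track simultaneously the spatial dilation $a(t)$, the normalization $w_{|\pi|}(a(t))$ hidden in the definition of $\widetilde{\left\vert \pi\right\vert }_{a(t)}$, and the time factor $w_\nu(a(t))^{-1}=t^{-1}$, and check that the hypothesis $\int_{|y|\le1}|y|^{\alpha_1}d\widetilde{|\pi|}_R+\int_{|y|>1}|y|^{\alpha_2}d\widetilde{|\pi|}_R\le M$ is stable (uniformly in $R$) under these rescalings — which it is, by construction of $\widetilde{|\pi|}_R$ and Lemma \ref{lem:alpha12}. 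Since, as the authors note, this mirrors Lemma 13 of \cite{MikPh2}, I would present the scaling identity in detail and then refer to that source for the routine estimates.
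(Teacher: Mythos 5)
Your proof is correct and follows essentially the route the paper itself invokes: the paper omits the proof of this lemma, deferring to Lemma 13 of \cite{MikPh2}, and your scaling identity is exactly the one recorded in Lemma \ref{al1-1} (with $\delta=0$), after which (i)--(iii) follow from Lemma \ref{lem:density_with_L} together with the O-RV estimates, just as the paper carries out the fractional analogue in the proof of Lemma \ref{lem:fracdensity}. Two harmless slips worth fixing: the normalization in $\tilde{\pi}_{R}$ is $w_{\nu}(R)$ (not $w_{|\pi|}(R)$), which is why the factor is exactly $t^{-1}=w_{\nu}(a(t))^{-1}$; and the forward equation reads $\partial_{t}p^{\nu}=L^{\nu^{\ast}}p^{\nu}$ with $\nu^{\ast}(dy)=\nu(-dy)$, which changes nothing since $\nu^{\ast}$ satisfies the same hypotheses.
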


\subsection{Representation of fractional operator and some density estimates}

In order to deal with the stochastic term we need some finer estimates of
probability density. The fractional operator is defined in the following
way. Let $\mu\in\mathfrak{A}_{sym}^{\sigma}=\left\{ \eta\in\mathfrak{A}%
^{\sigma}:\eta\text{ is symmetric, }\eta=\eta_{sym}\right\} .$ Then for $%
\delta\in(0,1)$ and $f\in\mathcal{S}\left(\mathbf{R}^{d}\right)$, we have 
\begin{eqnarray*}
& & -\left(-\psi^{\mu}\left(\xi\right)\right)^{\delta}\hat{f}\left(\xi\right)
\\
& = & c_{\delta}\int_{0}^{\infty}t^{-\delta}\left[\exp\left(\psi^{\mu}\left(%
\xi\right)t\right)-1\right]\frac{dt}{t}\hat{f}\left(\xi\right),\xi\in\mathbf{%
R}^{d},
\end{eqnarray*}
and define 
\begin{eqnarray}
L^{\mu;\delta}f\left(x\right) & := & \mathcal{F}^{-1}\left[%
-\left(-\psi^{\mu}\right)^{\delta}\hat{f}\right]\left(x\right)  \label{ff01}
\\
& = & c_{\delta}\mathbf{E}\int_{0}^{\infty}t^{-\delta}\left[%
f\left(x+Z_{t}^{\mu}\right)-f\left(x\right)\right]\frac{dt}{t},x\in\mathbf{R}%
^{d}.  \notag
\end{eqnarray}
For $\mu\in\mathfrak{A}^{\sigma},\delta\in\left(0,1\right)$ we let $\mu_{%
\text{sym}}=\frac{\mu\left(dy\right)+\mu\left(-dy\right)}{2}$ and define $%
L^{\mu;\delta}=-\left(-L^{\mu_{\text{sym}}}\right)^{\delta}$.

\begin{lemma}
\label{lem:fracdensity}Let $\mu\in\mathfrak{A}_{sym}^{\sigma},\delta\in%
\left(0,1\right).$ Let $\nu\in\mathfrak{A}^{\sigma},w=w_{\nu}$ be a
continuous O-RV function and \textbf{A, B} hold. Assume 
\begin{equation*}
\alpha_{1}>q_{1}^{w}\vee q_{2}^{w},0<\alpha_{2}<p_{1}^{w}\wedge p_{2}^{w}, 
\end{equation*}
\begin{eqnarray*}
\alpha_{2} & > & 1\text{ if }\sigma\in(1,2), \\
\alpha_{1} & \leq & 1\text{ if }\sigma\in(0,1),\alpha_{1}\leq2\text{ if }%
\sigma\in\lbrack1,2),
\end{eqnarray*}
and 
\begin{equation*}
\int_{\left\vert y\right\vert \leq1}\left\vert y\right\vert ^{\alpha_{1}}d%
\widetilde{\mu}_{R}+\int_{\left\vert y\right\vert >1}\left\vert y\right\vert
^{\alpha_{2}}d\widetilde{\mu}_{R}\leq M,R>0. 
\end{equation*}
Let $p^{\nu}\left(t,x\right),x\in\mathbf{R}^{d}$, be the pdf of $%
Z_{t}^{\nu},t>0$ and $a\left(t\right)=\inf\left\{ r:w\left(r\right)\geq
t\right\} ,t>0.$

\noindent (i) For any $p\geq1,$ and $\varepsilon>0$ there is $C_{\epsilon}>0$
such that 
\begin{equation*}
\left\vert L^{\mu;\delta}f\right\vert _{L_{p}\left(\mathbf{R}%
^{d}\right)}\leq\varepsilon\left\vert L^{\mu}f\right\vert _{L_{p}\left(%
\mathbf{R}^{d}\right)}+C_{\epsilon}\left\vert f\right\vert _{L_{p}\left(%
\mathbf{R}^{d}\right)},f\in\mathcal{S}\left(\mathbf{R}^{d}\right). 
\end{equation*}
(ii) There is $C=C\left(\nu\right)>0$ such that 
\begin{equation*}
\int_{\mathbf{R}^{d}}\left\vert
L^{\mu;\delta}p^{\nu}\left(t,x-y\right)-L^{\mu;\delta}p^{\nu}\left(t,x%
\right)\right\vert dx\leq CM\frac{\left\vert y\right\vert }{%
t^{\delta}a\left(t\right)},t>0,y\in\mathbf{R}^{d}, 
\end{equation*}
(iii) There is $C=C\left(\nu\right)>0$ such that 
\begin{align}
& \int_{2b}^{\infty}\left(\int\left\vert L^{\mu;\frac{1}{2}%
}p^{\nu}\left(t-s,x\right)-L^{\mu;\frac{1}{2}}p^{\nu}\left(t,x\right)\right%
\vert dx\right)^{2}dt  \label{eq:MVTtime-1} \\
& \leq CM,\left\vert s\right\vert \leq b<\infty.  \notag
\end{align}
(iv) For any multiindex $k\in\mathbf{N}_{0}^{d}$ and $\beta\in\lbrack0,%
\delta\alpha_{2})$ there is $C=C\left(k,\nu,\beta\right)>0$ such that 
\begin{eqnarray*}
\int_{\left\vert x\right\vert >c}\left\vert
L^{\mu;\delta}D^{k}p^{\nu}\left(t,x\right)\right\vert dx & \leq &
CMt^{-\delta}a\left(t\right)^{\beta-\left\vert k\right\vert }c^{-\beta}, \\
\int\left\vert L^{\mu;\delta}D^{k}p^{\nu}\left(t,x\right)\right\vert dx &
\leq & CMt^{-\delta}a\left(t\right)^{-\left\vert k\right\vert }.
\end{eqnarray*}
\end{lemma}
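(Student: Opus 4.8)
\textbf{Proof proposal for Lemma \ref{lem:fracdensity}.}

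The plan is to reduce all four parts to the density estimates of Lemma \ref{lem:main_density} and Lemma \ref{lem:density_with_L} via the subordination-type representation \eqref{ff01}, namely $L^{\mu;\delta}f(x)=c_\delta\,\mathbf{E}\int_0^\infty t^{-\delta}[f(x+Z_t^\mu)-f(x)]\,\frac{dt}{t}$, together with the scaling identity relating $Z_t^\mu$ and $\tilde\mu_R$. First I would record, from Lemma \ref{lem:main_density} applied with $\pi=\mu$ (so that the hypothesis $\int_{|y|\le1}|y|^{\alpha_1}d\tilde\mu_R+\int_{|y|>1}|y|^{\alpha_2}d\tilde\mu_R\le M$ is exactly what is assumed here), the bounds $\int|L^\mu D^k p^\nu(t,z)|\,dz\le CMt^{-1}a(t)^{-|k|}$ and $\int_{|z|>c}|L^\mu D^k p^\nu(t,z)|\,dz\le CMt^{-1}a(t)^{\beta-|k|}c^{-\beta}$, plus the space- and time-increment bounds (ii), (iii) of that lemma. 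These are the "degree-one" analogues; the point of the present lemma is to replace $t^{-1}$ by $t^{-\delta}$ at the cost of the fractional power of the operator.

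For part (iv), I would write, using \eqref{ff01} with $f=p^\nu(t,\cdot)$ and Fubini/Chapman--Kolmogorov, $L^{\mu;\delta}D^k p^\nu(t,x)=c_\delta\int_0^\infty r^{-\delta}\bigl[L^\mu_{?}\cdots\bigr]$ — more precisely I would use $L^{\mu;\delta}p^\nu(t,\cdot)=c_\delta\int_0^\infty r^{-\delta}\bigl(p^\nu(t,\cdot)\ast(p^\mu(r,\cdot)-\delta_0)\bigr)\frac{dr}{r}$ and then move one $L^\mu$-derivative onto $p^\nu(t,\cdot)$, splitting the $r$-integral at $r=t$. For $r\le t$ one estimates $\int_{\mathbf R^d}|p^\nu(t,x+z)-p^\nu(t,x)|\,\cdots$ by the gradient bound and the fact that $\mathbf{E}|Z^\mu_r|$ is controlled through $\int|y|\wedge\cdots\,d\tilde\mu$; for $r\ge t$ one uses $\|p^\mu(r,\cdot)-\delta_0\|\le 2$ directly. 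The $c^{-\beta}$-localized version follows the same split, using the $\int_{|z|>c}$ version of the degree-one estimate and Chebyshev in the convolution variable. The integrals $\int_0^t r^{-\delta}(\cdots)\frac{dr}{r}$ and $\int_t^\infty r^{-\delta}\frac{dr}{r}$ converge precisely because $\delta\in(0,1)$, and the resulting power of $a(t)$ is pinned down by Lemma \ref{al1}--\ref{al3} / Corollary \ref{cor:aymp_integral}; the restriction $\beta<\delta\alpha_2$ is exactly what is needed to make the $r\le t$ piece integrable after inserting $a(r)^\beta\lesssim a(t)^\beta (r/t)^{\alpha_2\beta/\cdots}$ type bounds. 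Parts (ii) and (iii) are obtained the same way: apply the representation to the already-differenced kernel $p^\nu(t,x-y)-p^\nu(t,x)$ (resp. $p^\nu(t-s,x)-p^\nu(t,x)$), invoke the corresponding estimate of Lemma \ref{lem:main_density}(ii) (resp. (iii)) under the $r$-integral, and integrate the resulting $r^{-\delta}\,r^{-1}\,(\cdot)$ against the relevant power of $a$; for (iii) one also squares and uses Minkowski's integral inequality in $t$ before integrating in $r$, which is why the exponent $\tfrac12$ appears.

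For part (i), the interpolation-type inequality, I would argue by Fourier multipliers combined with the representation: since $-(-\psi^\mu)^\delta=c_\delta\int_0^\infty r^{-\delta}(e^{\psi^\mu r}-1)\frac{dr}{r}$, split at $r=1$; for $r\le1$ use $|e^{\psi^\mu r}-1|\le r|\psi^\mu|$ to bound the contribution by $C\int_0^1 r^{-\delta}\,dr\cdot \|L^\mu f\|_{L_p}$ after recognizing $\mathcal F^{-1}[\psi^\mu\hat f]=L^\mu f$ (this uses that convolution with the finite signed measure $r^{-1}(p^\mu(r,\cdot)-\delta_0)$, suitably normalized, has $L_p\to L_p$ norm bounded uniformly, via Lemma \ref{Lem: density_one}-type density bounds scaled back), and for $r\ge1$ use $\|e^{\psi^\mu r}-1\|\le 2$ to bound the contribution by $C\int_1^\infty r^{-\delta-1}dr\cdot\|f\|_{L_p}$; then a scaling $\mu\mapsto\tilde\mu_R$ and Young's inequality with $\varepsilon$ absorb the constant. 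The main obstacle I anticipate is part (iv) with the localization $\int_{|x|>c}$ and the sharp exponent $\beta$: tracking the powers of $a(t)$ through the $r$-integral requires careful use of the regular-variation integral lemmas (Lemma \ref{al1}, Lemma \ref{al2}, Corollary \ref{cor:aymp_integral}) to ensure the bound $\int_0^t r^{-\delta}a(r)^{\beta-|k|}\frac{dr}{r}\lesssim t^{-\delta}a(t)^{\beta-|k|}$ holds, and to check the borderline case $\sigma=1$ where $\alpha_1$ and $\alpha_2$ straddle $1$; everything else is a routine repackaging of Lemma \ref{lem:main_density} under the $t^{-\delta}$ time integral.
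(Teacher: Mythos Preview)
Your plan for part (i) is essentially the paper's argument: split the representation \eqref{ff01} at a free level $a>0$, use It\^o's formula on $[0,a]$ to produce $L^{\mu}f$ and the trivial bound on $[a,\infty)$, then apply Minkowski and choose $a$ small. No difference there.

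For (ii)--(iv) your route is workable but genuinely different from the paper's, and more laborious. You propose to keep $t$ general, carry the subordination integral in $r$, split at $r=t$ (or optimize the split), and push Lemma~\ref{lem:main_density}-type bounds on $p^{\nu}(t,\cdot)$ through the $r$-integral, tracking powers of $a(r)$ via the O-RV integral lemmas. The paper instead front-loads a scaling step: by Lemma~\ref{al1-1} one has $L^{\mu;\delta}p^{\nu}(t,x)=t^{-\delta}a(t)^{-d}\bigl(L^{\tilde\mu_{a(t)};\delta}p^{\tilde\nu_{a(t)}}\bigr)(1,x/a(t))$, so the factor $t^{-\delta}a(t)^{-|k|}$ falls out immediately and one is reduced to proving, uniformly in $R$, the single weighted bound $\int(1+|x|^{\beta})\bigl|D^{k}L^{\tilde\mu_{R};\delta}p^{R}(1,x)\bigr|\,dx\le CM$ (Lemma~\ref{lem:le0}(b)). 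That bound is obtained by splitting the representation at $r=1$ and invoking Lemma~\ref{lem:density_with_L} together with the moment estimate $\mathbf{E}|Z_{r}^{\tilde\mu_{R}}|^{\beta}\le C(1+r^{\beta/\alpha_{2}})$; the condition $\beta<\delta\alpha_{2}$ enters only as convergence of $\int_{1}^{\infty}r^{-\delta}r^{\beta/\alpha_{2}}\frac{dr}{r}$. Parts (ii), (iii), (iv) then follow in one line each: mean value in $x$, mean value in $t$ (using $\partial_{t}p^{\nu}=L^{\nu}p^{\nu}$ and the companion bound for $L^{\tilde\mu_{R};\delta}L^{\tilde\nu_{R}}p^{R}(1,\cdot)$), and Chebyshev, respectively.

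What the paper's order of operations buys is that no O-RV integral estimate in the $r$-variable is ever needed, and the localized bound in (iv) comes for free from the weighted $L^{1}$ estimate rather than from splitting on $\{|Z^{\mu}_{r}|\le c/2\}$ as you sketch. Your approach would work, but the bookkeeping you flag as ``the main obstacle'' is exactly what the scaling lemma eliminates.
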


For the proof of Lemma \ref{lem:fracdensity}, we will need the following
Lemmas \ref{al1-1}, \ref{lem:le0}.

\begin{lemma}
\label{al1-1}Let $\delta\in\left(0,1\right),$ $\mu\in\mathfrak{A}%
_{sym}^{\sigma}$ and $\pi\in\mathfrak{A}_{sign}^{\sigma}.$ Let $\nu\in%
\mathfrak{A}^{\sigma},w=w_{\nu}$ be a continuous O-RV function and \textbf{%
A, B} hold. Then 
\begin{eqnarray*}
p^{\nu}\left(t,x\right) & = & a\left(t\right)^{-d}p^{\tilde{\nu}%
_{a\left(t\right)}}\left(1,xa\left(t\right)^{-1}\right),x\in\mathbf{R}%
^{d},t>0, \\
L^{\mu;\delta}p^{\nu}\left(t,x\right) & = & \frac{1}{t^{\delta}}%
a\left(t\right)^{-d}(L^{\tilde{\mu}_{a\left(t\right)};\delta}p^{\tilde{\nu}%
_{a\left(t\right)}})\left(1,xa\left(t\right)^{-1}\right),x\in\mathbf{R}%
^{d},t>0, \\
L^{\pi}L^{\mu;\delta}p^{\nu}\left(t,x\right) & = & \frac{1}{t^{1+\delta}}%
a\left(t\right)^{-d}(L^{\tilde{\pi}_{a\left(t\right)}}L^{\tilde{\mu}%
_{a\left(t\right)};\delta}p^{\tilde{\nu}_{a\left(t\right)}})\left(1,xa%
\left(t\right)^{-1}\right),x\in\mathbf{R}^{d},t>0,
\end{eqnarray*}
where $a\left(t\right)=\inf\left\{ r\geq0:\omega\left(r\right)\geq t\right\}
,t>0.$
\end{lemma}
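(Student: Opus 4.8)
The plan is to derive all three identities from a single source: the distributional scaling relation between $Z^\nu_t$ and $Z^{\tilde\nu_R}_1$, together with the defining relation $w_\nu(a(t)) = t$ from Lemma \ref{al3}(i). First I would establish the base identity for the density. Recall that $R^{-1}Z^\nu_{w_\nu(R)t}$ and $Z^{\tilde\nu_R}_t$ have the same law (this is stated in the text just before Lemma \ref{lem:symbol}, and follows by comparing characteristic exponents via the change of variables $\xi \mapsto R\xi$ in $\psi^\nu$, using $\tilde\nu_R(dy) = w_\nu(R)\nu(R\,dy)$ and the homogeneity convention on $\chi_\sigma$ built into $\mathfrak{A}^\sigma$, e.g. the vanishing-mean condition when $\sigma=1$). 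Setting $R = a(t)$ and $t \mapsto 1$, and using $w_\nu(a(t)) = t$ from Lemma \ref{al3}(i), gives that $a(t)^{-1} Z^\nu_t$ has the same law as $Z^{\tilde\nu_{a(t)}}_1$. Passing to densities, and accounting for the Jacobian of the dilation $x \mapsto a(t)^{-1}x$ on $\mathbf{R}^d$, yields
\begin{equation*}
p^\nu(t,x) = a(t)^{-d} p^{\tilde\nu_{a(t)}}\bigl(1, x a(t)^{-1}\bigr), \qquad x \in \mathbf{R}^d,\ t > 0.
\end{equation*}

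Next I would propagate this identity through the operators $L^{\mu;\delta}$ and $L^\pi$. The key is the behavior of $L^\eta$ under dilation: for a dilation $\tau_a\varphi(x) = \varphi(x/a)$ one has the intertwining $L^\eta(\tau_a \varphi) = \tau_a(L^{\tilde\eta_a}\varphi)$ acting on Schwartz functions (again a change of variables $y \mapsto ay$ inside the integral defining $L^\eta$, matching the definition $\tilde\eta_a(dy) = w_\nu(a)\eta(a\,dy)$ — but note the scaling only requires $\tilde\eta_a(dy) = \eta(a\,dy)$ up to the constant $w_\nu(a)$, which is exactly what produces the extra powers of $t$ below). For the fractional operator $L^{\mu;\delta} = -(-L^{\mu_{\mathrm{sym}}})^\delta$, I would use the subordination representation (\ref{ff01}): since $L^{\mu;\delta}$ is a spectral power of $L^{\mu_{\mathrm{sym}}}$, it inherits the dilation intertwining but with the constant $w_\nu(a)$ raised to the power $\delta$, i.e. $L^{\mu;\delta}(\tau_a\varphi) = w_\nu(a)^{-\delta}\, \tau_a(L^{\tilde\mu_a;\delta}\varphi)$ — most cleanly seen on the Fourier side from $\psi^{\tilde\mu_a}(\xi) = w_\nu(a)\,\psi^\mu(a^{-1}\xi)$ and hence $(-\psi^{\tilde\mu_a}(\xi))^\delta = w_\nu(a)^\delta (-\psi^\mu(a^{-1}\xi))^\delta$. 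Applying this with $a = a(t)$ to $p^\nu(t,\cdot) = a(t)^{-d}\tau_{a(t)} p^{\tilde\nu_{a(t)}}(1,\cdot)$ and using $w_\nu(a(t)) = t$ converts each application of $L^{\mu;\delta}$ into a factor $t^{-\delta}$ and each application of $L^\pi$ (the case $\delta = 1$) into a factor $t^{-1}$, giving the second and third displayed identities.

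The only genuine subtlety — and the step I expect to need the most care — is the $\sigma = 1$ bookkeeping in the drift/cutoff term $\chi_\sigma$. Under the change of variables $y \mapsto ay$, the indicator $1_{\{|y|\le 1\}}$ becomes $1_{\{|y| \le a^{-1}\}}$ rather than $1_{\{|y|\le 1\}}$, so a priori $L^\eta$ does not intertwine exactly with $L^{\tilde\eta_a}$; there is a discrepancy drift term supported on the annulus between the two radii. Here one uses the structural assumption on $\mathfrak{A}^\sigma$ that $\int_{R < |y| \le R'} y\,\nu(dy) = 0$ for $\sigma = 1$: this kills precisely that discrepancy, so the intertwining holds on the nose. (For $\sigma \in (0,1)$ there is no drift term and nothing to check; for $\sigma \in (1,2)$ the "compensator" $\chi_\sigma \equiv 1$ is scale-invariant and the finiteness of $\int_{|y|>1}|y|\,\nu(dy)$ makes the integral absolutely convergent, so again there is no issue.) I would carry out the verification at the level of characteristic exponents to make the $\sigma=1$ cancellation transparent, then read off all three identities; once the exponents match, the density identities follow by Fourier inversion, which is justified because the relevant densities and their derivatives are integrable by Lemmas \ref{lem:symbol}, \ref{Lem: density_one}, and \ref{lem:main_density}.
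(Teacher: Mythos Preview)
Your approach is correct and essentially identical to the paper's: both verify the identities on the Fourier side via the scaling relation $\psi^{\tilde\eta_{a(t)}}(a(t)\xi)=t\,\psi^{\eta}(\xi)$ (using $w_\nu(a(t))=t$ from Lemma \ref{al3}(i)) and then Fourier-invert, with the regularity needed for inversion supplied by Lemma \ref{Lem: density_one}. Your explicit treatment of the $\sigma=1$ cutoff discrepancy is more careful than the paper, which simply asserts the exponent identity as ``obvious''; this is a welcome addition, not a deviation.
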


\begin{proof}
Indeed, by Lemma \ref{Lem: density_one}, for each $t>0$ and $r>0$, the
density $p^{\tilde{\nu}_{a\left(t\right)}}\left(r,x\right),x\in\mathbf{R}%
^{d},$ is infinitely differentiable in $x$, all derivatives are bounded and
integrable. Obviously, 
\begin{equation*}
\exp\left\{ \psi^{\nu}\left(\xi\right)t\right\} =\exp\left\{ \psi^{\tilde{\nu%
}_{a\left(t\right)}}\left(a\left(t\right)\xi\right)\right\} ,t>0,\xi\in%
\mathbf{R}^{d}, 
\end{equation*}
and 
\begin{eqnarray*}
& & \left(-\psi^{\mu}\left(\xi\right)\right)^{\delta}\exp\left\{
\psi^{\nu}\left(\xi\right)t\right\} \\
& = & \frac{1}{t^{\delta}}\left(-\psi^{\tilde{\mu}_{a\left(t\right)}}\left(a%
\left(t\right)\xi\right)\right)^{\delta}\exp\left\{ \psi^{\tilde{\nu}%
_{a\left(t\right)}}\left(a\left(t\right)\xi\right)\right\} ,t>0,\xi\in%
\mathbf{R}^{d}.
\end{eqnarray*}
We derive the first two equalities by taking Fourier inverse. Similarly, the
third equality can be derived.
\end{proof}

\begin{lemma}
\label{lem:le0}Let $\mu$, $\nu\in\mathfrak{A}^{\sigma}$ satisfy assumptions
of Lemma \ref{lem:fracdensity}, $\delta\in\left(0,1\right).$

a) For any $p\geq1,$ and $\varepsilon>0$ there is $C_{\epsilon}>0$ such that 
\begin{equation*}
\left\vert L^{\mu;\delta}f\right\vert _{L_{p}\left(\mathbf{R}%
^{d}\right)}\leq\varepsilon\left\vert L^{\mu}f\right\vert _{L_{p}\left(%
\mathbf{R}^{d}\right)}+C_{\epsilon}\left\vert f\right\vert _{L_{p}\left(%
\mathbf{R}^{d}\right)},f\in\mathcal{S}\left(\mathbf{R}^{d}\right). 
\end{equation*}

b) Let $p^{R}\left(t,x\right)=p^{\tilde{\nu}_{R}}\left(t,x\right),x\in%
\mathbf{R}^{d}$, be the pdf of $Z_{t}^{\tilde{\nu}_{R}},t>0,R>0.$ Then for
each multiindex $k\in\mathbf{N}_{0}^{d}$, and $\beta\in\lbrack0,\delta%
\alpha_{2})$, there is $C=C\left(k,\nu,\beta\right)>0$ such that, 
\begin{eqnarray*}
\int\left(1+\left\vert x\right\vert ^{\beta}\right)\left\vert D^{k}L^{\tilde{%
\mu}_{R};\delta}p^{R}\left(1,x\right)\right\vert dx & \leq & CM, \\
\int(1+\left\vert x\right\vert ^{\beta})\left\vert L^{\tilde{\mu}%
_{R};\delta}L^{\tilde{\nu}_{R}}p^{R}\left(1,x\right)\right\vert dx & \leq &
CM.
\end{eqnarray*}
\end{lemma}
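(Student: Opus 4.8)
The plan is to derive both parts from the subordination formula (\ref{ff01}). Writing $P_{t}^{\mu}g(x)=\mathbf{E}\,g(x+Z_{t}^{\mu})$, it says that for symmetric $\mu$ and $g$ smooth with all derivatives in $L_{1}$
\[
L^{\mu;\delta}g=c_{\delta}\int_{0}^{\infty}t^{-\delta}\bigl(P_{t}^{\mu}g-g\bigr)\frac{dt}{t},\qquad P_{t}^{\mu}g-g=\int_{0}^{t}P_{s}^{\mu}L^{\mu}g\,ds ,
\]
and I would combine this with the fact that each $P_{s}^{\mu}$, being convolution with the law of $Z_{s}^{\mu}$, is a contraction on $L_{p}(\mathbf{R}^{d})$. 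For a) (which is exactly Lemma \ref{lem:fracdensity}(i)) fix $a\in(0,1]$ and split the $t$-integral at $a$: on $(0,a)$ the second identity gives $|P_{t}^{\mu}f-f|_{L_{p}}\le t\,|L^{\mu}f|_{L_{p}}$, contributing at most $\frac{c_{\delta}}{1-\delta}a^{1-\delta}|L^{\mu}f|_{L_{p}}$, while on $(a,\infty)$ the crude bound $|P_{t}^{\mu}f-f|_{L_{p}}\le 2|f|_{L_{p}}$ contributes at most $\frac{2c_{\delta}}{\delta}a^{-\delta}|f|_{L_{p}}$; picking $a=a(\varepsilon,\delta)$ with $\frac{c_{\delta}}{1-\delta}a^{1-\delta}\le\varepsilon$ gives a). Only $\mu\in\mathfrak{A}_{sym}^{\sigma}$ is used here.

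For b), abbreviate $\|h\|_{w}=\int(1+|x|^{\beta})|h(x)|\,dx$ and $\|h\|_{w_{2}}=\int(1+|x|^{\alpha_{2}})|h(x)|\,dx$, so $\|h\|_{w}\le C\|h\|_{w_{2}}$ since $\beta<\alpha_{2}$. The one new ingredient is a weighted version of contractivity: translating the weight under $P_{s}^{\tilde{\mu}_{R}}$ and using $|x-z|^{\beta}\le C(|x|^{\beta}+|z|^{\beta})$ one gets $\|P_{s}^{\tilde{\mu}_{R}}h\|_{w}\le C(1+\mathbf{E}|Z_{s}^{\tilde{\mu}_{R}}|^{\beta})\|h\|_{w}$. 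I would then apply (\ref{ff01}) with $g=D^{k}p^{R}(1,\cdot)$, respectively $g=L^{\tilde{\nu}_{R}}p^{R}(1,\cdot)$ (both smooth with all derivatives in $L_{1}$ by Lemmas \ref{Lem: density_one} and \ref{lem:density_with_L}; the Fourier multiplier $L^{\tilde{\mu}_{R};\delta}$ commutes with $D^{k}$ and with $L^{\tilde{\nu}_{R}}$), and split the $t$-integral at $1$. On $(0,1)$ use $P_{t}^{\tilde{\mu}_{R}}g-g=\int_{0}^{t}P_{s}^{\tilde{\mu}_{R}}(L^{\tilde{\mu}_{R}}g)\,ds$, where $L^{\tilde{\mu}_{R}}g$ is $D^{k}L^{\tilde{\mu}_{R}}p^{R}(1,\cdot)$, resp. $L^{\tilde{\mu}_{R}}L^{\tilde{\nu}_{R}}p^{R}(1,\cdot)$, whose $w_{2}$-norm is $\le CM$ by Lemma \ref{lem:density_with_L} (taken with $\pi=\mu$); together with $\sup_{s\le1}\mathbf{E}|Z_{s}^{\tilde{\mu}_{R}}|^{\beta}\le C$ this makes the $(0,1)$ contribution at most $CM\int_{0}^{1}t^{1-\delta}\frac{dt}{t}=\frac{CM}{1-\delta}$. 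On $(1,\infty)$ one has $\|P_{t}^{\tilde{\mu}_{R}}g-g\|_{w}\le C(1+\mathbf{E}|Z_{t}^{\tilde{\mu}_{R}}|^{\beta})\|g\|_{w_{2}}$ with $\|g\|_{w_{2}}$ bounded by Lemma \ref{Lem: density_one} (resp. by Lemma \ref{lem:density_with_L} with $\pi=\nu$ together with Lemma \ref{lem:alpha12}); feeding in the growth estimate $\mathbf{E}|Z_{t}^{\tilde{\mu}_{R}}|^{\beta}\le C(1+t)^{\beta/\alpha_{2}}$, this contribution is $\le C\int_{1}^{\infty}t^{-\delta-1+\beta/\alpha_{2}}\,dt$, which is finite precisely because $\beta<\delta\alpha_{2}$. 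Adding the two parts yields b).

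The step that needs real work is the uniform-in-$R$ moment bound $\sup_{R>0}\mathbf{E}|Z_{t}^{\tilde{\mu}_{R}}|^{\beta}\le C(\nu,\beta)(1+t)^{\beta/\alpha_{2}}$ for $0\le\beta<\alpha_{2}$ (on $(0,1)$ only the bounded case $t\le1$ is needed). I would obtain it by splitting $Z_{t}^{\tilde{\mu}_{R}}$ at the scale $T=\max(1,t^{1/\alpha_{2}})$ into the part $Z_{t}^{\le T}$ with jumps of size $\le T$ and the part $Z_{t}^{>T}$ with jumps of size $>T$. The drift of $Z^{\le T}$ vanishes (by symmetry of $\mu$, or by the compensation in (\ref{f10})), so $\mathbf{E}|Z_{t}^{\le T}|^{2}=t\int_{|y|\le T}|y|^{2}\,\tilde{\mu}_{R}(dy)$; splitting this into $|y|\le1$ and $1<|y|\le T$ and using $\alpha_{1}\le2$ and the standing bound $\int_{|y|\le1}|y|^{\alpha_{1}}\,\tilde{\mu}_{R}+\int_{|y|>1}|y|^{\alpha_{2}}\,\tilde{\mu}_{R}\le M$ shows it is $\le CM(1+t)^{2/\alpha_{2}}$, whence $\mathbf{E}|Z_{t}^{\le T}|^{\beta}\le(CM(1+t)^{2/\alpha_{2}})^{\beta/2}$ as $\beta\le\alpha_{2}<2$. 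For $Z^{>T}$, Burkholder--Davis--Gundy and subadditivity of $r\mapsto r^{\beta/2}$ ($\beta/2<1$) give $\mathbf{E}|Z_{t}^{>T}|^{\beta}\le C_{\beta}\,t\int_{|y|>T}|y|^{\beta}\,\tilde{\mu}_{R}(dy)$, and since $|y|^{\beta}\le T^{\beta-\alpha_{2}}|y|^{\alpha_{2}}$ on $\{|y|>T\}$ (because $\beta<\alpha_{2}$) this is $\le C_{\beta}M(1+t)^{\beta/\alpha_{2}}$. Combining via $(a+b)^{\beta}\le2^{\beta}(a^{\beta}+b^{\beta})$ gives the claim, all constants depending only on $M$ and $\beta$; the uniform control of the $\tilde{\nu}_{R}$-integrals appearing above is Lemma \ref{lem:alpha12}. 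The choice of truncation scale $T=t^{1/\alpha_{2}}$ and the handling of the heavy-tailed piece $Z^{>T}$ is the only genuinely delicate point; the remaining bookkeeping parallels that carried out in \cite{MikPh2} for the non-fractional operator.
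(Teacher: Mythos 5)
Your proposal is correct and follows essentially the same route as the paper: the representation (\ref{ff01})/(\ref{f00}) with the time integral split at $a$ (resp. at $1$), Minkowski and $L_{p}$-contractivity for a), and for b) the weighted estimates of Lemma \ref{lem:density_with_L} on $(0,1)$ together with the moment bound $\mathbf{E}\left\vert Z_{t}^{\tilde{\mu}_{R}}\right\vert ^{\beta}\leq C(1+t)^{\beta/\alpha_{2}}$ on $(1,\infty)$, convergence there being exactly the condition $\beta<\delta\alpha_{2}$. The only difference is that you prove the uniform-in-$R$ moment estimate from scratch (correctly, via truncation at $T=t^{1/\alpha_{2}}$), whereas the paper simply cites Lemma 17 of \cite{MikPh2} for it.
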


\begin{proof}
Indeed for any $a>0,f\in\mathcal{S}\left(\mathbf{R}^{d}\right),x\in\mathbf{R}%
^{d},$ by Ito formula and (\ref{ff01}), 
\begin{eqnarray}
L^{\mu;\delta}f\left(x\right) & = & c\mathbf{E}\int_{0}^{a}t^{-\delta}%
\int_{0}^{t}L^{\mu}f\left(x+Z_{r}^{\mu}\right)dr\frac{dt}{t}  \label{f00} \\
& & +c\mathbf{E}\int_{a}^{\infty}t^{-\delta}\left[f\left(x+Z_{t}^{\mu}%
\right)-f\left(x\right)\right]\frac{dt}{t}.  \notag
\end{eqnarray}
The statement a) follows by Minkowski inequality.

By (\ref{f00}), for any multiindex $k$, 
\begin{eqnarray*}
& & \int\left(1+\left\vert x\right\vert ^{\beta}\right)\left\vert L^{\tilde{%
\mu}_{R};\delta}D^{k}p^{R}\left(1,x\right)\right\vert dx \\
& \leq & C\mathbf{E}\int_{0}^{1}t^{-\delta}\int_{0}^{t}\int\left(1+\left%
\vert x\right\vert ^{\beta}\right)\left\vert L^{\tilde{\mu}%
_{R}}D^{k}p^{R}\left(1,x+Z_{r}^{\mu}\right)\right\vert dxdr\frac{dt}{t} \\
& & +C\mathbf{E}\int_{1}^{\infty}t^{-\delta}\int\left(1+\left\vert
x\right\vert ^{\beta}\right)[\left\vert
D^{k}p^{R}\left(1,x+Z_{t}^{\mu}\right)\right\vert +\left\vert
D^{k}p^{R}\left(1,x\right)\right\vert ]dx\frac{dt}{t} \\
& = & A_{1}+A_{2}.
\end{eqnarray*}
Now, by Lemma \ref{lem:density_with_L}, and Lemma 17 in \cite{MikPh2}, 
\begin{eqnarray*}
A_{1} & \leq & C\int_{0}^{1}t^{-\delta}\int\left(1+\left\vert x\right\vert
^{\beta}\right)\left\vert L^{\tilde{\mu}_{R}}D^{k}p^{R}\left(1,x\right)%
\right\vert dxdt \\
& & +C\int_{0}^{1}t^{-\delta}\int_{0}^{t}\mathbf{E}\left(\left\vert
Z_{r}^{\mu}\right\vert ^{\beta}\right)dr\int\left\vert L^{\tilde{\mu}%
_{R}}D^{k}p^{R}\left(1,x\right)\right\vert dx\frac{dt}{t} \\
& \leq & C,
\end{eqnarray*}
and 
\begin{eqnarray*}
A_{2} & \leq & C\int_{1}^{\infty}t^{-\delta}\int\left(1+\left\vert
x\right\vert ^{\beta}\right)\left\vert D^{k}p^{R}\left(1,x\right)\right\vert
]dx\frac{dt}{t} \\
& & +C\int_{1}^{\infty}t^{-\delta}\mathbf{E}\left(\left\vert
Z_{t}^{\mu}\right\vert ^{\beta}\right)\int\left\vert
D^{k}p^{R}\left(1,x\right)\right\vert dx\frac{dt}{t} \\
& \leq & C\left(1+\int_{1}^{\infty}t^{-\delta}t^{\frac{\beta}{\alpha_{2}}}%
\frac{dt}{t}\right)\leq C.
\end{eqnarray*}
Similarly, the second inequality of part b) is proved.
\end{proof}

\subsubsection{Proof of Lemma \protect\ref{lem:fracdensity}}

(i) is proved in Lemma \ref{lem:le0} (a).

(ii) By Lemma \ref{al1-1} and Lemma \ref{lem:le0}, 
\begin{eqnarray*}
& & \int_{\mathbf{R}^{d}}\left\vert
L^{\mu;\delta}p^{\nu}\left(t,x-y\right)-L^{\mu;\delta}p^{\nu}\left(t,x%
\right)\right\vert dx \\
& = & \frac{1}{t^{\delta}}\int\left\vert L^{\tilde{\mu}_{a\left(t\right)};%
\delta}p^{\tilde{\nu}_{a\left(t\right)}}\left(1,x-\frac{y}{a\left(t\right)}%
\right)-L^{\tilde{\mu}_{a\left(t\right)};\delta}p^{\tilde{\nu}%
_{a\left(t\right)}}\left(1,x\right)\right\vert dx \\
& \leq & \frac{1}{t^{\delta}}\int_{0}^{1}\int\left\vert \nabla L^{\tilde{\mu}%
_{a\left(t\right)};\delta}p^{\tilde{\nu}_{a\left(t\right)}}\left(1,x-s\frac{y%
}{a\left(t\right)}\right)\right\vert \frac{\left\vert y\right\vert }{%
a\left(t\right)}dxds \\
& \leq & C\frac{\left\vert y\right\vert }{t^{\delta}a\left(t\right)}%
\int\left\vert L^{\tilde{\mu}_{a\left(t\right)};\delta}\nabla p^{\tilde{\nu}%
_{a(t)}}\left(1,x\right)\right\vert dx\leq CM\frac{\left\vert y\right\vert }{%
t^{\delta}a\left(t\right)}.
\end{eqnarray*}

(iii) By Lemma \ref{al1-1} and Lemma \ref{lem:le0}, 
\begin{eqnarray*}
& & \int_{2a}^{\infty}\left(\int\left\vert L^{\mu;\frac{1}{2}%
}p^{\nu}\left(t-s,x\right)-L^{\mu;\frac{1}{2}}p^{\nu}\left(t,x\right)\right%
\vert dx\right)^{2}dt \\
& \leq & \left\vert s\right\vert
^{2}\int_{2a}^{\infty}\left(\int_{0}^{1}\int\left\vert L^{\mu;\frac{1}{2}%
}L^{\nu}p^{\nu}\left(t-rs,x\right)\right\vert dxdr\right)^{2}dt \\
& \leq & C\left\vert s\right\vert ^{2}\int_{2a}^{\infty}\left(\int_{0}^{1}%
\frac{dr}{\left(t-rs\right)^{1+\frac{1}{2}}}\right)^{2}dt\leq
C\int_{2a}^{\infty}\left\vert \frac{1}{\left(t-s\right)^{\frac{1}{2}}}-\frac{%
1}{t^{\frac{1}{2}}}\right\vert ^{2}dt \\
& \leq & C\left\vert s\right\vert \int_{2a}^{\infty}\frac{dt}{%
\left(t-s\right)t}=C\int_{2a}^{\infty}\frac{1}{\left(\frac{t}{s}-1\right)}%
\frac{dt}{t}\leq C.
\end{eqnarray*}

(iv) Indeed, by Lemma \ref{al1-1}, Chebyshev inequality, and Lemma \ref%
{lem:le0}, 
\begin{eqnarray*}
& & \int_{\left\vert x\right\vert >c}\left\vert
L^{\mu;\delta}D^{k}p^{\nu}\left(t,x\right)\right\vert dx \\
& = & \frac{1}{t^{\delta}}a\left(t\right)^{-d-k}\int_{\left\vert
x\right\vert >c}\left\vert L^{\tilde{\mu}_{a\left(t\right)};\delta}D^{k}p^{%
\tilde{\nu}_{a\left(t\right)}}\left(1,\frac{x}{a\left(t\right)}%
\right)\right\vert dx \\
& \leq & \frac{a\left(t\right)^{\beta-k}c^{-\beta}}{t^{\delta}}%
\int\left\vert x\right\vert ^{\beta}\left\vert L^{\tilde{\mu}%
_{a\left(t\right)};\delta}D^{k}p^{\tilde{\nu}_{a\left(t\right)}}\left(1,x%
\right)\right\vert dx\leq CM\frac{a\left(t\right)^{\beta-k}c^{-\beta}}{%
t^{\delta}}.
\end{eqnarray*}
Similarly, we derive the second estimate of the claim.

\section{Equivalent norms of function spaces}

In this section, we discuss a characterization of Bessel potential spaces
and Besov spaces.

\begin{lemma}
\label{Lem: symbol_omega}Let $w$ be a non-decreasing O-RV function at zero
and infinity, with $p_{1}^{w},p_{2}^{w}>0$. Let $\pi\in\mathfrak{A}^{\sigma}$
and define $\tilde{\pi}_{R}\left(dy\right)=w\left(R\right)\pi\left(Rdy\right)
$, $R>0.$

\noindent a) Assume there is $N_{2}>0$ so that 
\begin{eqnarray}
\int\left(\left\vert y\right\vert \wedge1\right)\tilde{\pi}%
_{R}\left(dy\right) & \leq & N_{2}\text{ if }\sigma\in(0,1),  \label{fa0} \\
\int\left(\left\vert y\right\vert ^{2}\wedge1\right)\tilde{\pi}%
_{R}\left(dy\right) & \leq & N_{2}\text{ if }\sigma=1,  \notag \\
\int_{\left\vert y\right\vert \leq1}\left\vert y\right\vert ^{2}\tilde{\pi}%
_{R}\left(dy\right)+\int_{\left\vert y\right\vert >1}\left\vert y\right\vert 
\tilde{\pi}_{R}\left(dy\right) & \leq & N_{2}\text{ if }\sigma\in\left(1,2%
\right)  \notag
\end{eqnarray}
for any $R>0.$ Then there is a constant $C_{1}$ so that for all $\xi\in%
\mathbf{R}^{d},$ 
\begin{eqnarray*}
\int\left[1-\cos\left(2\pi\xi y\right)\right]\pi\left(dy\right) & \leq &
C_{1}N_{2}w\left(\left\vert \xi\right\vert ^{-1}\right)^{-1}, \\
\int\left\vert \sin\left(2\pi\xi\cdot
y\right)-2\pi\chi_{\sigma}\left(y\right)\xi\cdot y\right\vert
\pi\left(dy\right) & \leq & C_{1}N_{2}w\left(\left\vert \xi\right\vert
^{-1}\right)^{-1},
\end{eqnarray*}
assuming $w\left(\left\vert \xi\right\vert ^{-1}\right)^{-1}=0$ if $\xi=0.$

\noindent b) Let 
\begin{equation}
\inf_{R\in\left(0,\infty\right),\left\vert \hat{\xi}\right\vert
=1}\int_{\left\vert y\right\vert \leq1}\left\vert \hat{\xi}\cdot
y\right\vert ^{2}\tilde{\pi}_{R}\left(dy\right)=c_{0}>0.  \label{fa00}
\end{equation}
Then there is a constant $c_{2}=c_{2}\left(w,c_{0}\right)>0$ such that 
\begin{equation*}
\int\left[1-\cos\left(2\pi\xi y\right)\right]\pi\left(dy\right)\geq
c_{2}w\left(\left\vert \xi\right\vert ^{-1}\right)^{-1} 
\end{equation*}
for all $\xi\in\mathbf{R}^{d},$ assuming $w\left(\left\vert \xi\right\vert
^{-1}\right)^{-1}=0$ if $\xi=0.$
\end{lemma}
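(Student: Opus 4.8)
\textbf{Proof proposal for Lemma \ref{Lem: symbol_omega}.}

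The plan is to rewrite everything in terms of the scaled measures $\tilde{\pi}_{R}$ with the choice $R=\left\vert \xi\right\vert ^{-1}$ and then exploit the scaling identity. For any $\xi\neq 0$, substituting $y=Ry'$ with $R=\left\vert \xi\right\vert^{-1}$ gives
\begin{equation*}
\int\left[1-\cos\left(2\pi\xi\cdot y\right)\right]\pi\left(dy\right)=w\left(\left\vert \xi\right\vert^{-1}\right)^{-1}\int\left[1-\cos\left(2\pi\hat{\xi}\cdot y'\right)\right]\tilde{\pi}_{\left\vert \xi\right\vert^{-1}}\left(dy'\right),
\end{equation*}
and a similar identity for the $\sin$-integral, because $\chi_{\sigma}(y)y\cdot\xi$ scales the same way up to the cutoff (which is exactly where the three cases of \eqref{fa0} enter). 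So it suffices to prove the stated bounds with $\pi$ replaced by any measure $\tilde{\pi}_{R}$ satisfying the hypotheses, with $\left\vert \xi\right\vert=1$, uniformly in $R$. This reduces part a) and part b) to a uniform estimate over the family $\{\tilde{\pi}_{R}:R>0\}$ and the single frequency $\hat\xi$ on the unit sphere.

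For part a), upper bound: split the integral over $\left\vert y\right\vert\le 1$ and $\left\vert y\right\vert>1$. On $\left\vert y\right\vert>1$ use $1-\cos\le 2$ together with the tail control $\int_{\left\vert y\right\vert>1}\tilde{\pi}_R(dy)\le N_2$ (which follows from \eqref{fa0} in each of the three regimes, since $\left\vert y\right\vert\wedge 1=1$ there, resp. $\left\vert y\right\vert^2\wedge1=1$, resp. $\left\vert y\right\vert\ge1$). On $\left\vert y\right\vert\le1$ use $1-\cos(2\pi\hat\xi\cdot y)\le C\left\vert y\right\vert^2$ when $\sigma\in[1,2)$ and $1-\cos(2\pi\hat\xi\cdot y)\le C\left\vert y\right\vert$ (interpolating $1-\cos u\le 2\wedge u^2\le C\left\vert u\right\vert$) when $\sigma\in(0,1)$, then apply the matching inequality in \eqref{fa0}. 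For the $\sin$-integral: when $\sigma\in[1,2)$, $\left\vert \sin(2\pi\hat\xi\cdot y)-2\pi\hat\xi\cdot y\right\vert\le C\left\vert y\right\vert^2$ on $\left\vert y\right\vert\le1$ and the far part is $\left\vert\sin\left\vert\le 2\pi\left\vert\hat\xi\cdot y\right\vert$ or $\le1$, controlled by $\int_{\left\vert y\right\vert>1}\left\vert y\right\vert\tilde\pi_R(dy)$ (for $\sigma\in(1,2)$) or by the symmetry-free bound $\left\vert\sin\left\vert\le1$ plus the $\left\vert y\right\vert^2\wedge1$ bound (for $\sigma=1$, where $\chi_\sigma=1_{\{\left\vert y\right\vert\le1\}}$); when $\sigma\in(0,1)$, $\chi_\sigma\equiv0$ and $\left\vert\sin(2\pi\hat\xi\cdot y)\right\vert\le C(\left\vert y\right\vert\wedge1)$ does the job directly. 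In each case one lands on a constant times $N_2$, which is the claim after multiplying back by $w(\left\vert\xi\right\vert^{-1})^{-1}$.

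For part b), lower bound: again reduce to $\left\vert\xi\right\vert=1$ and estimate from below
\begin{equation*}
\int\left[1-\cos\left(2\pi\hat\xi\cdot y\right)\right]\tilde{\pi}_R(dy)\ge \int_{\left\vert y\right\vert\le1}\left[1-\cos\left(2\pi\hat\xi\cdot y\right)\right]\tilde{\pi}_R(dy)\ge c\int_{\left\vert y\right\vert\le 1}\left\vert\hat\xi\cdot y\right\vert^2\tilde{\pi}_R(dy),
\end{equation*}
where the last step uses $1-\cos u\ge c u^2$ on the bounded range $\left\vert u\right\vert=\left\vert 2\pi\hat\xi\cdot y\right\vert\le 2\pi$ (valid for $\left\vert y\right\vert\le1$). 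By hypothesis \eqref{fa00} the right-hand side is at least $c\cdot c_0>0$ uniformly in $R$ and in $\hat\xi$, giving the bound $c_2 w(\left\vert\xi\right\vert^{-1})^{-1}$ after rescaling. The main subtlety — and the only place requiring care — is bookkeeping the three $\sigma$-regimes in part a) so that the drift term $\chi_\sigma(y)y\cdot\xi$ and the growth moment assumed in \eqref{fa0} match after the scaling $y\mapsto y/\left\vert\xi\right\vert$; in particular the cutoff radius $1$ in $\chi_\sigma$ is \emph{not} scale-invariant, but since the compensation is only needed for $\sigma=1$ on $\left\vert y\right\vert\le1$ and there $1-\cos$ and $\left\vert\sin u - u\right\vert$ are both $O(\left\vert y\right\vert^2)$, the mismatch costs nothing. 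Everything else is a routine $1-\cos u\asymp u^2\wedge1$ estimate. The lower bound b) is essentially immediate once the reduction to the unit sphere is in place.
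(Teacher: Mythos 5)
Your overall strategy for part b) — rescale to $\tilde{\pi}_{R}$ with $R=\left\vert \xi\right\vert ^{-1}$ and reduce to a uniform bound at frequency $\hat{\xi}$ on the unit sphere — is the same as the paper's, but the key step is wrong as written. You claim $1-\cos u\geq cu^{2}$ for $\left\vert u\right\vert =\left\vert 2\pi\hat{\xi}\cdot y\right\vert \leq2\pi$; this is false, since $1-\cos(2\pi)=0$ while $(2\pi)^{2}>0$, so no such $c>0$ exists on that range. The failure is not cosmetic: for a single measure, the unit-ball hypothesis $\int_{\left\vert y\right\vert \leq1}\left\vert \hat{\xi}\cdot y\right\vert ^{2}\tilde{\pi}_{R}(dy)\geq c_{0}$ does \emph{not} imply $\int[1-\cos(2\pi\hat{\xi}\cdot y)]\tilde{\pi}_{R}(dy)\geq c$ (in $d=1$ take mass concentrated at $y=\pm1$, where $1-\cos(2\pi\hat{\xi}\cdot y)=0$). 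The paper's proof avoids this by first restricting to $\left\vert y\right\vert \leq\frac{1}{4}$, where $\left\vert 2\pi\hat{\xi}\cdot y\right\vert \leq\pi/2$ and $1-\cos x\geq x^{2}/\pi$ holds, and then rescaling that smaller ball back to the unit ball: the substitution $y\mapsto4y$ converts the integral into $c\,w(\left\vert 4\xi\right\vert ^{-1})^{-1}\int_{\left\vert y\right\vert \leq1}\left\vert \hat{\xi}\cdot y\right\vert ^{2}\tilde{\pi}_{\left\vert 4\xi\right\vert ^{-1}}(dy)$, which is bounded below because (\ref{fa00}) is assumed \emph{uniformly in} $R$ (here applied with $R=\left\vert 4\xi\right\vert ^{-1}$), and $w(\left\vert 4\xi\right\vert ^{-1})^{-1}\geq w(\left\vert \xi\right\vert ^{-1})^{-1}$ by monotonicity of $w$. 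This use of the whole family $\{\tilde{\pi}_{R}\}_{R>0}$, not just the single $R=\left\vert \xi\right\vert ^{-1}$, is the missing idea.

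For part a) (which the paper does not reprove but cites from Lemma 7 of \cite{MikPh1}), your cosine estimate and the $\sigma\in(0,1)$, $\sigma\in(1,2)$ sine estimates are fine, but the $\sigma=1$ sine estimate is not. After the substitution $y=Ry^{\prime}$ the compensator $\chi_{\sigma}(y)\xi\cdot y$ becomes $1_{\{\left\vert y^{\prime}\right\vert \leq\left\vert \xi\right\vert \}}\hat{\xi}\cdot y^{\prime}$, and on the annulus between radius $1$ and radius $\left\vert \xi\right\vert $ the integrand contains the bare linear term $2\pi\left\vert \hat{\xi}\cdot y^{\prime}\right\vert $, which the assumed moment $\int(\left\vert y^{\prime}\right\vert ^{2}\wedge1)\,\tilde{\pi}_{R}(dy^{\prime})\leq N_{2}$ does not control; already for the one-dimensional Cauchy-type measure $\pi(dy)=dy/y^{2}$ this contribution grows like $\left\vert \xi\right\vert \log\left\vert \xi\right\vert $ rather than $w(\left\vert \xi\right\vert ^{-1})^{-1}\sim\left\vert \xi\right\vert $. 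So the mismatch of cutoffs is precisely where the cost appears, and it is not zero. The standard repair uses the annulus-cancellation condition $\int_{R<\left\vert y\right\vert \leq R^{\prime}}y\,d\pi=0$ built into the definition of $\mathfrak{A}^{\sigma}$ for $\sigma=1$ to move the cutoff from $1$ to $\left\vert \xi\right\vert ^{-1}$ at no cost — but that cancellation is only available when the absolute value sits outside the integral, so the $\sigma=1$ case requires a genuinely different (signed) argument from the one you sketch.
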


\begin{proof}
The following simple estimates hold: 
\begin{eqnarray}
\left\vert \sin x-x\right\vert & \leq & \frac{\left\vert x\right\vert ^{3}}{6%
},1-\cos x\leq\frac{1}{2}x^{2},x\in\mathbf{R,}  \label{fa1} \\
1-\cos x & \geq & \frac{x^{2}}{\pi}\text{ if }\left\vert x\right\vert
\leq\pi/2.  \notag
\end{eqnarray}
Part a) was proved in Lemma 7 of \cite{MikPh1}.

b) By (\ref{fa1}), for all $\xi\in\mathbf{R}^{d},$ 
\begin{eqnarray*}
& & \int[1-\cos\left(2\pi\xi\cdot y\right)]\pi\left(dy\right) \\
& = & \int[1-\cos\left(2\pi\hat{\xi}\cdot y\right)]\pi_{\left\vert
\xi\right\vert ^{-1}}\left(dy\right)\geq\int_{\left\vert y\right\vert \leq%
\frac{1}{4}}4\pi\left\vert \hat{\xi}\cdot y\right\vert ^{2}\pi_{\left\vert
\xi\right\vert ^{-1}}\left(dy\right) \\
& = & 4^{-1}\int_{\left\vert 4y\right\vert \leq1}\pi\left\vert \hat{\xi}%
\cdot4y\right\vert ^{2}\pi_{\left\vert \xi\right\vert
^{-1}}\left(dy\right)=4^{-1}w\left(\left\vert 4\xi\right\vert
^{-1}\right)^{-1}\int_{\left\vert y\right\vert \leq1}\pi\left\vert \hat{\xi}%
\cdot y\right\vert ^{2}\tilde{\pi}_{\left\vert 4\xi\right\vert
^{-1}}\left(dy\right) \\
& \geq & cw\left(\left\vert 4\xi\right\vert ^{-1}\right)^{-1}\geq
cw\left(\left\vert \xi\right\vert ^{-1}\right)^{-1}.
\end{eqnarray*}
\end{proof}

\begin{lemma}
\label{Lem:expEst}Let $\nu\in\mathfrak{A}^{\sigma},w=w_{\nu}$ be an O-RV
function and \textbf{A, B} hold. Let $\zeta,\zeta_{0}\in C_{0}^{\infty}\left(%
\mathbf{R}^{d}\right)$ be such that $0\notin\text{supp}\left(\zeta\right)$ .
Let $\tilde{\zeta}=\mathcal{F}^{-1}\zeta,\tilde{\zeta}_{0}=\mathcal{F}%
^{-1}\zeta_{0}$, and for $R>0$, 
\begin{eqnarray*}
H^{R}\left(t,x\right) & = & \mathbf{E}\tilde{\zeta}\left(x+Z_{t}^{R}%
\right),t\geq0,x\in\mathbf{R}^{d}, \\
H_{0}^{R}\left(t,x\right) & = & \mathbf{E}\tilde{\zeta}_{0}\left(x+Z_{t}^{R}%
\right),t\geq0,x\in\mathbf{R}^{d}.
\end{eqnarray*}
(i) For any $0<\alpha_{2}<p_{1}^{\omega_{\nu}}\wedge p_{2}^{\omega_{\nu}}$,
there are constants $C_{0},C_{1},C_{2}>0$ independent of $R$ such that 
\begin{eqnarray*}
\int\left(1+\left\vert x\right\vert ^{\alpha_{2}}\right)\left\vert
H^{R}\left(t,x\right)\right\vert dx & \leq & C_{1}e^{-C_{2}t},t\geq0, \\
\int\left\vert x\right\vert ^{\alpha_{2}}\left\vert
H_{0}^{R}\left(t,x\right)\right\vert dx & \leq &
C_{0}\left(1+t\right),t\geq0, \\
\int\left\vert H_{0}^{R}\left(t,x\right)\right\vert dx & \leq & C_{0},t\geq0.
\end{eqnarray*}
(ii) There are constants $C_{1},C_{2}>0$ independent of $R$ so that for $y\in%
\mathbf{R}^{d},$ 
\begin{eqnarray*}
\int\left\vert H^{R}\left(t,x+y\right)-H^{R}\left(t,x\right)\right\vert dx &
\leq & C_{1}\left\vert y\right\vert e^{-C_{2}t}, \\
\int\left\vert
H_{0}^{R}\left(t,x+y\right)-H_{0}^{R}\left(t,x\right)\right\vert dx & \leq &
\left\vert y\right\vert \int\left\vert \nabla\tilde{\zeta}%
_{0}\left(x\right)\right\vert dx.
\end{eqnarray*}
\end{lemma}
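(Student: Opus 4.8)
\textbf{Proof plan for Lemma \ref{Lem:expEst}.}

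The plan is to reduce everything to the density estimates already established and to exploit the scaling identity $Z_t^R \overset{d}{=} R^{-1} Z^{\nu}_{w_\nu(R)t}$ together with Lemma \ref{al1-1}. First I would write $H^R(t,x) = \mathbf{E}\tilde\zeta(x+Z_t^R) = (\tilde\zeta * p^R(t,\cdot))(x)$, where $p^R(t,\cdot)$ is the pdf of $Z_t^R$; since $0\notin\mathrm{supp}(\zeta)$, the function $\tilde\zeta=\mathcal{F}^{-1}\zeta$ satisfies $\tilde\zeta = L^{\tilde\nu_R;-1}L^{\tilde\nu_R}\tilde\zeta$ in the sense that $\zeta(\xi) = (-\psi^{\tilde\nu_R}(\xi))^{-1}\cdot(-\psi^{\tilde\nu_R}(\xi))\zeta(\xi)$ is legitimate away from $\xi=0$, so one can trade a factor that decays like $w(|\xi|^{-1})^{-1}$ (by Lemma \ref{Lem: symbol_omega}) against derivatives. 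More directly: by Lemma \ref{lem:symbol} and Lemma \ref{Lem: density_one}, $p^R(1,\cdot)$ and all its $x$-derivatives are bounded and integrable with weight $(1+|x|^{\alpha_2})$, uniformly in $R$, so for the unit-time estimates one simply convolves. For general $t$, use $p^R(t,x) = a_R(t)^{-d} p^{\tilde\nu_{R a_R(t)}}(1, x a_R(t)^{-1})$ from Lemma \ref{al1-1} applied to the measure $\tilde\nu_R$ in place of $\nu$ (so $a_R(t) = \inf\{r : w_{\tilde\nu_R}(r)\ge t\}$), which keeps all constants uniform in $R$ because the hypotheses $\mathbf{A},\mathbf{B}$ and Lemma \ref{lem:alpha12} bounds are scale-invariant for the family $\{\tilde\nu_R\}$.

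The exponential decay in the first and third displays of (i) and in the first display of (ii) is the genuine content. Here I would split time into $t\le 1$ and $t\ge 1$. For $t\le 1$, $\mathbf{E}\tilde\zeta(x+Z_t^R)\to\tilde\zeta(x)$ as $t\to0$ and the weighted $L_1$ norm stays bounded by a routine estimate using $|\tilde\zeta(x+Z_t^R)-\tilde\zeta(x)|$ and $\mathbf{E}|Z_t^R|^{\alpha_2}\le Ct$ (the moment bound comes from Lemma \ref{lem:alpha12} via the Lévy–Khinchine representation), so on $[0,1]$ the quantity is bounded by a constant, hence by $C_1 e^{-C_2}$ after adjusting constants. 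For $t\ge 1$: because $0\notin\mathrm{supp}(\zeta)$, write $\hat H^R(t,\xi) = \zeta(\xi)\exp\{\psi^{\tilde\nu_R}(\xi)t\}$ and use $\mathrm{Re}\,\psi^{\tilde\nu_R}(\xi)\le -c|\xi|^{\kappa}$ for $|\xi|\ge c_0$ (this is essentially (\ref{eq:eq:mainExp}) from Lemma \ref{lem:symbol}, transferred from $\psi^{R,0}$ to $\psi^{\tilde\nu_R}$ by discarding the large-jump part, which only helps since it contributes a nonpositive real part) together with the fact that $\zeta$ is supported in an annulus bounded away from $0$; thus $\exp\{\psi^{\tilde\nu_R}(\xi)t\} = \exp\{\psi^{\tilde\nu_R}(\xi)(t-1)\}\exp\{\psi^{\tilde\nu_R}(\xi)\}$, the first factor being bounded by $e^{-c(t-1)}$ on $\mathrm{supp}(\zeta)$ and the second producing, after inverse Fourier transform, exactly a function of the type controlled at $t=1$. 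This gives $\int(1+|x|^{\alpha_2})|H^R(t,x)|\,dx\le C e^{-c(t-1)}\cdot C_1 \le C_1' e^{-C_2 t}$.

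For the polynomial-growth bounds on $H_0^R$ (middle and last displays of (i)): since $\zeta_0$ need not vanish near the origin, there is no spectral decay, so instead I would use $|H_0^R(t,x)| \le \mathbf{E}|\tilde\zeta_0(x+Z_t^R)|$ directly, giving $\int |H_0^R(t,x)|\,dx = \int |\tilde\zeta_0|\,dx = C_0$ by Fubini, and $\int |x|^{\alpha_2}|H_0^R(t,x)|\,dx \le \mathbf{E}\int |x-Z_t^R|^{\alpha_2}|\tilde\zeta_0(x)|^{\phantom{1}}\,dx$ — wait, more carefully, $\int|x|^{\alpha_2}\mathbf{E}|\tilde\zeta_0(x+Z_t^R)|\,dx = \mathbf{E}\int|y - Z_t^R|^{\alpha_2}|\tilde\zeta_0(y)|\,dy \le C(\int|y|^{\alpha_2}|\tilde\zeta_0|\,dy + \mathbf{E}|Z_t^R|^{\alpha_2}) \le C_0(1+t)$, using $\mathbf{E}|Z_t^R|^{\alpha_2}\le Ct$ for $t\ge 1$ and $\le Ct$ for $t\le 1$ as well (again from Lemma \ref{lem:alpha12} and the choice $\alpha_2 < p_1^w\wedge p_2^w$, which by Remark \ref{rem:alpha_choice} can be taken below the relevant threshold making the large-jump moment finite). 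For (ii), the difference estimates: for $H_0^R$ write $H_0^R(t,x+y)-H_0^R(t,x) = \mathbf{E}[\tilde\zeta_0(x+y+Z_t^R) - \tilde\zeta_0(x+Z_t^R)] = \mathbf{E}\int_0^1 y\cdot\nabla\tilde\zeta_0(x+sy+Z_t^R)\,ds$, and integrating in $x$ and using translation invariance gives exactly $|y|\int|\nabla\tilde\zeta_0(x)|\,dx$; for $H^R$, the same mean-value step reduces it to $\le |y|\int|\nabla\tilde\zeta^{\phantom{1}}|$-type bound but now one must keep the exponential factor, so I would instead apply the already-proved first display of (i) to the function $\nabla\tilde\zeta$ in place of $\tilde\zeta$ (whose Fourier transform $2\pi i\xi\,\zeta(\xi)$ is again smooth, compactly supported, and vanishes near $0$), obtaining $\int|H^R(t,x+y)-H^R(t,x)|\,dx \le |y|\int \mathbf{E}|\nabla\tilde\zeta(x+Z_t^R)|\,dx \le C_1|y|e^{-C_2 t}$.

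The main obstacle is the transfer of the exponential spectral bound: Lemma \ref{lem:symbol}(i) is stated for the truncated symbol $\psi^{R,0}$ (only the small jumps), whereas here we need $\mathrm{Re}\,\psi^{\tilde\nu_R}(\xi)\le -c|\xi|^\kappa$ on the annular support of $\zeta$. I would handle this by noting $\mathrm{Re}\,\psi^{\tilde\nu_R} = \mathrm{Re}\,\psi^{R,0} + \mathrm{Re}\,\psi^{\tilde\nu_{R,2}}$ and $\mathrm{Re}\,\psi^{\tilde\nu_{R,2}}(\xi) = \int_{|y|>1}[\cos(2\pi\xi\cdot y)-1]\tilde\nu_R(dy) \le 0$, so the inequality (\ref{eq:eq:mainExp}) for $\psi^{R,0}$ passes to $\psi^{\tilde\nu_R}$ unchanged; the only care needed is that the bound holds for $|\xi|\ge 1$ whereas $\mathrm{supp}(\zeta)$ may dip below $1$, but since $\mathrm{supp}(\zeta)$ is a fixed compact set bounded away from $0$, on the part where $|\xi|\in[c_\zeta, 1]$ continuity and $\mathbf{B}$ give $\mathrm{Re}\,\psi^{\tilde\nu_R}(\xi)\le -c'$ uniformly in $R$ (this is where the nondegeneracy $\mathbf{B}$ and Lemma \ref{Lem: symbol_omega}(b) are essential), so the decay persists with possibly smaller $\kappa$ and $c$.
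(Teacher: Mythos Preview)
Your treatment of $H_0^R$ in (i) and (ii) is correct and essentially identical to the paper's. Your reduction of the $H^R$ estimate in (ii) to the first estimate in (i) applied to $\partial_j\tilde\zeta$ (whose Fourier transform $2\pi i\xi_j\zeta(\xi)$ again avoids the origin) is also valid and in fact slightly slicker than the paper, which reproves that difference estimate via the decomposition described below.

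The genuine gap is in your argument for the exponential decay of $\int(1+|x|^{\alpha_2})|H^R(t,x)|\,dx$. The factorization $\hat H^R(t,\xi)=\exp\{\psi^{\tilde\nu_R}(\xi)(t-1)\}\hat H^R(1,\xi)$ together with $|\exp\{\psi^{\tilde\nu_R}(\xi)(t-1)\}|\le e^{-c(t-1)}$ on $\mathrm{supp}(\zeta)$ does \emph{not} give a factor $e^{-c(t-1)}$ on weighted $L_1$ norms. On the physical side the factorization reads $H^R(t,\cdot)=H^R(1,\cdot)\ast p^R(t-1,\cdot)$, and the convolving kernel satisfies $\int(1+|x|^{\alpha_2})p^R(t-1,x)\,dx\sim 1+t$, so convolution \emph{increases} the weighted norm. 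Equivalently, to extract $x$-decay from the Fourier side you must differentiate $\zeta(\xi)e^{\psi^{\tilde\nu_R}(\xi)t}$ in $\xi$; but $\partial_\xi^\gamma\psi^{\tilde\nu_R}$ requires $\int_{|y|>1}|y|^{|\gamma|}\tilde\nu_R(dy)<\infty$, and only the $\alpha_2$-moment ($\alpha_2<2$, possibly $<1$) is available, so the needed derivatives do not exist uniformly in $R$.

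The paper resolves this by splitting $\tilde\nu_R=\tilde\nu_{R,\epsilon}+(\tilde\nu_R-\tilde\nu_{R,\epsilon})$ with $\tilde\nu_{R,\epsilon}=\chi_{\{|y|\le\epsilon\}}\tilde\nu_R$. The small-jump symbol $\psi^{\tilde\nu_{R,\epsilon}}$ is $C^\infty$ in $\xi$ with all derivatives bounded uniformly in $R$ (compactly supported measure), and satisfies $-\mathrm{Re}\,\psi^{\tilde\nu_{R,\epsilon}}(\xi)\ge c_0|\xi|^2$ on $\mathrm{supp}(\zeta)$ by \textbf{B}. Plancherel then gives $\int|x^\gamma F(t,x)|^2\,dx\le C_1 e^{-C_2 t}$ for $F=\mathcal{F}^{-1}[\zeta e^{\psi^{\tilde\nu_{R,\epsilon}}t}]$ and all multi-indices $\gamma$, hence $\int(1+|x|^2)|F(t,x)|\,dx\le C_1 e^{-C_2 t}$ by Cauchy--Schwarz. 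The large-jump part contributes only a convolution $H^R(t,\cdot)=F(t,\cdot)\ast P_t$ with $P_t$ a probability measure satisfying $\int|y|^{\alpha_2}P_t(dy)\le C(1+t)$; since $F$ already carries exponential decay with a polynomial weight that dominates $|x|^{\alpha_2}$, the triangle-inequality splitting $|x|^{\alpha_2}\le C(|x-y|^{\alpha_2}+|y|^{\alpha_2})$ yields the claimed bound, the linear growth of $\int|y|^{\alpha_2}P_t(dy)$ being absorbed by the exponential $e^{-C_2 t}$. This small-jump/large-jump decomposition is the missing ingredient in your plan.
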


\begin{proof}
(i) Note that 
\begin{equation*}
\mathcal{F}H^{R}(t,\xi)=\exp\left\{ \psi^{\tilde{\nu}_{R}}\left(\xi\right)t%
\right\} \zeta\left(\xi\right),\xi\in\mathbf{R}^{d}. 
\end{equation*}

Following Lemma 2 of \cite{MikPh2}(see also Corollary 2 of \cite{MF}). We
choose $\epsilon>0$ so that $\text{supp}\left(\zeta\right)\subset\left\{
\xi:\left\vert \xi\right\vert \leq\epsilon^{-1}\right\} $. Let $\tilde{\nu}%
_{R,\epsilon}\left(dy\right)=\chi_{\left\{ \left\vert y\right\vert
\leq\epsilon\right\} }\tilde{\nu}_{R}\left(dy\right),R\in\left(0,1\right]$.
Then for $\xi\in\text{supp}\left(\zeta\right)$, $\left\vert y\right\vert
\leq\epsilon$ and $R\in(0,1]$,

\begin{equation*}
1-\cos\left(\xi\cdot y\right)\geq\frac{1}{\pi}\left\vert \xi\cdot
y\right\vert ^{2}=\frac{\left\vert \xi\right\vert ^{2}}{\pi}\left\vert \hat{%
\xi}\cdot y\right\vert ^{2},\hspace{1em}\hat{\xi}=\xi/\left\vert
\xi\right\vert , 
\end{equation*}
and for some $c_{0}=c_{0}\left(\varepsilon\right),$ applying Lemma \ref%
{lem:powerEstratio},

\begin{align}
-\text{Re}\psi^{\tilde{\nu}_{R,\epsilon}}\left(\xi\right) &
=\int_{\left\vert y\right\vert \leq\epsilon}\left[1-\cos\left(\xi\cdot
y\right)\right]\tilde{\nu}_{R,\epsilon}\left(dy\right)  \notag \\
& \geq\frac{\left\vert \xi\right\vert ^{2}}{\pi}\int_{\left\vert
y\right\vert \leq\epsilon}\left\vert \hat{\xi}\cdot y\right\vert ^{2}\tilde{%
\nu}_{R}\left(dy\right)  \label{eq:symb2} \\
& =\frac{\left\vert \xi\right\vert ^{2}}{\pi}\int_{\left\vert y\right\vert
\leq1}\epsilon^{2}\left\vert \hat{\xi}\cdot y\right\vert ^{2}\frac{%
w\left(R\right)}{w\left(R\epsilon\right)}\tilde{\nu}_{\epsilon
R}\left(dy\right)\geq c_{0}\left\vert \xi\right\vert ^{2}  \notag
\end{align}
for all $\xi\in\mathbf{R}^{d}.$ Hence,

\begin{equation*}
H^{R}(t,\cdot)=F\left(t,\cdot\right)\ast P_{t}, 
\end{equation*}
where 
\begin{equation*}
F\left(t,x\right)=\mathcal{F}^{-1}\left[\exp\left\{ \psi^{\tilde{\nu}%
_{R,\epsilon}}t\right\} \zeta\right]\left(x\right)=\mathbf{E}\tilde{\zeta}%
\left(x+Z_{t}^{\tilde{\nu}_{R,\epsilon}}\right),t\geq0,x\in\mathbf{R}^{d}, 
\end{equation*}
and $P_{t}\left(dy\right)$ is the distribution of $Z_{t}^{\tilde{\nu}_{R}-%
\tilde{\nu}_{R,\epsilon}}$. By Plancherel for any multiindex $\gamma\in%
\mathbf{N}_{0}^{d}$ and (\ref{eq:symb2}), 
\begin{eqnarray*}
\int\left\vert x^{\gamma}F\left(t,x\right)\right\vert ^{2}dx & \leq &
C\int\left\vert D^{\gamma}\left[\zeta\left(\xi\right)\exp\left\{ \psi^{%
\tilde{\nu}_{R,\epsilon}}\left(\xi\right)t\right\} \right]\right\vert
^{2}d\xi \\
& \leq & C_{1}e^{-C_{2}t},t\geq0.
\end{eqnarray*}
By Cauchy-Schwarz inequality, with $d_{0}=\left[\frac{d}{2}\right]+1,$ 
\begin{eqnarray*}
& & \int\left(1+\left\vert x\right\vert ^{2}\right)\left\vert
F\left(t,x\right)\right\vert dx \\
& = & \int\left(1+\left\vert x\right\vert ^{2}\right)\left(1+\left\vert
x\right\vert \right)^{-d_{0}}\left\vert F\left(t,x\right)\right\vert
\left(1+\left\vert x\right\vert \right)^{d_{0}}dx \\
& \leq & \left(\int\left(1+\left\vert x\right\vert
\right)^{-2d_{0}}dx\right)^{1/2}\left(\int(1+\left\vert x\right\vert
)^{4}\left\vert F\left(t,x\right)\right\vert ^{2}\left(1+\left\vert
x\right\vert \right)^{2d_{0}}dx\right)^{1/2} \\
& \leq & C\left(\int F\left(t,x\right)^{2}\left(1+\left\vert x\right\vert
^{2}\right)^{d_{0}+2}dx\right)^{1/2}\leq C_{1}\exp\left\{ -C_{2}t\right\}
,t\geq0.
\end{eqnarray*}
Let $0<\alpha_{2}<p_{1}\wedge p_{2}$. By Lemma \ref{lem:alpha12}, and Lemma
17 of \cite{MikPh2}, there is $C>0$ so that 
\begin{equation*}
\mathbf{E}\left[\left\vert Z_{t}^{\tilde{\nu}_{R}-\tilde{\nu}%
_{R,\epsilon}}\right\vert ^{\alpha_{2}}\right]=\int\left\vert y\right\vert
^{\alpha_{2}}P_{t}\left(dy\right)\leq C\left(1+t\right),t\geq0. 
\end{equation*}
Hence there are constants $C_{1},C_{2}$ so that 
\begin{eqnarray*}
& & \int\left(1+\left\vert x\right\vert ^{\alpha_{2}}\right)\left\vert
H^{R}\left(t,x\right)\right\vert dx=\int\left(1+\left\vert x\right\vert
^{\alpha_{2}}\right)\left\vert \int
F\left(t,x-y\right)P_{t}\left(dy\right)\right\vert dx \\
& \leq & \int\int\left(1+\left\vert x-y\right\vert
^{\alpha_{2}}\right)\left\vert F\left(t,x-y\right)\right\vert
P_{t}\left(dy\right)dx \\
& & +\int\int\left\vert y\right\vert ^{\alpha_{2}}\left\vert
F\left(t,x-y\right)\right\vert P_{t}\left(dy\right)dx \\
& \leq & C_{1}e^{-C_{2}t},t\geq0,
\end{eqnarray*}
and 
\begin{eqnarray*}
\int\left\vert x\right\vert ^{\alpha_{2}}\left\vert
H_{0}^{R}\left(t,x\right)\right\vert dx & = & \int\left\vert x\right\vert
^{\alpha_{2}}\left\vert \mathbf{E}\tilde{\zeta}_{0}\left(x+Z_{t}^{R}\right)%
\right\vert dx \\
& \leq & \mathbf{E}\int\left\vert x+Z_{t}^{R}\right\vert
^{\alpha_{2}}\left\vert \tilde{\zeta}_{0}\left(x+Z_{t}^{R}\right)\right\vert
dx+\mathbf{E}\left[\left\vert Z_{t}^{R}\right\vert ^{\alpha_{2}}\right]%
\int\left\vert \tilde{\zeta}_{0}\left(x\right)\right\vert dx \\
& \leq & C\left(1+t\right).
\end{eqnarray*}
The last inequality is trivial.

\noindent (ii) Similarly as in part (i), for $y\in\mathbf{R}^{d},$ 
\begin{eqnarray*}
& & \int\left\vert H^{R}\left(t,x+y\right)-H^{R}\left(t,x\right)\right\vert
dx \\
& = & \int\left\vert \int\int_{0}^{1}\nabla F\left(t,x+sy-z\right)\cdot
ydsP_{t}\left(dz\right)\right\vert dx \\
& \leq & \left\vert y\right\vert \int\left\vert
DF\left(t,x\right)\right\vert dx\leq C_{1}\left\vert y\right\vert
e^{-C_{2}t},t>0,
\end{eqnarray*}
and directly 
\begin{equation*}
\int\left\vert
H_{0}^{R}\left(t,x+y\right)-H_{0}^{R}\left(t,x\right)\right\vert
dx\leq\left\vert y\right\vert \int\left\vert \nabla\tilde{\zeta}%
_{0}\left(x\right)\right\vert dx. 
\end{equation*}
\end{proof}

Let $V_{r}=L_{r}\left(U,\mathcal{U},\Pi\right),r\geq1$, the space of $r-$%
integrable measurable functions on $U$, and $V_{0}=\mathbf{R}$. For brevity
of notation we write 
\begin{equation*}
B_{r,pp}^{s}\left(A\right)=B_{pp}^{s}\left(A;V_{r}\right),\hspace{1em}%
\mathbb{B}_{r,pp}^{s}\left(A\right)=\mathbb{\mathbb{B}}_{pp}^{s}%
\left(A;V_{r}\right), 
\end{equation*}
\begin{eqnarray*}
H_{r,p}^{s}\left(A\right) & = & H_{p}^{s}\left(A;V_{r}\right),\hspace{1em}%
\mathbb{H}_{r,p}^{s}\left(A\right)=\mathbb{H}_{p}^{s}\left(A;V_{r}\right), \\
L_{r,p}\left(A\right) & = & H_{r,p}^{0}\left(A\right),\hspace{1em}\mathbb{L}%
_{r,p}\left(A\right)=\mathbb{H}_{r,p}^{0}\left(A\right),
\end{eqnarray*}
where $A=\mathbf{R}^{d}$ or $E$. We use the following equivalent norms of
Besov spaces $B_{r,pp}^{s}\left(\mathbf{R}^{d}\right),r=0,p,$ (see
Proposition \ref{prop-FuncSpace} below) 
\begin{equation*}
|v|_{\tilde{B}_{r,pp}^{s}(\mathbf{R}^{d})}=\left(\sum_{j=0}^{\infty}w%
\left(N^{-j}\right)^{-sp}\int|\varphi_{j}\ast v|_{V_{r}\
}^{p}dx\right)^{1/p}, 
\end{equation*}
where $\varphi_{j}=\varphi_{j}^{N},j\geq0,$ is the system of functions
defined in Remark \ref{rem:system}. The equivalent norms of $%
H_{r,p}^{s}\left(\mathbf{R}^{d}\right),r=0,2,$ (see Proposition \ref%
{prop-FuncSpace} below) are defined by 
\begin{equation}
|v|_{\tilde{H}_{r,p}^{s}(\mathbf{R}^{d})}=|v|_{\tilde{H}_{r,p}^{\nu,N;s}(%
\mathbf{R}^{d})}=\left\vert \left(\sum_{j=0}^{\infty}\left\vert
w\left(N^{-j}\right)^{-s}\varphi_{j}\ast v\right\vert
_{V_{r}}^{2}\right)^{1/2}\right\vert _{L_{p}\left(\mathbf{R}^{d}\right)}.
\end{equation}
We define the equivalent norms of functions on $E$ as well: 
\begin{eqnarray*}
|v|_{\tilde{B}_{p,pp}^{s}(E)} & = & \left(\int_{0}^{T}\left\vert
v\left(t\right)\right\vert _{\tilde{B}_{p,pp}^{s}(\mathbf{R}%
^{d})}^{p}dt\right)^{1/p} \\
|v|_{\tilde{H}_{2,p}^{s}(E)} & = & \left(\int_{0}^{T}\left\vert
v\left(t\right)\right\vert _{\tilde{H}_{2,pp}^{s}(\mathbf{R}%
^{d})}^{p}dt\right)^{1/p}.
\end{eqnarray*}
For $D=D_{r}\left(A\right)=B_{r,pp}^{s}\left(A\right)$ or $%
H_{r,p}^{s}\left(A\right),A=\mathbf{R}^{d},E$, we consider corresponding
equivalent norms on random function spaces $\mathbb{D}=\mathbb{D}%
_{r}\left(A\right)=\mathbb{B}_{r,pp}^{s}\left(A\right)$ or $\mathbb{H}%
_{r,p}^{s}\left(A\right):$ 
\begin{equation*}
\left\vert v\right\vert _{\mathbb{\tilde{D}}}=\left\{ \mathbf{E}%
\left(\left\vert v\right\vert _{\tilde{D}}^{p}\right)\right\} ^{1/p}. 
\end{equation*}

Lemmas \ref{Lem: symbol_omega}, \ref{Lem:expEst}, and \ref{lem:alpha12} are
key ingredients in proving the following characterization of our Bessel
potential spaces and Besov spaces. The proof follows almost verbatim with
obvious changes the corresponding proof in \cite{MikPh2}. Therefore we omit
it, and simply state the result.

\begin{proposition}
\label{prop-FuncSpace}Let $\nu\in\mathfrak{A}^{\sigma},w=w_{\nu}$ be an O-RV
function and \textbf{A, B} hold. Let $s\in\mathbf{R},p,q\in\left(1,\infty%
\right)$.

(i) $\tilde{B}_{pq}^{\nu;s}\left(\mathbf{R}^{d}\right)=B_{pq}^{\nu;s}\left(%
\mathbf{R}^{d}\right)$ and the norms are equivalent

(ii) $\tilde{H}_{p}^{\nu;s}\left(\mathbf{R}^{d}\right)=H_{p}^{\nu;s}\left(%
\mathbf{R}^{d}\right)$ and the norms are equivalent

(iii) $\tilde{H}_{p}^{\nu;s}\left(\mathbf{R}^{d};l_{2}\right)=H_{p}^{\nu;s}%
\left(\mathbf{R}^{d};l_{2}\right)$ and the norms are equivalent.

Moreover, for all $s,s^{\prime}\in\mathbf{R}$, $J^{s}:A^{s^{\prime}}%
\rightarrow A^{s^{\prime}-s}$ is an isomorphism where $A^{s}=B_{pq}^{\nu;s}%
\left(\mathbf{R}^{d}\right),H_{p}^{\nu;s}\left(\mathbf{R}^{d}\right)$ or $%
H_{p}^{\nu;s}\left(\mathbf{R}^{d};l_{2}\right).$
\end{proposition}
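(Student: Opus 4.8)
The plan is to reduce the statement to uniform bounds for a family of localized Fourier multiplier operators, following (with $\nu$ replaced by $\mu=\nu_{sym}$) the scheme of \cite{MikPh2}. Note that $J=I-L^{\mu}$, that $1-\psi^{\mu}=1-\mathrm{Re}\,\psi^{\nu}$ is real and $\ge 1$, that $w_{\mu}=w_{\nu}=w$ is non-decreasing, and that $\mu$ inherits assumptions \textbf{A} and \textbf{B} from $\nu$ (both the tail $\delta$ and the quadratic form in \textbf{B} are symmetric in $y$), so that all results of Section 3 apply to $\mu$. Fix $s\in\mathbf{R}$ and for $j\ge 0$ set $m_{j}(\xi)=w(N^{-j})^{-s}(1-\psi^{\mu}(\xi))^{-s}$. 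On $\mathrm{supp}\,\mathcal{F}\tilde\varphi_{j}\subseteq\{N^{j-2}\le|\xi|\le N^{j+2}\}$ one has $|\xi|^{-1}\asymp N^{-j}$, hence $w(|\xi|^{-1})\asymp w(N^{-j})$ by Lemma \ref{lem:powerEstratio}, while $1-\psi^{\mu}(\xi)\asymp w(|\xi|^{-1})^{-1}$ by Lemma \ref{Lem: symbol_omega} --- its part (a) applied with the moment bounds of Lemma \ref{lem:alpha12}, together with part (b), which is exactly \textbf{B}. Thus $m_{j}\asymp 1$ on $\mathrm{supp}\,\mathcal{F}\tilde\varphi_{j}$, uniformly in $j\ge 1$ (the single low-frequency block $j=0$ is handled directly, $1-\psi^{\mu}$ being continuous and $\ge 1$). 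The heart of the matter is to show that the kernels $K_{j}=\mathcal{F}^{-1}[m_{j}\,\mathcal{F}\tilde\varphi_{j}]$ and $\bar K_{j}=\mathcal{F}^{-1}[m_{j}^{-1}\mathcal{F}\tilde\varphi_{j}]$ satisfy, after the scaling $K_{j}(x)=N^{jd}K^{(j)}(N^{j}x)$ (and likewise $\bar K^{(j)}$), the uniform estimate $\sup_{j}\int(1+|y|^{\alpha_{2}})(|K^{(j)}(y)|+|\bar K^{(j)}(y)|)\,dy<\infty$, with $\alpha_{2}\in(0,p_{1}^{w}\wedge p_{2}^{w})$ fixed.

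First I would establish these kernel bounds, assuming $s>0$ (the case $s<0$ is symmetric, with the roles of $K_{j}$ and $\bar K_{j}$ interchanged, and $s=0$ is trivial). For $K_{j}$, use Bochner subordination $(1-z)^{-s}=\Gamma(s)^{-1}\int_{0}^{\infty}t^{s-1}e^{-t}e^{tz}\,dt$; inserting $e^{t\psi^{\mu}}$ into $\mathcal{F}^{-1}$ gives $\mathbf{E}\tilde\varphi_{j}(\cdot+Z^{\mu}_{t})$, and the scaling identity $Z^{\mu}_{t}\stackrel{d}{=}N^{-j}Z^{R}_{t/w(N^{-j})}$, $R=N^{-j}$ ($Z^{R}$ the process associated with $\tilde\mu_{R}$), recorded in Section 3, turns this into $N^{jd}\mathbf{E}\tilde\varphi(N^{j}\cdot+Z^{R}_{t/w(N^{-j})})$; after the change of variables $u=t/w(N^{-j})$ the powers of $w(N^{-j})$ cancel exactly, leaving $K^{(j)}(y)=\Gamma(s)^{-1}\int_{0}^{\infty}u^{s-1}e^{-w(N^{-j})u}\,\mathbf{E}\tilde\varphi(y+Z^{R}_{u})\,du$. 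By Lemma \ref{Lem:expEst}(i), applied with $\zeta=\mathcal{F}\tilde\varphi$ (supported off $0$), $\int(1+|y|^{\alpha_{2}})|\mathbf{E}\tilde\varphi(y+Z^{R}_{u})|\,dy\le C_{1}e^{-C_{2}u}$ with $C_{1},C_{2}$ independent of $R$; integrating in $u$ and using $w(N^{-j})\le w(1)<\infty$ for $j\ge 0$ gives $\int(1+|y|^{\alpha_{2}})|K^{(j)}|\,dy\le C_{1}(w(N^{-j})+C_{2})^{-s}\le C_{1}C_{2}^{-s}$, uniformly in $j$. For $\bar K_{j}$, where $m_{j}^{-1}=w(N^{-j})^{s}(1-\psi^{\mu})^{s}$, write $s=n+\theta$, $n\in\mathbf{N}_{0}$, $\theta\in[0,1)$, and factor $(1-\psi^{\mu})^{s}=(1-\psi^{\mu})^{n}(1-\psi^{\mu})^{\theta}$: the integer power is a polynomial in $L^{\mu}$, and after rescaling $|(L^{\mu})^{k}\tilde\varphi_{j}|_{L_{1}}\le C_{k}w(N^{-j})^{-k}$ follows from the uniform bound $|\mathcal{F}^{-1}[\psi^{\tilde\mu_{R}}\tilde\phi]|_{L_{1}}\le C$, a consequence of Lemma \ref{Lem: symbol_omega}(a) and Lemma \ref{lem:alpha12} on the compact annulus $\mathrm{supp}\,\tilde\phi$; the factor $(1-\psi^{\mu})^{\theta}$ is handled by the subordination formula for powers in $(0,1)$ together with Lemma \ref{Lem:expEst}(i)--(ii), the weight $|y|^{\alpha_{2}}$ being absorbed via $\mathbf{E}|Z^{R}_{u}|^{\alpha_{2}}\le C(1+u)$ (Lemma 17 of \cite{MikPh2}).

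With the kernels in hand the rest is soft. Since $\phi=\tilde\phi\phi$, writing $\phi(N^{-j}\cdot)=\tilde\phi(N^{-j}\cdot)\phi(N^{-j}\cdot)$ gives $w(N^{-j})^{-s}\varphi_{j}\ast J^{-s}g=K_{j}\ast(\varphi_{j}\ast g)$ and $\varphi_{j}\ast J^{s}v=\bar K_{j}\ast(w(N^{-j})^{-s}\varphi_{j}\ast v)$. The uniform size-plus-moment bounds on $K^{(j)},\bar K^{(j)}$, via the scaling $K_{j}=N^{jd}K^{(j)}(N^{j}\cdot)$, yield a H\"ormander-type condition for the $l_{2}$-valued kernels $(K_{j})_{j}$ and $(\bar K_{j})_{j}$, whence the diagonal operators $(h_{j})_{j}\mapsto(K_{j}\ast h_{j})_{j}$ and $(h_{j})_{j}\mapsto(\bar K_{j}\ast h_{j})_{j}$ are bounded on $L_{p}(\mathbf{R}^{d};l_{2})$ by the vector-valued Calder\'on--Zygmund argument of \cite{MikPh2}. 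Combined with the classical ($N$-adic) Littlewood--Paley equivalence $|h|_{L_{p}(\mathbf{R}^{d})}\asymp|(\sum_{j}|\varphi_{j}\ast h|^{2})^{1/2}|_{L_{p}(\mathbf{R}^{d})}$, this gives both inequalities in $|J^{s}v|_{L_{p}}\asymp|(\sum_{j}|w(N^{-j})^{-s}\varphi_{j}\ast v|^{2})^{1/2}|_{L_{p}}$, i.e. $H_{p}^{\nu;s}(\mathbf{R}^{d})=\tilde H_{p}^{\nu;s}(\mathbf{R}^{d})$ with equivalent norms, which is (ii); (iii) is identical with $V=l_{2}$ replacing $V=\mathbf{R}$, the kernel estimates being insensitive to the Hilbert target; and (i) for Besov spaces is easier, the $l_{2}$ square function being replaced by an $\ell^{q}$-in-$j$ of $L_{p}$-in-$x$, for which only the scalar per-block bound $\sup_{j}|K^{(j)}|_{L_{1}}<\infty$ and Minkowski's inequality are needed. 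Finally, in these equivalent norms $J^{s}$ and $J^{-s}$ act on the $j$-th block as convolution with the uniformly $L_{1}$-bounded kernels above, so $J^{s}\colon A^{s'}\to A^{s'-s}$ is bounded with bounded inverse for $A^{s}\in\{B_{pq}^{\nu;s}(\mathbf{R}^{d}),H_{p}^{\nu;s}(\mathbf{R}^{d}),H_{p}^{\nu;s}(\mathbf{R}^{d};l_{2})\}$, which is the last assertion.

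The main obstacle is the kernel estimates, and inside them the passage from the scalar, single-block bound --- the short subordination computation above --- to the $l_{2}$-uniform bound needed for the square function (hence for the Bessel--Besov comparison): this forces the weighted estimate $\sup_{j}\int|y|^{\alpha_{2}}|K^{(j)}|\,dy<\infty$, which is precisely what assumptions \textbf{A} and \textbf{B} buy through Lemmas \ref{lem:alpha12} and \ref{Lem:expEst}; without O-RV regularity of $w$ the non-smooth symbol $\psi^{\mu}$ cannot be localized with this precision, so the standard Mikhlin multiplier theorems are unavailable. A secondary point is the bookkeeping of the integer part of $s$ for the forward operator $J^{s}$, handled above by peeling off the powers $(L^{\mu})^{k}$ and bounding each by $C_{k}w(N^{-j})^{-k}$ using Lemma \ref{Lem: symbol_omega}(a).
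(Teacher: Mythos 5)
Your proposal is correct and follows essentially the argument the paper itself invokes: the paper omits the proof, stating that it "follows almost verbatim" the one in \cite{MikPh2} with Lemmas \ref{Lem: symbol_omega}, \ref{Lem:expEst} and \ref{lem:alpha12} as the key ingredients, and your reduction to uniform weighted $L_{1}$ bounds for the rescaled block kernels via subordination and the scaling $Z_{t}^{\mu}\overset{d}{=}N^{-j}Z_{t/w(N^{-j})}^{R}$ is exactly that scheme. The only point stated too quickly is that the size-plus-moment bounds alone do not give the H\"ormander condition for the diagonal $l_{2}$-valued kernel --- one also needs the $L_{1}$-Lipschitz estimate $\int\left\vert K^{(j)}(y+h)-K^{(j)}(y)\right\vert dy\leq C\left\vert h\right\vert$, but this comes from the same subordination computation combined with Lemma \ref{Lem:expEst}(ii), which you already cite.
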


Let $U_{n}\in\mathcal{U},U_{n}\subseteq U_{n+1},n\geq1,\cup_{n}U_{n}=U$ and $%
\Pi\left(U_{n}\right)<\infty,n\geq1.$ We denote by $\mathbb{\tilde{C}}%
_{r.p}^{\infty}\left(E\right),1\leq p<\infty,$ the space of all $\mathcal{R}%
\left(\mathbb{F}\right)\otimes\mathcal{B}\left(\mathbf{R}^{d}\right)-$%
measurable $V_{r}$ -valued random functions $\Phi$ on $E$ such that for
every $\gamma\in\mathbf{N}_{0}^{d}$, 
\begin{equation*}
\mathbf{E}\int_{0}^{T}\sup_{x\in\mathbf{R}^{d}}\left\vert
D^{\gamma}\Phi\left(t,x\right)\right\vert _{V_{r}}^{p}dt+\mathbf{E}\left[%
\left\vert D^{\gamma}\Phi\right\vert _{L_{p}\left(E;V_{r}\right)}^{p}\right]%
<\infty, 
\end{equation*}
and $\Phi=\Phi\raisebox{2pt}{\ensuremath{\chi}}_{U_{n}}$ for some $n$ if $%
r=2,p.$ Similarly we define the space $\mathbb{\tilde{C}}_{r.p}^{\infty}%
\left(\mathbf{R}^{d}\right)$ by replacing $\mathcal{R}\left(\mathbb{F}\right)
$ and $E$ by $\mathcal{F}$ and $\mathbf{R}^{d}$ respectively in the
definition of $\mathbb{\tilde{C}}_{r,p}^{\infty}\left(E\right)$.

Let $\nu\in\mathfrak{A}^{\sigma},N>1,w=w_{\nu}$ is a continuous O-RV
function and \textbf{A, B} hold. We now discuss an approximating sequence of
the input $\Phi$, based on Proposition \ref{prop-FuncSpace}.

\begin{lemma}
\label{lem-approximation1} Let $\nu\in\mathfrak{A}^{\sigma},w=w_{\nu}$ be an
O-RV function and \textbf{A, B} hold. Let $U_{n}\in\mathcal{U}%
,U_{n}\subseteq U_{n+1},n\geq1,\cup_{n}U_{n}=U$ and $\Pi\left(U_{n}\right)<%
\infty,n\geq1.$ Let $s\in\mathbf{R},p\in\left(1,\infty\right)$, $\Phi\in%
\mathbb{D}_{r,p}$, where $\mathbb{D}_{r,p}=\mathbb{D}_{r,p}\left(A\right)=%
\mathbb{B}_{r,pp}^{s}\left(A\right)$ with $r=0,p,$ or $\mathbb{D}_{r,p}=%
\mathbb{H}_{r,p}^{s}\left(A\right)$ with $r=0,2$, $A=\mathbf{R}^{d}$ or $E$.
For $\Phi\in\mathbb{D}_{r,p}$ we set 
\begin{equation*}
\Phi_{n}=\sum_{j=0}^{n}\Phi\ast\varphi_{j}\raisebox{2pt}{\ensuremath{\chi}}%
_{U_{n}},\text{ if }r=2,p,~\Phi_{n}=\sum_{j=0}^{n}\Phi\ast\varphi_{j},\text{
if }r=0. 
\end{equation*}
Then there is $C>0$ so that 
\begin{equation*}
\left\vert \Phi_{n}\right\vert _{\mathbb{D}_{r,p}}\leq C\left\vert
\Phi\right\vert _{\mathbb{D}_{r,p}},\Phi\in\mathbb{D}_{r,p},n\geq1\text{,} 
\end{equation*}
and $\left\vert \Phi_{n}-\Phi\right\vert _{\mathbb{D}_{r,p}}\rightarrow0$ as 
$n\rightarrow\infty.$ Moreover, for $r=0,2,p,$ every $n$ and multiindex $%
\gamma\in\mathbf{N}_{0}^{d}$, 
\begin{eqnarray*}
\mathbf{E}\int_{0}^{T}\sup_{x}\left\vert D^{\gamma}\Phi_{n}\right\vert
_{V_{r}}^{p}dt+\left\vert D^{\gamma}\Phi_{n}\right\vert _{\mathbb{L}%
_{r,p}\left(E\right)}^{p} & < & \infty\text{ if }A=E, \\
\mathbf{E[}\sup_{x}\left\vert D^{\gamma}\Phi_{n}\right\vert
_{V_{r}}^{p}]+\left\vert D^{\gamma}\Phi_{n}\right\vert _{\mathbb{L}%
_{r,p}\left(\mathbf{R}^{d}\right)}^{p} & < & \infty\text{ if }A=\mathbf{R}%
^{d},
\end{eqnarray*}
\end{lemma}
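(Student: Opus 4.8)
The plan is to reduce everything to the already-established equivalence of norms in Proposition \ref{prop-FuncSpace} together with the Littlewood--Paley structure of the spaces $\mathbb{D}_{r,p}$. First I would dispose of the uniform bound $|\Phi_n|_{\mathbb{D}_{r,p}}\leq C|\Phi|_{\mathbb{D}_{r,p}}$. Writing $\Phi\ast\varphi_j$ in terms of the fattened system $\tilde\varphi_j$ from Remark \ref{rem:system} (recall $\varphi_j=\tilde\varphi_j\ast\varphi_j$), the partial sum $S_n\Phi=\sum_{j=0}^n\Phi\ast\varphi_j$ is, up to the truncation $\chi_{U_n}$, a Fourier multiplier that localizes to $|\xi|\leq N^{n+2}$; the point is that in the equivalent norms $|\cdot|_{\tilde B_{r,pp}^s}$ and $|\cdot|_{\tilde H_{r,p}^s}$ the $k$-th Littlewood--Paley block of $S_n\Phi$ is either $\varphi_k\ast\Phi$ (for $k\leq n$, using $\tilde\varphi_k\ast\varphi_k=\varphi_k$ and that consecutive blocks overlap) or a fixed finite combination of such for $k$ near $n$, and vanishes for $k>n+1$. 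Hence the $\ell^p$ (resp. $\ell^2$) sum defining $|S_n\Phi|$ is dominated termwise by a constant multiple of that defining $|\Phi|$, with constant independent of $n$; the cutoff $\chi_{U_n}$ only decreases the $V_r$-norm pointwise, and in the $E$ case one integrates in $t$ and takes $\mathbf E$, which preserves the inequality. This uses Proposition \ref{prop-FuncSpace}(i)--(iii) to pass between $|\cdot|$ and $|\cdot|_{\tilde{\phantom{x}}}$.

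Next, for the convergence $|\Phi_n-\Phi|_{\mathbb{D}_{r,p}}\to0$, I would argue in two steps. For $r=0$, $\Phi_n-\Phi=\sum_{j>n}\Phi\ast\varphi_j$, whose $k$-th block vanishes for $k\leq n-1$; so in the equivalent norm the quantity is the tail $\big(\sum_{k\geq n}w(N^{-k})^{-sp}|\varphi_k\ast\Phi|_{V_0}^p\big)^{1/p}$-type expression (resp. the corresponding $\ell^2$-in-$k$, $L_p$-in-$x$ expression), which tends to $0$ by dominated convergence since the full sum is finite for $\Phi\in\mathbb{D}_{0,p}$. For $r=2,p$ one has in addition the factor $\chi_{U_n}$; write $\Phi_n-\Phi=(S_n\Phi-\Phi)\chi_{U_n}+\Phi(\chi_{U_n}-1)$ and bound the first term as above (uniformly in the $U_n$-cutoff) and the second by $|\Phi\chi_{U\setminus U_n}|_{\mathbb{D}_{r,p}}\to0$, again by dominated convergence using $\Pi(U_n)\uparrow\Pi(U)$ and $\cup_nU_n=U$ together with the fact that $|\cdot|_{V_r}$ of $\Phi\chi_{U\setminus U_n}$ decreases pointwise to $0$. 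In the $E$-case one additionally integrates $dt$ and takes $\mathbf E$, and the dominating function is the integrable $|\Phi|_{\mathbb{D}_{r,p}}^p$-density.

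Finally, the smoothness/integrability assertion: each $\varphi_j\in\mathcal S(\mathbf R^d)$, so $D^\gamma(\Phi\ast\varphi_j)=\Phi\ast D^\gamma\varphi_j$ and $\sup_x|\Phi\ast D^\gamma\varphi_j(t,\cdot)|_{V_r}\leq |D^\gamma\varphi_j|_{L_{p'}}|\Phi(t,\cdot)|_{L_{r,p}}$ by Hölder (or Young), and likewise $|D^\gamma(\Phi\ast\varphi_j)|_{L_p(\cdot;V_r)}\leq |D^\gamma\varphi_j|_{L_1}|\Phi|_{L_{r,p}}$. Summing the finitely many $j\leq n$ and inserting $\chi_{U_n}$ (which for $r=2,p$ makes the $L_{r,p}$-factor finite because $\Pi(U_n)<\infty$), then taking $\mathbf E\int_0^T$, gives finiteness; one only needs $\Phi\in\mathbb{D}_{r,p}\subseteq\mathbb{L}_{r,p}$, which holds since $s$ is real and the Littlewood--Paley sum controls the $L_p$-norm of the low-frequency part while $\chi_{U_n}$ handles the rest. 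I expect the main obstacle to be bookkeeping the boundary blocks $k\in\{n-1,n,n+1,n+2\}$ in the uniform-bound step — making sure the constant there genuinely does not depend on $n$ — but this is exactly the overlap structure engineered in Remark \ref{rem:system} ($\tilde\varphi_k=\sum_{l=-1}^1\varphi_{k+l}$, $\tilde\phi\phi=\phi$), so it is a finite, $n$-independent combinatorial estimate rather than a genuine difficulty; everything else is dominated convergence in the (equivalent) norms supplied by Proposition \ref{prop-FuncSpace}.
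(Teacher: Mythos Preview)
Your approach is essentially the same as the paper's: the same block-by-block analysis using the overlap structure from Remark \ref{rem:system}, the same two-term split for $r=2,p$ (the paper writes $\Phi_n-\Phi=(\Phi_n-\tilde\Phi_n)+(\tilde\Phi_n-\Phi)$ with $\tilde\Phi_n=\Phi\chi_{U_n}$, which is exactly your decomposition $(S_n\Phi-\Phi)\chi_{U_n}+\Phi(\chi_{U_n}-1)$), and the same H\"older/Young argument for the smoothness assertion.

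There is one genuine slip in the last part. The inclusion $\mathbb{D}_{r,p}\subseteq\mathbb{L}_{r,p}$ you invoke to justify $\sup_x|\Phi\ast D^\gamma\varphi_j|_{V_r}\leq |D^\gamma\varphi_j|_{L_{p'}}|\Phi|_{L_{r,p}}$ fails for $s<0$: an element of $\mathbb{H}^s_{r,p}$ or $\mathbb{B}^s_{r,pp}$ is then only a tempered distribution in $x$, and $|\Phi(t,\cdot)|_{L_{r,p}}$ need not be finite. The fix is immediate and is precisely what the paper does: use $\varphi_j=\tilde\varphi_j\ast\varphi_j$ to write $D^\gamma(\Phi\ast\varphi_j)=(\Phi\ast\varphi_j)\ast D^\gamma\tilde\varphi_j$, and bound by $|D^\gamma\tilde\varphi_j|_{L_{p'}}\,|\Phi\ast\varphi_j|_{L_p(\mathbf{R}^d;V_r)}$ (resp.\ $|D^\gamma\tilde\varphi_j|_{L_1}$ for the $L_p$-bound). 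The point is that each Littlewood--Paley block $\Phi\ast\varphi_j$ \emph{does} lie in $L_p(\mathbf{R}^d;V_r)$ for every $s\in\mathbf{R}$, since $w(N^{-j})^{-s}|\Phi\ast\varphi_j|_{L_p(V_r)}$ is one summand in the finite equivalent norm $|\Phi|_{\tilde{\mathbb{D}}_{r,p}}$. With this adjustment your argument goes through for all $s\in\mathbf{R}$.
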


\begin{proof}
Let $\tilde{\Phi}_{n}=\Phi\raisebox{2pt}{\ensuremath{\chi}}_{U_{n}},n\geq1.$
Since 
\begin{equation*}
\varphi_{k}=\sum_{l=-1}^{1}\varphi_{k+l}\ast\varphi_{k},k\geq1,\varphi_{0}=%
\left(\varphi_{0}+\varphi_{1}\right)\ast\varphi_{0}, 
\end{equation*}
we have for $n>1,$ 
\begin{eqnarray*}
\left(\tilde{\Phi}_{n}-\Phi_{n}\right)\ast\varphi_{k} & = & 0,k<n, \\
\left(\tilde{\Phi}_{n}-\Phi_{n}\right)\ast\varphi_{k} & = & \left(\tilde{\Phi%
}_{n}\ast\varphi_{k-1}+\tilde{\Phi}_{n}\ast\varphi_{k}+\tilde{\Phi}%
_{n}\ast\varphi_{k+1}\right)\ast\varphi_{k},k>n+1, \\
\left(\tilde{\Phi}_{n}-\Phi_{n}\right)\ast\varphi_{n} & = & \left(\tilde{\Phi%
}_{n}\ast\varphi_{n+1}\right)\ast\varphi_{n}, \\
\left(\tilde{\Phi}_{n}-\Phi_{n}\right)\ast\varphi_{n+1} & = & \left(\tilde{%
\Phi}_{n}\ast\varphi_{n+1}+\tilde{\Phi}_{n}\ast\varphi_{n+2}\right)\ast%
\varphi_{n+1}.
\end{eqnarray*}
Let $V_{r}=L_{r}\left(U,\mathcal{U},\Pi\right),r=2,p$ and $V_{0}=\mathbf{R}$%
. By Corollary 2 in \cite{MikPh2}, there is a constant $C$ independent of $%
\Phi\in\mathbb{H}_{2,p}^{s}\left(E\right)$ so that 
\begin{eqnarray*}
& & \left\vert \left(\sum_{j=0}^{\infty}\left\vert
w\left(N^{-j}\right)^{-s}\left(\tilde{\Phi}_{n}-\Phi_{n}\right)\ast%
\varphi_{j}\right\vert _{V_{r}}^{2}\right)^{1/2}\right\vert
_{L_{p}\left(E\right)} \\
& \leq & C\left\vert \left(\sum_{j=n}^{\infty}\left\vert
w\left(N^{-j}\right)^{-s}\Phi\ast\varphi_{j}\right\vert
_{V_{r}}^{2}\right)^{1/2}\right\vert
_{L_{p}\left(E\right)}\rightarrow0,r=0,2,
\end{eqnarray*}
as $n\rightarrow\infty.$ Obviously 
\begin{eqnarray*}
\left\vert \left(\tilde{\Phi}_{n}-\Phi_{n}\right)\ast\varphi_{j}\right\vert
_{L_{r,p}\left(E\right)} & \leq & C\sum_{k=j-1}^{j+1}\left\vert \tilde{\Phi}%
_{n}\ast\varphi_{k}\right\vert _{L_{r,p}\left(E\right)},j\geq n, \\
\left\vert \left(\tilde{\Phi}_{n}-\Phi_{n}\right)\ast\varphi_{j}\right\vert
_{L_{r,p}\left(E\right)} & = & 0,j<n,r=0,p,
\end{eqnarray*}
and 
\begin{equation*}
\left\vert \Phi_{n}\right\vert _{\mathbb{D}_{r,p}\left(E\right)}\leq
C\left\vert \Phi\right\vert _{\mathbb{D}_{r,p}\left(E\right)},\Phi\in\mathbb{%
D}_{r,p},n\geq1,r=0,2,p\text{.} 
\end{equation*}
Thus $\left\vert \Phi_{n}-\Phi\right\vert _{\mathbb{D}_{r,p}\left(E\right)}%
\rightarrow0$ as $n\rightarrow\infty,r=0,2,p.$

Let $r=0,2,p$, $\Phi\in\mathbb{D}_{r,p}\left(E\right)$. Obviously, for any $%
k\geq0,$ 
\begin{equation*}
\mathbf{E}\int_{E}\left\vert \Phi\ast\varphi_{k}\right\vert
_{V_{r}}^{p}dxdt<\infty. 
\end{equation*}
Since for any multiindex $\gamma\in\mathbf{N}_{0}^{d},$ 
\begin{equation*}
\Phi\ast\varphi_{k}=\Phi\ast\varphi_{k}\ast\tilde{\varphi}%
_{k},D^{\gamma}\Phi\ast\varphi_{k}=\Phi\ast\varphi_{k}\ast D^{\gamma}\tilde{%
\varphi}_{k}, 
\end{equation*}
and $\mathbf{P}$-a.s. for all $s,x,$ with $\frac{1}{q}+\frac{1}{p}=1,$ 
\begin{eqnarray*}
\left\vert D^{\gamma}\Phi\ast\varphi_{k}\left(s,x\right)\right\vert _{V_{r}}
& \leq & \int\left\vert \Phi\ast\varphi_{k}\left(s,x-y\right)\right\vert
_{V_{r}}\left\vert D^{\gamma}\tilde{\varphi}_{k}\left(y\right)\right\vert dy,
\\
\sup_{x}\left\vert D^{\gamma}\Phi\ast\varphi_{k}\left(s,x\right)\right\vert
_{V_{r}} & \leq & \left(\int\left\vert
\Phi\ast\varphi_{k}\left(s,\cdot\right)\right\vert
_{V_{r}}^{p}dx\right)^{1/p}\left\vert D^{\gamma}\tilde{\varphi}%
_{k}\right\vert _{L_{q}\left(\mathbf{R}^{d}\right)},
\end{eqnarray*}
we have for any multiindex $\gamma,$ 
\begin{equation*}
\left\vert D^{\gamma}\Phi\ast\varphi_{k}\right\vert _{\mathbb{L}%
_{r,p}\left(E\right)}<\infty, 
\end{equation*}
and 
\begin{equation*}
\mathbf{E}\int_{0}^{T}\sup_{x}\left\vert
D^{\gamma}\Phi\ast\varphi_{k}\right\vert _{V_{r}}^{p}dt<\infty,r=0,2,p. 
\end{equation*}
The proof for the case of $A=\mathbf{R}^{d}$ is a repeat with obvious changes%
$.$ The statement follows.
\end{proof}

\begin{corollary}
\label{cor:approx1}The space $C_{0}^{\infty}\left(\mathbf{R}^{d};V_{r}\right)
$ of $V_{r}$-valued infinitely differentiable functions with compact support
is dense in $D_{r}\left(\mathbf{R}^{d}\right),r=0,2,p$.
\end{corollary}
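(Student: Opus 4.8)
The plan is to combine the approximation result of Lemma \ref{lem-approximation1} with a standard truncation-and-mollification argument, using the isomorphism property in Proposition \ref{prop-FuncSpace}. Fix $r \in \{0,2,p\}$ and a target space $D_r(\mathbf{R}^d)$, i.e. $B_{r,pp}^{\nu;s}(\mathbf{R}^d)$ or $H_{r,p}^{\nu;s}(\mathbf{R}^d)$. Recall that by definition these spaces are the closure of the Schwartz class $\mathcal{S}(\mathbf{R}^d;V_r)$ in the respective norm, so it suffices to show that every $f \in \mathcal{S}(\mathbf{R}^d;V_r)$ can be approximated in $D_r$ by elements of $C_0^\infty(\mathbf{R}^d;V_r)$.

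First I would reduce to the case $s = 0$, i.e. to $L_p(\mathbf{R}^d;V_r)$: by the final assertion of Proposition \ref{prop-FuncSpace}, $J^s = (I - L^{\nu_{\mathrm{sym}}})^s$ is an isomorphism $D_r^{s'} \to D_r^{s'-s}$ for all real exponents, and $J^{-s}$ maps $\mathcal{S}(\mathbf{R}^d;V_r)$ continuously into itself (the symbol $(1-\psi^{\nu_{\mathrm{sym}}})^{-s}$ is, by assumptions \textbf{A},\textbf{B} together with Lemma \ref{Lem: symbol_omega}, comparable to $w(|\xi|^{-1})^{-s}$ and smooth away from $0$). So if $g_n \to J^s f$ in $L_p(\mathbf{R}^d;V_r)$ with $g_n \in C_0^\infty$, then $J^{-s} g_n \to f$ in $D_r^s$; but $J^{-s} g_n$ need not be compactly supported, so one instead runs the density argument directly at level $s$ using Lemma \ref{lem-approximation1}.

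The cleaner route: given $f \in \mathcal{S}(\mathbf{R}^d;V_r)$, apply Lemma \ref{lem-approximation1} with $A = \mathbf{R}^d$ to get $f_n = \sum_{j=0}^n f * \varphi_j$ (with the cutoff $\chi_{U_n}$ absorbed trivially since here $V_r$-valued Schwartz functions already sit in $\tilde{C}_{r,p}^\infty$), so that $f_n \to f$ in $D_r(\mathbf{R}^d)$ and each $f_n$ lies in $\mathcal{S}(\mathbf{R}^d;V_r)$ with all $V_r$-norms of derivatives integrable and bounded. It then remains to approximate a single Schwartz function $h = f_n$ (which has, in addition, a band-limited Fourier transform, hence is smooth) by compactly supported smooth functions in the $D_r$-norm. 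For this, pick $\theta \in C_0^\infty(\mathbf{R}^d)$ with $\theta \equiv 1$ near $0$ and set $h^{(R)}(x) = \theta(x/R) h(x)$. One checks $h^{(R)} \to h$ in $D_r$ as $R \to \infty$: writing the $\tilde{B}$ or $\tilde{H}$ equivalent norm from Proposition \ref{prop-FuncSpace}, the differences $\varphi_j * (h - h^{(R)})$ are controlled using the rapid decay of $h$ and its derivatives, the fact that $\varphi_j$ is a fixed Schwartz function dilated by $N^{-j}$, and the polynomial control $w(N^{-j})^{-sp} \le C N^{j\alpha}$ coming from Lemma \ref{lem:powerEstratio}; summing a geometric-type series in $j$ and letting $R \to \infty$ gives the claim.

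The main obstacle is the bookkeeping in the last step: one must show the tails (both the high-frequency tail in $j$ and the spatial tail in $R$) are uniformly small in the \emph{weighted} $\ell^p$ or $\ell^2$ Littlewood--Paley norm attached to $w_\nu$ rather than the classical dyadic one. This is where Lemma \ref{lem:powerEstratio} (two-sided power bounds $c_1(y/x)^{\alpha_2} \le w(y)/w(x) \le c_2(y/x)^{\alpha_1}$) is essential: it lets one dominate $w(N^{-j})^{-s}$ by $C N^{j|s|\alpha}$, reducing the weighted sum to an ordinary convergent series, after which the estimate is routine. Once this is established for $h = f_n$, a diagonal choice $R = R(n) \to \infty$ produces $\psi_n := f_n^{(R(n))} \in C_0^\infty(\mathbf{R}^d;V_r)$ with $\psi_n \to f$ in $D_r(\mathbf{R}^d)$, proving density.
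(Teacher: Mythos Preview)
Your proposal is correct and follows essentially the same route as the paper: reduce via Lemma \ref{lem-approximation1} to functions with all $V_r$-valued derivatives bounded and in $L_p$, then multiply by a spatial cutoff $g(\cdot/n)$ and show convergence in the equivalent Littlewood--Paley norm using Lemma \ref{lem:powerEstratio} to control the weights $w(N^{-j})^{-s}$. The paper makes explicit the mechanism you leave as ``geometric-type series in $j$'': it uses the vanishing moments $\int y^{\beta}\varphi_{j}(y)\,dy=0$ for $j\ge 1$ together with a Taylor remainder of order $m$ to obtain $|f_n\ast\varphi_j|_{L_p(\mathbf{R}^d;V_r)}\le C\,N^{-j(m+1)}$ \emph{uniformly in $n$}, and then chooses $m$ large enough that $N^{s\sigma'-m}<1$; this is exactly the uniform high-frequency tail bound you identify as the main obstacle.
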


\begin{proof}
In the view of Lemma \ref{lem-approximation1}, it suffices to show that for
any $V=V_{r}$-valued function $f$ such that for all multiindex $\gamma\in%
\mathbf{N}_{0}^{d}$, 
\begin{equation*}
\sup_{x}\left\vert D^{\gamma}f\left(x\right)\right\vert _{V_{r}}+\left\vert
D^{\gamma}f\right\vert _{L_{p}\left(\mathbf{R}^{d};V_{r}\right)}<\infty, 
\end{equation*}
there exists $f_{n}\in C_{0}^{\infty}\left(\mathbf{R}^{d},V_{r}\right)$ so
that $f_{n}\rightarrow f$ in $D_{r}\left(\mathbf{R}^{d}\right)$. Let $g\in
C_{0}^{\infty}\left(\mathbf{R}^{d}\right)$ with $0\leq
g\left(x\right)\leq1,x\in\mathbf{R}^{d}$, $g\left(x\right)=1$ for $%
\left\vert x\right\vert \leq1$, and $g\left(x\right)=0$ for $\left\vert
x\right\vert \geq2$. Let 
\begin{equation*}
f_{n}\left(x\right):=f\left(x\right)g\left(x/n\right),x\in\mathbf{R}^{d}. 
\end{equation*}
Obviously $f_{n}\in C_{0}^{\infty}\left(\mathbf{R}^{d},V_{r}\right)$, and
for any multiindex $\beta,$

\begin{eqnarray*}
& & D^{\beta}f_{n}\left(x\right) \\
& = &
D^{\beta}f\left(x\right)g\left(x/n\right)+\sum_{\beta_{1}+\beta_{2}=\beta,%
\left\vert \beta_{2}\right\vert \geq1}n^{-\left\vert \beta_{2}\right\vert
}D^{\beta_{1}}f\left(x\right)\left(D^{\beta_{2}}g\right)\left(x/n\right),x\in%
\mathbf{R}^{d}, \\
& & \left\vert D^{\beta}f_{n}\right\vert _{Lp\left(\mathbf{R}%
^{d};V_{r}\right)} \\
& \leq & C\left(\left\vert \beta\right\vert
\right)\sup_{\beta^{\prime}\leq\left\vert \beta\right\vert }\left\vert
D^{\beta^{\prime}}f\right\vert _{Lp\left(\mathbf{R}^{d};V_{r}\right)},
\end{eqnarray*}
and $\left\vert D^{\beta}f_{n}-D^{\beta}f\right\vert _{Lp\left(\mathbf{R}%
^{d};V_{r}\right)}\rightarrow0$. Since for any multiindex $\beta$ we have$%
\int y^{\beta}\varphi_{j}\left(y\right)dy=0,$ it follows for $m>0,j\geq1$,
by Taylor remainder theorem, for $x\in\mathbf{R}^{d},$

\begin{eqnarray*}
& & f_{n}\ast\varphi_{j}\left(x\right) \\
& = & \int\varphi_{j}\left(y\right)\left\{
f_{n}\left(x-y\right)-\sum_{\beta:\left\vert \beta\right\vert \leq m}\frac{%
D^{\beta}f_{n}\left(x\right)}{\beta!}\left(-y\right)^{\beta}\right\} dy \\
& = & \int\varphi_{j}\left(y\right)\sum_{\beta:\left\vert \beta\right\vert
=m+1}\int_{0}^{1}\frac{\left(1-t\right)^{m+1}}{\left(m+1\right)!}%
\left(D^{\beta}f_{n}\right)\left(x-ty\right)\left(-y\right)^{\beta}dtdy \\
& = & N^{-j(m+1)}\sum_{\beta:\left\vert \beta\right\vert
=m+1}\int\varphi\left(y\right)\int_{0}^{1}\frac{\left(1-t\right)^{m+1}}{%
\left(m+1\right)!}\left(D^{\beta}f_{n}\right)\left(x-tN^{-j}y\right)dt%
\left(-y\right)^{\beta}dy.
\end{eqnarray*}
By Lemma \ref{lem:powerEstratio}, there exists $\sigma^{\prime}>0$ such that 
$w\left(N^{-j}\right)^{-s}\leq CN^{js\sigma^{\prime}}$. Let $m>1$ be such
that $t=N^{\sigma^{\prime}s}N^{-m}<1$. Hence there is a constant $%
C=C\left(m,w_{\nu}\right)$ (independent of $n$) so that 
\begin{equation*}
w\left(N^{-j}\right)^{-s}\left\vert f_{n}\ast\varphi_{j}\right\vert
_{L_{p}\left(\mathbf{R}^{d};V_{r}\right)}\leq
C\left(m,w_{\nu}\right)t^{j},j\geq0. 
\end{equation*}
Now, for $r=0,2$ and any $k\geq0,$

\begin{eqnarray*}
& & \left\vert \left(\sum_{j=k}^{\infty}\left\vert
w\left(N^{-j}\right)^{-s}f_{n}\ast\varphi_{j}\left(x\right)\right\vert
_{V_{r}}^{2}\right)^{1/2}\right\vert _{L_{p}\left(\mathbf{R}^{d}\right)} \\
& \leq & \left\vert \sum_{j=k}^{\infty}\left\vert
w\left(N^{-j}\right)^{-s}f_{n}\ast\varphi_{j}\left(x\right)\right\vert
_{V_{r}}\right\vert _{L_{p}\left(\mathbf{R}^{d}\right)}\leq\sum_{j\geq
k}\left\vert
w\left(N^{-j}\right)^{-s}f_{n}\ast\varphi_{j}\left(x\right)\right\vert
_{L_{p}\left(\mathbf{R}^{d};V_{r}\right)} \\
& \leq & C\left(m,w_{\nu}\right)\sum_{j\geq k}t^{j}.
\end{eqnarray*}

Since the same estimate holds for $f,$ 
\begin{equation*}
\left\vert \left(\sum_{j\geq k}\left\vert
w\left(N^{-j}\right)^{-s}f\ast\varphi_{j}\left(x\right)\right\vert
_{V_{r}}^{2}\right)^{1/2}\right\vert _{L_{p}\left(\mathbf{R}^{d}\right)}\leq
C\left(m,w_{\nu}\right)\sum_{j\geq k}t^{j}, 
\end{equation*}
and 
\begin{equation*}
\left\vert \left(\sum_{j<k}\left\vert
w\left(N^{-j}\right)^{-s}(f-f_{n})\ast\varphi_{j}\left(x\right)\right\vert
_{V_{r}}^{2}\right)^{1/2}\right\vert _{L_{p}\left(\mathbf{R}%
^{d}\right)}\rightarrow0, 
\end{equation*}
it follows that 
\begin{equation*}
\left\vert \left(\sum_{j}\left\vert
w\left(N^{-j}\right)^{-s}(f-f_{n})\ast\varphi_{j}\left(x\right)\right\vert
_{V_{r}}^{2}\right)^{1/2}\right\vert _{L_{p}\left(\mathbf{R}%
^{d}\right)}\rightarrow0 
\end{equation*}
as $n\rightarrow\infty$.

Likewise, for $r=0,p,$ 
\begin{equation*}
\lim_{n\rightarrow\infty}\left\vert f_{n}-f\right\vert
_{B_{r,pp}^{s}}=\lim_{n\rightarrow\infty}\left(\sum_{j=0}^{\infty}\left\vert
w\left(N^{-j}\right)^{-s}\left(f_{n}-f\right)\ast\varphi_{j}\right\vert
_{L_{p}\left(\mathbf{R}^{d};V_{r}\right)}^{p}\right)^{1/p}=0. 
\end{equation*}
\end{proof}

An obvious consequence of Lemma \ref{lem-approximation1} (the form of the
approximating sequence is identical for different $V_{r}$) is the following

\begin{lemma}
\label{lem-denseSubspace} Let $p\geq1$ and $s,s^{\prime}\in\mathbf{R}$. Then
the set $\tilde{\mathcal{\mathbb{C}}}_{p,p}^{\infty}\left(E\right)$ is a
dense subset in $\mathbb{B}_{p,pp}^{s^{\prime}}\left(E\right),\tilde{%
\mathcal{\mathbb{C}}}_{0,p}^{\infty}\left(\mathbf{R}^{d}\right)$ is a dense
subset of $\mathbb{B}_{pp}^{s^{\prime}}\left(\mathbf{R}^{d}\right),$ and $%
\tilde{\mathcal{\mathbb{C}}}_{r,p}^{\infty}\left(E\right)$ is dense in $%
\mathbb{H}_{r,p}^{s}\left(E\right),r=0,2.$ Moreover, the set $\tilde{%
\mathcal{\mathbb{C}}}_{2,p}^{\infty}\left(E\right)\cap\tilde{\mathcal{%
\mathbb{C}}}_{p,p}^{\infty}\left(E\right)$ is a dense subset of $\mathbb{B}%
_{p,pp}^{s^{\prime}}\left(E\right)\cap\mathbb{H}_{2,p}^{s}\left(E\right)$.
\end{lemma}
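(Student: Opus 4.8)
The plan is to deduce everything from Lemma \ref{lem-approximation1} and the construction of the approximating sequence $\Phi_n = \sum_{j=0}^n \Phi * \varphi_j \chi_{U_n}$ (resp. without the cut-off $\chi_{U_n}$ when $r=0$). First I would observe that Lemma \ref{lem-approximation1} already gives $\Phi_n \to \Phi$ in each of the spaces $\mathbb{B}_{p,pp}^{s'}(E)$, $\mathbb{B}_{pp}^{s'}(\mathbf{R}^d)$, $\mathbb{H}_{0,p}^{s}(E)$ and $\mathbb{H}_{2,p}^{s}(E)$, together with the assertion that each $\Phi_n$ and all its spatial derivatives lie in $\mathbb{L}_{r,p}$ and in $L_p(\Omega\times[0,T];C_b)$; i.e. $\Phi_n$ satisfies exactly the integrability and smoothness bounds in the definition of $\tilde{\mathbb{C}}^\infty_{r,p}$. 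Since for $r=2,p$ each $\Phi_n$ is of the form $\Phi_n = \Phi_n \chi_{U_n}$ by construction (the $\chi_{U_n}$ factor is built in), $\Phi_n \in \tilde{\mathbb{C}}^\infty_{r,p}(E)$ for all $n$, and likewise $\Phi_n\in\tilde{\mathbb{C}}^\infty_{0,p}(\mathbf{R}^d)$ when $r=0$. This immediately yields density of $\tilde{\mathbb{C}}^\infty_{p,p}(E)$ in $\mathbb{B}_{p,pp}^{s'}(E)$, of $\tilde{\mathbb{C}}^\infty_{0,p}(\mathbf{R}^d)$ in $\mathbb{B}_{pp}^{s'}(\mathbf{R}^d)$, and of $\tilde{\mathbb{C}}^\infty_{r,p}(E)$ in $\mathbb{H}_{r,p}^s(E)$ for $r=0,2$.

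For the final assertion — density of $\tilde{\mathbb{C}}^\infty_{2,p}(E)\cap \tilde{\mathbb{C}}^\infty_{p,p}(E)$ in $\mathbb{B}_{p,pp}^{s'}(E)\cap\mathbb{H}_{2,p}^{s}(E)$ — the key point, which I would emphasize, is that the approximating sequence is \emph{the same} regardless of which norm we measure in: $\Phi_n$ does not depend on $r$, $s$, or $s'$. So given $\Phi$ in the intersection, the single sequence $\Phi_n = \sum_{j=0}^n \Phi*\varphi_j\,\chi_{U_n}$ converges to $\Phi$ simultaneously in $\mathbb{B}_{p,pp}^{s'}(E)$ and in $\mathbb{H}_{2,p}^{s}(E)$ by two applications of Lemma \ref{lem-approximation1} (once with $\mathbb{D}_{p,p} = \mathbb{B}_{p,pp}^{s'}(E)$, once with $\mathbb{D}_{2,p} = \mathbb{H}_{2,p}^{s}(E)$), hence in the intersection norm $|\cdot|_{\mathbb{B}_{p,pp}^{s'}(E)} + |\cdot|_{\mathbb{H}_{2,p}^{s}(E)}$; and each $\Phi_n$ lies in $\tilde{\mathbb{C}}^\infty_{2,p}(E)\cap\tilde{\mathbb{C}}^\infty_{p,p}(E)$ by the last display of Lemma \ref{lem-approximation1} applied with $r=2$ and $r=p$.

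There is really no serious obstacle here; the lemma is a bookkeeping consequence of the preceding one, and the proof is the one-line observation that the mollifier-plus-truncation sequence is norm-independent. The only thing worth spelling out is why $\Phi_n$ genuinely belongs to the test-function classes $\tilde{\mathbb{C}}^\infty_{r,p}$: the finite sum $\sum_{j\le n}\Phi*\varphi_j$ is smooth in $x$ with all derivatives controlled (differentiation passes onto the Schwartz functions $\varphi_j$ via $D^\gamma(\Phi*\varphi_j)=\Phi*D^\gamma\varphi_j$ and $\varphi_j = \varphi_j*\tilde\varphi_j$), the $\chi_{U_n}$ factor makes it supported in $U_n$ with $\Pi(U_n)<\infty$, and the required $L_p(\Omega\times E)$ and $\mathbf{E}\int_0^T\sup_x(\cdot)$ bounds are exactly the last two displays of Lemma \ref{lem-approximation1}. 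I would present the argument in two short paragraphs: one invoking Lemma \ref{lem-approximation1} for the individual spaces, and one noting the sequence is common to both spaces to handle the intersection.
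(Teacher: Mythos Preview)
Your proposal is correct and matches the paper's approach exactly: the paper states the lemma as ``an obvious consequence of Lemma \ref{lem-approximation1} (the form of the approximating sequence is identical for different $V_r$)'' with no further proof given. Your emphasis on the approximating sequence $\Phi_n$ being independent of $r$, $s$, $s'$ is precisely the point the paper's parenthetical remark is making.
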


\section{Proof of the main results}

\subsection{Existence and uniqueness of solution for smooth input functions}

Let $\nu\in\mathfrak{A}^{\sigma}$, and $Z_{t}=Z_{t}^{\nu},t\geq0,$ be the
Levy process associated to it. Let $P_{t}\left(dy\right)$ be the
distribution of $Z_{t}^{\nu},t>0$, and for a measurable $f\geq0,$ 
\begin{equation*}
T_{t}f\left(x\right)=\int
f\left(x+y\right)P_{t}\left(dy\right),\left(t,x\right)\in E. 
\end{equation*}

We represent the solution to (\ref{eq:mainEq}) with smooth input functions
using the following operators:

\begin{align*}
T_{t}^{\lambda}g\left(x\right) & =e^{-\lambda t}\int
g\left(x+y\right)P_{t}\left(dy\right),\left(t,x\right)\in E,\hspace{1em}g\in%
\tilde{\mathcal{\mathbb{C}}}_{0,p}^{\infty}\left(\mathbf{R}^{d}\right),p>1,
\\
R_{\lambda}f\left(t,x\right) & =\int_{0}^{t}e^{-\lambda\left(t-s\right)}\int
f\left(s,x+y\right)P_{t-s}\left(dy\right)ds,\left(t,x\right)\in E,\hspace{1em%
}f\in\tilde{\mathcal{\mathbb{C}}}_{0,p}^{\infty}\left(E\right),p>1,
\end{align*}
and 
\begin{eqnarray*}
\tilde{R}_{\lambda}\Phi\left(t,x\right) & = &
\int_{0}^{t}e^{-\lambda\left(t-s\right)}\int_{U}\int\Phi\left(s,x+y,z%
\right)P_{t-s}\left(dy\right)q\left(ds,dz\right),\left(t,x\right)\in E,%
\hspace{1em} \\
\Phi & \in & \tilde{\mathcal{\mathbb{C}}}_{2,p}^{\infty}\left(E\right)\cap%
\tilde{\mathcal{\mathbb{C}}}_{p,p}^{\infty}\left(E\right)\text{ if }%
p\geq2,\Phi\in\tilde{\mathcal{\mathbb{C}}}_{p,p}^{\infty}\left(E\right)\text{
if }p\in\left(1,2\right).
\end{eqnarray*}

We remind readers that here $q\left(ds,dz\right)$ is the martingale measure
as given in (\ref{eq:mainEq}).

\begin{lemma}
\label{lem:est-smooth}Let $f\in\mathcal{\mathbb{\tilde{C}}}%
_{0,p}^{\infty}\left(E\right),g\in\tilde{\mathcal{\mathbb{C}}}%
_{0,p}^{\infty}\left(\mathbf{R}^{d}\right),\Phi\in\mathbb{\tilde{\mathcal{%
\mathbb{C}}}}_{2,p}^{\infty}\left(E\right)\cap{\tilde{\mathcal{\mathbb{C}}}}%
_{p,p}^{\infty}\left(E\right)$ for $p\in\left[2,\infty\right)$ and $\Phi\in{%
\tilde{\mathcal{\mathbb{C}}}}_{p,p}^{\infty}\left(E\right)$ for $p\in(1,2)$,
then the following estimates hold for any multiindex $\gamma\in\mathbf{N}%
_{0}^{d}$:

(i) $\mathbf{P}$-a.s. 
\begin{eqnarray*}
\left\vert D^{\gamma}T^{\lambda}g\right\vert _{L_{p}\left(E\right)} & \leq &
\rho_{\lambda}^{\frac{1}{p}}\left\vert D^{\gamma}g\right\vert _{L_{p}\left(%
\mathbf{R}^{d}\right)},\hspace{1em}g\in\tilde{\mathbb{C}}_{0,p}^{\infty}%
\left(\mathbf{R}^{d}\right),p\geq1, \\
\left\vert D^{\gamma}R_{\lambda}f\right\vert _{L_{p}\left(E\right)} & \leq &
\rho_{\lambda}\left\vert D^{\gamma}f\right\vert _{L_{p}\left(E\right)},%
\hspace{1em}f\in\mathbb{\tilde{C}}_{0,p}^{\infty}\left(E\right),p\geq1,
\end{eqnarray*}
and 
\begin{eqnarray*}
\left\vert D^{\gamma}R_{\lambda}f\left(t,\cdot\right)\right\vert
_{L_{p}\left(\mathbf{R}^{d}\right)} & \leq & \int_{0}^{t}\left\vert
D^{\gamma}f\left(s,\cdot\right)\right\vert _{L_{p}\left(\mathbf{R}%
^{d}\right)}ds,t\geq0, \\
\left\vert T_{t}^{\lambda}g\right\vert _{L_{p}\left(\mathbf{R}^{d}\right)} &
\leq & e^{-\lambda t}\left\vert g\right\vert _{L_{p}\left(\mathbf{R}%
^{d}\right)},t\geq0,p\geq1;
\end{eqnarray*}

(ii) For each $p\geq2,$ 
\begin{eqnarray*}
& & \left\vert D^{\gamma}\tilde{R}_{\lambda}\Phi\right\vert _{\mathbb{L}%
_{p}\left(E\right)}^{p} \\
& \leq & C\left[\rho_{\lambda}^{\frac{p}{2}}\mathbf{E}\int_{0}^{T}\left\vert
D^{\gamma}\Phi\left(s,\cdot\right)\right\vert _{L_{2,p}\left(\mathbf{R}%
^{d}\right)}^{p}ds+\rho_{\lambda}\left\vert D^{\gamma}\Phi\right\vert _{%
\mathbb{L}_{p,p}\left(E\right)}^{p}\right], \\
\Phi & \in & \tilde{\mathbb{C}}_{2,p}^{\infty}\left(E\right)\cap\tilde{%
\mathbb{C}}_{p,p}^{\infty}\left(E\right),
\end{eqnarray*}
and for each \,$p\in(1,2),$ 
\begin{equation*}
\left\vert D^{\gamma}\tilde{R}_{\lambda}\Phi\right\vert _{\mathbb{L}%
_{p}\left(E\right)}^{p}\leq C\rho_{\lambda}\left\vert
D^{\gamma}\Phi\right\vert _{\mathbb{L}_{p,p}\left(E\right)}^{p},\Phi\in%
\tilde{\mathbb{C}}_{p,p}^{\infty}\left(E\right), 
\end{equation*}
where $\rho_{\lambda}=T\wedge\frac{1}{\lambda}.$ Moreover, 
\begin{eqnarray*}
& & \left\vert D^{\gamma}\tilde{R}_{\lambda}\Phi\left(t,\cdot\right)\right%
\vert _{\mathbb{L}_{p}\left(\mathbf{R}^{d}\right)}^{p} \\
& \leq & C\left\{ \mathbf{E}\left[\left(\int_{0}^{t}\left\vert
D^{\gamma}\Phi\left(s,\cdot\right)\right\vert _{L_{2,p}\left(\mathbf{R}%
^{d}\right)}^{2}ds\right)^{p/2}\right]+\mathbf{E}\int_{0}^{t}\left\vert
D^{\gamma}\Phi\left(s,\cdot\right)\right\vert _{L_{p,p}\left(\mathbf{R}%
^{d}\right)}^{p}ds\right\} ,
\end{eqnarray*}
if $p\geq2$, and 
\begin{equation*}
\left\vert D^{\gamma}\tilde{R}_{\lambda}\Phi\left(t,\cdot\right)\right\vert
_{\mathbb{L}_{p}\left(\mathbf{R}^{d}\right)}^{p}\leq C\mathbf{E}%
\int_{0}^{t}\left\vert D^{\gamma}\Phi\left(s,\cdot\right)\right\vert
_{L_{p,p}\left(\mathbf{R}^{d}\right)}^{p}ds,t\geq0, 
\end{equation*}
if $p\in\left(1,2\right).$
\end{lemma}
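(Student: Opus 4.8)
The plan is to estimate each of the three representation operators $T^\lambda$, $R_\lambda$, $\tilde R_\lambda$ separately, since the statement of Lemma~\ref{lem:est-smooth} is split accordingly. The key point is that $P_{t-s}$ is a probability measure, so $f\mapsto \int f(\cdot+y)P_{t-s}(dy)$ is a contraction on $L_p(\mathbf{R}^d)$ by Jensen/Minkowski; moreover it commutes with $D^\gamma$ because $P_t$ acts by translation and the input functions are smooth with all derivatives in $L_p$ (this is exactly why we work in $\tilde{\mathbb{C}}^\infty_{r,p}$). So $D^\gamma T^\lambda_t g = e^{-\lambda t}\int D^\gamma g(\cdot+y)P_t(dy)$ and likewise for $R_\lambda$ and $\tilde R_\lambda$.

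For part (i): From $|D^\gamma T^\lambda_t g|_{L_p(\mathbf{R}^d)}\le e^{-\lambda t}|D^\gamma g|_{L_p(\mathbf{R}^d)}$, raising to the $p$-th power and integrating $dt$ over $[0,T]$ gives the factor $\int_0^T e^{-\lambda p t}\,dt \le T\wedge\frac{1}{\lambda p}\le \rho_\lambda$ — actually one wants $\rho_\lambda^{1/p}$ on the norm, which comes from $(\int_0^T e^{-\lambda p t}dt)^{1/p}\le \rho_\lambda^{1/p}$ after noting $\int_0^T e^{-\lambda pt}dt\le T\wedge\frac1\lambda$. For $R_\lambda$: by Minkowski's integral inequality in $L_p(\mathbf{R}^d)$,
\[
|D^\gamma R_\lambda f(t,\cdot)|_{L_p(\mathbf{R}^d)}\le \int_0^t e^{-\lambda(t-s)}|D^\gamma f(s,\cdot)|_{L_p(\mathbf{R}^d)}\,ds\le \int_0^t |D^\gamma f(s,\cdot)|_{L_p(\mathbf{R}^d)}\,ds,
\]
which is one of the claimed pointwise-in-$t$ bounds; then convolution in $t$ with $e^{-\lambda t}\mathbf{1}_{[0,T]}$ (whose $L_1$ norm is $\le\rho_\lambda$) followed by Young's inequality in $t$ gives $|D^\gamma R_\lambda f|_{L_p(E)}\le\rho_\lambda|D^\gamma f|_{L_p(E)}$.

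For part (ii), the stochastic term: I would fix $x$ and $\gamma$, write $D^\gamma\tilde R_\lambda\Phi(t,x)=\int_0^t\int_U e^{-\lambda(t-s)}\big(\int D^\gamma\Phi(s,\cdot+y,z)P_{t-s}(dy)\big)(x)\,q(ds,dz)$, and apply the Burkholder--Davis--Gundy inequality for stochastic integrals against the compensated jump measure $q$. For $p\ge2$ the BDG inequality gives a bound by two terms: the $L_{p/2}$-norm of the quadratic variation $\int_0^t\int_U(\cdots)^2\Pi(dz)ds$, and the $p$-th power term $\int_0^t\int_U|\cdots|^p\Pi(dz)ds$; after using that $\int(\cdot)P_{t-s}(dy)$ is an $L_p(\mathbf{R}^d)$-contraction commuting with $D^\gamma$, integrating in $x$, and applying Minkowski again to pull the $P_{t-s}$-average out, these become $\mathbf{E}[(\int_0^t|D^\gamma\Phi(s,\cdot)|^2_{L_{2,p}(\mathbf{R}^d)}ds)^{p/2}]$ and $\mathbf{E}\int_0^t|D^\gamma\Phi(s,\cdot)|^p_{L_{p,p}(\mathbf{R}^d)}ds$ respectively (recall $|\cdot|_{L_{r,p}}$ uses $V_r=L_r(U,\Pi)$). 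Integrating $dt$ over $[0,T]$ and absorbing the $e^{-\lambda(t-s)}$ factors via the $L_1(dt)$-norm $\rho_\lambda$ (for the first term one gets $\rho_\lambda^{p/2}$, since the $s$-convolution kernel appears under a square in the quadratic variation, and $\rho_\lambda$ for the second) yields the claimed global bound. For $p\in(1,2)$ the BDG inequality produces only the single $p$-th power term, giving the $\rho_\lambda$-bound without the $L_{2,p}$ piece; this is why the restriction $\Phi\in\tilde{\mathbb{C}}^\infty_{p,p}$ suffices there.

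The main obstacle, and the only nonroutine point, is the stochastic part (ii): one must be careful to apply BDG \emph{after} moving the spatial $L_p$-norm and the $P_{t-s}$-averaging inside, because for $p>2$ these operations do not commute for free — the correct order is first BDG in the probability space (producing the quadratic-variation and jump terms as $V_r$-valued integrals), then Minkowski to take the $P_{t-s}$-average out of the $L_p(\mathbf{R}^d,V_r)$ norm, using that $P_{t-s}$ is a probability measure. The bookkeeping of which exponent of $\rho_\lambda$ attaches to which term (namely $\rho_\lambda^{p/2}$ for the quadratic-variation term versus $\rho_\lambda$ for the $p$-th-moment term and for $R_\lambda$) also requires a small but genuine argument about $L_1(dt)$-convolution under a power. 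Everything else — the contraction property of $T_t$, commuting with $D^\gamma$, Minkowski's integral inequality — is standard and the calculations are short. The construction of the stochastic integral $\tilde R_\lambda\Phi$ and the BDG inequality for it are provided in the Appendix referenced in the introduction, so I would simply cite that.
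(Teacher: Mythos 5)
Your proposal follows essentially the same route as the paper: part (i) via the contraction property of $P_t$, Minkowski's integral inequality and Young's inequality in $t$ (the paper simply cites these estimates from its predecessors), and part (ii) by applying the two-term moment inequality for compensated Poisson integrals pointwise in $x$, then Fubini/Minkowski to pull the $P_{t-s}$-average out, then H\"older on the normalized exponential kernel to extract $\rho_\lambda^{p/2}$ and $\rho_\lambda$. The only cosmetic difference is that you attribute the two-term bound (quadratic-variation term plus $p$-th-moment term) to BDG, whereas the paper invokes it directly as Kunita's inequality (Theorem 2.11 of the cited reference); the substance is identical.
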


\begin{proof}
The estimates (i) follow from Lemma 15 in \cite{MikPh2} and Lemma 8 in \cite%
{MikPh1}. Due to similarity, we only prove (ii) for the case $p\geq2.$ Let $%
\Phi\in\tilde{\mathbb{C}}_{2,p}^{\infty}\left(E\right)\cap\tilde{\mathbb{C}}%
_{p,p}^{\infty}\left(E\right)$, recall that $\Phi=\Phi\raisebox{2pt}{%
\ensuremath{\chi}}_{U_{n}}$ for some $U_{n}\in\mathcal{U}$ with $%
\Pi\left(U_{n}\right)<\infty.$ Obviously, for any multiindex $\gamma$, $%
\left(t,x\right)\in E,$ 
\begin{eqnarray*}
D^{\gamma}\tilde{R}_{\lambda}\Phi\left(t,x\right) & = &
\int_{0}^{t}e^{-\lambda\left(t-s\right)}\int_{U}\int
D^{\gamma}\Phi\left(s,x+y,z\right)P_{t-s}\left(dy\right)q\left(ds,dz\right)
\\
& = & \int_{0}^{t}e^{-\lambda\left(t-s\right)}\int_{U}\int
D^{\gamma}\Phi\left(s,x+y,z\right)P_{t-s}\left(dy\right)p\left(ds,dz\right)
\\
& & -\int_{0}^{t}e^{-\lambda\left(t-s\right)}\int_{U}\int
D^{\gamma}\Phi\left(s,x+y,z\right)P_{t-s}\left(dy\right)\Pi\left(dz\right)ds.
\end{eqnarray*}
By Kunita's inequality (see Theorem 2.11 of \cite{ku}), for $t>0,$ 
\begin{eqnarray*}
& & \mathbf{E}\int\left\vert D^{\gamma}\tilde{R}_{\lambda}\Phi\left(t,x%
\right)\right\vert ^{p}dx \\
& \leq & C\mathbf{E}\int\left(\int_{0}^{t}e^{-2\lambda\left(t-s\right)}\int%
\left(\int_{U}\left\vert D^{\gamma}\Phi\left(s,x+y,z\right)\right\vert
^{2}\Pi\left(dz\right)\right)P_{t-s}\left(dy\right)ds\right)^{p/2}dx \\
& & +C\mathbf{E}\int\int_{0}^{t}e^{-p\lambda\left(t-s\right)}\left\vert
T_{t-s}D^{\gamma}\Phi\left(s,x,z\right)\right\vert ^{p}\Pi\left(dz\right)dsdx
\\
& = & B\left(t\right)+D\left(t\right).
\end{eqnarray*}
By Fubini theorem and Minkowski inequality, 
\begin{equation*}
D\left(t\right)\leq C\mathbf{E}\int_{0}^{t}e^{-p\lambda\left(t-s\right)}%
\left\vert D^{\gamma}\Phi\left(s,\cdot\right)\right\vert _{L_{p,p}\left(%
\mathbf{R}^{d}\right)}^{p}ds,t>0, 
\end{equation*}

and 
\begin{eqnarray*}
B\left(t\right) & \leq & C\mathbf{E}\left[\left(\int_{0}^{t}e^{-2\lambda%
\left(t-s\right)}\left\vert D^{\gamma}\Phi\left(s,\cdot\right)\right\vert
_{L_{2,p}\left(\mathbf{R}^{d}\right)}^{2}ds\right)^{p/2}\right] \\
& \leq & C\left(\frac{1}{\lambda}\right)^{p/2}\mathbf{E}\left[%
\int_{0}^{t}2\lambda e^{-2\lambda\left(t-s\right)}\left\vert
D^{\gamma}\Phi\left(s,\cdot\right)\right\vert _{L_{2,p}\left(\mathbf{R}%
^{d}\right)}^{p}ds\right].
\end{eqnarray*}
Now, 
\begin{equation*}
\int_{0}^{T}D\left(t\right)dt\leq C\rho_{\lambda}\mathbf{E}%
\int_{0}^{T}\left\vert D^{\gamma}\Phi\left(s,\cdot\right)\right\vert
_{L_{p,p}\left(\mathbf{R}^{d}\right)}^{p}ds 
\end{equation*}
and 
\begin{equation*}
\int_{0}^{T}B\left(t\right)dt\leq C\rho_{\lambda}^{\frac{p}{2}}\mathbf{E}%
\int_{0}^{T}\left\vert D^{\gamma}\Phi\left(s,\cdot\right)\right\vert
_{L_{2,p}\left(\mathbf{R}^{d}\right)}^{p}ds. 
\end{equation*}
\end{proof}

\begin{lemma}
\label{lem:well-posedness-smooth}For $\nu\in\mathfrak{A}^{\sigma}$, let $f\in%
\mathcal{\mathbb{\tilde{C}}}_{0,p}^{\infty}\left(E\right),g\in\tilde{%
\mathcal{\mathbb{C}}}_{0,p}^{\infty}\left(\mathbf{R}^{d}\right),\Phi\in%
\mathbb{\tilde{\mathcal{\mathbb{C}}}}_{2,p}^{\infty}\left(E\right)\cap{%
\tilde{\mathcal{\mathbb{C}}}}_{p,p}^{\infty}\left(E\right)$ for $p\in\left[%
2,\infty\right)$ and $\Phi\in{\tilde{\mathcal{\mathbb{C}}}}%
_{p,p}^{\infty}\left(E\right)$ for $p\in\left(1,2\right)$, then there is
unique $u\in\tilde{\mathcal{\mathbb{C}}}_{0,p}^{\infty}\left(E\right)$
solving (\ref{eq:mainEq}). Moreover,

\begin{equation*}
u\left(t,x\right)=T_{t}^{\lambda}g\left(x\right)+R_{\lambda}f\left(t,x%
\right)+\tilde{R}_{\lambda}\Phi\left(t,x\right),\left(t,x\right)\in E, 
\end{equation*}
and $u_{1}\left(t,x\right)=T_{t}^{\lambda}g\left(x\right),\left(t,x\right)%
\in E,$ solves (\ref{eq:mainEq}) with $f=0,\Phi=0$, $u_{2}=R_{\lambda}f$
solves (\ref{eq:mainEq}) with $g=0,\Phi=0$, and $u_{3}=\tilde{R}%
_{\lambda}\Phi$ solves \ (\ref{eq:mainEq}) with $g=0,f=0.$
\end{lemma}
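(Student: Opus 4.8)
The plan is to take $u=T^{\lambda}g+R_{\lambda}f+\tilde{R}_{\lambda}\Phi$ and verify in turn that (a) $u\in\tilde{\mathcal{\mathbb{C}}}_{0,p}^{\infty}(E)$, (b) the three summands solve the three sub-problems listed in the statement (so that $u$ solves (\ref{eq:mainEq})), and (c) the solution is unique. For (a), differentiation under the integral sign shows that each summand is infinitely differentiable in $x$, with $D^{\gamma}T_{t}\varphi=T_{t}D^{\gamma}\varphi$; the finiteness of $\mathbf{E}\int_{0}^{T}\sup_{x}|D^{\gamma}\cdot|^{p}\,dt+\mathbf{E}|D^{\gamma}\cdot|_{L_{p}(E)}^{p}$ is exactly Lemma \ref{lem:est-smooth} for the $L_{p}(E)$-part, and for the $\sup_{x}$-part one repeats the same estimates after taking the supremum in $x$ first (for $\tilde{R}_{\lambda}\Phi$ the latter uses Kunita's inequality applied to $t\mapsto\sup_{x}|D^{\gamma}\tilde{R}_{\lambda}\Phi(t,x)|$ together with $\Phi=\Phi\chi_{U_{n}}$, so that the $z$-integration runs over a set of finite $\Pi$-measure).

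For the deterministic pieces I would argue via the symbol. Since $\widehat{T_{t}g}(\xi)=e^{\psi^{\nu}(\xi)t}\hat{g}(\xi)$ and $\mathrm{Re}\,\psi^{\nu}\le0$, the rapid decay of $\hat{g}$ (and the at most polynomial growth of $|\psi^{\nu}|$) lets one differentiate under $\mathcal{F}^{-1}$, giving $\partial_{t}T_{t}g=L^{\nu}T_{t}g=T_{t}L^{\nu}g$; hence $u_{1}=T_{t}^{\lambda}g=e^{-\lambda t}T_{t}g$ satisfies $\partial_{t}u_{1}=L^{\nu}u_{1}-\lambda u_{1}$ with $u_{1}(0,\cdot)=g$. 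For $u_{2}=R_{\lambda}f$, writing $u_{2}(t,x)=\int_{0}^{t}e^{-\lambda(t-s)}T_{t-s}[f(s,\cdot)](x)\,ds$ and differentiating in $t$ gives $\partial_{t}u_{2}(t,x)=f(t,x)+\int_{0}^{t}e^{-\lambda(t-s)}(L^{\nu}-\lambda)T_{t-s}[f(s,\cdot)](x)\,ds=f(t,x)+(L^{\nu}-\lambda)u_{2}(t,x)$, where the interchange of $L^{\nu}$ with $\int_{0}^{t}\cdot\,ds$ is justified by $L^{\nu}T_{r}[f(s,\cdot)]=T_{r}L^{\nu}[f(s,\cdot)]$ being jointly continuous and integrable in $x$; clearly $u_{2}(0,\cdot)=0$.

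The crucial step is that $u_{3}=\tilde{R}_{\lambda}\Phi$ solves (\ref{eq:mainEq}) with $f=g=0$. For $t>s$, the previous paragraph applied to $\varphi=\Phi(s,\cdot,z)$ gives $e^{-\lambda(t-s)}T_{t-s}[\Phi(s,\cdot,z)](x)=\Phi(s,x,z)+\int_{s}^{t}(L^{\nu}-\lambda)\bigl[e^{-\lambda(r-s)}T_{r-s}[\Phi(s,\cdot,z)]\bigr](x)\,dr$. Inserting this into the definition of $\tilde{R}_{\lambda}\Phi$ and applying a stochastic Fubini theorem for integrals against $q(ds,dz)$ — admissible because $\Phi=\Phi\chi_{U_{n}}$ is smooth in $x$ and compactly supported in $z$, so the integrand is square-integrable over $\Omega\times\{0\le s\le r\le t\}\times U$, which is the setting of the jump stochastic integral recalled in the Appendix (see also \cite{ku}) — yields $u_{3}(t,x)=\int_{0}^{t}\int_{U}\Phi(s,x,z)\,q(ds,dz)+\int_{0}^{t}(L^{\nu}-\lambda)u_{3}(r,x)\,dr$, i.e. $du_{3}=(L^{\nu}u_{3}-\lambda u_{3})\,dt+\int_{U}\Phi\,q(dt,dz)$ with $u_{3}(0,\cdot)=0$. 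Adding the three identities shows $u=u_{1}+u_{2}+u_{3}$ solves (\ref{eq:mainEq}). I expect this stochastic-Fubini interchange, with the attendant measurability and integrability bookkeeping for the $V_{r}$-valued integrands, to be the main technical obstacle; the deterministic steps are routine and parallel the $\Phi=0$ case of \cite{MikPh1,MikPh2}.

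For uniqueness, if $u,u'\in\tilde{\mathcal{\mathbb{C}}}_{0,p}^{\infty}(E)$ both solve (\ref{eq:mainEq}), then $v=u-u'$ satisfies $dv=(L^{\nu}v-\lambda v)\,dt$, $v(0,\cdot)=0$, with no martingale part, so $\bar{v}(t,\cdot):=e^{\lambda t}v(t,\cdot)$ solves $\partial_{t}\bar{v}=L^{\nu}\bar{v}$, $\bar{v}(0,\cdot)=0$, as an $L_{p}\cap C_{b}^{\infty}$-valued equation for $\mathbf{P}$-a.e. $\omega$. Fixing $t_{0}>0$ and setting $h(s):=T_{t_{0}-s}\bar{v}(s,\cdot)$, $s\in[0,t_{0}]$, the relations $\partial_{r}T_{r}=L^{\nu}T_{r}$ and $L^{\nu}T_{r}=T_{r}L^{\nu}$ give $\partial_{s}h(s)=-L^{\nu}T_{t_{0}-s}\bar{v}(s,\cdot)+T_{t_{0}-s}L^{\nu}\bar{v}(s,\cdot)=0$, hence $\bar{v}(t_{0},\cdot)=h(t_{0})=h(0)=T_{t_{0}}\bar{v}(0,\cdot)=0$; since $t_{0}>0$ was arbitrary, $v\equiv0$ and $u=u'$. (Alternatively one may invoke the $L_{p}$-dissipativity of $L^{\nu}$: $\frac{d}{dt}\frac{1}{p}\int|v(t,\cdot)|^{p}\,dx=\int(L^{\nu}v-\lambda v)|v|^{p-2}v\,dx\le0$ together with $\int|v(0,\cdot)|^{p}\,dx=0$.)
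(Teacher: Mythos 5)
Your proposal is correct and follows essentially the same route as the paper: represent $u=T^{\lambda}g+R_{\lambda}f+\tilde{R}_{\lambda}\Phi$, verify the deterministic pieces by the semigroup/symbol computation (the paper simply cites Lemma 8 of \cite{MikPh1} and Lemma 15 of \cite{MikPh2} for these and for uniqueness), and handle $u_{3}$ by applying the It\^o formula to $e^{-\lambda(t-s)}T_{t-s}\Phi(s,\cdot,z)$ and then a stochastic Fubini interchange, using $\Phi=\Phi\chi_{U_{n}}$ with $\Pi(U_{n})<\infty$ — which is exactly the paper's argument. You correctly identify the stochastic Fubini step as the only genuinely nontrivial point.
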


\begin{proof}
Uniqueness is a simple repeat of the proof of Lemma 8 in \cite{MikPh1}. We
prove that $u_{1},u_{2}$ solve the corresponding equations by repeating the
proofs of Lemma 8 in \cite{MikPh1} and Lemma 15\ in \cite{MikPh2}. Let $%
\Phi\in\mathbb{\tilde{\mathcal{\mathbb{C}}}}_{2,p}^{\infty}\left(E\right)\cap%
{\tilde{\mathcal{\mathbb{C}}}}_{p,p}^{\infty}\left(E\right)$ if $p\in\left[%
2,\infty\right)$ or $\Phi\in{\tilde{\mathcal{\mathbb{C}}}}%
_{p,p}^{\infty}\left(E\right)$ if $p\in(1,2]$. Recall that $\Phi=\Phi%
\raisebox{2pt}{\ensuremath{\chi}}_{U_{n}}$ for some $U_{n}\in\mathcal{U}$
with $\Pi\left(U_{n}\right)<\infty$. A simple application of Ito formula and
Fubini theorem show that $\mathbf{P}$-a.s. 
\begin{eqnarray*}
& & u_{3}\left(t,x\right) \\
& = &
\int_{0}^{t}e^{-\lambda\left(t-s\right)}\int_{U}\int\Phi\left(s,x+y,z%
\right)P_{t-s}\left(dy\right)q\left(ds,dz\right) \\
& = &
\int_{0}^{t}\int_{U}\Phi\left(s,x,z\right)q\left(ds,dz\right)+\int_{0}^{t}%
\int_{s}^{t}e^{-\lambda\left(r-s\right)}\times \\
& & \int_{U}\int[L^{\nu}\Phi\left(s,x+y,z\right)-\lambda\Phi\left(s,x+y,z%
\right)]P_{r-s}\left(dy\right)drq\left(ds,dz\right) \\
& = &
\int_{0}^{t}\int_{U}\Phi\left(s,x,z\right)q\left(ds,dz\right)+\int_{0}^{t}
\left[L^{\nu}u_{3}\left(s,x\right)-\lambda u_{3}\left(s.x\right)\right]%
ds,\left(t,x\right)\in E.
\end{eqnarray*}
\end{proof}

\subsection{Estimate of $T_{\protect\lambda}g$}

The solution associated to the initial value function is given explicitly by 
\begin{equation*}
T_{\lambda}g=G_{t}^{\lambda}\ast g,\hspace{1em}g\in\mathbb{\tilde{C}}%
_{0,p}^{\infty}\left(\mathbf{R}^{d}\right), 
\end{equation*}
where $G_{t}^{\lambda}\left(x\right):=\exp\left(-\lambda
t\right)p^{\nu^{\ast}}\left(t,x\right)$, $\nu^{\ast}\left(dy\right)=\nu%
\left(-dy\right).$

We prove that there is $C=C\left(\nu,d,p\right)$ so that a.s. 
\begin{equation}
\left\vert L^{\nu}T_{\lambda}g\right\vert _{H_{p}^{\nu;s}\left(E\right)}\leq
C\left\vert g\right\vert _{B_{pp}^{\nu;s+1-1/p}\left(\mathbf{R}^{d}\right)}.
\label{h80}
\end{equation}
Since by Proposition \ref{prop-FuncSpace}, $J_{\nu}^{t}:H_{p}^{\nu;s}\left(%
\mathbf{R}^{d}\right)\rightarrow H_{p}^{\nu;s-t}\left(\mathbf{R}^{d}\right)$
and $J_{\nu}^{t}:B_{pp}^{\nu;s}\left(\mathbf{R}^{d}\right)\rightarrow
B_{pp}^{\nu;s-t}\left(\mathbf{R}^{d}\right)$ are isomorphisms for any $s,t\in%
\mathbf{R}$, it is enough to derive the estimate for $s=0.$

\begin{lemma}
\label{lem:est_initial}Let $\nu\in\mathfrak{A}^{\sigma},w=w_{\nu}$ is a
continuous O-RV function and \textbf{A, B} hold. Then there is $%
C=C\left(\nu,d,p\right)$ so that 
\begin{equation}
\left\vert L^{\nu}T_{\lambda}g\right\vert _{L_{p}\left(E\right)}\leq
C\left\vert g\right\vert _{B_{pp}^{\nu;1-1/p}\left(\mathbf{R}%
^{d}\right)},g\in\mathbb{\tilde{C}}_{0,p}^{\infty}\left(\mathbf{R}%
^{d}\right).  \label{eq:initialEst}
\end{equation}
\end{lemma}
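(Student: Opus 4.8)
The goal is the estimate $\left\vert L^{\nu}T_{\lambda}g\right\vert_{L_{p}(E)}\le C\left\vert g\right\vert_{B_{pp}^{\nu;1-1/p}(\mathbf{R}^{d})}$, and since $T_{\lambda}g(t,\cdot)=G_{t}^{\lambda}\ast g$ with $G_{t}^{\lambda}=e^{-\lambda t}p^{\nu^{*}}(t,\cdot)$, this amounts to an estimate on $e^{-\lambda t}L^{\nu}p^{\nu^{*}}(t,\cdot)\ast g$. First I would use the Littlewood--Paley decomposition $g=\sum_{j\ge 0}g\ast\varphi_{j}$ from Remark \ref{rem:system}, so that $L^{\nu}T_{\lambda}g=\sum_{j}e^{-\lambda t}(L^{\nu}p^{\nu^{*}}(t,\cdot)\ast\varphi_{j})\ast g$, and since $\varphi_{j}=\tilde{\varphi}_{j}\ast\varphi_{j}$ one can write each block as $(e^{-\lambda t}L^{\nu}p^{\nu^{*}}(t,\cdot)\ast\varphi_{j})\ast(g\ast\tilde\varphi_{j})$ — no, more cleanly: put all the analysis on the kernel side. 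By Minkowski's inequality in $L_{p}(\mathbf{R}^{d})$,
\begin{equation*}
\left\vert L^{\nu}T_{\lambda}g(t,\cdot)\right\vert_{L_{p}}\le \sum_{j\ge 0} e^{-\lambda t}\left\vert L^{\nu}p^{\nu^{*}}(t,\cdot)\ast\varphi_{j}\right\vert_{L_{1}}\left\vert g\ast\tilde\varphi_{j}\right\vert_{L_{p}},
\end{equation*}
using $\varphi_j = \varphi_j \ast \tilde\varphi_j$ so that $g\ast\varphi_j = (g\ast\tilde\varphi_j)\ast\varphi_j$, Young's inequality, and then moving $\varphi_j$ onto the kernel. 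This reduces everything to a good $L_{1}$-in-$x$ bound on the frequency-localized kernel, integrated against $dt$ with the weight $e^{-\lambda t}$.

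The key step is therefore to estimate $\int_{0}^{T} e^{-p\lambda t}\left\vert L^{\nu}p^{\nu^{*}}(t,\cdot)\ast\varphi_{j}\right\vert_{L_{1}}^{p}\,dt$ (or rather to get the sum over $j$ under control after raising to the $p$-th power). Here I would invoke the scaling identity from Lemma \ref{al1-1} (with $\pi=\nu$, $\mu$ trivial), namely $L^{\nu}p^{\nu}(t,x)=t^{-1}a(t)^{-d}(L^{\tilde\nu_{a(t)}}p^{\tilde\nu_{a(t)}})(1,x/a(t))$, so that $\left\vert L^{\nu}p^{\nu^{*}}(t,\cdot)\ast\varphi_{j}\right\vert_{L_{1}(\mathbf{R}^{d})}$ becomes, after a change of variables, $t^{-1}$ times an $L_{1}$ norm of $L^{\tilde\nu_{a(t)}}p^{\tilde\nu_{a(t)}}(1,\cdot)$ convolved against the rescaled bump $a(t)^{d}\varphi_{j}(a(t)\cdot)$, whose Fourier transform is supported in $\{N^{-j-1}\le|\xi|\le a(t)^{-1}N^{-j+1}\}$... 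I need to be careful: the natural splitting is by whether the Littlewood--Paley frequency scale $N^{j}$ is large or small compared to $1/a(t)$, equivalently whether $t$ is large or small compared to $w(N^{-j})$. Using Lemma \ref{lem:density_with_L} (with $\pi=\nu$, which satisfies the hypotheses by Lemma \ref{lem:alpha12} and Remark \ref{rem:alpha_choice}) one gets $\int(1+|x|^{\alpha_2})|L^{\tilde\nu_R}p^{R}(1,x)|\,dx\le C$ uniformly in $R$; combined with the decay of $\widehat{\varphi_{j}}$ away from $|\xi|\sim N^{j}$ this yields, for the "off-diagonal" regime, a bound like $t^{-1}(N^{j}a(t))^{-M}$ for any $M$ (integration by parts exploiting smoothness of the symbol at the relevant scale, as in Lemma \ref{lem:symbol}), and in the "diagonal" regime $t\sim w(N^{-j})$ a bound of size $t^{-1}$.

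Putting these together, $\int_{0}^{T}e^{-p\lambda t}\left\vert L^{\nu}p^{\nu^{*}}(t,\cdot)\ast\varphi_{j}\right\vert_{L_{1}}^{p}\,dt\lesssim \int_{0}^{\infty} (t^{-1}\wedge \text{(off-diag decay)})^{p}\,dt$, which by Corollary \ref{cor:aymp_integral} / Lemmas \ref{al1}--\ref{al2} converges and scales like $w(N^{-j})^{-(p-1)}=w(N^{-j})^{-(1-1/p)p}$ — precisely the weight appearing in the equivalent Besov norm $\left\vert g\right\vert_{\tilde B_{pp}^{\nu;1-1/p}}=\big(\sum_{j}w(N^{-j})^{-(1-1/p)p}\left\vert\varphi_{j}\ast g\right\vert_{L_p}^{p}\big)^{1/p}$ from Proposition \ref{prop-FuncSpace}. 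Raising the Minkowski estimate to the $p$-th power, applying Hölder in $j$ to pull out the kernel sum, integrating in $t$, and recognizing the resulting weighted sum as the $\tilde B_{pp}^{\nu;1-1/p}$ norm then gives \eqref{eq:initialEst}. I expect the main obstacle to be the bookkeeping in the off-diagonal regime: one must show the kernel $L^{\tilde\nu_{a(t)}}p^{\tilde\nu_{a(t)}}(1,\cdot)$, frequency-localized to a shell far from where its symbol has its "mass", has rapidly decaying $L_1$ norm uniformly in the scaling parameter — this requires combining the lower bound on $\mathrm{Re}\,\psi^{R,0}$ from Lemma \ref{lem:symbol}(i) with the derivative bounds on the symbol, exactly the type of argument already carried out in \cite{MikPh1, MikPh2}, so I would cite those and only indicate the adaptations forced by O-RV scaling.
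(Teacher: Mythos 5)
Your skeleton --- Littlewood--Paley blocks, rescaling to $\tilde{\nu}_{N^{-j}}$, uniform $L_{1}$ bounds on the frequency-localized kernel, and a per-block time integral of size $w(N^{-j})^{-(p-1)}$ --- is the same as the paper's, but two steps as you state them do not close. First, the kernel bounds. The correct estimate, which the paper obtains from Lemma \ref{Lem:expEst} together with $\sup_{j}\int\vert L^{\tilde{\nu}_{N^{-j}}}\bar{\eta}\vert\,dx<\infty$, is
\begin{equation*}
K_{j}(t):=e^{-\lambda t}\bigl\vert L^{\nu}p^{\nu^{\ast}}(t,\cdot)\ast\varphi_{j}\bigr\vert_{L_{1}(\mathbf{R}^{d})}\leq C\,w(N^{-j})^{-1}\exp\bigl\{-c\,w(N^{-j})^{-1}t\bigr\},\qquad j\geq1,
\end{equation*}
i.e.\ the localized kernel is \emph{bounded} by $w(N^{-j})^{-1}$ (not $\sim t^{-1}$) for $t\lesssim w(N^{-j})$, and there is no additional off-diagonal gain in that regime: the symbol $\psi^{\nu}\phi(N^{-j}\cdot)$ has modulus $\asymp w(N^{-j})^{-1}$ on its support by Lemma \ref{Lem: symbol_omega}, and $e^{\psi^{\nu}t}\asymp1$ there when $t\ll w(N^{-j})$. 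Your bound $t^{-1}$ near the diagonal makes $\int_{0}^{w(N^{-j})}t^{-p}\,dt$ diverge, and your off-diagonal factor $(N^{j}a(t))^{-M}$ is $\geq1$ precisely when $t\ll w(N^{-j})$, so it cannot rescue the integral. With the corrected bound one does get $\int_{0}^{\infty}K_{j}(t)^{p}\,dt\leq Cw(N^{-j})^{-(p-1)}$, as you claim.

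Second, and more seriously, the passage from the per-$j$ bounds to the estimate is where the paper's proof does essentially all of its work, and ``H\"older in $j$'' without weights fails. After Minkowski you must show $\int_{0}^{T}\bigl(\sum_{j}K_{j}(t)b_{j}\bigr)^{p}dt\lesssim\sum_{j}w(N^{-j})^{-(p-1)}b_{j}^{p}$. The unweighted H\"older bound $(\sum_{j}K_{j}b_{j})^{p}\leq(\sum_{j}K_{j})^{p-1}\sum_{j}K_{j}b_{j}^{p}$ would require $\sup_{t}\sum_{j}K_{j}(t)<\infty$, but $\sum_{j}K_{j}(t)\asymp t^{-1}$; the natural weighted variant with weight $w(N^{-j})$ produces $\sum_{j}e^{-cw(N^{-j})^{-1}t}\gtrsim\log(1/a(t))$, also unbounded as $t\to0$. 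One needs a genuinely weighted Schur test --- for instance $u_{j}=w(N^{-j})^{\gamma}$ with $1-\frac{2}{p}<\gamma<1-\frac{1}{p}$ and test function $v(t)=t^{\gamma-1+1/p}$, verified via Lemma \ref{lem:powerEstratio} and Corollary \ref{cor:aymp_integral} --- and this is exactly the content of the paper's splitting of $\sum_{j}e^{-cw(N^{-j})^{-1}t}k_{j}^{2}$ into $D(t)+E(t)$ with the weights $(a(t)/N^{-j})^{\pm\beta}$. Note also that for $p\in(1,2]$ the paper avoids any interaction between blocks by keeping the square-function characterization of $L_{p}$ (Proposition \ref{prop-FuncSpace}) and using $\ell_{p}\hookrightarrow\ell_{2}$, whereas your $\ell_{1}$-Minkowski step discards the square function and forces the Schur argument for every $p$. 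The approach is salvageable, but the summation over $j$ is the crux and must be supplied.
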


\begin{proof}
We will use an equivalent norm(see Proposition \ref{prop-FuncSpace}.) Let $%
N>1$ be an integer. There exists a function $\phi\in C_{0}^{\infty}(\mathbf{R%
}^{d})$ such that $\mathrm{supp}\,\phi=\{\xi:\frac{1}{N}\leqslant|\xi|%
\leqslant N\}$, $\phi(\xi)>0$ if $N^{-1}<|\xi|<N$ and 
\begin{equation*}
\sum_{j=-\infty}^{\infty}\phi(N^{-j}\xi)=1\quad\text{if }\xi\neq0. 
\end{equation*}
Let 
\begin{equation*}
\tilde{\phi}\left(\xi\right)=\phi\left(N\xi\right)+\phi\left(\xi\right)+\phi%
\left(N^{-1}\xi\right),\xi\in\mathbf{R}^{d}. 
\end{equation*}
Note that supp$~\tilde{\phi}\subseteq\left\{ N^{-2}\leq\left\vert
\xi\right\vert \leq N^{2}\right\} $ and $\tilde{\phi}\phi=\phi$. Let $%
\varphi_{k}=\mathcal{F}^{-1}\phi\left(N^{-k}\cdot\right),k\geq1,$ and $%
\varphi_{0}\in\mathcal{S}\left(\mathbf{R}^{d}\right)$ is defined as 
\begin{equation*}
\varphi_{0}=\mathcal{F}^{-1}\left[1-\sum_{k=1}^{\infty}\phi\left(N^{-k}\cdot%
\right)\right]. 
\end{equation*}
Let $\phi_{0}\left(\xi\right)=\mathcal{F}\varphi_{0}\left(\xi\right),\tilde{%
\phi}_{0}\left(\xi\right)=\mathcal{F}\varphi_{0}\left(\xi\right)+\mathcal{%
F\varphi}_{1}\left(\xi\right),\xi\in\mathbf{R}^{d}\mathbf{,}\tilde{\varphi}=%
\mathcal{F}^{-1}\tilde{\phi},\varphi=\mathcal{F}^{-1}\phi$. Let 
\begin{equation*}
\tilde{\varphi}_{k}=\sum_{l=-1}^{1}\varphi_{k+l},k\geq1,\tilde{\varphi}%
_{0}=\varphi_{0}+\varphi_{1}. 
\end{equation*}
Note that $\varphi_{k}=\tilde{\varphi}_{k}\ast\varphi_{k},k\geq0$. For $%
j\geq1,$ 
\begin{eqnarray*}
& & \mathcal{F}\left[L^{\nu}T_{\lambda}g\left(t,\cdot\right)\ast\varphi_{j}%
\right] \\
& = & w\left(N^{-j}\right)^{-1}\psi^{\tilde{\nu}_{N^{-j}}}\left(N^{-j}\xi%
\right)\exp\left\{ w\left(N^{-j}\right)^{-1}\psi^{\tilde{\nu}%
_{N^{-j}}}\left(N^{-j}\xi\right)t-\lambda t\right\} \\
& & \times\tilde{\phi}\left(N^{-j}\xi\right)\hat{g}_{j}\left(\xi\right),
\end{eqnarray*}
and 
\begin{equation*}
\mathcal{F}\left[L^{\nu}T_{\lambda}g\left(t,\cdot\right)\ast\varphi_{0}%
\right]=\psi^{\nu}\left(\xi\right)\exp\left\{
\psi^{\nu}\left(\xi\right)t-\lambda t\right\} \tilde{\phi}%
_{0}\left(\xi\right)\hat{g}_{0}\left(\xi\right), 
\end{equation*}
where $g_{j}=g\ast\varphi_{j},j\geq0.$

Let $Z^{j}=Z^{\tilde{\nu}_{N^{-j}}},j\geq1.$ Let $\bar{\phi}\in
C_{0}^{\infty}\left(\mathbf{R}^{d}\right),0\notin$supp$\left(\bar{\phi}%
\right)$ and $\bar{\phi}\tilde{\phi}=\tilde{\phi}$, $\bar{\eta}=\mathcal{F}%
^{-1}\bar{\phi}$. Denoting $\eta=\mathcal{F}^{-1}\tilde{\phi}_{0},$ we have 
\begin{eqnarray}
L^{\nu}T_{\lambda}g\left(t,\cdot\right)\ast\varphi_{j} & = &
w\left(N^{-j}\right)^{-1}\bar{H}_{t}^{\lambda,j}\ast g_{j},j\geq1,
\label{for0} \\
L^{\nu}T_{\lambda}g\left(t,\cdot\right)\ast\varphi_{0} & = & \bar{H}%
_{t}^{\lambda,0}\ast g_{0},t>0,  \notag
\end{eqnarray}
where for $j\geq1,$ 
\begin{eqnarray*}
\bar{H}_{t}^{\lambda,j}\left(x\right) & = &
N^{jd}H_{w\left(N^{-j}\right)^{-1}t}^{\lambda,j}\left(N^{j}x\right),%
\left(t,x\right)\in E, \\
H_{t}^{\lambda,j} & = & e^{-\lambda w\left(N^{-j}\right)t}(L^{\tilde{\nu}%
_{N^{-j}}}\bar{\eta})\ast\mathbf{E}\tilde{\varphi}\left(\cdot+Z_{t}^{j}%
\right),t>0\mathbf{,}
\end{eqnarray*}
and 
\begin{equation*}
\bar{H}_{t}^{\lambda,0}\left(x\right)=e^{-\lambda t}L^{\nu}\mathbf{E}%
\eta\left(\cdot+Z_{t}^{\nu}\right),\left(t,x\right)\in E. 
\end{equation*}

By Lemma 5 in \cite{MF}, 
\begin{equation*}
\sup_{j}\int\left\vert L^{\tilde{\nu}_{N^{-j}}}\bar{\eta}\right\vert
dx<\infty\text{.} 
\end{equation*}
Hence by Lemma \ref{Lem:expEst}, 
\begin{eqnarray*}
\int\left\vert H_{t}^{\lambda,j}\right\vert dx & \leq & \int\left\vert L^{%
\tilde{\nu}_{N^{-j}}}\bar{\eta}\right\vert dx\int\left\vert \mathbf{E}\tilde{%
\varphi}\left(\cdot+Z_{t}^{j}\right)\right\vert dx \\
& \leq & Ce^{-ct},t>0,j\geq1,
\end{eqnarray*}
and hence, 
\begin{equation}
\int\left\vert \bar{H}_{t}^{\lambda,j}\right\vert dx\leq C\exp\left\{
-cw\left(N^{-j}\right)^{-1}t\right\} ,t>0,j\geq1.  \label{for1}
\end{equation}
and by Lemma \ref{lem:main_density}, 
\begin{equation}
\int\left\vert \bar{H}_{t}^{\lambda,0}\right\vert dx\leq C\left(\frac{1}{t}%
\wedge1\right),t>0.  \label{h11}
\end{equation}

It follows by Proposition \ref{prop-FuncSpace} and (\ref{for0}) that 
\begin{eqnarray*}
\left\vert L^{\nu}T_{\lambda}g\left(t\right)\right\vert _{L_{p}\left(\mathbf{%
R}^{d}\right)}^{p} & \leq & C\left\vert \left(\sum_{j=1}^{\infty}\left\vert
w\left(N^{-j}\right)^{-1}\bar{H}_{t}^{\lambda,j}\ast g_{j}\right\vert
^{2}\right)^{1/2}\right\vert _{L_{p}\left(\mathbf{R}^{d}\right)}^{p} \\
& & +C\int\left\vert \bar{H}_{t}^{\lambda,0}\ast g_{0}\right\vert ^{p}dx.
\end{eqnarray*}

Hence 
\begin{equation*}
\left\vert L^{\nu}T_{\lambda}g\left(t\right)\right\vert _{L_{p}\left(\mathbf{%
R}^{d}\right)}^{p}\leq C\sum_{j=0}^{\infty}\left\vert
w\left(N^{-j}\right)^{-1}\bar{H}_{t}^{\lambda,j}\ast g_{j}\right\vert
_{L_{p}\left(\mathbf{R}^{d}\right)}^{p}\text{ if }p\in(1,2], 
\end{equation*}
and, by Minkowski inequality, 
\begin{eqnarray*}
\left\vert L^{\nu}T_{\lambda}g\left(t\right)\right\vert _{L_{p}\left(\mathbf{%
R}^{d}\right)}^{p} & \leq & C\left(\sum_{j=1}^{\infty}\left(\int\left\vert
w\left(N^{-j}\right)^{-1}\bar{H}_{t}^{\lambda,j}\ast g_{j}\right\vert
^{p}dx\right)^{2/p}\right)^{p/2} \\
& & +C\int\left\vert \bar{H}_{t}^{\lambda,0}\ast g_{0}\right\vert ^{p}dx
\end{eqnarray*}
if $p>2$. Now, by (\ref{for1}), 
\begin{eqnarray}
& & \int\left\vert w\left(N^{-j}\right)^{-1}\bar{H}_{t}^{\lambda,j}\ast
g_{j}\right\vert ^{p}dx  \label{h12} \\
& \leq & \left(\int\left\vert \bar{H}_{t}^{\lambda,j}\right\vert
dx\right)^{p}\int\left\vert w\left(N^{-j}\right)^{-1}g_{j}\right\vert ^{p}dx
\notag \\
& \leq & Cw\left(N^{-j}\right)^{-p}\exp\left\{
-cw\left(N^{-j}\right)^{-1}t\right\} \left\vert g_{j}\right\vert
_{L_{p}\left(\mathbf{R}^{d}\right)}^{p}\text{ if }j\geq1,  \notag
\end{eqnarray}
and, by (\ref{h11}), 
\begin{equation}
\int\left\vert \bar{H}_{t}^{\lambda,0}\ast g_{0}\right\vert ^{p}dx\leq
C\left(\frac{1}{t}\wedge1\right)^{p}\int\left\vert g_{0}\right\vert ^{p}dx.
\label{h13}
\end{equation}
Therefore for $p\in(1,2],$ 
\begin{equation*}
\int_{0}^{\infty}\left\vert L^{\nu}T_{\lambda}g\left(t\right)\right\vert
_{L_{p}\left(\mathbf{R}^{d}\right)}^{p}dt\leq C\sum_{j=0}^{\infty}\left\vert
w\left(N^{-j}\right)^{-(1-1/p)}\left\vert g_{j}\right\vert _{L_{p}\left(%
\mathbf{R}^{d}\right)}\right\vert ^{p}, 
\end{equation*}
and (\ref{eq:initialEst}) follows by Proposition \ref{prop-FuncSpace}.

Let $p>2.$ In this case, 
\begin{equation*}
\int_{0}^{\infty}\left\vert L^{\nu}T_{\lambda}g\left(t\right)\right\vert
_{L_{p}\left(\mathbf{R}^{d}\right)}^{p}dt\leq C[G+\left\vert
g_{0}\right\vert _{L_{p}\left(\mathbf{R}^{d}\right)}^{p}], 
\end{equation*}
where 
\begin{equation*}
G=\int_{0}^{\infty}\left(\sum_{j=1}^{\infty}\exp\left\{
-cw\left(N^{-j}\right)^{-1}t\right\} k_{j}^{2}\right)^{p/2}dt 
\end{equation*}
with $c>0$ and 
\begin{equation*}
k_{j}=w\left(N^{-j}\right)^{-1}\left\vert g_{j}\right\vert _{L_{p}\left(%
\mathbf{R}^{d}\right)},j\geq1. 
\end{equation*}
Now, let $B=\left\{ j:a\left(t\right)\leq N^{-j}\right\} $ where $%
a\left(t\right)=\inf\left\{ r:w\left(r\right)\geq t\right\} ,t>0$. Then 
\begin{equation*}
\sum_{j=1}^{\infty}e^{-cw\left(N^{-j}\right)^{-1}t}k_{j}^{2}=\sum_{j\in
B}...+\sum_{j\notin B}...=D\left(t\right)+E\left(t\right),t>0. 
\end{equation*}
Let $0<\frac{\beta p}{2}<p_{1}\wedge p_{2}$, by Hölder inequality,

\begin{eqnarray*}
& & D\left(t\right) \\
& \leq & C\sum_{j=1}^{\infty}\chi_{\left\{ j:a\left(t\right)\leq
N^{-j}\right\} }\left(\frac{a\left(t\right)}{N^{-j}}\right)^{\beta}\left(%
\frac{a\left(t\right)}{N^{-j}}\right)^{-\beta}k_{j}^{2} \\
& \leq & C\left(\sum_{j=1}^{\infty}\chi_{\left\{ j:a\left(t\right)\leq
N^{-j}\right\} }\left(\frac{a\left(t\right)}{N^{-j}}\right)^{\beta\frac{p}{%
p-2}}\right)^{1-\frac{2}{p}}\left(\sum_{j=1}^{\infty}\chi_{\left\{
j:j:a\left(t\right)\leq N^{-j}\right\} }\left(\frac{a\left(t\right)}{N^{-j}}%
\right)^{-\beta\frac{p}{2}}k_{j}^{p}\right)^{\frac{2}{p}} \\
& = & CD_{1}^{\frac{p-2}{p}}D_{2}^{\frac{2}{p}}.
\end{eqnarray*}

Denoting $\beta^{\prime}=\beta p/\left(p-2\right),$ we have for $t>0,$ 
\begin{eqnarray*}
D_{1}\left(t\right) & = & \sum_{j=1}^{\infty}\chi_{\left\{ j:\frac{%
a\left(t\right)}{N^{-j}}\leq1\right\} }\left(\frac{a\left(t\right)}{N^{-j}}%
\right)^{\beta^{\prime}} \\
& \leq & C\int_{0}^{\infty}\chi_{\left\{ \frac{a\left(t\right)}{N^{-x}}%
\leq1\right\} }\left(\frac{a\left(t\right)}{N^{-x}}\right)^{\beta^{%
\prime}}dx\leq C\int_{0}^{1}y^{\beta^{\prime}}\frac{dy}{y}<\infty.
\end{eqnarray*}
Applying Corollary \ref{cor:aymp_integral} with $1>\beta\cdot\frac{p}{2}%
\cdot\left(\frac{1}{p_{1}}\vee\frac{1}{p_{2}}\right)$, and arbitrary $\rho>1,
$ 
\begin{eqnarray*}
& & \int_{0}^{\infty}D_{2}^{p/2}dt\leq
C\sum_{j=1}^{\infty}\int_{0}^{\infty}\chi_{\left\{ j:\frac{a\left(t\right)}{%
N^{-j}}\leq1\right\} }\left(\frac{a\left(t\right)}{N^{-j}}\right)^{-\beta%
\frac{p}{2}}k_{j}^{p}dt \\
& \leq & C\sum_{j=1}^{\infty}\int_{0}^{w\left(N^{-j}\right)\rho}\left(\frac{%
a\left(t\right)}{N^{-j}}\right)^{-\beta\frac{p}{2}}dtk_{j}^{p}\leq
C\sum_{j=1}^{\infty}w\left(N^{-j}\right)\left(\frac{a\left(w\left(N^{-j}%
\right)\rho\right)}{N^{-j}}\right)^{-\beta\frac{p}{2}}k_{j}^{p}.
\end{eqnarray*}
Hence 
\begin{equation*}
\int_{0}^{\infty}D_{2}^{p/2}dt\leq
C\sum_{j=1}^{\infty}w\left(N^{-j}\right)k_{j}^{p}. 
\end{equation*}

Now, we estimate the second term \,$E\left(t\right),t>0$. By Hölder
inequality, for $t>0$, 
\begin{eqnarray*}
E\left(t\right) & = &
\sum_{a\left(t\right)>N^{-j}}e^{-cw\left(N^{-j}\right)^{-1}t}w\left(N^{-j}%
\right)^{-2}\left\vert g_{j}\right\vert _{L_{p}}^{2} \\
& \leq & \left(\sum_{a\left(t\right)\geq
N^{-j}}e^{-cw\left(N^{-j}\right)^{-1}t}\right)^{\frac{p-2}{p}%
}\left(\sum_{a\left(t\right)\geq
N^{-j}}e^{-cw\left(N^{-j}\right)^{-1}t}k_{j}^{p}\right)^{\frac{2}{p}}.
\end{eqnarray*}

Changing the variable of integration ($y=\frac{1}{a\left(t\right)N^{x}}$)$,$
we have, by Lemma \ref{lem:powerEstratio}, for some $l>0$, 
\begin{eqnarray*}
& & \sum_{a\left(t\right)\geq
N^{-j}}e^{-cw\left(N^{-j}\right)^{-1}t}\leq\sum_{a\left(t\right)\geq
N^{-j}}\exp\left\{ -c\frac{w\left(a\left(t\right)\right)}{%
w\left(N^{-j}\right)}\right\} \\
& \leq & \sum_{a\left(t\right)\geq N^{-j}}\exp\left\{ -c\left(\frac{%
a\left(t\right)}{N^{-j}}\right)^{l}\right\}
=\sum_{N^{j}a\left(t\right)\geq1}\exp\left\{
-c\left(N^{j}a\left(t\right)\right)^{l}\right\} \\
& \leq & C\int_{N^{x}a\left(t\right)\geq1}\exp\left\{
-c\left(N^{x}a\left(t\right)\right)^{l}\right\}
dx=C\int_{1}^{\infty}\exp\left\{ -cy^{l}\right\} \frac{dy}{y}
\end{eqnarray*}
Hence 
\begin{eqnarray*}
\int_{0}^{\infty}E\left(t\right)^{p/2}dt & \leq &
C\int_{0}^{\infty}\sum_{w\left(N^{-j}\right)^{-1}t\geq1}e^{-cw\left(N^{-j}%
\right)^{-1}t}k_{j}^{p}dt \\
& \leq & C\sum_{j}w\left(N^{-j}\right)k_{j}^{p}\text{.}
\end{eqnarray*}
The estimate (\ref{eq:initialEst}) is proved.
\end{proof}

\subsection{Estimates of $R_{\protect\lambda}f,\tilde{R}_{\protect\lambda%
}\Phi$, verification of Hörmander conditions}

First we show that for each $p>1$ there is $C>0$ so that 
\begin{equation}
\left\vert L^{\nu}R_{\lambda}f\right\vert _{\mathbb{H}_{p}^{\nu;s}\left(E%
\right)}\leq C\left\vert f\right\vert _{\mathbb{H}_{p}^{\nu;s}\left(E%
\right)},\hspace{1em}f\in\tilde{\mathcal{\mathbb{C}}}_{0,p}^{\infty}\left(E%
\right).  \label{f0}
\end{equation}

According to Lemma \ref{lem:powerEstratio}, 
\begin{equation*}
\frac{w\left(\epsilon r\right)}{w\left(r\right)}\leq
C\left(\epsilon^{\alpha_{2}}\vee\epsilon^{\alpha_{1}}\right),r,\epsilon>0. 
\end{equation*}
Thus $w$ is a scaling function with scaling factor $\epsilon^{\alpha_{2}}%
\vee\epsilon^{\alpha_{1}}$ as defined in \cite{MikPh1} and \cite{MikPh2}.
Therefore, we may apply Calderon-Zygmund theorem (see Theorem 5 in \cite%
{MikPh1}) to derive (\ref{f0}). For $p=2,$ the estimate (\ref{f0}) follows
by Plancherel identity (see \cite{MikPh2}). We remind some simple facts that
for a non-decreasing continuous function $w:\left(0,\infty\right)\rightarrow%
\left(0,\infty\right)$ such that $\lim_{\epsilon\rightarrow0}w\left(\epsilon%
\right)=0$ and $\lim_{\epsilon\rightarrow\infty}w\left(\epsilon\right)=\infty
$, denote $a$ to be its generalized inverse, then $a$ is left-hand
continuous and 
\begin{equation*}
a\left(w\left(t\right)\right)\leq t\leq a\left(w\left(t\right)+\right) 
\end{equation*}
and thus, for any $\rho\geq1$, 
\begin{equation*}
a\left(w\left(t\right)\right)\leq t\leq a\left(\rho w\left(t\right)\right) 
\end{equation*}

Moreover, if $a^{-1}$ is a generalized inverse of $a$, i.e. $%
a^{-1}\left(r\right)=\inf\left\{ t>0:a\left(t\right)\geq r\right\} ,r>0$, we
have $a^{-1}\leq w$.

In the next Lemma we show that a version a Hörmander condition, (\ref%
{hormander}), holds and thus a Calderon-Zygmund theorem applies (see Theorem
5 in \cite{MikPh1}); as a result, (\ref{f0}) holds.

\begin{lemma}
\label{Lem:cal1}Let $\nu\in\mathfrak{A}^{\sigma},w=w_{\nu}$ be a continuous
O-RV function and \textbf{A, B} hold. Let $\alpha_{1}>q_{1}\vee q_{2}$ , $%
0<\alpha_{2}<p_{1}\wedge p_{2}$, and 
\begin{eqnarray*}
\alpha_{2} & > & 1\text{ if }\sigma\in(1,2), \\
\alpha_{1} & \leq & 1\text{ if }\sigma\in(0,1),\alpha_{1}\leq2\text{ if }%
\sigma\in\lbrack1,2).
\end{eqnarray*}
Let $\pi\in\mathfrak{A}_{sign}^{\sigma},$ and assume that 
\begin{equation*}
\int_{\left\vert y\right\vert \leq1}\left\vert y\right\vert ^{\alpha_{1}}d%
\widetilde{\left\vert \pi\right\vert }_{R}+\int_{\left\vert y\right\vert
>1}\left\vert y\right\vert ^{\alpha_{2}}d\widetilde{\left\vert
\pi\right\vert }_{R}\leq M,R>0. 
\end{equation*}
Let $\epsilon>0$ and 
\begin{equation*}
K_{\lambda}^{\epsilon}\left(t,x\right)=e^{-\lambda
t}L^{\pi}p^{\nu^{\ast}}\left(t,x\right)\chi_{\left[\epsilon,\infty\right]%
}\left(t\right),t>0,x\in\mathbf{R}^{d}, 
\end{equation*}
where $\nu^{\ast}\left(dy\right)=\nu\left(-dy\right)$. There exist $C_{0}>1$
and $C$ so that 
\begin{equation}
\mathcal{I}=\int\chi_{Q_{C_{0}\eta}\left(0\right)^{c}}\left(t,x\right)\left%
\vert
K_{\lambda}^{\epsilon}\left(t-s,x-y\right)-K_{\lambda}^{\epsilon}\left(t,x%
\right)\right\vert dxdt\leq CM  \label{hormander}
\end{equation}
for all $\left\vert s\right\vert \leq w\left(\eta\right),\left\vert
y\right\vert \leq\eta,\eta>0$, where $Q_{C_{0}\eta}\left(0\right)=\left(-w%
\left(C_{0}\eta\right),w\left(C_{0}\eta\right)\right)\times\left\{
x:\left\vert x\right\vert \leq C_{0}\eta\right\} .$
\end{lemma}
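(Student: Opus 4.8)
The plan is to reduce everything to the density estimates of Lemma \ref{lem:main_density}, applied to the reflected measure $\nu^{\ast}(dy)=\nu(-dy)$, which again lies in $\mathfrak{A}^{\sigma}$ and satisfies \textbf{A, B} with the same radial profile $w$ and the same inverse $a$; since the hypothesis on $\pi$ here is exactly the one in Lemma \ref{lem:main_density}, the bounds there hold for $p^{\nu^{\ast}}$ with the constant $M$. Write $K_{\lambda}^{\epsilon}(t,x)=g(t)q(t,x)$ with $g(t)=e^{-\lambda t}\chi_{[\epsilon,\infty]}(t)$ and $q(t,x)=L^{\pi}p^{\nu^{\ast}}(t,x)$. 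First I would fix $C_{0}>1$, depending only on $w$ (via Lemma \ref{lem:powerEstratio}), so large that $w(C_{0}\eta)\ge 2w(\eta)$ for all $\eta>0$; using also the scaling relations $a(w(t))\le t\le a(\rho w(t))$ ($\rho>1$) and the O-RV property of $a$ (Lemma \ref{al3}) one then gets $a(w(C_{0}\eta))\le C_{0}\eta$, $a(2w(C_{0}\eta))\le C\,C_{0}\eta$ and $a(w(C_{0}\eta)/2)\ge c\,C_{0}\eta$ with $c,C$ depending only on $w$. Since $K_{\lambda}^{\epsilon}$ vanishes for $t\le 0$, I split $\mathcal{I}=\mathcal{I}_{1}+\mathcal{I}_{2}$, where $\mathcal{I}_{1}$ is the integral over the time-far slab $\{t\ge w(C_{0}\eta)\}$ (all $x$) and $\mathcal{I}_{2}$ the integral over the space-far region $\{0<t<w(C_{0}\eta),\ |x|>C_{0}\eta\}$.

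For $\mathcal{I}_{2}$ a crude triangle inequality suffices: $|K_{\lambda}^{\epsilon}(t-s,x-y)|+|K_{\lambda}^{\epsilon}(t,x)|\le |q(t-s,x-y)|+|q(t,x)|$, and since $|x|>C_{0}\eta$ one has $|x-y|>(C_{0}-1)\eta$, so the tail bound $\int_{|x|>c}|L^{\pi}p^{\nu^{\ast}}(t,x)|\,dx\le CM\,t^{-1}a(t)^{\beta}c^{-\beta}$ of Lemma \ref{lem:main_density}(i) (with $k=0$ and a fixed $\beta\in(0,\alpha_{2}]$) applies to both terms. Integrating in $t$ over $(0,w(C_{0}\eta))$, resp. over the shifted range, which is contained in $(0,w(C_{0}\eta)+w(\eta))$, and using Corollary \ref{cor:aymp_integral}(i) in the form $\int_{0}^{r}a(t)^{\beta}\,\frac{dt}{t}\le Ca(r)^{\beta}$ together with the comparisons for $a$ above, gives $\mathcal{I}_{2}\le CM\bigl(C_{0}/(C_{0}-1)\bigr)^{\beta}\le CM$.

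For $\mathcal{I}_{1}$ the triangle inequality fails, since $\int_{w(C_{0}\eta)}^{\infty}\int|q(t,x)|\,dx\,dt$ diverges when $\lambda=0$, so cancellation is essential; I would use the decomposition
\begin{align*}
K_{\lambda}^{\epsilon}(t-s,x-y)-K_{\lambda}^{\epsilon}(t,x) &= g(t-s)\big[q(t-s,x-y)-q(t-s,x)\big] \\
&\quad + g(t-s)\big[q(t-s,x)-q(t,x)\big] + \big[g(t-s)-g(t)\big]\,q(t,x).
\end{align*}
In the first term $|g|\le 1$, and the space-regularity estimate of Lemma \ref{lem:main_density}(ii) gives $\int|q(t-s,x-y)-q(t-s,x)|\,dx\le CM\,|y|/\bigl((t-s)a(t-s)\bigr)$; since $t-s\ge w(C_{0}\eta)/2$ on the slab, integrating in $t$ and applying Corollary \ref{cor:aymp_integral}(ii) in the form $\int_{\rho}^{\infty}\frac{dr}{r\,a(r)}\le C/a(\rho)$, together with $a(w(C_{0}\eta)/2)\ge c\,C_{0}\eta$ and $|y|\le\eta$, bounds this contribution by $CM/C_{0}\le CM$. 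In the second term $|g|\le 1$ and the time-regularity estimate of Lemma \ref{lem:main_density}(iii) with $b=w(\eta)$ (legitimate since $|s|\le w(\eta)$ and $w(C_{0}\eta)\ge 2w(\eta)$) gives $\le CM$ directly.

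The remaining, genuinely delicate, term is $\big[g(t-s)-g(t)\big]q(t,x)$, where I must track the interaction of the cutoff $\chi_{[\epsilon,\infty]}$, the exponential $e^{-\lambda t}$ and the time shift $s$ so that the estimate stays uniform in $\epsilon$ and $\lambda$. Here I would use $\int|q(t,x)|\,dx\le CM/t$ (Lemma \ref{lem:main_density}(i), $k=0$, $\beta=0$). The factor $g(t-s)-g(t)$ splits according to whether both $t,t-s\ge\epsilon$: on the complementary set it is supported on a $t$-interval of length $\le|s|$ near $t=\epsilon$, which either misses the slab $\{t\ge w(C_{0}\eta)\}$ entirely (if $\epsilon\le w(C_{0}\eta)/2$) or else contributes $\le CM\,|s|/\epsilon\le CM$; on the set where both $\ge\epsilon$ one has $|g(t-s)-g(t)|=e^{-\lambda t}|e^{\lambda s}-1|\le\lambda|s|\,e^{-\lambda(t\wedge(t-s))}$, and the factor $e^{-\lambda w(C_{0}\eta)/2}$ absorbs the logarithmically divergent $\int_{w(C_{0}\eta)}^{\infty}dt/t$, leaving $\le CM\,w(\eta)/w(C_{0}\eta)\le CM$. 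Collecting, $\mathcal{I}_{1}+\mathcal{I}_{2}\le CM$, which is (\ref{hormander}). The main obstacle is exactly this last term; everything else is a direct substitution into Lemma \ref{lem:main_density} and Corollary \ref{cor:aymp_integral}.
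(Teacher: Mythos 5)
Your proof is correct in substance but organizes the decomposition genuinely differently from the paper, and the comparison is worth recording. The paper splits the time axis at $t=2|s|$: on $\{t<2|s|\}$ it uses that $(t,x)\notin Q_{C_{0}\eta}(0)$ forces $|x|,|x-y|\gtrsim a(|s|)$ and applies the spatial tail bound to each kernel separately, while on $\{t\ge 2|s|\}$ it extracts cancellation in both variables; there the spatial increment requires a two-case analysis on $|y|$ versus $a(2|s|+)$, with a further splitting of the time integral at $2|s|+a^{-1}(|y|)$, because the gradient bound $|y|/(\tau a(\tau))$ integrated from $|s|$ only yields $|y|/a(|s|)$, which need not be bounded. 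You instead split according to the geometry of $Q_{C_{0}\eta}(0)^{c}$ at $t=w(C_{0}\eta)$. This buys two simplifications: on the space-far region the pure triangle inequality plus the tail bound of Lemma \ref{lem:main_density}(i) suffices (no cancellation needed, since $|x|,|x-y|\gtrsim C_{0}\eta$ and $\int_{0}^{2w(C_{0}\eta)}u^{-1}a(u)^{\beta}du\lesssim (C_{0}\eta)^{\beta}$), and on the time-far slab $t-s\ge w(C_{0}\eta)/2$ gives $a(t-s)\gtrsim C_{0}\eta\ge |y|$, so the gradient estimate closes in one stroke and the paper's case analysis on $|y|$ disappears. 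Your explicit treatment of the term $[g(t-s)-g(t)]q(t,x)$ (the $\epsilon$-band of length $|s|$ plus the $\lambda|s|e^{-\lambda(t-|s|)}$ bound) is also more careful than the paper's, which absorbs the cutoff and exponential into its $\mathcal{I}_{2,2}$ with less detail. The individual ingredients you invoke (Lemma \ref{lem:main_density} applied to $\nu^{\ast}$, which has the same $w$ and satisfies \textbf{A}, \textbf{B}; Corollary \ref{cor:aymp_integral}; the comparisons $a(w(r))\le r$ and $a(w(C_{0}\eta)/2)\ge cC_{0}\eta$) all check out.

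One small omission: you assert that $K_{\lambda}^{\epsilon}$ vanishes for $t\le 0$ and then restrict the split to $t>0$, but the integrand also contains the shifted kernel $K_{\lambda}^{\epsilon}(t-s,x-y)$, which is nonzero on the sliver $\{s<t\le 0\}$ when $s<0$. On that sliver $|t|\le|s|\le w(\eta)<w(C_{0}\eta)$, so $(t,x)\in Q_{C_{0}\eta}(0)^{c}$ forces $|x|>C_{0}\eta$, and the same substitution $u=t-s\in(0,|s|]$ and tail bound you already use for the shifted term in $\mathcal{I}_{2}$ give a contribution of at most $CM\int_{0}^{|s|}u^{-1}a(u)^{\beta}du\,\bigl((C_{0}-1)\eta\bigr)^{-\beta}\le CM\,a(w(\eta))^{\beta}\eta^{-\beta}\le CM$. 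Either state this separately or define $\mathcal{I}_{2}$ over $t\in(-w(\eta),w(C_{0}\eta))$; with that adjustment the argument is complete.
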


\begin{proof}
Fix $\rho>1$ throughout the proof. By Lemma \ref{lem:powerEstratio} we
choose $C_{0}>3$ such that $w\left(C_{0}\eta\right)>3w\left(\eta\right),%
\eta>0$. We split 
\begin{equation*}
\mathcal{I}=\int_{-\infty}^{2\left\vert s\right\vert
}\int...+\int_{2\left\vert s\right\vert }^{\infty}\int...=\mathcal{I}_{1}+%
\mathcal{I}_{2} 
\end{equation*}

Since $w\left(C_{0}\eta\right)>3w\left(\eta\right),\eta>0,$ it follows by
Lemma \ref{lem:main_density}, Corollary \ref{cor:aymp_integral} and Lemma %
\ref{lem:powerEstratio} for $a$, denoting $k_{0}=C_{0}-1,$ 
\begin{eqnarray*}
\mathcal{I}_{1} & \leq & C\int_{0}^{3\left\vert s\right\vert
}\int_{\left\vert x\right\vert >k_{0}a\left(\left\vert s\right\vert
\right)}\left\vert L^{\pi}p^{\nu^{\ast}}\left(t,x\right)\right\vert dxdt \\
& \leq & CM\frac{1}{a\left(\left\vert s\right\vert \right)^{\alpha_{2}}}%
\int_{0}^{3\left\vert s\right\vert }t^{-1}a\left(t\right)^{\alpha_{2}}dt\leq
CM\frac{a\left(3\left\vert s\right\vert \right)^{\alpha_{2}}}{%
a\left(\left\vert s\right\vert \right)^{\alpha_{2}}} \\
& = & CM.
\end{eqnarray*}

Now,

\begin{eqnarray*}
\mathcal{I}_{2} & \leq & \int_{2\left\vert s\right\vert
}^{\infty}\int\chi_{Q_{C_{0}\eta}^{c}\left(0\right)}\left\vert
L^{\pi}p^{\nu^{\ast}}\left(t-s,x-y\right)-L^{\pi}p^{\nu^{\ast}}\left(t-s,x%
\right)\right\vert dxdt \\
& & +\int_{2\left\vert s\right\vert
}^{\infty}\int\chi_{Q_{C_{0}\eta}^{c}\left(0\right)}|\chi_{\left[%
\epsilon,\infty\right]}\left(t-s\right)L^{\pi}p^{\nu^{\ast}}\left(t-s,x%
\right) \\
& & -\chi_{\left[\epsilon,\infty\right]}\left(t\right)L^{\pi}p^{\nu^{\ast}}%
\left(t,x\right)|dxdt \\
& = & \mathcal{\mathcal{I}}_{2,1}+\mathcal{I}_{2,2}.
\end{eqnarray*}

We split the estimate of $\mathcal{I}_{2,1}$ into two cases.

\emph{Case 1}. Assume $\left\vert y\right\vert \leq a\left(2\left\vert
s\right\vert +\right).$ Then, by Lemma \ref{lem:main_density} , Corollary %
\ref{cor:aymp_integral} , and Lemma \ref{lem:powerEstratio},

\begin{eqnarray*}
\mathcal{I}_{2,1} & \leq & CM\left\vert y\right\vert \int_{2\left\vert
s\right\vert }^{\infty}\left(t-s\right)^{-1}a\left(\left\vert t-s\right\vert
\right)^{-1}dt \\
& = & CM\left\vert y\right\vert a\left(2\left\vert s\right\vert
-s\right)^{-1}\leq CM\left\vert y\right\vert a\left(\left\vert s\right\vert
\right)^{-1} \\
& \leq & CM\frac{\left\vert y\right\vert }{a\left(2\left\vert s\right\vert
+\right)}\frac{a\left(3\left\vert s\right\vert \right)}{a\left(\left\vert
s\right\vert \right)}\leq CM.
\end{eqnarray*}

\emph{Case 2}. Assume $\left\vert y\right\vert >a\left(2\left\vert
s\right\vert +\right)$, i.e. $\eta\geq\left\vert y\right\vert
>a\left(2\left\vert s\right\vert +\right)$ and $a^{-1}\left(\eta\right)\geq
a^{-1}\left(\left\vert y\right\vert \right)\geq2\left\vert s\right\vert $.
We split 
\begin{equation*}
\mathcal{I}_{2,1}=\int_{2\left\vert s\right\vert }^{2\left\vert s\right\vert
+a^{-1}\left(\left\vert y\right\vert \right)}\int...+\int_{2\left\vert
s\right\vert +a^{-1}\left(\left\vert y\right\vert \right)}^{\infty}\int...=%
\mathcal{I}_{2,1,1}+\mathcal{I}_{2,1,2}. 
\end{equation*}

If $2\left\vert s\right\vert \leq t\leq2\left\vert s\right\vert
+a^{-1}\left(\left\vert y\right\vert \right)$, then $0\leq
t\leq3a^{-1}\left(\eta\right)\leq3w\left(\eta\right)\leq
w\left(C_{0}\eta\right)$. Hence $\left\vert x\right\vert >C_{0}\eta\geq
a\left(2\left\vert s\right\vert \right)+\left\vert y\right\vert $ and 
\begin{eqnarray*}
\left\vert x-y\right\vert & \geq & \left(C_{0}-1\right)\eta=k_{0}\eta\geq%
\frac{k_{0}}{2}[a\left(2\left\vert s\right\vert \right)+\left\vert
y\right\vert ] \\
& \geq & a\left(2\left\vert s\right\vert \right)+\left\vert y\right\vert 
\text{ if }\left(t,x\right)\notin Q_{C_{0}\eta}\left(0\right).
\end{eqnarray*}

Also, 
\begin{equation}
2\geq\frac{2\left\vert s\right\vert +a^{-1}\left(\left\vert y\right\vert
\right)}{2\left\vert s\right\vert +a^{-1}\left(\left\vert y\right\vert
\right)-s}\geq\frac{2}{3},  \label{af1}
\end{equation}
and, by Lemma \ref{lem:powerEstratio},

\begin{eqnarray}
& & \frac{a\left(3\left\vert s\right\vert +a^{-1}(\left\vert y\right\vert
)\right)}{a\left(2\left\vert s\right\vert \right)+\left\vert y\right\vert }
\label{af2} \\
& \leq & \frac{a\left(\frac{5}{2}a^{-1}(\left\vert y\right\vert )\right)}{%
a\left(2\left\vert s\right\vert \right)+\left\vert y\right\vert }\leq\frac{%
a\left(\frac{5}{2}a^{-1}\left(\left\vert y\right\vert \right)\right)}{%
a\left(a^{-1}\left(\left\vert y\right\vert \right)\right)}\frac{%
a\left(a^{-1}\left(\left\vert y\right\vert \right)\right)}{\left\vert
y\right\vert }\leq C.  \notag
\end{eqnarray}

By Lemma \ref{lem:main_density}, Corollary \ref{cor:aymp_integral} and (\ref%
{af2}), 
\begin{eqnarray*}
\mathcal{I}_{2,1,1} & \leq & C\int_{2\left\vert s\right\vert }^{2\left\vert
s\right\vert +a^{-1}\left(\left\vert y\right\vert \right)}\int_{\left\vert
x\right\vert >a\left(2\left\vert s\right\vert \right)+\left\vert
y\right\vert }\left\vert L^{\pi}p^{\nu^{\ast}}\left(t-s,x\right)\right\vert
dtdx \\
& \leq & \frac{CM}{[a\left(2\left\vert s\right\vert \right)+\left\vert
y\right\vert ]^{\alpha_{2}}}\int_{0}^{2\left\vert s\right\vert
+a^{-1}\left(\left\vert y\right\vert
\right)}\left(t-s\right)^{-1}a\left(\left\vert t-s\right\vert
\right)^{\alpha_{2}}dt \\
& \leq & CM\frac{a\left(3\left\vert s\right\vert +a^{-1}\left(\left\vert
y\right\vert \right)\right)^{\alpha_{2}}}{[a\left(2\left\vert s\right\vert
\right)+\left\vert y\right\vert ]^{\alpha_{2}}}\leq CM
\end{eqnarray*}

Then, by Lemma \ref{lem:main_density}, Corollary \ref{cor:aymp_integral} and
(\ref{af1})

\begin{eqnarray*}
& & \mathcal{I}_{2,1,2} \\
& \leq & \int_{2\left\vert s\right\vert +a^{-1}\left(\left\vert y\right\vert
\right)}^{\infty}\left[\int_{\mathbf{R}^{d}}\left\vert
L^{\pi}p^{\nu^{\ast}}\left(t-s,x-y\right)-L^{\pi}p^{\nu^{\ast}}\left(t-s,x%
\right)\right\vert dx\right]dt \\
& = & CM\left\vert y\right\vert \int_{2\left\vert s\right\vert
+a^{-1}\left(\left\vert y\right\vert
\right)}^{\infty}\left(t-s\right)^{-1}a\left(\left\vert t-s\right\vert
\right)^{-1}dr \\
& \leq & CM\left\vert y\right\vert a\left(2\left\vert s\right\vert
+a^{-1}\left(\left\vert y\right\vert \right)-s\right)^{-1}\leq CM\left\vert
y\right\vert a\left(a^{-1}\left(\left\vert y\right\vert \right)+\left\vert
s\right\vert \right)^{-1} \\
& \leq & CM\left\vert y\right\vert \left\vert y\right\vert ^{-1}\leq CM
\end{eqnarray*}

Hence, $\mathcal{I}_{2,1}\leq C$.

Finally, by Lemma \ref{lem:main_density}, 
\begin{eqnarray*}
\mathcal{I}_{2,2} & \leq & \int_{2\left\vert s\right\vert
}^{\infty}\int\chi_{Q_{C_{0}\eta}^{c}\left(0\right)}\left\vert
L^{\pi}p^{\nu^{\ast}}\left(t-s,x\right)-L^{\pi}p^{\nu^{\ast}}\left(t,x%
\right)\right\vert dxdt \\
& + & \int_{\varepsilon\vee\left\vert s\right\vert }^{\varepsilon+\left\vert
s\right\vert }\int\left\vert
L^{\pi}p^{\nu^{\ast}}\left(t,x\right)\right\vert dxdt \\
& \leq & CM.
\end{eqnarray*}

The proof is complete.
\end{proof}

\begin{remark}
Although we write Lemma \ref{Lem:cal1} with general $\pi$, in this paper we
only need the result for $\pi=\nu$.
\end{remark}

Now we will show that for $p\in\left[2,\infty\right)$, there is $C$ so that
for all $\Phi\in\mathbb{\tilde{\mathcal{\mathbb{C}}}}_{2,p}^{\infty}\left(E%
\right)\cap{\tilde{\mathcal{\mathbb{C}}}}_{p,p}^{\infty}\left(E\right)$, 
\begin{eqnarray}
& & \left\vert L^{\nu}\tilde{R}^{\lambda}\Phi\right\vert _{\mathbb{H}%
_{p}^{\nu;s}\left(E\right)}\leq C\left[\left\vert \Phi\right\vert _{\mathbb{B%
}_{p,pp}^{\nu;s+1-1/p}\left(E\right)}+\left\vert \Phi\right\vert _{\mathbb{H}%
_{2,p}^{\nu;s+1/2}\left(E\right)}\right],  \label{f1}
\end{eqnarray}
and for $p\in\left(1,2\right)$, there is $C$ so that for all $\Phi\in{\tilde{%
\mathcal{\mathbb{C}}}}_{p,p}^{\infty}\left(E\right)$, 
\begin{eqnarray}
\left\vert L^{\nu}\tilde{R}^{\lambda}\Phi\right\vert _{\mathbb{H}%
_{p}^{\nu;s}\left(E\right)} & \leq & C\left\vert \Phi\right\vert _{\mathbb{B}%
_{p,pp}^{\nu;s+1-1/p}\left(E\right)}.  \label{f2}
\end{eqnarray}

Due to Proposition \ref{prop-FuncSpace}, it is enough to consider the case $%
s=0$. Let $\varepsilon>0,$ 
\begin{equation*}
G_{s,t}^{\lambda,\varepsilon}\left(x\right)=\exp\left(-\lambda\left(t-s%
\right)\right)p^{\nu^{\ast}}\left(t-s,x\right)\chi_{\left[\varepsilon,\infty%
\right]}\left(t-s\right),0<s<t,x\in\mathbf{R}^{d}, 
\end{equation*}
where $\nu^{\ast}\left(dy\right)=\nu\left(-dy\right)$. Denote 
\begin{eqnarray*}
Q\left(t,x\right) & = & \int_{0}^{t}\int_{U}\tilde{\Phi}_{\varepsilon}%
\left(s,x,z\right)q\left(ds,dz\right),\left(t,x\right)\in E, \\
\tilde{\Phi}_{\varepsilon}\left(s,x,z\right) & = &
\int\left(L^{\nu}G_{s,t}^{\lambda,\varepsilon}\right)\left(x-y\right)\Phi%
\left(s,y,z\right)dy,\left(s,x\right)\in E.
\end{eqnarray*}

Obviously, with $K_{\lambda}^{\varepsilon}\left(t,x\right)=e^{-\lambda
t}L^{\nu;\frac{1}{2}}p^{\nu^{\ast}}\left(t,x\right)\chi_{\left[%
\varepsilon,\infty\right]}\left(t\right),t>0,x\in\mathbf{R}^{d},$ we have

\begin{eqnarray}
& & \mathbf{E}\left\vert
\int_{0}^{t}\int_{U}\int\left(L^{\nu}G_{s,t}^{\lambda,\varepsilon}\right)%
\left(x-y\right)\Phi\left(s,y,z\right)dyq\left(ds,dz\right)\right\vert
_{L_{p}\left(E\right)}^{p}  \label{ff1} \\
& = & \mathbf{E}\left\vert \int_{0}^{t}\int_{U}\int
L^{\nu;1/2}G_{s,t}^{\lambda,\varepsilon}\left(x-y\right)L^{\nu;1/2}\Phi%
\left(s,y,z\right)dyq\left(ds,dz\right)\right\vert _{L_{p}\left(E\right)}^{p}
\notag \\
& = & \mathbf{E}\left\vert \int_{0}^{t}\int_{U}\int
K_{\lambda}^{\varepsilon}\left(t-s,x-y\right)L^{\nu;1/2}\Phi\left(s,y,z%
\right)dyq\left(ds,dz\right)\right\vert _{L_{p}\left(E\right)}^{p}.  \notag
\end{eqnarray}

If $2\leq p<\infty$, then

\begin{eqnarray*}
& & \mathbf{E}\int_{0}^{T}\left\vert Q\left(t,\cdot\right)\right\vert
_{L_{p}\left(\mathbf{R}^{d}\right)}^{p}dt \\
& \leq & C\mathbf{E}\left\{ \int_{0}^{T}\left\vert \left[\int_{0}^{t}\int_{U}%
\tilde{\Phi}_{\varepsilon}\left(s,\cdot,z\right)^{2}\Pi\left(dz\right)ds%
\right]^{1/2}\right\vert _{L_{p}\left(\mathbf{R}^{d}\right)}^{p}dt\right\} \\
& + & C\mathbf{E}\left\{ \int_{0}^{T}\int_{0}^{t}\int_{U}\left\vert \tilde{%
\Phi}_{\varepsilon}\left(s,\cdot,z\right)\right\vert _{L_{p}\left(\mathbf{R}%
^{d}\right)}^{p}\Pi\left(dz\right)dsdt\right\} =C\left(\mathbf{E}I_{1}+%
\mathbf{E}I_{2}\right).
\end{eqnarray*}

If $1<p<2$, then

\begin{equation*}
\mathbf{E}\int_{0}^{T}\left\vert Q\left(t,\cdot\right)\right\vert
_{L_{p}\left(\mathbf{R}^{d}\right)}^{p}dt\leq C\mathbf{E}I_{2}. 
\end{equation*}

\emph{Estimate of} $\mathbf{E}I_{2}$. Let $B_{t}^{\lambda}g\left(x%
\right)=e^{-\lambda t}\mathbf{E}g\left(x+Z_{t}^{\nu}\right),\left(t,x\right)%
\in E,g\in\tilde{C}_{0,p}^{\infty}\left(\mathbf{R}^{d}\right)$. Then

\begin{eqnarray*}
I_{2} & = & \int_{0}^{T}\int_{0}^{t}\int_{U}\left\vert \tilde{\Phi}%
_{\varepsilon}\left(s,\cdot,z\right)\right\vert _{L_{p}\left(\mathbf{R}%
^{d}\right)}^{p}\Pi\left(dz\right)dsdt \\
& = & \int_{0}^{T}\int_{0}^{t}\int_{U}\left\vert
\left(L^{\nu}G_{s,t}^{\lambda,\varepsilon}\right)\ast\Phi\left(s,\cdot,z%
\right)\right\vert _{L_{p}\left(\mathbf{R}^{d}\right)}^{p}\Pi\left(dz%
\right)dsdt \\
& \leq & \int_{0}^{T}\int_{0}^{t}\int_{U}\left\vert
L^{\nu}B_{t-s}^{\lambda}\Phi\left(s,\cdot,z\right)\right\vert _{L_{p}\left(%
\mathbf{R}^{d}\right)}^{p}\Pi\left(dz\right)dsdt \\
& = & \int_{U}\int_{0}^{T}\int_{s}^{T}\left\vert
L^{\nu}B_{t-s}^{\lambda}\Phi\left(s,\cdot,z\right)\right\vert _{L_{p}\left(%
\mathbf{R}^{d}\right)}^{p}dtds\Pi\left(dz\right)
\end{eqnarray*}

It follows from Proposition \ref{prop-FuncSpace} and Lemma \ref%
{lem:est_initial} that for $p>1,$

\begin{eqnarray*}
\mathbf{E}I_{2} & \leq & \mathbf{E}\int_{U}\int_{0}^{T}\int_{s}^{T}\left%
\vert L^{\nu}B_{t-s}^{\lambda}\Phi\left(s,\cdot,z\right)\right\vert
_{L_{p}\left(\mathbf{R}^{d}\right)}^{p}dtds\Pi\left(dz\right) \\
& \leq & C\mathbf{E}\int_{U}\int_{0}^{T}\sum_{j=0}^{\infty}\left\vert
w\left(N^{-j}\right)^{-\left(1-1/p\right)}\left\vert
\Phi\left(s,\cdot,z\right)\ast\varphi_{j}\right\vert _{L_{p}\left(\mathbf{R}%
^{d}\right)}\right\vert ^{p}ds\Pi\left(dz\right) \\
& = & C\left\vert \Phi\right\vert _{\mathbb{B}_{p,pp}^{\nu;1-1/p}\left(E%
\right)}^{p}.
\end{eqnarray*}

Hence (\ref{f2}) holds for $p\in\left(1,2\right)$. We prove that for $p\geq2,
$ 
\begin{equation}
\mathbf{E}I_{1}\leq C\left\vert \Phi\right\vert _{\mathbb{H}%
_{2,p}^{\nu;1/2}\left(E\right)},\Phi\in\mathbb{\tilde{\mathcal{\mathbb{C}}}}%
_{2,p}^{\infty}\left(E\right),  \label{f3}
\end{equation}
by verifying Hörmander condition (\ref{hc}) in Lemma \ref{lem:cal2} below.

In the following statement we show that a version a stochastic Hörmander
condition holds which implies (\ref{f3})- see theorem 2.5 of \cite{KK2}. Due
to similarity to Lemma \ref{Lem:cal1} (following the same splitting), we
skip some details.

\begin{lemma}
\label{lem:cal2}Let $\nu\in\mathfrak{A}^{\sigma},w=w_{\nu}$ be a continuous
O-RV function and \textbf{A, B} hold. Let $\epsilon>0$ and 
\begin{equation*}
K_{\lambda}^{\epsilon}\left(t,x\right)=e^{-\lambda t}L^{\nu;\frac{1}{2}%
}p^{\nu^{\ast}}\left(t,x\right)\chi_{\left[\epsilon,\infty\right]%
}\left(t\right),t>0,x\in\mathbf{R}^{d}, 
\end{equation*}
where $\nu^{\ast}\left(dy\right)=\nu\left(-dy\right)$. There exists $C_{0}>0$
and $N>0$ such that for all $\left\vert s\right\vert \leq
w\left(\eta\right),\left\vert y\right\vert \leq\eta,\eta>0$, we have 
\begin{equation}
\mathcal{I}=\int\left[\int\chi_{Q_{C_{0}\eta}\left(0\right)^{c}}\left\vert
K_{\lambda}^{\varepsilon}\left(t-s,x-y\right)-K_{\lambda}^{\varepsilon}%
\left(t,x\right)\right\vert dx\right]^{2}dt\leq N,  \label{hc}
\end{equation}
where $Q_{C_{0}\eta}\left(0\right)=\left(-w\left(C_{0}\eta\right),w%
\left(C_{0}\eta\right)\right)\times\left\{ x:\left\vert x\right\vert
<C_{0}\eta\right\} .$
\end{lemma}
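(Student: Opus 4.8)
The plan is to estimate $\mathcal I$ by following exactly the same time-splitting as in the proof of Lemma \ref{Lem:cal1}, but with the square of the $dx$-integral kept inside the $t$-integral, and using the fractional-density estimates of Lemma \ref{lem:fracdensity} in place of the integer-order ones of Lemma \ref{lem:main_density}. First I would fix $\rho>1$, use Lemma \ref{lem:powerEstratio} to choose $C_0>3$ with $w(C_0\eta)>3w(\eta)$ for all $\eta>0$, and write $\mathcal I=\int_{-\infty}^{2|s|}(\cdots)^2dt+\int_{2|s|}^{\infty}(\cdots)^2dt=\mathcal I_1+\mathcal I_2$. For $\mathcal I_1$: on $Q_{C_0\eta}(0)^c$ with $t\le 2|s|$ one has $|x|>k_0 a(|s|)$ (with $k_0=C_0-1$), so by the triangle inequality and Lemma \ref{lem:fracdensity}(iv) with $\delta=1/2$ (taking $\beta\in[0,\alpha_2/2)$ large enough, e.g. $\beta>1$ when $\sigma\in(1,2)$ via Remark \ref{rem:alpha_choice}), together with Corollary \ref{cor:aymp_integral} and the scaling of $a$ from Lemma \ref{lem:powerEstratio}, the inner $dx$-integral is bounded by $CM\, a(t)^{\beta}a(|s|)^{-\beta}t^{-1/2}$ on $(0,3|s|)$; since $\beta>1/2$ is not needed here — only that the resulting $t$-integral converges — we get $\mathcal I_1\le CM^2 a(|s|)^{-2\beta}\int_0^{3|s|}a(t)^{2\beta}t^{-1}dt\le CM^2$.

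For $\mathcal I_2$ I would split $K_\lambda^{\varepsilon}(t-s,x-y)-K_\lambda^{\varepsilon}(t,x)$ into the spatial increment $L^{\nu;1/2}p^{\nu^*}(t-s,x-y)-L^{\nu;1/2}p^{\nu^*}(t-s,x)$ (times $e^{-\lambda(t-s)}\chi_{[\varepsilon,\infty]}(t-s)$) and the term coming from the change $t-s\mapsto t$ in the exponential, the cutoff, and the density, exactly as for $\mathcal I_{2,1}$ and $\mathcal I_{2,2}$ in Lemma \ref{Lem:cal1}. For the $\mathcal I_{2,2}$-type piece, Lemma \ref{lem:fracdensity}(iii) is tailor-made: it gives precisely $\int_{2b}^{\infty}\big(\int|L^{\mu;1/2}p^{\nu}(t-s,x)-L^{\mu;1/2}p^{\nu}(t,x)|dx\big)^2dt\le CM$ with $|s|\le b$, and the contribution of the cutoff discrepancy is over the interval $(\varepsilon\vee|s|,\varepsilon+|s|)$, on which the integrand of the inner norm is bounded by $CMt^{-1/2}a(t)^{-\,?}$-type terms from Lemma \ref{lem:fracdensity}(iv) whose square integrates to $\le CM^2$ (using $a^{-1}\le w$ and $|s|\le w(\eta)$). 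For the spatial-increment piece $\mathcal I_{2,1}$, I would again separate the cases $|y|\le a(2|s|+)$ and $|y|>a(2|s|+)$; in the first case Lemma \ref{lem:fracdensity}(ii) bounds the inner $dx$-integral by $CM|y|t^{-1/2}a(|t-s|)^{-1}$, and squaring and integrating over $(2|s|,\infty)$ gives $CM^2|y|^2\int_{2|s|}^{\infty}(t-s)^{-1}a(t-s)^{-2}dt\le CM^2|y|^2 a(|s|)^{-2}\le CM^2$ by Corollary \ref{cor:aymp_integral} and $|y|\le a(2|s|+)\cdot a(3|s|)/a(|s|)$; in the second case I would further split at $2|s|+a^{-1}(|y|)$ as in Lemma \ref{Lem:cal1}, use on the near part that $|x-y|\ge a(2|s|)+|y|$ off the cylinder together with Lemma \ref{lem:fracdensity}(iv), and on the far part Lemma \ref{lem:fracdensity}(ii), with the elementary ratio estimates \eqref{af1}, \eqref{af2} supplying the needed comparisons.

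The main obstacle is bookkeeping rather than conceptual: in each subcase one must verify that the $L^1_x$-bound produced by Lemma \ref{lem:fracdensity} decays in $t$ like $t^{-1/2}$ times a power of $a(t)$ whose \emph{square} is still integrable against $dt/t$ near the lower endpoint and near $\infty$, which is exactly what Corollary \ref{cor:aymp_integral} guarantees provided the exponents are chosen inside the admissible ranges $\beta\in[0,\alpha_2/2)$ with $\alpha_2\in(0,p_1^w\wedge p_2^w)$ (and $\alpha_2>1$ when $\sigma\in(1,2)$, $\alpha_1\le 2$ otherwise). The one genuinely new estimate over Lemma \ref{Lem:cal1} is part (iii) of Lemma \ref{lem:fracdensity}, which replaces the $L^1$-in-time bound of Lemma \ref{lem:main_density}(iii) by the $L^2$-in-time bound needed here; since that lemma is already available, the proof is a routine adaptation and I would state it by saying ``following verbatim the splitting of Lemma \ref{Lem:cal1}, with Lemma \ref{lem:main_density} replaced by Lemma \ref{lem:fracdensity}, one obtains \eqref{hc}.''
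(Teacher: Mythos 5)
Your proposal is correct and follows essentially the same route as the paper: the identical splitting $\mathcal I=\mathcal I_1+\mathcal I_2$ at $t=2|s|$, the same further decomposition of $\mathcal I_2$ into the spatial-increment and time-increment/cutoff pieces with the same two cases $|y|\le a(2|s|+)$ and $|y|>a(2|s|+)$, and the same replacement of Lemma \ref{lem:main_density} by Lemma \ref{lem:fracdensity} (with part (iii) handling the squared time-increment term and $\beta\in(0,\alpha_2/2)$ in part (iv)). The only cosmetic difference is your aside about needing $\beta>1$ or $\beta>1/2$, which is unnecessary — any $\beta\in(0,\alpha_2/2)$ suffices since Corollary \ref{cor:aymp_integral} makes $\int_0^{3|s|}a(t)^{2\beta}\,dt/t$ converge for every such $\beta$.
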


\begin{proof}
By Lemma \ref{lem:powerEstratio} we choose $C_{0}>3$ such that $%
w\left(C_{0}\eta\right)>3w\left(\eta\right),\eta>0$. We split 
\begin{equation*}
\mathcal{I}=\int_{-\infty}^{2\left\vert s\right\vert }\left[\int...\right]%
^{2}dt+\int_{2\left\vert s\right\vert }^{\infty}\left[\int...\right]^{2}dt=%
\mathcal{I}_{1}+\mathcal{I}_{2}. 
\end{equation*}
Since $w\left(C_{0}\eta\right)>3w\left(\eta\right),\eta>0$, it follows by
Lemma \ref{lem:fracdensity}, Lemma \ref{lem:powerEstratio} and Corollary \ref%
{cor:aymp_integral} with $k_{0}=C_{0}-1$ and $\beta\in(0,\frac{\alpha_{2}}{2}%
)$,\ 
\begin{eqnarray*}
\left\vert \mathcal{I}_{1}\right\vert & \leq & C\int_{0}^{3\left\vert
s\right\vert }\left[\int_{\left\vert x\right\vert >k_{0}a\left(\left\vert
s\right\vert \right)}\left\vert L^{\nu;\frac{1}{2}}p^{\nu^{\ast}}\left(t,x%
\right)\right\vert dx\right]^{2}dt \\
& \leq & C\int_{0}^{3\left\vert s\right\vert }\left(t^{-\frac{1}{2}%
}a\left(t\right)^{\beta}\left(k_{0}a\left(\left\vert s\right\vert
\right)\right)^{-\beta}\right)^{2}dt\leq Ca\left(\left\vert s\right\vert
\right)^{-2\beta}\int_{0}^{3\left\vert s\right\vert }a\left(t\right)^{2\beta}%
\frac{dt}{t} \\
& \leq & Ca\left(s\right)^{-2\beta}a\left(3\left\vert s\right\vert
\right)^{2\beta}\leq C
\end{eqnarray*}
Now, 
\begin{eqnarray*}
& & \left\vert \mathcal{I}_{2}\right\vert \\
& \leq & 2\int_{2\left\vert s\right\vert }^{\infty}\left[\int\chi_{Q_{C_{0}%
\eta}^{c}\left(0\right)}\left\vert L^{\nu;\frac{1}{2}}p^{\nu^{\ast}}%
\left(t-s,x-y\right)-L^{\nu;\frac{1}{2}}p^{\nu^{\ast}}\left(t-s,x\right)%
\right\vert dx\right]^{2}dt \\
& + & 2\int_{2\left\vert s\right\vert }^{\infty}\left\{
\int\chi_{Q_{C_{0}\eta}^{c}\left(0\right)}|\chi_{\left[\varepsilon,\infty%
\right]}\left(t-s\right)L^{\nu;\frac{1}{2}}p^{\nu^{\ast}}\left(t-s,x\right)-%
\chi_{\left[\varepsilon,\infty\right]}\left(t\right)L^{\nu;\frac{1}{2}%
}p^{\nu^{\ast}}\left(t,x\right)|dx\right\} ^{2}dt \\
& = & \mathcal{I}_{2,1}+\mathcal{I}_{2,2}.
\end{eqnarray*}
We split the estimate of $\mathcal{I}_{2,1}$ into two cases.

\emph{Case 1.} Assume $\left\vert y\right\vert \leq a\left(2\left\vert
s\right\vert +\right)$. Then by Lemma \ref{lem:fracdensity}, Lemma \ref%
{lem:powerEstratio} and Corollary \ref{cor:aymp_integral} 
\begin{eqnarray*}
\mathcal{I}_{2,1} & \leq & C\int_{2\left\vert s\right\vert }^{\infty}\frac{%
\left\vert y\right\vert ^{2}}{(t-s)a\left(t-s\right)^{2}}dt \\
& \leq & C\left\vert y\right\vert ^{2}\int_{2\left\vert s\right\vert
}^{\infty}\frac{1}{a\left(t-s\right)^{2}}\left(t-s\right)^{-1}dt \\
& \leq & C\left\vert y\right\vert ^{2}\int_{\left\vert s\right\vert
}^{\infty}a\left(t\right)^{-2}\frac{dt}{t}\leq C\left\vert y\right\vert
^{2}a\left(\left\vert s\right\vert \right)^{-2}\leq C
\end{eqnarray*}

\emph{Case 2.} Assume $\left\vert y\right\vert >a\left(2\left\vert
s\right\vert +\right)$ i.e. $\eta\geq\left\vert y\right\vert
>a\left(2\left\vert s\right\vert +\right)$ and $a^{-1}\left(\eta\right)\geq
a^{-1}\left(\left\vert y\right\vert \right)\geq2\left\vert s\right\vert $.
We split 
\begin{equation*}
\mathcal{I}_{2,1}=\int_{2\left\vert s\right\vert }^{2\left\vert s\right\vert
+a^{-1}\left(\left\vert y\right\vert \right)}\left[\int...\right]%
^{2}+\int_{2\left\vert s\right\vert +a^{-1}\left(\left\vert y\right\vert
\right)}^{\infty}\left[\int...\right]^{2}=\mathcal{I}_{2,1,1}+\mathcal{I}%
_{2,1,2}. 
\end{equation*}
Hence, with $\beta\in(0,\frac{\alpha_{2}}{2})$, by Lemma \ref%
{lem:fracdensity}, Lemma \ref{lem:powerEstratio} and Corollary \ref%
{cor:aymp_integral}, 
\begin{eqnarray*}
\mathcal{I}_{2,1,1} & \leq & C\int_{2\left\vert s\right\vert }^{2\left\vert
s\right\vert +a^{-1}\left(\left\vert y\right\vert \right)}\left[%
\int_{\left\vert x\right\vert >a\left(2\left\vert s\right\vert
\right)+\left\vert y\right\vert }\left\vert L^{\nu;\frac{1}{2}%
}p^{\nu^{\ast}}\left(t-s,x\right)\right\vert dx\right]^{2}dt \\
& \leq & \frac{C}{\left(a\left(2\left\vert s\right\vert \right)+\left\vert
y\right\vert \right)^{2\beta}}\int_{2\left\vert s\right\vert }^{2\left\vert
s\right\vert +a^{-1}\left(\left\vert y\right\vert
\right)}a\left(t-s\right)^{2\beta}\frac{dt}{\left(t-s\right)} \\
& \leq & \frac{C}{\left(a\left(2\left\vert s\right\vert \right)+\left\vert
y\right\vert \right)^{2\beta}}a\left(3\left\vert s\right\vert
+a^{-1}\left(\left\vert y\right\vert \right)\right)^{2\beta}\leq C
\end{eqnarray*}
Then by Lemma \ref{lem:fracdensity} and Corollary \ref{cor:aymp_integral}, 
\begin{eqnarray*}
\mathcal{I}_{2,1,2} & \leq & C\int_{2\left\vert s\right\vert
+a^{-1}\left(\left\vert y\right\vert \right)}^{\infty}\left[\int_{\mathbf{R}%
^{d}}\left\vert L^{\nu;\frac{1}{2}}p^{\nu^{\ast}}\left(t-s,x-y\right)-L^{\nu;%
\frac{1}{2}}p^{\nu^{\ast}}\left(t-s,x\right)\right\vert dx\right]^{2}dt \\
& \leq & C\int_{2\left\vert s\right\vert +a^{-1}\left(\left\vert
y\right\vert \right)}^{\infty}\frac{\left\vert y\right\vert ^{2}}{%
a\left(t-s\right)^{2}}\frac{dt}{t-s}\leq C\left\vert y\right\vert
^{2}a\left(2\left\vert s\right\vert +a^{-1}\left(\left\vert y\right\vert
\right)-s\right)^{-2} \\
& \leq & C\left\vert y\right\vert ^{2}a\left(\left\vert s\right\vert
+a^{-1}\left(\left\vert y\right\vert \right)\right)^{-2}\leq C
\end{eqnarray*}
Hence, $\mathcal{I}_{2,1}\leq C$. Since 
\begin{eqnarray*}
\mathcal{I}_{2,2} & \leq & \int_{2\left\vert s\right\vert }^{\infty}\left[%
\int\chi_{Q_{C_{0}\eta}^{c}\left(0\right)}\left\vert L^{\nu;\frac{1}{2}%
}p^{\nu^{\ast}}\left(t-s,x\right)-L^{\nu;\frac{1}{2}}p^{\nu^{\ast}}\left(t,x%
\right)\right\vert dx\right]^{2}dt \\
& + & \int_{\varepsilon\vee\left\vert s\right\vert }^{\varepsilon+\left\vert
s\right\vert }\left[\int\left\vert L^{\nu;\frac{1}{2}}p^{\nu^{\ast}}%
\left(t,x\right)\right\vert dx\right]^{2}dt \\
& = & \mathcal{I}_{2,2,1}+\mathcal{I}_{2,2,2},
\end{eqnarray*}
it follows by Lemma \ref{lem:fracdensity} that 
\begin{eqnarray*}
\mathcal{I}_{2,2,1} & \leq & \int_{2\left\vert s\right\vert }^{\infty}\left[%
\int\chi_{Q_{C_{0}\eta}^{c}\left(0\right)}\left\vert L^{\nu;\frac{1}{2}%
}p^{\nu^{\ast}}\left(t-s,x\right)-L^{\nu;\frac{1}{2}}p^{\nu^{\ast}}\left(t,x%
\right)\right\vert dx\right]^{2}dt \\
& \leq & C.
\end{eqnarray*}
Clearly, $\mathcal{I}_{2,2,2}\leq C$ as well.
\end{proof}

\begin{remark}
\label{rem:est_for_smooth}The estimates in Lemmas \ref{Lem:cal1} and \ref%
{lem:cal2} are independent of $\epsilon>0$. Now, Lemmas \ref{lem:est-smooth}%
- \ref{lem:cal2}, (\ref{f0}), (\ref{f1}), and (\ref{f2}) allow us to derive
all the estimates of $\left\vert u\right\vert _{L_{P}\left(E\right)}$ and $%
\left\vert L^{\nu}u\right\vert _{L_{P}\left(E\right)}$ ($s=0$) claimed in
Theorem \ref{thm:main1} for the solution $u$ of (\ref{eq:mainEq}) with
smooth input functions.
\end{remark}

We now prove the main theorem for general imputs.

\subsection{Proof of the main Theorem}

We finish the proof of Theorem \ref{thm:main1} in a standard way. Since by
Proposition \ref{prop-FuncSpace}, $J_{\nu}^{t}:\mathbb{H}_{p}^{\nu;s}\left(%
\mathbf{R}^{d},l_{2}\right)\rightarrow\mathbb{H}_{p}^{\nu;s-t}\left(\mathbf{R%
}^{d},l_{2}\right)$, $J_{\nu}^{t}:\mathbb{H}_{p}^{\nu;s}\left(\mathbf{R}%
^{d}\right)\rightarrow\mathbb{H}_{p}^{\nu;s-t}\left(\mathbf{R}^{d}\right)$
and $J_{\nu}^{t}:\mathbb{B}_{pp}^{\nu;s}\left(\mathbf{R}^{d}\right)%
\rightarrow\mathbb{B}_{pp}^{\nu;s-t}\left(\mathbf{R}^{d}\right)$ is an
isomorphism for any $s,t\in\mathbf{R}$, it is enough to derive the statement
for $s=0.$ Let $f\in\mathbb{L}_{p}\left(E\right),g\in\mathbb{B}%
_{pp}^{\nu;1-1/p}\left(\mathbf{R}^{d}\right),$ and 
\begin{eqnarray*}
\Phi & \in & \mathbb{B}_{p,pp}^{\nu;1-1/p}\left(E\right)\cap\mathbb{H}%
_{2,p}^{\nu;\frac{1}{2}}\left(E\right)\text{ if }p\geq2, \\
\Phi & \in & \mathbb{B}_{p,pp}^{\nu;1-1/p}\left(E\right)\text{ if }%
p\in\left(1,2\right).
\end{eqnarray*}

According to Lemma \ref{lem-denseSubspace}, there are sequences $f_{n}\in%
\tilde{\mathbb{C}}_{0,p}^{\infty}\left(E\right),g_{n}\in\tilde{\mathbb{C}}%
_{0,p}^{\infty}\left(\mathbf{R}^{d}\right),\Phi_{n}\in\tilde{\mathbb{C}}%
_{2,p}^{\infty}\left(E\right)\cap\tilde{\mathbb{C}}_{p,p}^{\infty}\left(E%
\right)$ if $p\geq2$, and $\Phi_{n}\in\tilde{\mathbb{C}}_{p,p}^{\infty}%
\left(E\right)$ if $p\in\left(1,2\right)$, such that 
\begin{equation*}
f_{n}\rightarrow f\text{ in }\mathbb{L}_{p}\left(E\right),g_{n}\rightarrow g%
\text{ in }\mathbb{B}_{pp}^{\nu;1-1/p}\left(\mathbf{R}^{d}\right), 
\end{equation*}
and 
\begin{eqnarray*}
\Phi_{n} & \rightarrow & \Phi\text{ in }\mathbb{B}_{p,pp}^{\nu;1-1/p}\left(E%
\right)\cap\mathbb{H}_{2,p}^{\nu;\frac{1}{2}}\left(E\right)\text{ if }p\geq2,
\\
\Phi_{n} & \rightarrow & \Phi\text{ in }\mathbb{B}_{p,pp}^{\nu;1-1/p}\left(E%
\right)\text{ if }p\in\left(1,2\right).
\end{eqnarray*}
For each $n,$ there is unique $u_{n}\in\tilde{\mathbb{C}}_{0,p}^{\infty}%
\left(E\right)$ solving (\ref{eq:mainEq}). Hence for $u_{n,m}=u_{n}-u_{m,}$
we have 
\begin{eqnarray*}
\partial_{t}u_{n,m} & = &
\left(L^{\nu}-\lambda\right)u_{n,m}+f_{n}-f_{m}+\int_{U}\left(\Phi_{n}-%
\Phi_{m}\right)q\left(dt,dz\right), \\
u_{n,m}\left(0,x\right) & = & g_{n}\left(x\right)-g_{m}\left(x\right),x\in%
\mathbf{R}^{d}.
\end{eqnarray*}
By estimates in Theorem \ref{thm:main1} for smooth inputs(see Remark \ref%
{rem:est_for_smooth}), 
\begin{eqnarray*}
& & \left\vert L^{\nu}u_{n,m}\right\vert _{\mathbb{L}_{p}\left(E\right)}\leq
C[\left\vert f_{n}-f_{m}\right\vert _{\mathbb{L}_{p}\left(E\right)}+\left%
\vert g_{n}-g_{m}\right\vert _{\mathbb{B}_{pp}^{\nu;1-1/p}\left(\mathbf{R}%
^{d}\right)} \\
& & +\left\vert \Phi_{n}-\Phi_{m}\right\vert _{\mathbb{B}_{p,pp}^{\nu;1-1/p}%
\left(E\right)}+\left\vert \Phi_{n}-\Phi_{m}\right\vert _{\mathbb{H}%
_{2,p}^{\nu;1/2}\left(E\right)}],
\end{eqnarray*}
if $p>2$ and 
\begin{eqnarray*}
& & \left\vert L^{\nu}u_{n,m}\right\vert _{\mathbb{L}_{p}\left(E\right)}\leq
C[\left\vert f_{n}-f_{m}\right\vert _{\mathbb{L}_{p}\left(E\right)}+\left%
\vert g_{n}-g_{m}\right\vert _{\mathbb{B}_{pp}^{\nu;1-1/p}\left(\mathbf{R}%
^{d}\right)} \\
& & +\left\vert \Phi_{n}-\Phi_{m}\right\vert _{\mathbb{B}_{p,pp}^{\nu;1-1/p}%
\left(E\right)}.
\end{eqnarray*}
if $p\in\left(1,2\right)$.

By Lemma \ref{lem:est-smooth}, 
\begin{eqnarray*}
\left\vert u_{n,m}\right\vert _{\mathbb{L}_{p}\left(E\right)} & \leq &
C[\rho_{\lambda}\left\vert f_{n}-f_{m}\right\vert _{\mathbb{L}%
_{p}\left(E\right)}+\rho_{\lambda}^{1/p}\left\vert g_{n}-g_{m}\right\vert _{%
\mathbb{L}_{p}\left(\mathbf{R}^{d}\right)} \\
& & +\rho_{\lambda}^{1/p}\left\vert \Phi_{n}-\Phi_{m}\right\vert _{\mathbb{L}%
_{p,p}\left(E\right)}+\rho_{\lambda}^{1/2}\left\vert
\Phi_{n}-\Phi_{m}\right\vert _{\mathbb{L}_{2,p}\left(E\right)}]
\end{eqnarray*}
if $p\geq2$, and 
\begin{eqnarray*}
\left\vert u_{n,m}\right\vert _{\mathbb{L}_{p}\left(E\right)} & \leq &
C[\rho_{\lambda}\left\vert f_{n}-f_{m}\right\vert _{\mathbb{L}%
_{p}\left(E\right)}+\rho_{\lambda}^{1/p}\left\vert g_{n}-g_{m}\right\vert _{%
\mathbb{L}_{p}\left(\mathbf{R}^{d}\right)} \\
& & +\rho_{\lambda}^{1/p}\left\vert \Phi_{n}-\Phi_{m}\right\vert _{\mathbb{L}%
_{p,p}\left(E\right)}]
\end{eqnarray*}
if $p\in\left(1,2\right)$. Hence there is $u\in\mathbb{H}_{p}^{\nu;1}\left(E%
\right)$ so that $u_{n}\rightarrow u$ in $\mathbb{H}_{p}^{\nu;1}\left(E%
\right)$. Moreover, by Lemma \ref{lem:est-smooth}, 
\begin{equation}
\sup_{t\leq T}\left\vert u_{n}\left(t\right)-u\left(t\right)\right\vert _{%
\mathbb{L}_{p}\left(\mathbf{R}^{d}\right)}\rightarrow0,  \label{fo7}
\end{equation}
and $u$ is $\mathbb{L}_{p}\left(\mathbf{R}^{d}\right)$-valued continuous.

By Lemma \ref{lem-stochInt} (see Appendix) and Remark \ref{rem:embedding}, 
\begin{equation}
\sup_{t\leq T}\left\vert
\int_{0}^{t}\int_{U}\Phi_{n}q\left(ds,dz\right)-\int_{0}^{t}\int_{U}\Phi
q\left(ds,dz\right)\right\vert _{L_{p}\left(\mathbf{R}^{d}\right)}%
\rightarrow0  \label{fo80}
\end{equation}
as $n\rightarrow\infty$ in probability.

Hence (see (\ref{fo7})-(\ref{fo80})) we can pass to the limit in the
equation 
\begin{equation}
u_{n}\left(t\right)=g_{n}+\int_{0}^{t}[L^{\nu}u_{n}\left(s\right)-\lambda
u_{n}\left(s\right)+f_{n}\left(s\right)]ds+\int_{0}^{t}\int_{U}\Phi_{n}q%
\left(ds,dz\right),0\leq t\leq T.  \label{fo9}
\end{equation}
Obviously, (\ref{fo9}) holds for $u,g$ and $f,\Phi$. We proved the existence
part of Theorem \ref{thm:main1}.

\emph{Uniqueness. }Assume $u_{1},u_{2}\in\mathbb{H}_{p}^{\nu;1}\left(E\right)
$ solve (\ref{eq:mainEq})$.$ Then $u=u_{1}-u_{2}\in\mathbb{H}%
_{p}^{\nu;1}\left(E\right)$ solves (\ref{eq:mainEq}) with $f=0,g=0,\Phi=0$.
Thus the uniqueness follows from the uniqueness of a deterministic equation
(see \cite{MikPh2}).

Theorem \ref{thm:main1} is proved.

\section{Appendix}

\subsection{A non-degeneracy estimate}

We present a sufficient condition for our assumption \textbf{B}. For the
sake of completeness we add the proof from \cite{MF}, see Corollary 6{,}
with an obvious modification.

\begin{lemma}
\label{cor:Example1}(Corollary 6 of \cite{MF}) Let $\nu\in\mathfrak{A}%
^{\sigma}$, 
\begin{equation*}
\nu\left(\Gamma\right)=-\int_{0}^{\infty}\int_{S_{d-1}}\chi_{\Gamma}\left(rz%
\right)\Pi\left(r,dz\right)d\delta\left(r\right),\Gamma\in\mathcal{B}\left(%
\mathbf{R}_{0}^{d}\right), 
\end{equation*}
where $\delta=\delta_{\nu},\Pi\left(r,dz\right),r>0$ is a measurable family
of measures on $S_{d-1}$ with $\Pi\left(r,S_{d-1}\right)=1,r>0$. Assume that 
$w=w_{\nu}=\delta_{\nu}^{-1}$ is an O-RV function at zero and infinity
satisfying 
\begin{equation*}
\inf_{\left\vert \hat{\xi}\right\vert =1}\int_{S_{d-1}}\left\vert \hat{\xi}%
\cdot z\right\vert ^{2}\Pi\left(r,dz\right)\geq c_{0}>0,\hspace{1em}r>0 
\end{equation*}
and $\left\vert \left\{ s\in\left[0,1\right]:r_{i}\left(s\right)<1\right\}
\right\vert >0,i=1,2.$ Then assumption $\mathbf{B}$ holds. That is 
\begin{equation*}
\inf_{R\in\left(0,\infty\right),\left\vert \hat{\xi}\right\vert
=1}\int_{\left\vert y\right\vert \leq1}\left\vert \hat{\xi}\cdot
y\right\vert ^{2}\tilde{\nu}_{R}\left(dy\right)>0 
\end{equation*}
\end{lemma}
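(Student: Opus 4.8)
The plan is to reduce assumption $\mathbf{B}$ to a one-dimensional statement about $w$ and then extract a uniform lower bound from the hypothesis on $r_1,r_2$ by a compactness argument.

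First I would rescale and insert the disintegration of $\nu$. For $R>0$ and $|\hat\xi|=1$,
\begin{equation*}
\int_{|y|\le1}|\hat\xi\cdot y|^2\tilde\nu_R(dy)=\frac{w(R)}{R^2}\int_{|y|\le R}|\hat\xi\cdot y|^2\nu(dy)
=-\frac{w(R)}{R^2}\int_0^R r^2\Big(\int_{S_{d-1}}|\hat\xi\cdot z|^2\Pi(r,dz)\Big)\,d\delta(r).
\end{equation*}
Since $\int_{S_{d-1}}|\hat\xi\cdot z|^2\Pi(r,dz)\ge c_0$ and $-d\delta\ge0$, this is $\ge -c_0\frac{w(R)}{R^2}\int_0^R r^2\,d\delta(r)$. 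Integrating by parts, and using that $r^2\delta(r)\to0$ as $r\to0$ (a consequence of $\int_{|y|\le1}|y|^2\,d\nu<\infty$, which holds since $\nu\in\mathfrak{A}^\sigma$), we get $-\int_0^R r^2\,d\delta(r)=2\int_0^R r\delta(r)\,dr-R^2\delta(R)$. Substituting $\delta=1/w$ and $r=Rs$ turns the estimate into
\begin{equation*}
\int_{|y|\le1}|\hat\xi\cdot y|^2\tilde\nu_R(dy)\ \ge\ c_0\big(2F(R)-1\big),\qquad F(R):=\int_0^1 s\,\frac{w(R)}{w(Rs)}\,ds.
\end{equation*}
Hence it suffices to prove $\inf_{R>0}F(R)>\tfrac12$.

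Since $w$ is nondecreasing, $w(Rs)\le w(R)$ for $s\le1$, so $F(R)\ge\int_0^1 s\,ds=\tfrac12$ for every $R$; only the strict inequality requires work. Suppose, for contradiction, that $F(R_n)\to\tfrac12$ along some sequence $R_n$. Then $\int_0^1 s\big(\tfrac{w(R_n)}{w(R_ns)}-1\big)ds\to0$ with nonnegative integrand, so, after passing to a subsequence, $\tfrac{w(R_ns)}{w(R_n)}\to1$ for a.e.\ $s\in(0,1)$; pass to a further subsequence so that $R_n$ converges in $[0,\infty]$. If $R_n\to0$, then for a.e.\ $s$ we get $r_1(s)=\limsup_{\epsilon\to0}\tfrac{w(\epsilon s)}{w(\epsilon)}\ge1$, while $r_1(s)\le1$ always, so $r_1(s)=1$ for a.e.\ $s\in[0,1]$, contradicting $|\{s:r_1(s)<1\}|>0$. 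If $R_n\to\infty$, the same argument with $r_2$ gives a contradiction. If $R_n\to R_*\in(0,\infty)$, then by continuity of $w$ the function $F$ is continuous and $F(R_*)=\tfrac12$, forcing $w(R_*s)=w(R_*)$ for a.e.\ $s\in(0,1)$; by monotonicity $w$ is then constant on $(0,R_*)$, which makes $r_1(s)=1$ for all $s$ --- a contradiction again. Therefore $\inf_{R>0}F(R)=\tfrac12+\varepsilon_0$ for some $\varepsilon_0>0$, and then $\int_{|y|\le1}|\hat\xi\cdot y|^2\tilde\nu_R(dy)\ge2c_0\varepsilon_0>0$ uniformly in $R$ and $\hat\xi$, which is assumption $\mathbf{B}$.

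The main obstacle is precisely the passage from the soft bound $F(R)\ge\tfrac12$ to a uniform strict lower bound: this is where the trichotomy on $R$ is needed, together with the full hypothesis $|\{s:r_i(s)<1\}|>0$ for \emph{both} $i=1,2$ (and the continuity of $w$ to dispose of the middle range $R_n\to R_*$). The remaining steps --- the rescaling, the disintegration, and the integration by parts --- are routine.
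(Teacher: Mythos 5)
Your proof is correct, and its first half --- the rescaling, the disintegration, the use of the nondegeneracy of $\Pi(r,\cdot)$ to reduce assumption $\mathbf{B}$ to a lower bound on $\int_{|y|\le 1}|y|^2\tilde\nu_R(dy)=2F(R)-1$ --- is exactly the paper's reduction (the paper writes the same identity as $2\int_0^1 s^2[\tfrac{w(R)}{w(Rs)}-1]\tfrac{ds}{s}$). Where you diverge is the endgame: the paper applies Fatou's lemma directly to get $\liminf_{R\to 0}(2F(R)-1)\ge 2\int_0^1 s[\tfrac{1}{r_1(s)}-1]ds>0$ and likewise at infinity, whereas you argue by contradiction, extracting a subsequence $R_n$ with $w(R_ns)/w(R_n)\to 1$ a.e.\ and running a trichotomy on the limit of $R_n$. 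The two are essentially equivalent in substance, but your version has the merit of explicitly treating the intermediate case $R_n\to R_*\in(0,\infty)$, which the paper leaves implicit (there one needs positivity and continuity of $R\mapsto w(R)R^{-2}\int_{|y|\le R}|y|^2\nu(dy)$ on compacts, which follows from the continuity of $\delta$ and $\lim_{r\to 0}\delta(r)=\infty$); conversely, the Fatou route gives the quantitative constants $c_1,c_2$ in one line without subsequence extraction. The only point you should flesh out is the continuity of $F$ at $R_*$, which requires a domination of $s\mapsto s\,\delta(R_ns)$ near $R_*$ (e.g.\ by $s\,\delta(R_*s/2)$, integrable since $2\int_0^{R}r\delta(r)\,dr=\int_{|y|\le R}|y|^2\nu(dy)+R^2\delta(R)<\infty$); alternatively, in that case you can conclude faster by noting that $w$ constant on $(0,R_*]$ contradicts the standing assumption $\lim_{r\to 0}\delta(r)=\infty$.
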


\begin{proof}
Indeed, for $\left\vert \hat{\xi}\right\vert =1,R\mathbf{>}0,$ with $C>0$, 
\begin{align*}
& \int_{\left\vert y\right\vert \leq1}\left\vert \hat{\xi}\cdot y\right\vert
^{2}\nu_{R}\left(dy\right) \\
= & R^{-2}\int_{\left\vert y\right\vert \leq R}\left\vert \hat{\xi}\cdot
y\right\vert
^{2}\nu\left(dy\right)=-R^{-2}\int_{0}^{R}\int_{S_{d-1}}\left\vert \hat{\xi}%
\cdot z\right\vert ^{2}\Pi\left(r,dz\right)r^{2}d\delta\left(r\right) \\
\geq &
-R^{-2}c_{0}\int_{0}^{R}r^{2}d\delta\left(r\right)=c_{0}R^{-2}\int_{\left%
\vert y\right\vert \leq R}\left\vert y\right\vert
^{2}\nu\left(dy\right)=c_{0}\int_{\left\vert y\right\vert \leq1}\left\vert
y\right\vert ^{2}\nu_{R}\left(dy\right)
\end{align*}

Therefore, $\int_{\left\vert y\right\vert \leq1}\left\vert \hat{\xi}\cdot
y\right\vert ^{2}\tilde{\nu}_{R}\left(dy\right)\geq c_{0}\int_{\left\vert
y\right\vert \leq1}\left\vert y\right\vert ^{2}\tilde{\nu}%
_{R}\left(dy\right).$

Using computation similar to the proof of Lemma \ref{lem:alpha12}, 
\begin{align*}
\int_{\left\vert y\right\vert \leq1}\left\vert y\right\vert ^{2}\tilde{\nu}%
_{R}\left(dy\right) & =w\left(R\right)\int_{\left\vert y\right\vert
\leq1}\left\vert y\right\vert ^{2}\nu_{R}\left(dy\right) \\
& =2R^{-2}\int_{0}^{R}s^{2}\left[\frac{w\left(R\right)}{w\left(s\right)}-1%
\right]\frac{ds}{s} \\
& =2\int_{0}^{1}s^{2}\left[\frac{w\left(R\right)}{w\left(Rs\right)}-1\right]%
\frac{ds}{s}
\end{align*}

By Fatou's Lemma, 
\begin{eqnarray*}
\liminf_{R\rightarrow0}\int_{\left\vert y\right\vert \leq1}\left\vert
y\right\vert ^{2}\tilde{\nu}_{R}\left(dy\right) & \geq & 2\int_{0}^{1}s^{2}%
\left[\frac{1}{r_{1}\left(s\right)}-1\right]\frac{ds}{s}=c_{1}>0, \\
\liminf_{R\rightarrow\infty}\int_{\left\vert y\right\vert \leq1}\left\vert
y\right\vert ^{2}\tilde{\nu}_{R}\left(dy\right) & \geq & 2\int_{0}^{1}s^{2}%
\left[\frac{1}{r_{2}\left(s\right)}-1\right]\frac{ds}{s}=c_{2}>0,
\end{eqnarray*}

if $\left\vert \left\{ s\in\left[0,1\right]:r_{i}\left(s\right)<1\right\}
\right\vert >0,i=1,2,$ completing the proof.
\end{proof}

\subsection{Stochastic integral}

We discuss here the definition of stochastic integrals with respect to a
martingale measure. Let $\left(\Omega,\mathcal{F},\mathbf{P}\right)$ be a
complete probability space with a filtration of $\sigma-$algebras on $%
\mathbb{F}=\left(\mathcal{F}_{t},t\geq0\right)$ satisfying the usual
conditions. Let $\left(U,\mathcal{U},\Pi\right)$ be a measurable space with $%
\sigma-$finite measure $\Pi$, $\mathbf{R}_{0}^{d}=\mathbf{R}%
^{d}\backslash\left\{ 0\right\} $. Let $p\left(dt,dz\right)$ be $\mathbb{F}-$%
adapted point measures on $\left(\left[0,\infty\right)\times U,\mathcal{B}%
\left(\left[0,\infty\right)\right)\otimes\mathcal{U}\right)$ with
compensator $\Pi\left(dz\right)dt.$ We denote the martingale measure $%
q\left(dt,dz\right)=p\left(dt,dz\right)-\Pi\left(dz\right)dt$.

We prove the following based on Lemma 12 from \cite{MP1}.

\begin{lemma}
\label{lem-stochInt}Let $s\in\mathbf{R}$, $\Phi\in\mathbb{H}%
_{2,p}^{s}\left(E\right)\cap\mathbb{H}_{p,p}^{s}\left(E\right)$ for $p\in%
\left[2,\infty\right)$ and $\Phi\in\mathbb{H}_{p,p}^{s}\left(E\right)$ for $%
p\in\left[1,2\right)$. There is a unique cadlag $H_{p}^{s}\left(\mathbf{R}%
^{d}\right)-$valued process 
\begin{equation*}
M\left(t\right)=\int_{0}^{t}\int\Phi\left(r,x,z\right)q\left(dr,dz\right),0%
\leq t\leq T,x\in\mathbf{R}^{d}, 
\end{equation*}
such that for every $\varphi\in\mathcal{S}\left(\mathbf{R}^{d}\right)$, 
\begin{equation}
\left\langle M\left(t\right),\varphi\right\rangle
=\int_{0}^{t}\int\left(\int J^{s}\Phi\left(r,\cdot,z\right)J^{-s}\varphi
dx\right)q\left(dr,dz\right),0\leq t\leq T.  \label{eq:unique}
\end{equation}
Moreover, there is a constant $C$ independent of $\Phi$ such that 
\begin{eqnarray}
& & \mathbf{E}\sup_{t\leq T}\left\vert
\int_{0}^{t}\int\Phi\left(r,\cdot,z\right)q\left(dr,dz\right)\right\vert
_{H_{p}^{s}\left(\mathbf{R}^{d}\right)}  \label{eq:normEst} \\
& \leq & C\sum_{j=2,p}\left\vert \Phi\right\vert _{\mathbb{H}%
_{j,p}^{s}\left(E\right)},p\geq2,  \notag \\
& & \mathbf{E}\sup_{t\leq T}\left\vert
\int_{0}^{t}\int\Phi\left(r,\cdot,z\right)q\left(dr,dz\right)\right\vert
_{H_{p}^{s}\left(\mathbf{R}^{d}\right)}  \notag \\
& \leq & C\left\vert \Phi\right\vert _{\mathbb{H}_{p,p}^{s}\left(E\right)},p%
\in\left(1,2\right).  \notag
\end{eqnarray}
\end{lemma}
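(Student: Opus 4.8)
The plan is to reduce to $s=0$, build the integral on the dense smooth classes $\tilde{\mathbb{C}}_{r,p}^{\infty}(E)$ via a pointwise-in-$x$ Itô integral, obtain the quantitative bound from Kunita's inequality exactly as in the proof of Lemma \ref{lem:est-smooth}, and then pass to the limit.

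\emph{Reduction to $s=0$.} By Proposition \ref{prop-FuncSpace}, $J^{s}=J_{\nu}^{s}$ is an isomorphism of $H_{p}^{\nu;s}(\mathbf{R}^{d})$ onto $L_{p}(\mathbf{R}^{d})$ (and of $H_{p}^{\nu;s}(\mathbf{R}^{d};l_{2})$ onto $L_{p}(\mathbf{R}^{d};l_{2})$), with $J^{-s}\varphi\in\mathcal{S}(\mathbf{R}^{d})$ for $\varphi\in\mathcal{S}(\mathbf{R}^{d})$. Hence $\Phi\in\mathbb{H}_{2,p}^{s}(E)\cap\mathbb{H}_{p,p}^{s}(E)$ iff $J^{s}\Phi\in\mathbb{L}_{2,p}(E)\cap\mathbb{L}_{p,p}(E)$, with equivalent norms; once the lemma is proved for $s=0$ we let $N_{0}(t)=\int_{0}^{t}\int J^{s}\Phi\,q(dr,dz)$ be the resulting càdlàg $L_{p}$-valued process and set $M(t)=J^{-s}N_{0}(t)$. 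Pairing with $\varphi$ and using $\langle J^{-s}N_{0}(t),\varphi\rangle=\langle N_{0}(t),J^{-s}\varphi\rangle$ gives (\ref{eq:unique}), and (\ref{eq:normEst}) follows from the $s=0$ bound and the norm equivalences. So assume $s=0$.

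\emph{Smooth integrands and the a priori estimate.} Let $\Phi\in\tilde{\mathbb{C}}_{2,p}^{\infty}(E)\cap\tilde{\mathbb{C}}_{p,p}^{\infty}(E)$ if $p\geq2$, resp.\ $\Phi\in\tilde{\mathbb{C}}_{p,p}^{\infty}(E)$ if $1\leq p<2$; recall $\Phi=\Phi\chi_{U_{n}}$ for some $n$ with $\Pi(U_{n})<\infty$. For each fixed $x$, $(r,z)\mapsto\Phi(r,x,z)$ is a bounded predictable integrand supported on $[0,T]\times U_{n}$, so $M(t,x)=\int_{0}^{t}\int\Phi(r,x,z)q(dr,dz)$ is a well-defined càdlàg $L^{2}(\Omega)$-martingale; a jointly measurable version of $(\omega,t,x)\mapsto M(t,x)$ that is càdlàg in $t$ with values in $L_{p}(\mathbf{R}^{d})$ is constructed as in Lemma 12 of \cite{MP1}, using the uniform bounds on $D^{\gamma}\Phi$ built into the definition of $\tilde{\mathbb{C}}_{r,p}^{\infty}$. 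Applying Kunita's inequality (Theorem 2.11 in \cite{ku}) for each $x$ and then integrating in $x$ with Fubini and Minkowski's integral inequality, exactly as in the proof of Lemma \ref{lem:est-smooth}, gives
\begin{equation*}
\mathbf{E}\sup_{t\leq T}\left\vert M(t,\cdot)\right\vert_{L_{p}(\mathbf{R}^{d})}^{p}\leq C\,\mathbf{E}\Bigl(\int_{0}^{T}\left\vert \Phi(s,\cdot)\right\vert_{L_{2,p}(\mathbf{R}^{d})}^{2}ds\Bigr)^{p/2}+C\,\mathbf{E}\int_{0}^{T}\left\vert \Phi(s,\cdot)\right\vert_{L_{p,p}(\mathbf{R}^{d})}^{p}ds
\end{equation*}
for $p\geq2$ (resp.\ only the last term when $1\leq p<2$), where the interchange of $\sup_{t\leq T}$ and $\int dx$ is legitimate because $M$ is càdlàg in $L_{p}$, so the supremum may be taken over a countable dense set of times. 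By Hölder's inequality on $[0,T]$ the right-hand side is dominated by $C\bigl(\sum_{j=2,p}\left\vert\Phi\right\vert_{\mathbb{H}_{j,p}^{0}(E)}\bigr)^{p}$ (resp.\ $C\left\vert\Phi\right\vert_{\mathbb{H}_{p,p}^{0}(E)}^{p}$), which is (\ref{eq:normEst}) after taking $p$-th roots and using Jensen.

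\emph{Extension by density, the weak identity, and uniqueness.} By Lemma \ref{lem-denseSubspace}, the smooth classes above are dense in $\mathbb{H}_{2,p}^{0}(E)\cap\mathbb{H}_{p,p}^{0}(E)$ (resp.\ $\mathbb{H}_{p,p}^{0}(E)$). Given $\Phi$ there, choose smooth $\Phi_{n}\to\Phi$; applying the a priori bound to $\Phi_{n}-\Phi_{m}$ shows $(M_{n})$ is Cauchy in the space of càdlàg $L_{p}(\mathbf{R}^{d})$-valued processes with the $L^{p}(\Omega)$-norm of the supremum, hence converges to a càdlàg $L_{p}$-valued process $M$, and passing to the limit in the bound yields (\ref{eq:normEst}). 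For each smooth $\Phi_{n}$ and $\varphi\in\mathcal{S}(\mathbf{R}^{d})$, Fubini's theorem for stochastic integrals gives $\langle M_{n}(t),\varphi\rangle=\int_{0}^{t}\int\bigl(\int\Phi_{n}(r,\cdot,z)\varphi\,dx\bigr)q(dr,dz)$; letting $n\to\infty$ (uniform-in-$t$ convergence in probability on the left, $L^{2}$-convergence of the stochastic integrals on the right) yields (\ref{eq:unique}) for $s=0$. If $M,M'$ are two càdlàg $L_{p}$-valued processes satisfying (\ref{eq:unique}), then for each $\varphi$ and $t$, $\langle M(t)-M'(t),\varphi\rangle=0$ a.s.; since $\mathcal{S}(\mathbf{R}^{d})$ is separable and $M-M'$ is càdlàg, this forces $M\equiv M'$ on $[0,T]$ a.s.

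\emph{Main obstacle.} The only genuinely delicate points are producing a jointly measurable, $L_{p}$-valued càdlàg version of $M(t,x)$ from the pointwise-in-$x$ Itô integrals, and the stochastic Fubini step relating the pointwise and tested definitions; both are handled as in \cite{MP1} using the smoothness and decay encoded in $\tilde{\mathbb{C}}_{r,p}^{\infty}$. Once these are in place, the quantitative estimate is an immediate consequence of Kunita's inequality and the computation already performed in Lemma \ref{lem:est-smooth}.
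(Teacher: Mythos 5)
Your proposal is correct and follows essentially the same route as the paper: reduction to $s=0$ via the isomorphism $J^{s}$, pointwise-in-$x$ It\^o integrals for the smooth truncated integrands $\Phi_{n}=\Phi_{n}\chi_{U_{n}}$, the moment bound via Kunita's inequality combined with Fubini and Minkowski, and a passage to the limit using the approximation of Lemma \ref{lem-approximation1}. The only cosmetic difference is that for $1<p<2$ the paper obtains the estimate from the BDG inequality together with the subadditivity of $x\mapsto x^{p/2}$ rather than from Kunita's inequality, but this yields exactly the bound you state.
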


\begin{proof}
According to Proposition \ref{prop-FuncSpace}, it is enough to consider the
case $s=0$. Let $\Phi_{n}$ be a sequence defined in Lemma \ref%
{lem-approximation1} that approximates $\Phi$. Note first that by Lemma \ref%
{lem-approximation1}, for all $x$, 
\begin{equation*}
\mathbf{E}\int_{0}^{T}\sup_{x}\int\left\vert
D^{\gamma}\Phi_{n}\left(r,x,z\right)\right\vert
^{p}\Pi\left(dz\right)dr<\infty. 
\end{equation*}
Recall for each $n$, $\ \,$we have $\Phi_{n}=\Phi_{n}\raisebox{2pt}{%
\ensuremath{\chi}}_{U_{n}}$ for some $U_{n}\in\mathcal{U}$ with $%
\Pi\left(U_{n}\right)<\infty$. Consequently, we define for each $x\in\mathbf{%
R}^{d}$ and $\mathbf{P}$-a.s. for all $\left(t,x\right)\in E,$ 
\begin{eqnarray*}
M_{n}\left(t,x\right) & = &
\int_{0}^{t}\int\Phi_{n}\left(r,x,z\right)q\left(dr,dz\right) \\
& = &
\int_{0}^{t}\int\Phi_{n}\left(r,x,z\right)p\left(dr,dz\right)-\int_{0}^{t}%
\int\Phi_{n}\left(r,x,z\right)\Pi\left(dz\right)dr.
\end{eqnarray*}
Obviously, $M_{n}\left(t,x\right)$ is cadlag in $t$ and infinitely
differentiable in $x$. Obviously,$M_{n}\left(t\right)=M_{n}\left(t,\cdot%
\right)$ is $\mathbb{L}_{p}\left(\mathbf{R}^{d}\right)$-valued cadlag and,
by Kunita's inequality (Theorem 2.11 of \cite{ku}), there is a constant $C$
independent of $\Phi_{n}$ such that 
\begin{equation}
\mathbf{E}\sup_{t\leq T}\left\vert M_{n}\left(t\right)\right\vert
_{L_{p}\left(\mathbf{R}^{d}\right)}^{p}\leq C\mathbf{E}\sum_{j=2,p}\left%
\vert \Phi_{n}\right\vert _{L_{j,p}\left(E\right)}^{p},2\leq p<\infty.
\label{f1-1}
\end{equation}
By BDG inequality, for $1<p<2$, 
\begin{align*}
\mathbf{E}\sup_{t\leq T}\left\vert M_{n}\left(t,x\right)\right\vert ^{p} &
\leq C\mathbf{E}\left[\text{$\left(\int_{0}^{T}\int\text{$%
\left|\Phi_{n}\left(r,x,z\right)\right|$}^{2}p\text{$\left(dr,dz\right)$}%
\right)$}^{\frac{p}{2}}\right] \\
& \leq C\mathbf{E}\left[\text{$\int_{0}^{T}\int\text{$\left|\Phi_{n}%
\left(r,x,z\right)\right|$}^{p}$}p\left(dr,dz\right)\right] \\
& =C\mathbf{E}\left[\text{$\int_{0}^{T}\int\text{$\left|\Phi_{n}\left(r,x,z%
\right)\right|$}^{p}$}dr\Pi\left(dz\right)\right]
\end{align*}

Hence, by integrating in $x$,

\begin{equation}
\mathbf{E}\sup_{t\leq T}\left\vert M_{n}\left(t\right)\right\vert
_{L_{p}\left(\mathbf{R}^{d}\right)}^{p}\leq C\mathbf{E}\left\vert
\Phi_{n}\right\vert _{L_{p,p}\left(E\right)}^{p}.  \label{f2-1}
\end{equation}
In addition, by Fubini theorem, $\mathbf{P}$-a.s. for $0\leq t\leq
T,\varphi\in\mathcal{S}\left(\mathbf{R}^{d}\right),$ 
\begin{eqnarray}
\left\langle M_{n}\left(t\right),\varphi\right\rangle & = & \int
M_{n}\left(t,x\right)\varphi\left(x\right)dx  \label{eq:stoch3} \\
& = &
\int_{0}^{t}\int\left(\int\Phi_{n}\left(r,x,z\right)\varphi\left(x\right)dx%
\right)q\left(dr,dz\right).  \notag
\end{eqnarray}
By Lemma \ref{lem-approximation1}, 
\begin{eqnarray*}
\mathbf{E}\sum_{j=2,p}\left\vert \Phi_{n}-\Phi\right\vert _{L_{j,p}\left(%
\mathbf{R}^{d}\right)}^{p} & \rightarrow & 0,2\leq p<\infty, \\
\mathbf{E}\left\vert \Phi_{n}-\Phi\right\vert _{L_{p,p}\left(E\right)}^{p} &
\rightarrow & 0,p\in(1,2).
\end{eqnarray*}
Similarly, for each $p\in\lbrack2,\infty)$, 
\begin{equation*}
\mathbf{E}\sup_{t\leq T}\left\vert
M_{n}\left(t\right)-M_{m}\left(t\right)\right\vert _{L_{p}\left(\mathbf{R}%
^{d}\right)}^{p}\leq C\mathbf{E}\sum_{j=2,p}\left\vert
\Phi_{n}-\Phi_{m}\right\vert _{L_{j,p}\left(E\right)}^{p}\rightarrow0, 
\end{equation*}
and for each $p\in\left(1,2\right)$ 
\begin{equation*}
\mathbf{E}\sup_{t\leq T}\left\vert
M_{n}\left(t\right)-M_{m}\left(t\right)\right\vert _{L_{p}\left(\mathbf{R}%
^{d}\right)}^{p}\leq C\mathbf{E}\left\vert \Phi_{n}-\Phi_{m}\right\vert
_{L_{p,p}\left(E\right)}^{p}\rightarrow0, 
\end{equation*}
as $n,m\rightarrow\infty$. Therefore there is an adapted cadlag $L_{p}\left(%
\mathbf{R}^{d}\right)$ -valued process $M\left(t\right)$ so that 
\begin{equation*}
\mathbf{E}\sup_{t\leq T}\left\vert
M_{n}\left(t\right)-M\left(t\right)\right\vert _{L_{p}\left(\mathbf{R}%
^{d}\right)}^{p}\rightarrow0 
\end{equation*}
as $n\rightarrow\infty.$ Passing to the limit as $n\rightarrow\infty$ in (%
\ref{f1-1}), (\ref{f2-1}) and (\ref{eq:stoch3}) we derive (\ref{eq:normEst}%
), (\ref{eq:unique}). Henceforth, we define $\int_{0}^{t}\int\Phi\left(r,x,z%
\right)q\left(dr,dz\right)$ to be $M\left(t\right)$ in this lemma.
\end{proof}

\end{document}